\newtheorem{dummy}{}[section]
\newtheorem{theorem}[dummy]{Theorem}
\newtheorem{proposition}[dummy]{Proposition}
\newtheorem{lemma}[dummy]{Lemma}
\newtheorem{corollary}[dummy]{Corollary}
\newtheorem{conjecture}[dummy]{Conjecture}
\theoremstyle{definition}
\newtheorem{definition}[dummy]{Definition}
\newtheorem{remark}[dummy]{Remark}
\newtheorem{example}[dummy]{Example}
\newcommand{\Z}{\ensuremath{\mathbb{Z}}}
\newcommand{\Q}{\ensuremath{\mathbb{Q}}}
\newcommand{\C}{\ensuremath{\mathbb{C}}}
\renewcommand{\O}{\ensuremath{\mathcal{O}}}
\newcommand{\vir}{\mathrm{vir}}
\newcommand{\mpp}{\mathfrak{M}^{\text{par}}_{C}(\text{Gr}(n,N), d,\underline{a})}
\newcommand{\mpb}{\fr{Bun}_C^\text{par}(d,n,\underline{a})}
\newcommand{\spp}{\overline{\ca{M}}^{\text{par},\delta}_{C}(\text{Gr}(n,N), d,\underline{a})}
\newcommand{\sppze}{\overline{\ca{M}}^{\text{par},0+}_{C}(\text{Gr}(n,N), d,\underline{a})}
\newcommand{\msp}{\overline{\ca{M}}^{\delta}_{C}(\text{Gr}(n,N), d)}
\newcommand{\mspze}{\overline{\ca{M}}^{\delta=0+}_{C}(\text{Gr}(n,N), d)}
\newcommand{\mspin}{\overline{\ca{M}}^{\delta=\infty}_{C}(\text{Gr}(n,N), d)}
\newcommand{\parslo}{\mu_{\text{par}}}
\newenvironment{Comment}[2]{\noindent\color{#1}{\texttt #2: }}{\par\noindent}
\title{Verlinde/Grassmannian Correspondence  and Rank 2 $\delta$-Wall-Crossing}
\author[Y.~Ruan]{Yongbin Ruan}
\address{Department of Mathematics, University of Michigan, 2074 East Hall, 530 Church Street, Ann Arbor, MI 48109, USA}
\email{ruan@umich.edu}
\author[M.~Zhang]{Ming Zhang}
\address{Department of Mathematics, University of Michigan, 2074 East Hall, 530 Church Street, Ann Arbor, MI 48109, USA}
\email{zhangmsq@umich.edu}
\newcommand{\bb}{\mathbb}
\newcommand{\ca}{\mathcal}
\newcommand{\fr}{\mathfrak}
\newcommand{\hm}{\mathop{\mathcal{H}\! \mathit{om}}}
\newcommand{\parhm}{\mathop{\mathcal{P}\! \mathit{ar}\mathcal{H}\! \mathit{om}}}
\newcommand{\sparhm}{\mathop{\mathcal{SP}\! \mathit{ar}\mathcal{H}\! \mathit{om}}}
\newcommand{\sslash}{\mathbin{/\mkern-6mu/}}
\begin{document}

\maketitle
\begin{abstract}
Motivated by Witten's work, we propose the Verlinde/Grassmannian correspondence which relates the \text{GL} Verlinde numbers to the $K$-theoretic quasimap invariants of the Grassmannian. We recover these two types of invariants by imposing different stability conditions on the gauged linear sigma model associated to the Grassmannian. We construct two families of stability conditions connecting the two theories and prove two wall-crossing results. We confirm the Verlinde/Grassmannian correspondence in the rank two case. \end{abstract}

\tableofcontents

\section{Introduction}
In the early '90s, there were two mathematical theories motivated by physics, Verlinde's theory and quantum cohomology. Verlinde's theory counts the dimensions of the spaces of \emph{generalized theta functions}. The quantum cohomology of a symplectic or K\"ahler manifold $X$ counts the number of holomorphic curves on $X$. In both theories, one can define {\em quantum rings}. Early explicit physical computations \cite{Gepner,Vafa,Intriligator} indicate that the quantum ring of level-$l$ $\text{GL}$ Verlinde's theory is isomorphic to the quantum cohomology ring of the Grassmannian $\text{Gr}(n, n+l)$.  In \cite{Witten}, Witten gave a conceptual explanation of this isomorphism, by proposing an equivalence between the quantum field theories which govern the level-$l$ $\text{GL}$ Verlinde algebra and the quantum cohomology of the Grassmannian. His physical derivation of the equivalence naturally leads to a mathematical problem: these two objects are conceptually isomorphic (without referring to detailed computations). A great deal of work has been done by Marian-Oprea \cite{marian2,marian3,marian} and Belkale \cite{Belkale}. However, to the best of our knowledge, a complete conceptual proof of this equivalence is missing. We should emphasize that the numerical invariants or correlators of the two theories are different and one needs to add corrections to the statement.

In \cite{zhang}, we made a simple observation that Verlinde invariants are $K$-theoretic invariants of the theory of semistable parabolic vector bundles. Hence they should be compared with a version of quantum
$K$-theoretic invariants of the Grassmannian, instead of cohomological Gromov-Witten invariants. For this purpose, we introduced the \emph{level structure} (a key ingredient in Verlinde's theory) to quantum $K$-theory in \cite{zhang} and showed that quantum $K$-theory with level structure
satisfies the same axioms as those of the ordinary quantum $K$-theory. In our new theory, a remarkable phenomenon is the appearance of Ramanujan's mock theta functions as \emph{$I$-functions}.

We first introduce some notations. Let $\fr{gl}_n(\bb{C})$ be the \emph{general linear Lie algebra} of all $n\times n$ complex matrices, with $[X,Y]=XY-YX$. The underlying complex vector space of the level-$l$ Verlinde algebra $V_l(\fr{gl}_n(\bb{C}))$ has a basis $\{V_\lambda\}_{\lambda\in \text{P}_l}$, where $\lambda=(\lambda_1,\dots,\lambda_n)$ is a partition satisfying $\lambda_1\geq\dots\geq\lambda_n\geq0$. The set $\text{P}_l$ consists of all partitions $\lambda$ with $n$ parts such that $\lambda_1\leq l$.  Let $C$ be a smooth projective curve of genus $g$, with one special marked point $x_0$ and $k$ distinct ordinary marked points $p_1,\dots,p_k$. Let $I=\{p_1,\dots,p_k\}$ be the set of (ordinary) marked points. We assign a partition $\lambda_p=(\lambda_{1,p},\dots,\lambda_{n,p})$ to each marked point $p\in I$. Let $\underline{\lambda}=(\lambda_{p_1},\dots,\lambda_{p_k})$ be a $k$-tuple of partitions in $\text{P}_l$. In Verlinde's theory, the fundamental geometric object is  $U(n,d,\underline{\lambda})$, the moduli space of S-equivalence classes of semistable parabolic vector bundles of rank $n$ and degree $d$, with parabolic type determined by the assignment $\underline{\lambda}=(\lambda_p)_{p\in I}$ (see Definition \ref{defofpar}). There
exists an ample line bundle $\Theta_{\underline{\lambda}}$, called the \emph{theta line bundle}, over $U(n,d,\underline{\lambda})$. The GL Verlinde
   number with insertion $\underline{\lambda}$ is defined by
$$
\langle V_{\lambda_{p_1}},\dots, V_{\lambda_{p_k}} \rangle^{l, \text{Verlinde}}_{g,d}=\chi(U(n,d,\underline{\lambda}), \Theta_{\underline{\lambda}}).$$

The Verlinde/Grassmannian correspondence is based on a classical fact that the {\em state space} of two theories are isomorphic.  Let $S$ be the tautological subbundle over $\text{Gr}(n,N)$ and let $E=S^\vee$ be its dual. A partition $\lambda\in \text{P}_l$ determines a vector bundle $\bb{S}_\lambda(S)$ on the Grassmannian $\text{Gr}(n,N)$ for any $N$. Here, $\bb{S}_\lambda$ denotes the \emph{Schur functor} associated to $\lambda$ (see \cite[\textsection{6}]{fulton2}). For $N=n+l$,  
$\{\bb{S}_\lambda(S)\}_{\lambda\in \text{P}_l}$ defines a basis of the $K$-group $K^0(\text{Gr}(n, n+l))\otimes\bb{C}$. Therefore, the map sending $V_\lambda$ to $\bb{S}_\lambda(S)$ induces a linear isomorphism of vector spaces
$$V_l(\fr{gl}_n(\bb{C}))\cong K^0(\text{Gr}(n, n+l))\otimes\C.$$
By abuse of notation, we denote $\bb{S}_\lambda(S)$ by $V_\lambda$.  

To define the counterpart of Verlinde's theory in quantum $K$-theory, we work with the $(\epsilon=0+)$-stable quasimap theory developed in \cite{marian4,toda,kim4}. Let $\overline{\mathcal M}^{\epsilon=0+}_{C,k}(\text{Gr}(n,N), d)$ be the \emph{$(\epsilon=0+)$-stable quasimap graph space} which parametrizes families of tuples
\[
\big((C',p_1',\dots,p_k'),E,s,\varphi\big),
\] 
with $(C',p_1',\dots,p_k')$ a $k$-pointed possibly nodal curve of genus $g$, $E$ a locally free sheaf of degree $d$ on $C'$, $s$ a section of $E\otimes\ca{O}^N_{C'}$ satisfying a certain stability condition, and $\varphi:C'\rightarrow C$ a degree one morphism such that $\varphi(p_i')=p_i$ for all $i$. The stability condition on the $N$-sections $s$ ensures that there are well-defined evaluation maps $\text{ev}_i:\overline{\mathcal M}^{\epsilon=0+}_{C,k}(\text{Gr}(n,N), d)\rightarrow\text{Gr}(n,N)$ at $p_i'$, for $i=1,\dots,k$.

 Let $\pi:\ca{C}\rightarrow \overline{\mathcal M}^{\epsilon=0+}_{C,k}(\text{Gr}(n,N), d)$ be the universal curve and let $\ca{E}$ be the universal vector bundle over $\ca{C}$. The determinant line bundle of cohomology is defined by 
$$\ca{D}^l:=\big(\text{det}\,R\pi_*(\ca{E})\big)^{-l}.$$
We define the quantum $K$-invariant of $\text{Gr}(n,N)$ with insertions $V_{\lambda_{p_1}} , \dots, 
V_{\lambda_{p_k}}$ by
$$ \langle  V_{\lambda_{p_1}},\dots, V_{\lambda_{p_k}}|
\text{det}(E)^{e}\rangle^{l,\epsilon=0+}_{C, d}:=\chi\bigg(\overline{\mathcal M}^{\epsilon=0+}_{C,k}(\text{Gr}(n,N), d), \ca{D}^l\otimes\ca{O}^\vir\otimes\bigotimes_{i=1}^k \text{ev}_i^*V_{\lambda_{p_i}}\otimes (\text{det}\,\ca{E}_{x_0})^e\bigg),$$
where $\ca{O}^\vir$ denotes the virtual structure sheaf of $\overline{\mathcal M}^{\epsilon=0+}_{C,k}(\text{Gr}(n,N), d)$, $e$ is an integer, and $\ca{E}_{x_0}$ is the restriction of $\ca{E}$ to the distinguished marked point $x_0$.

In the mathematical formulation of the Verlinde invariants, the insertion $\text{det}(E)$ at the additional marked point $x_0$ is a new type of insertion, different from those at the (parabolic) marked points $p_i$. It took us a while to understand the nature of the insertion $\text{det}(E)$. In the quasimap language, the special point $x_0$ corresponds to the so-called \emph{light} point and it should not be confused with the \emph{heavy} marked points $p_j$.  Now we can state the following conjecture.
\begin{conjecture}[$K$-theoretic Verlinde/Grassmannian Correspondence]\label{ktheorywitten}

Let $e=l(1-g)+(ld-|\underline{\lambda|})/n$, where $|\underline{\lambda}|:=\sum_{p,i}\lambda_{i,p}$. Assume $e$ is an integer. If $N$ is sufficiently large comparing to $n$ and $l$, then we have
\[\langle V_{\lambda_{p_1}},\dots, V_{\lambda_{p_k}}\rangle^{l,  \emph{Verlinde}}_{g,d} 
= \langle  V_{\lambda_{p_1}},\dots, V_{\lambda_{p_k}}|
\emph{det}(E)^{e}\rangle^{l,\epsilon=0+}_{C, d}.
\]
for $d> n(g-1)$ and $\lambda_{p_1},\dots,\lambda_{p_k}\in \emph{P}_l$.

\end{conjecture}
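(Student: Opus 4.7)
The plan is to interpolate between the two sides of Conjecture~\ref{ktheorywitten} through a two-parameter family of moduli spaces of parabolic $\epsilon$-stable quasimaps to $\text{Gr}(n,N)$, with parabolic stability parameter $\delta$ and quasimap stability parameter $\epsilon$. The Verlinde side should appear at a suitable $\delta = \infty$ limit, where the stability forces the underlying bundle to be a semistable parabolic bundle in the sense of Definition~\ref{defofpar}, while the quasimap side is realized at $(\delta,\epsilon) = (0+,0+)$. The bridge between them consists of two wall-crossing arguments: a $\delta$-wall-crossing with $\epsilon$ fixed, and an $\epsilon$-wall-crossing that terminates at the $\epsilon = 0+$ quasimap graph space carrying the insertions of the conjecture.

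First I would construct $\spp$ for generic $\delta$ and identify its $\delta = \infty$ specialization. For $N \gg 0$ and $d > n(g-1)$, the natural forgetful morphism $\sppin \to U(n,d,\underline{\lambda})$ should be a projective bundle whose fibers parametrize the choice of $N$ generating sections of the parabolic bundle; the pushforward of $\ca{D}^l \otimes \oo \otimes \bigotimes_i \text{ev}_i^* V_{\lambda_{p_i}} \otimes (\det \ca{E}_{x_0})^e$ must then be identified with the theta line bundle $\Theta_{\underline{\lambda}}$, yielding the Verlinde Euler characteristic. The exponent $e = l(1-g) + (ld - |\underline{\lambda}|)/n$ is forced by matching the first Chern class of $\Theta_{\underline{\lambda}}$ against the determinant of cohomology via Grothendieck--Riemann--Roch on the universal curve, combined with the Schur-functor identity expressing $\bb{S}_{\lambda_p}(S)$ in terms of parabolic weight data at each $p$.

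Next I would prove the two wall-crossings. For the $\delta$-wall-crossing, as $\delta$ decreases from $\infty$ to $0+$ with $\epsilon = 0+$ fixed, one shows that the $K$-theoretic Euler characteristic of the decorated virtual structure sheaf is invariant. The natural approach is a master space construction in the spirit of Mochizuki and of recent $K$-theoretic refinements: at each critical value $\delta_0$, construct an auxiliary moduli space carrying a $\bb{C}^*$-action whose fixed loci recover the $\delta_0^\pm$-chambers, and then apply $K$-theoretic virtual localization to exhibit the cancellation of wall contributions. An analogous $\epsilon$-wall-crossing is then needed to reach $\emsze$; along this family the parabolic data is rigid and degenerates into the determinant insertion $(\det \ca{E}_{x_0})^e$ at the light point $x_0$, while the heavy insertions $V_{\lambda_{p_i}}$ at $p_1,\ldots,p_k$ are preserved.

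The main obstacle is the wall-crossing in rank $n > 2$. In rank two each wall is determined by a single line subbundle, the $\bb{C}^*$-fixed loci on the master space admit a clean description, and the contributions cancel after an explicit term-by-term computation; this is precisely what the present paper carries out. For general $n$, walls can mix arbitrarily long Harder--Narasimhan filtrations of parabolic bundles, and the Joyce-type combinatorial machinery needed to organize the cancellations has not yet been developed in the $K$-theoretic parabolic quasimap setting. A secondary difficulty is the stabilization in $N$: one must check that for $N \gg 0$ both the projective bundle identification at $\delta = \infty$ and the insertion $(\det \ca{E}_{x_0})^e$ are independent of the auxiliary embedding of $\text{Gr}(n,N)$, so that the resulting invariant depends only on $n, l, d, g$, and $\underline{\lambda}$.
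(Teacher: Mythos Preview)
Your overall two-stage wall-crossing outline matches the paper's strategy, but you have the $\delta$-endpoints reversed, and this is not a harmless relabeling. At $\delta=\infty$ the stability condition does \emph{not} force the underlying parabolic bundle to be semistable; by Lemma~\ref{infeququot} it forces the $N$ sections to generically generate the fiber, so $\sppin$ is a flag-bundle fibration over the Quot scheme, not over $U(n,d,\underline{\lambda})$. Conversely, it is at $\delta=0+$ that a $\delta$-stable pair has semistable underlying bundle, and it is the forgetful map $\sppze\to U(n,d,\underline{\lambda})$ that is generically a projective bundle and recovers the Verlinde number via Corollary~\ref{pairtoverlinde}. Thus the correct route is
\[
\text{Verlinde}\ \longleftrightarrow\ (\delta=0+)\ \xrightarrow{\ \delta\text{-wall-crossing}\ }\ (\delta=\infty)\ \xrightarrow{\ \text{light}\to\text{heavy}\ }\ (\epsilon=0+),
\]
opposite to what you wrote. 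Your description of the $\epsilon$-wall-crossing is also off: the determinant insertion $(\det\ca{E}_{x_0})^e$ at the light point $x_0$ is present throughout and is not produced by degenerating parabolic data; what the $\epsilon$-wall-crossing (Section~\ref{section6}, following Zhou's master space) does is convert the ordinary markings $p_1,\dots,p_k$ from light to heavy so that genuine evaluation maps to $\text{Gr}(n,N)$ exist.

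On technique, your proposal to handle the $\delta$-wall-crossing by a master space with $\bb{C}^*$-localization is a reasonable alternative, but it is not what the paper does in rank two: there the argument is via explicit Thaddeus-type flips (blowup/blowdown along the flip loci $\ca{W}^\pm$), combined with an embedding into a smooth higher-degree moduli space (Proposition~\ref{embvirsheaf}) to handle the singular chambers. Your identification of the higher-rank $\delta$-wall-crossing as the essential obstruction is correct and agrees with the paper's assessment.
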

The conjecture is formulated for the theory of stable quotients, i.e., $(\epsilon=0^+)$-stable quasimap theory because $x_0$ is a light point and there is no light point in the theory of stable maps.

Consider a subset of $\text{P}_l$ defined by $
\text{P}_l':=\{\lambda=(\lambda_1,\dots,\lambda_n)\in\text{P}_l|\lambda_1<l\}.$ The main theorem of this paper is the following:
\begin{theorem}
Under the assumption that $\lambda_{p_i}\in\emph{P}_l'$ for all $i$ and the stable locus $U^s_C(n,d,\underline{\lambda})$ is non-empty, the $K$-theoretic Verlinde/Grassmannian Correspondence holds for $n\leq2$ and $N\geq n+2l$.
\end{theorem}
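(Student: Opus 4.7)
The plan is to interpolate between the two sides through a one-parameter family of moduli spaces of $\delta$-stable parabolic quasimaps, denoted schematically by $\spp$. At $\delta = \infty$ the stability agrees with Mehta--Seshadri stability of parabolic vector bundles, so $\sppin$ maps (on the stable locus, which is nonempty by assumption) onto $U(n,d,\underline{\lambda})$, and a Riemann--Roch calculation on the universal parabolic bundle shows that the pushforward of $\ca{D}^l \otimes \bigotimes_i \text{ev}_i^* V_{\lambda_{p_i}} \otimes \det(\ca{E}_{x_0})^e$ is the theta line bundle $\Theta_{\underline{\lambda}}$. The particular value $e = l(1-g) + (ld-|\underline{\lambda}|)/n$ is exactly the exponent that makes the determinants match on both sides of this pushforward identity. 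At $\delta = 0^+$ the parabolic weights are invisible to stability, and I plan to compare $\sppze$ with the $(\epsilon = 0^+)$-stable quasimap graph space $\emsze$ by a separate wall-crossing that converts the parabolic data at the points $p_i$ into the Schur-class insertions $V_{\lambda_{p_i}}$.

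The two main ingredients are therefore (1) a $\delta$-wall-crossing theorem equating the $K$-theoretic invariants on $\sppze$ and $\sppin$, and (2) an $\epsilon$-type wall-crossing linking $\sppze$ to $\emsze$. Both are the wall-crossing results set up in earlier sections of the paper in arbitrary rank; they provide the corrections as pushforwards of explicit classes from master-space flip loci. With these in hand, the proof of the theorem reduces to showing that the $\delta$-wall-crossing corrections vanish in rank $\leq 2$ under the given hypotheses.

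The hard part, and the reason the theorem is stated only for $n \leq 2$, is the explicit vanishing of the wall-crossing corrections. For $n = 1$ no wall appears and the equality is immediate from the two end-point identifications above. For $n = 2$, the walls are indexed by rank-one parabolic subsheaves of the universal bundle that destabilize at specific rational values of $\delta$; each flip locus is a projective bundle over a product of rank-one moduli spaces, and the correction is a fibre integral of a polynomial in universal line bundle classes twisted by $\ca{D}^l$ and $\det(\ca{E}_{x_0})^e$. I expect to show this integral vanishes: the strict inequality $\lambda_{1,p} < l$ encoded in $\lambda_{p_i} \in \text{P}_l'$ forces the total twist on each projective bundle fibre into a range where the relevant higher direct image vanishes by Serre duality, while the bound $N \geq n + 2l$ ensures that the Schur insertions $V_{\lambda_{p_i}}$ extend through the appropriate sub-Grassmannian on the flip locus without producing spurious contributions. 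Setting up the universal complex on the flip locus and tracking determinants carefully to prove this vanishing is the technical heart of the proof; once it is done, combining it with the two wall-crossing theorems and the two end-point identifications yields the stated equality.
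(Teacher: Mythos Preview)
Your overall architecture is right --- interpolate between the two sides via a family of $\delta$-stable parabolic $N$-pair moduli spaces, identify the two endpoints, and show the $\delta$-wall-crossing corrections vanish for $n\le 2$ --- but you have the two endpoints of the $\delta$-line \emph{swapped}, and this inverts the logic of the argument.

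Concretely: at $\delta=0^+$ (small $\delta$), a $\delta$-stable parabolic $N$-pair has a parabolic \emph{semistable} underlying bundle, and conversely any stable parabolic bundle with nonzero sections gives a $\delta$-stable pair. Hence it is $\sppze$ that admits the forgetful morphism to $U(n,d,\underline{\lambda})$ with generic fibre a projective space, and it is here that the pushforward of the determinant line bundle recovers $\Theta_{\underline{\lambda}}$ and the Verlinde number. At $\delta=\infty$, by contrast, $\delta$-stability is the condition that the $N$ sections generically generate $E$; the moduli space is a fibre product of flag bundles over the Quot scheme, and the parabolic weights play no role in stability. It is from this $\delta=\infty$ endpoint that one passes, via a second (master-space) wall-crossing converting light markings to heavy ones, to the $(\epsilon=0^+)$-stable quasimap graph space. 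So your ``Mehta--Seshadri at $\delta=\infty$'' and ``parabolic weights invisible at $\delta=0^+$'' are exactly reversed.

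This reversal also misattributes the numerical hypotheses. The bound $N\ge n+2l$ is what controls the $q$-behaviour of the $I$-function contributions in the light-to-heavy wall-crossing at the $\delta=\infty$ end; the $\delta$-wall-crossing itself needs only $N\ge n+l$, and its vanishing comes from a direct projective-bundle cohomology computation (the restriction of the determinant line bundle to a fibre of the flip locus is $\ca{O}(\pm l\delta_c/2)$, and the fibre dimension exceeds $l\delta_c/2$ precisely when $N\ge n+l$). The hypothesis $\lambda_{p_i}\in\text{P}_l'$ is not a Serre-duality twist bound on the flip locus; it is needed so that the parabolic weights satisfy $a_{l_p,p}<l$, without which the GIT construction of $\spp$ fails to produce a fine moduli space. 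Once you reorient the two endpoints, the rest of your outline matches the paper's strategy.
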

The assumption that all partitions are in $\text{P}_l'$ is a technical assumption (see Remark \ref{fail} and \textsection{4.1}). The condition that $U^s_C(n,d,\underline{\lambda})\neq\emptyset$ holds when $g\geq2$ or $g=1,k>0$. Therefore, it is primarily a condition for the genus 0 case (see Remark \ref{remarkonexist}).

We follow Witten's strategy to lift the problem into the gauged linear sigma model (GLSM) of the Grassmannian. The GLSM of the Grassmannian depends on two stability parameters
$\epsilon$ and $\delta$ (see the precise definitions in Section \ref{section2.1} and Section \ref{section3.2}). The $\epsilon$-stability concerns about the stability of sections in the GLSM data, while $\delta$-stability concerns about the stability of bundles. When we vary $\epsilon$ or $\delta$, the moduli space undergoes a series of wall-crossings. 
When $\delta=0+$, the GLSM moduli space admits a morphism to $U(n,d,\underline{\lambda})$. This morphism is generically a projective bundle, if $d>n(g-1)$ and the open subset $U^s_C(n,d,\underline{\lambda})\subset U_C(n,d,\underline{\lambda})$ of stable vector bundles is non-empty. Therefore, it allows us to recover the GL Verlinde numbers from $(\delta=0+)$-stable parabolic GLSM invariants. More precisely, we prove (see Theorem \ref{pairtoverlinde}) the following:
\begin{theorem}
Suppose that $d>n(g-1)$ and $U^s_C(n,d,\underline{\lambda})\neq \emptyset$. Then
$$ \langle V_{\lambda_{p_1}},\dots, V_{\lambda_{p_k}}\rangle^{ l, \emph{Verlinde}}_{g,d} =\langle V_{\lambda_{p_1}},\dots, V_{\lambda_{p_k}}\rangle^{ l, \delta=0+,\emph{Gr}(n, N)}_{C,d}.$$
\end{theorem}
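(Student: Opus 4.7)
The strategy is to construct a forgetful morphism from the $(\delta=0+)$-stable parabolic GLSM moduli space down to the Verlinde moduli space $U(n,d,\underline{\lambda})$ and use pushforward to identify the two Euler characteristics. Write $\ca{M}:=\sppze$. Sending a GLSM datum $((E,s),\text{parabolic flags})$ to the S-equivalence class $[E]$ of the underlying parabolic bundle defines a morphism $\rho:\ca{M}\to U(n,d,\underline{\lambda})$; this is well-defined because $(\delta=0+)$-stability forces the underlying parabolic bundle to be semistable.

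\textbf{Step 1.} Over a parabolic-stable $[E]\in U^s$, the fibre of $\rho$ is an open substack of $\bb{P}(H^0(E\otimes\ca{O}_C^N))$ parameterising $N$-tuples of sections $s:\ca{O}_C^N\to E$ that are generically surjective, modulo the scalar automorphisms $\bb{C}^*=\mathrm{Aut}(E)$. The hypothesis $d>n(g-1)$, together with parabolic stability and a Serre-duality/slope estimate, yields $H^1(E)=0$ on a dense open $V\subset U^s$; then $h^0(E)$ is constant by Riemann-Roch, and $\rho^{-1}(V)\to V$ is a genuine projective bundle. On $\rho^{-1}(V)$ the moduli $\ca{M}$ is smooth of the expected dimension, so $\ca{O}^\vir$ agrees with the ordinary structure sheaf.

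\textbf{Step 2.} The core claim is the identification of sheaves on $U(n,d,\underline{\lambda})$
\[
R\rho_*\Bigl(\ca{D}^l\otimes\ca{O}^\vir\otimes\bigotimes_{i=1}^k\text{ev}_i^*V_{\lambda_{p_i}}\otimes(\det\ca{E}_{x_0})^e\Bigr)=\Theta_{\underline{\lambda}},
\]
with $e=l(1-g)+(ld-|\underline{\lambda}|)/n$. By the classical theory of determinant line bundles on moduli of bundles (Grothendieck-Riemann-Roch; Drezet-Narasimhan; Laszlo-Sorger), the pushforward of $\ca{D}^l$ recovers a power of the determinantal theta bundle, which only descends to $U$ after twisting by $(\det\ca{E}_{x_0})^e$; this descent condition forces the prescribed value of $e$ and explains the role of the light point $x_0$. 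The Schur-functor insertions $\text{ev}_i^*\bb{S}_{\lambda_{p_i}}(S)$ at the parabolic points restrict to $\bb{S}_{\lambda_{p_i}}$ applied to the fibres of the universal bundle at $p_i$, reproducing the parabolic weight contribution to $\Theta_{\underline{\lambda}}$.

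\textbf{Step 3 and main obstacle.} The projection formula and Leray then give $\chi(\ca{M},\cdots)=\chi(U,\Theta_{\underline{\lambda}})$, which is the Verlinde number. The principal obstacle is globalising the identification in Step 2: on the complement of $V$ inside $U^s$ and on the strictly semistable locus $U\setminus U^s$ the fibre description degenerates and the virtual pushforward must still be shown to produce exactly $\Theta_{\underline{\lambda}}$ rather than a sheaf which agrees with it only generically. The assumption $U^s\neq\emptyset$ is decisive here, as it guarantees $U^s$ is dense in $U$ and allows the line-bundle identification established on $V$ to propagate globally; controlling the higher direct images $R^{>0}\rho_*$ over the non-projective-bundle locus, and reconciling the S-equivalence identifications on $U$ with the bundle-level data on $\ca{M}$, is the most delicate point of the argument.
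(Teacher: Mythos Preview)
Your broad outline (forgetful map $q$ to $U(n,d,\underline{\lambda})$, then projection formula) is correct, but your execution diverges from the paper in two essential places, and the gap you identify as the ``main obstacle'' is one you do not resolve.

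First, in your Step~2 you try to compute $R\rho_*$ of the full insertion and identify the result with $\Theta_{\underline{\lambda}}$. This is harder than necessary. The paper instead proves (Lemma~\ref{detcompare}) that the insertion bundle
\[
\big(\det R\pi_*\ca{E}\big)^{-l}\otimes\bigotimes_{p\in I}L_{\lambda_p}\otimes(\det\ca{E}_{x_0})^e
\]
is \emph{equal} to $q^*\Theta_{\underline{\lambda}}$, globally on $\ca{M}$, not merely generically. This is a direct line-bundle comparison using the GIT presentation of both spaces; no Grothendieck--Riemann--Roch or descent subtleties are needed. Note also that in the parabolic setting the insertions are the Borel--Weil--Bott line bundles $L_{\lambda_p}$ built from the universal flags, not pullbacks by evaluation maps (there are no evaluation maps to $\text{Gr}(n,N)$ at the $p_i$ in the $\delta$-stable parabolic theory).

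Second, once the insertion is a pullback, the projection formula reduces everything to showing $q_*[\ca{O}_{\ca{M}}]=[\ca{O}_U]$ in $K_0(U)$. The paper dispatches this with a single cited result (Lemma~\ref{buchlemma}, from Buch): for a surjective morphism of projective varieties with rational singularities whose general fibre is rational, the pushforward of the structure sheaf is the structure sheaf. The hypotheses are checked as follows: $U(n,d,\underline{\lambda})$ has rational singularities by Sun; $d>n(g-1)$ and $U^s\neq\emptyset$ give surjectivity (any bundle has a section, and the image contains the dense open $U^s$); the general fibre over a stable $[E]$ is $\bb{P}H^0(E\otimes\ca{O}_C^N)$, hence rational. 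This bypasses entirely your concerns about higher direct images over the strictly semistable locus and about the virtual class on non-generic fibres.

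So the genuine gap in your proposal is that you correctly isolate the difficulty but offer no mechanism to overcome it; the paper's mechanism is the pullback identification plus Buch's theorem.
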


As we vary $\delta$, we analyze the geometric wall-crossings of the $\delta$-stable parabolic GLSM moduli spaces in the cases $n\leq2$. This will allow us to prove the following $\delta$-wall-crossing result (see Theorem \ref{intromainthm3}).
\begin{theorem}\label{intromainthm2}
Assume $n\leq 2$. Suppose that $N\geq n+l$, $d>2g-2+k$, $\delta$ is generic, and $\lambda_{p_i}\in\emph{P}_l'$ for all $i$. Then
\[\langle  V_{\lambda_{p_1}},\dots, V_{\lambda_{p_k}}\rangle^{\delta, l, \emph{Gr}(n,N)}_{C,d}.
\]
is independent of $ \delta$.
\end{theorem}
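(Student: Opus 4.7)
The plan is to establish this wall-crossing by a $K$-theoretic virtual-localization argument on a master space built at each critical value of $\delta$. For fixed discrete data $(n=2,d,\underline{a})$ the set of critical values of $\delta$ at which a strictly $\delta$-semistable parabolic GLSM datum exists is discrete and, on any bounded subinterval of $\R_{>0}$, finite, since $\delta$-semistability is controlled by the parabolic slope of rank-one sub-objects of a rank-two $E$ and these form a bounded family. Between consecutive walls the stack $\overline{\ca{M}}^{\text{par},\delta}_C(\text{Gr}(2,N),d,\underline{a})$ and its perfect obstruction theory are constant in $\delta$, so the invariant is locally constant there. It therefore suffices to fix a single wall $\delta_0$ and to show that the invariant for $\delta_0 + \eta$ equals the invariant for $\delta_0 - \eta$ for all sufficiently small $\eta > 0$.

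At such a wall I would follow the master-space construction of Thaddeus, Mochizuki, and Kiem-Li (adapted to the $K$-theoretic setting by Fan-Lee, Qu, and others) to produce a quasi-projective Deligne-Mumford stack $\mathscr{M}_{\delta_0}$ equipped with a $\bb{C}^*$-action, a $\bb{C}^*$-equivariant perfect obstruction theory, and two GIT linearizations whose quotients recover the $\delta_0 \pm \eta$ moduli. The $\bb{C}^*$-fixed locus decomposes as
\[
\mathscr{M}_{\delta_0}^{\bb{C}^*} = F_{+} \sqcup F_{-} \sqcup F_{\mathrm{wall}},
\]
where $F_{\pm}$ are (essentially) the two GIT quotients and $F_{\mathrm{wall}}$ parametrizes strictly $\delta_0$-semistable objects together with the auxiliary torus parameter. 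Applying the $K$-theoretic virtual localization theorem to the integral of $\ca{D}^l\otimes\prod_i \ev_i^*V_{\lambda_{p_i}}$ over $\mathscr{M}_{\delta_0}$ then yields an identity of the form
\[
\langle V_{\lambda_{p_1}},\dots,V_{\lambda_{p_k}}\rangle^{\delta_0+\eta}-\langle V_{\lambda_{p_1}},\dots,V_{\lambda_{p_k}}\rangle^{\delta_0-\eta}=\text{(residue over }F_{\mathrm{wall}}\text{)}.
\]

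In the rank-two case $F_{\mathrm{wall}}$ admits an explicit stratification: every point parametrizes a strictly $\delta_0$-semistable parabolic GLSM datum $(E,s)$ admitting a rank-one sub-object $(L_1,s_1)\subset(E,s)$ whose parabolic quotient $(L_2,s_2)$ has the same $\delta_0$-parabolic slope. Hence $F_{\mathrm{wall}}$ is stratified by finitely many products, or $\bb{P}$-bundles, of rank-one parabolic quasimap moduli, indexed by decompositions $d = d_1 + d_2$ and $\underline{\lambda} = \underline{\lambda}^{(1)} + \underline{\lambda}^{(2)}$ compatible with the slope-matching condition. On each stratum the universal bundle decomposes as $\ca{L}_1\oplus\ca{L}_2$ with opposite $\bb{C}^*$-weights, so $\ca{D}^l$, the parabolic insertions, and the virtual normal bundle can all be evaluated in terms of $\ca{L}_1$ and $\ca{L}_2$.

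The technical heart, and main obstacle, is to show that the residue contribution from each stratum of $F_{\mathrm{wall}}$ vanishes. The strict semistability relation $\mu^{\delta_0}_{\mathrm{par}}(L_1)=\mu^{\delta_0}_{\mathrm{par}}(L_2)$ forces a rigid balance between the $d_i$ and the parabolic-weight totals, while the hypothesis $\lambda_{p_i}\in\text{P}_l'$ (the strict inequality $\lambda_{1,p_i}<l$) prevents the wall locus from acquiring boundary phenomena where $V_{\lambda_{p_i}}$ pulls back through the splitting in a degenerate way. The degree hypothesis $d>2g-2+k$ guarantees that $R^1\pi_{*}\ca{L}_i$ vanishes on each stratum, so the determinant-of-cohomology factor in $\ca{D}^l$ splits cleanly. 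After these simplifications the residue on a stratum reduces to a symmetric sum over the two orderings $(L_1,L_2)$ and $(L_2,L_1)$ of contributions which are opposite in $\bb{C}^*$-weight and therefore cancel via the $K$-theoretic localization formula. Executing this weight bookkeeping carefully --- tracking the $\bb{C}^*$-weights of $R\pi_{*}\ca{L}_i$, the virtual normal bundle, the theta twist, and the parabolic data --- is the principal difficulty, and is the reason the argument is currently restricted to $n\leq 2$.
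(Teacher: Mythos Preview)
Your master-space/localization packaging is a reasonable alternative to the paper's direct Thaddeus-style blowup comparison, and in rank two the two frameworks are essentially equivalent: the $\bb{C}^*$-quotients of your $\mathscr{M}_{\delta_0}$ recover $\ca{M}^\pm_{\delta_0}$, and the flip loci $\ca{W}^\pm$ are the attracting/repelling loci of your $F_{\mathrm{wall}}$. But the mechanism you propose for the vanishing of the wall residue is wrong, and this is the heart of the proof.

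First, there is no $(L_1,L_2)\leftrightarrow(L_2,L_1)$ symmetry. A strictly $\delta_0$-semistable rank-two pair splits as $(L,s')\oplus(M,0)$ with $s\in H^0(L\otimes\ca{O}^N)$ and $s\notin H^0(M\otimes\ca{O}^N)$; the section distinguishes the two summands, and the degrees satisfy $d''-d'=\delta_0+(|\underline{a}'|-|\underline{a}''|)/l\neq 0$. So the contributions from the two ``orderings'' are genuinely different, not opposite, and do not cancel by weight-flipping. Second, your claim that $d>2g-2+k$ forces $R^1\pi_*\ca{L}_i=0$ is false: the degree of $L$ is roughly $(d-\delta_0)/2$, which can be small when $\delta_0$ is large. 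The paper explicitly splits into a smooth case ($d'>2g-1$) and a singular case, the latter handled by embedding into a higher-degree moduli via $E\mapsto E(D)$ and pushing the virtual class forward as a $K$-theoretic Euler class (Proposition~\ref{embvirsheaf}).

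The actual vanishing mechanism, which you have not identified, is a projective-bundle cohomology count. On the $+$ side the flip locus is $\bb{P}(\ca{V}^+)$ with fiber $\bb{P}^{n_+-1}$, and the determinant line $\ca{D}_{\delta_0,+}$ restricts to $\ca{O}(-l\delta_0/2)$ on each fiber (Lemma~\ref{restriction2}). The wall-crossing difference is then a sum of Euler characteristics of $\ca{O}(-j)$ on $\bb{P}^{n_+-1}$ for $1\le j\le l\delta_0/2$, and these vanish precisely when $n_+>l\delta_0/2$. The hypothesis $N\ge n+l$ (i.e.\ $l\le N-2$) is exactly what makes this rank inequality hold; it appears nowhere in your outline. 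Without isolating this count --- and in the singular case, combining it with the embedding trick to control the extra $\lambda_{-1}((\ca{E}^\vee_D)^N)$ insertions --- your argument has no way to conclude.
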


The ($\delta=\infty$)-theory corresponds to the theory of Quot scheme in which both $x_0$ and $p_j$ are light points. The wall-crossing from ($\delta=\infty$)-theory to ($\epsilon=0^+$)-theory is equivalent to converting the light points $p_j$ to heavy marked points. We use the technique of Yang Zhou \cite{zhou1} to prove the following wall-crossing result for arbitrary rank.

\begin{theorem}
Suppose that $\lambda_{p_1},\dots,\lambda_{p_k}\in\emph{P}_l$. If $N\geq n+2L$, we have
$$ \langle  V_{\lambda_{p_1}},\dots, V_{\lambda_{p_k}}\rangle^{l,\delta=\infty, \emph{Gr}(n,N)}_{C,d}= \langle  V_{\lambda_{p_1}},\dots, V_{\lambda_{p_k}}|
\emph{det}(E)^{e}\rangle^{l,\epsilon=0+}_{C,d}.$$
\end{theorem}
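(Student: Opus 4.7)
The plan is to implement the master-space/virtual localization strategy of Yang Zhou \cite{zhou1}, adapted to our $K$-theoretic setting with the level determinant line bundle $\ca{D}^l$. Both sides of the equality are Euler characteristics on moduli stacks built from essentially the same GLSM data on curves mapping to $C$; they differ structurally only in how the markers $p_1,\dots,p_k$ are treated. On the $(\delta=\infty)$ side all of $p_1,\dots,p_k$ are light in the Quot-style sense (no evaluation map required, base points allowed), while on the $(\epsilon=0+)$ side they are heavy (evaluation maps to $\text{Gr}(n,N)$ exist, no base points at $p_j$). The marker $x_0$ and the insertion $(\det\ca{E}_{x_0})^e$ are the same on both sides. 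It therefore suffices to convert the markers $p_j$ from light to heavy one at a time.

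First, I would set up the induction: suppose $p_1,\dots,p_{j-1}$ have already been made heavy, and attach to $p_j$ an auxiliary stability parameter $\epsilon_j\in(0,\infty]$, with $\epsilon_j=\infty$ corresponding to $p_j$ light (so the invariant agrees with the $\delta=\infty$ style in this coordinate) and $\epsilon_j=0+$ to $p_j$ heavy. The invariants should be piecewise constant in $\epsilon_j$, with walls at $\epsilon_j=1/m$ for positive integers $m$; these walls are to be processed one at a time.

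Next, at each wall I would construct Zhou's master space carrying a $\bb{C}^*$-action whose $\bb{C}^*$-fixed locus decomposes as (i) the moduli just above the wall, (ii) the moduli just below, and (iii) rubber-type components parametrizing entangled bubble trees grafted at $p_j$. Applying $K$-theoretic virtual localization reduces the wall-crossing difference to an equivariant Euler characteristic over the rubber loci, with the insertion $\ca{D}^l\otimes\bigotimes_i \ev_i^*\bb{S}_{\lambda_{p_i}}(S)\otimes(\det\ca{E}_{x_0})^e$ restricted to the tails and twisted by the appropriate localization weights. The properness of the master space and the existence of a perfect obstruction theory compatible with the $\bb{C}^*$-action are routine to verify from the GLSM construction, so the localization formula is available.

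The hard part will be the tail vanishing. Each rubber locus is essentially a projective bundle (or a projectivization of universal-sheaf data) over a simpler base, and one must prove that the equivariant pushforward of the restricted insertion sheaf vanishes. This is the $K$-theoretic analogue of Zhou's calibrated-tails lemma, and the positivity hypothesis $N\geq n+2L$ is tailored to produce exactly the required vanishing. The principal technical obstacle, absent from Zhou's original setting, is the level determinant $\ca{D}^l$: it is a nontrivial determinantal correction defined via $R\pi_*\ca{E}$ on the whole domain, and restricting it to a rubber tail requires a careful computation of its $\bb{C}^*$-weight and of the pairing of that weight with the Schur-functor insertions. Once the tail vanishing is established, independence of the invariant in $\epsilon_j$ follows, and iterating over all markers converts the $(\delta=\infty)$ theory into the $(\epsilon=0+)$ theory with heavy $p_j$'s, producing the claimed equality.
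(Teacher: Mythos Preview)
Your strategy matches the paper's: convert markings one at a time via Zhou's master space, apply $K$-theoretic virtual localization, and show the extra fixed-locus contributions vanish under $N\geq n+2l$. Two execution details differ from your sketch. First, the paper does not process a sequence of walls at $\epsilon_j=1/m$; rather, a \emph{single} master space $\widetilde{\ca{M}}_{C,m|k-m}$ (Definition~\ref{masterfamily}) interpolates directly between $y_1$ light and $y_1$ heavy, with additional fixed loci $F_{d'}$ (a degree-$d'$ rational bubble attached at $y_1$) for each $0<d'\leq d$ absorbing all intermediate contributions at once. Second, the tail vanishing is not argued as a projective-bundle pushforward: the $F_{d'}$ contribution is packaged into a class $\mu^{W_1}_{d'}(q)$ on $\text{Gr}(n,N)$, computed explicitly in the appendix from the level-$l$ $K$-theoretic $I$-function of the Grassmannian; a degree count (Lemma~\ref{laurentpart}) shows this class is regular at $q=0$ and vanishes at $q=\infty$ precisely when $N\geq n+2l$, and a residue extraction (Lemma~\ref{elementarylem}) applied to the localization identity~(\ref{eq:ktheoryloc}) then annihilates it. Your instinct that tracking $\ca{D}^l$ and the Schur insertion on the tail is the crux is exactly right---each contributes roughly a factor of $l$ to the numerator degree in $q$, which is why the hypothesis reads $n+2l$ rather than $n+l$.
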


Combining the above results, we prove the Verlinde/Grassmannian correspondence for $n\leq2$. The higher rank $\delta$-wall-crossing problem is much more complicated
and we leave it for future research. 

The paper is organized as follows.  The case when $g\geq 2$ and there is no parabolic structure is well-understood in the literature. As a starting point, we treat this case first in Section \ref{section2.1} to give the reader a general idea of the strategy. The full version of parabolic GLSM/stable pairs is less developed in the literature and new in the GLSM setting. When $\delta$ is large,
the moduli space is not smooth and it was considered to be a major difficulty in the '90s. With the modern technique of virtual fundamental cycles and virtual structure sheaves, we treat this case as well. The technical heart of the article is Section \ref{section3}, where we give a complete treatment of
the moduli space of parabolic GLSM/stable pairs. The identification of Verlinde invariants with $(\delta=0^+)$-stable parabolic GLSM invariants is proved in Section \ref{section4}. The rank 2 wall-crossing theorem is proved in Section \ref{section5}.
The wall-crossing from $\delta=\infty$ theory to $\epsilon=0^+$ theory is studied in the last section.

\subsection{Notation and conventions}
We introduce some basic notations in $K$-theory. For a Deligne-Mumford stack $X$, we denote by $K_0(X)$ the Grothendieck group of coherent sheaves on $X$ and by $K^0(X)$ the Grothendieck group of locally free sheaves on $X$. Suppose $X$ has a $\bb{C}^*$-action. The equivariant $K$-groups are denoted by $K_0^{\C^*}(X)$ and $K^0_{\C^*}(X)$, with rational coefficients. We have canonical isomorphisms
\[
K_0^{\C^*}(X)\cong K_0([X/\C^*]),\quad K^0_{\C^*}(X)\cong K^0([X/\C^*]).
\]

For a flat morphism $f:X\rightarrow Y$, we have the flat pullback $f^*: K_0(Y)\rightarrow K_0(X)$. For a proper morphism $g:X\rightarrow Y$, we can define the proper pushforward $f_*:K_0(X)\rightarrow K_0(Y)$ by
\[
[F]\mapsto\sum_n(-1)^n[R^nf_*F].
\]
For a regular embedding $i:X\hookrightarrow Y$ and a cartesian diagram
\[
\begin{tikzcd}
X'\arrow{r}{}\arrow{d}{} &Y' \arrow{d}  \\
X\arrow{r}{} &  Y
\end{tikzcd}
\]
one can define the \emph{Gysin pullback} $i^!:K_0(Y')\rightarrow K_0(X')$ by
\[
i^![F]=\sum_i(-1)^![\text{Tor}_i^Y(F,\ca{O}_X)],
\]
where $\text{Tor}_i^Y(F,\ca{O}_X)$ denotes the Tor sheaf. 

Let $E$ be a vector bundle on $X$. We define the \emph{$K$-theoretic Euler class} of $E$ by
\[\lambda_{-1}(E^\vee):=\sum_i(-1)^i\wedge^iE^\vee\in K^0(X).
\]
Suppose $X$ has a $\bb{C}^*$-action. For a $\bb{C}^*$-equivariant vector bundle $E$, the same formula defines its $\bb{C}^*$-equivariant $K$-theoretic Euler class $\lambda_{-1}^{\C^*}(E^\vee)\in K^0_{\bb{C}^*}(X)$.

Throughout the paper, we consider the rational Grothendieck groups $K_0(X)_\bb{Q}:=K_0(X)\otimes\bb{Q}$ and $K^0(X)_\bb{Q}:=K^0(X)\otimes\bb{Q}$.

\subsection{Acknowledgments}
The first author wishes to thank Witten for many inspirational comments on the topic. The second author would like to thank Prakash Belkale, Qile Chen, Ajneet Dhillon, Thomas Goller, Daniel Halpern-Leistner, Yi Hu, Dragos Oprea, Jeongseok Oh, Feng Qu, Xiaotao Sun, Yaoxiong Wen, and Yang Zhou for helpful discussions. Both authors wish to thank Davesh Maulik for his participation in the early stage of the project and constant support.
The first author is partially supported by NSF grant DMS 1807079 and NSF FRG grant DMS 1564457.

\section{The GLSM of the Grassmannian and wall-crossing}\label{section2.1}
Grassmannian can be expressed as a geometric invariant theory (GIT) quotient $M_{n\times N}\sslash \text{GL}_n(\bb{C})$, where $M_{n\times N}$ denotes the vector space of $n\times N$ complex matrices. For any GIT quotient, we can construct a gauged linear sigma model (GLSM) which recovers the nonlinear sigma model 
     (physical counterpart of GW-theory)
     at one of its limit. In Witten's physical argument, he obtained the gauged WZW model (physical counterpart of Verlinde's theory)  at another limit. A mathematical theory of the GLSM has been constructed by
     Fan-Jarvis-Ruan \cite{ruan2} where the parameter in the GLSM is interpreted as stability parameter $\epsilon$. Recently, Choi-Kiem \cite{kiem} introduces several more stability parameters for abelian gauge group. To simplify the notation, we postpone the introduction of parabolic structures to the next section.
     Throughout the rest of the section, we fix a smooth projective curve $C$ of genus $g\geq2$ and a marked point $x_0\in C$. 
         
 \subsection{The GLSM of the Grassmannian and $\delta$-stability}
The GIT description of the Grassmannian gives rise to a moduli problem of the GLSM data
$$\big(C',x'_0, E, s\in H^0(E\otimes\ca{O}^N_{C'}),\varphi\big)$$
where $C'$ is a genus $g$ (possibly) nodal curve, $E$ is a vector bundle of rank $n$ and degree $d$ on $C'$, and $\varphi:C'\rightarrow C$ is a morphism of degree one (i.e., $\varphi([C'])=[C]$) such that $\varphi(x_0')=x_0$. A point $x\in C'$ is called a \emph{base point} if the $N$ sections $s$ do not span the fiber of $E$ at $x$. 

To ensure the moduli stacks are proper Deligne-Mumford stacks, we need to impose certain stability conditions on the GLSM data. There are several choices and we focus on two of them: $\epsilon$-stability and $\delta$-stability. Roughly speaking, the $\epsilon$-stability condition is imposed on the $N$ sections $s$ and the $\delta$-stability condition is imposed on the bundle $E$. For any $\epsilon\in\bb{Q}_+$, the theory of $\epsilon$-stability was developed in \cite{toda,kim4}. In this paper, we are only interested in the case $\epsilon=0^+$ and we postpone the discussion to Section \ref{section6}.

Let $\delta\in\Q_+$. The $\delta$-stability condition in the GLSM of the Grassmannian has been studied much earlier under the name of \emph{stable pairs} in \cite{Bertram}. Its moduli space can be constructed using the geometric invariant theory (GIT). In the definition of the $\delta$-stability condition, we require that the underlying curve does not degenerate and has a fixed complex structure, i.e., we require $(C',x'_0)=(C,x_0)$. Suppose that $F$ is a vector bundle on $C$. The rank and degree of $F$ are denoted by $r(F)$ and $d(F)$, respectively. Define the slope of $F$ as $\mu(F):=d(F)/r(F)$. We recall the definition of \emph{Bradlow $N$-pairs} and the $\delta$-stability condition.
\begin{definition}{\cite{Bertram}}
A Bradlow $N$-pair $(E,s)$ consists of a vector bundle $E$ of rank $n$ and degree $d$ over $C$, together with $N$ sections $s\neq 0\in H^0(E\otimes\ca{O}_C^N)$.  A sub-pair 
\[
(E',s')\subset(E,s),
\]
consists of a subbundle $\iota: E'\hookrightarrow E$ and $N$ sections $s':\ca{O}_C^N\rightarrow E'$ such that 
\begin{alignat*}{3}
\iota \circ s'&=s&&\quad s\in \text{H}^0(E'\otimes\ca{O}_C^N),\quad\text{and}\\
s'&=0&&\quad s\notin \text{H}^0(E'\otimes\ca{O}_C^N).
\end{alignat*}
A quotient pair $(E'',s'')$ consists of a quotient  bundle $q: E\rightarrow E''$ with $s''=q\circ s$.

\end{definition}
We will focus on the case $N\geq n$. The slope of an $N$-pair $(E,s)$ is defined by
\[
\mu(E,s)=\mu(E)+\frac{\theta(s)\delta}{r(E)},
\]
where $\theta(s)=1$ if $s\neq0$ and 0 otherwise. 

\begin{definition}
Let $\delta\in\bb{Q}_+$. A Bradlow $N$-pair of degree $d$ is $\delta$-semistable if for all nonzero sub-pairs $(E',s')\subsetneq (E,s)$, we have 
\[
\mu(E',s')\leq\mu(E,s).
\]
An $N$-pair $(E,s)$ is $\delta$-stable if the above inequality is strict. 
\end{definition}

\begin{lemma}\label{automorphism}
Suppose $\phi:(E_1,s_1)\rightarrow(E_2,s_2)$ is a nonzero morphism of $\delta$-semistable pairs. Then $\mu(E_1,s_1)\leq\mu(E_2,s_2)$. Furthermore, if $(E_1,s_1)$ and $(E_2,s_2)$ are $\delta$-stable pairs with the same slope, then $\phi$ is an isomorphism. In particular, for a $\delta$-stable pair $(E,s)$ with $s\neq0$, there are no endomorphisms of $E$ preserving $s$ except the identity, and no endomorphisms of $E$ annihilating $s$ except 0.
\end{lemma}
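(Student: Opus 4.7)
The plan is the standard Schur-type argument adapted to the category of $\delta$-(semi)stable pairs. Write $K:=\ker\phi\subset E_1$ and $I:=\mathrm{im}(\phi)\subset E_2$; since the sheaves involved are locally free on the smooth curve $C$, the quotient $E_1/K\cong I$ is torsion-free and hence $K$ is automatically a subbundle, while $I$ is locally free though not necessarily saturated in $E_2$, so let $\bar I\subset E_2$ denote its saturation. The key observation, coming from the relation $\phi\circ s_1=s_2$, is that $s_1$ factors through $K$ exactly when $s_2=0$, whereas if $s_2\ne 0$ then $s_2$ automatically factors through $\bar I$. This pins down $\theta(s_K),\theta(s_{\bar I})\in\{0,1\}$ in every case. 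Feeding the sub-pair $(K,s_K)$ into $\delta$-semistability of $(E_1,s_1)$ gives an upper bound on $\mu(K)+\theta(s_K)\delta/r(K)$; the short exact sequence $0\to K\to E_1\to I\to 0$ converts this into a lower bound on $\mu(I)$, and hence on $\mu(\bar I)\ge\mu(I)$; combining with $\delta$-semistability of $(E_2,s_2)$ applied to $(\bar I,s_{\bar I})$ and tracking the $\delta/r$-corrections yields the desired $\mu(E_1,s_1)\le\mu(E_2,s_2)$.

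For the stable-with-equal-slopes case I would observe that every inequality in the chain above must be an equality, and that strict $\delta$-stability on a proper sub-pair would force strict inequality. Thus $K\in\{0,E_1\}$ and $\bar I\in\{0,E_2\}$; since $\phi\ne 0$ rules out $K=E_1$ and $\bar I=0$, we conclude $K=0$ and $\bar I=E_2$. The equality $\mu(I)=\mu(\bar I)$ (same rank, same degree) in the saturation step also forces $I=\bar I$, so $\phi$ is both injective and surjective, hence an isomorphism.

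For the endomorphism assertions I would reduce the ``preserves $s$'' statement to the ``annihilates $s$'' statement via the substitution $\psi:=\phi-\mathrm{id}_E$, which converts $\phi\circ s=s$ into $\psi\circ s=0$. It then suffices to show that any $\psi:E\to E$ with $\psi\circ s=0$ must vanish when $(E,s)$ is $\delta$-stable and $s\ne 0$. Assume $\psi\ne 0$. The subbundle $K=\ker\psi$ is then proper (since $\psi\ne 0$) and nonzero (since it contains the image of the nonzero $s$); consequently the saturation $\bar I$ of $\mathrm{im}\,\psi$ is proper and nonzero as well, the proper-ness using that $\bar I=E$ together with the rank count would force $K=0$. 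Since $s$ factors through $K$ we have $\theta(s_K)=1$, so strict stability on $K$ combined with the short exact sequence yields $\mu(I)>\mu(E)+\delta/r(E)$, while strict stability on $\bar I$ yields $\mu(\bar I)<\mu(E)+\delta/r(E)$ irrespective of whether $\theta(s_{\bar I})$ is $0$ or $1$ (in the $\theta=1$ case the extra $\delta/r(\bar I)$ term only sharpens the bound). This contradicts $\mu(\bar I)\ge\mu(I)$.

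The main obstacle is essentially bookkeeping: tracking the $\theta$-values across the several case splits (on the vanishing of $s_1,s_2$ and on whether various sections factor through the relevant sub-bundles) and keeping the image $I$ carefully separated from its saturation $\bar I$ in the slope inequalities. Once this is organized, all three assertions of the lemma follow from one slope computation anchored on the sub-pairs $(K,s_K)\subset(E_1,s_1)$ and $(\bar I,s_{\bar I})\subset(E_2,s_2)$.
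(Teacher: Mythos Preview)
Your proposal is correct and is precisely the standard Schur-type argument the paper has in mind: the paper itself omits the proof entirely, writing only ``The proof is standard (cf.\ \cite[Lemma 7]{lin}), and we omit the details.'' Your kernel/image/saturation bookkeeping with the $\theta$-values is exactly what that reference does, so there is nothing to compare---you have supplied the omitted standard argument.
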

\begin{proof}
The proof is standard (cf. \cite[Lemma 7]{lin}), and we omit the details.
\end{proof}

\begin{lemma}\label{genvanish0}
Let $(E,s)$ be a $\delta$-semistable parabolic $N$-pair of rank $n$ and degree $d$. Assume that $\mu(E,s)>2g-1+\delta$. Then 
$H^1(E)=0$ and $E$ is globally generated, i.e., the morphism 
\[
H^0(E)\otimes\ca{O}_C\rightarrow E
\]
is surjective.
\end{lemma}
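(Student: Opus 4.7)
The plan is to reduce both claims to a single vanishing statement $\text{Hom}(E, M) = 0$ for an arbitrary line bundle $M$ on $C$ with $\deg M \leq 2g-1$. By Serre duality, $H^1(E) \cong \text{Hom}(E, K_C)^\vee$, which handles the first claim with $M = K_C$ (so $\deg M = 2g-2$). For the second claim, I would use the standard fact that $E$ is globally generated iff $H^0(E) \to E|_x$ is surjective for every closed $x \in C$; combining the sequence $0 \to E(-x) \to E \to E|_x \to 0$ with the already-proved vanishing $H^1(E) = 0$ shows this is equivalent to $H^1(E(-x)) = 0$, i.e.\ $\text{Hom}(E, K_C(x)) = 0$, with $\deg K_C(x) = 2g-1$. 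So it suffices to prove the uniform bound.

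Suppose for contradiction that $\phi : E \to M$ is nonzero with $\deg M \leq 2g-1$. The image $L := \text{im}(\phi) \subseteq M$ is a line subbundle with $\deg L \leq 2g-1$, and the kernel $E' := \ker\phi$ is a rank-$(n-1)$ subbundle of $E$ because $E/E' \cong L$ is locally free. I would form the sub-pair $(E', s')$ and apply $\delta$-semistability, splitting according to whether the sections factor through $E'$. If $s \in H^0(E' \otimes \ca{O}_C^N)$, then $s' = s$ and $\theta(s') = 1$, so semistability reads $\mu(E') + \delta/(n-1) \leq \mu(E) + \delta/n$; combined with the identity $n\mu(E) = (n-1)\mu(E') + \deg L$ this rearranges to $\mu(E,s) \leq \deg L \leq 2g-1$. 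If instead $s \notin H^0(E' \otimes \ca{O}_C^N)$, then $s' = 0$ and semistability reads $\mu(E') \leq \mu(E) + \delta/n$; the same manipulation then gives $\mu(E,s) \leq \deg L + \delta \leq 2g-1 + \delta$. Either case contradicts the hypothesis $\mu(E,s) > 2g-1+\delta$, so $\phi$ cannot exist. The degenerate case $n = 1$ is immediate: $\phi$ is automatically injective, $E \cong L$, and $\mu(E,s) = \deg E + \delta \leq 2g-1+\delta$.

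The arithmetic is routine, in the spirit of the classical Bradlow-pair slope estimates; the one point that needs attention is the two-case split triggered by the $\theta(s)$-contribution to the pair slope. This is also precisely why the hypothesis carries the $\delta$-correction $\mu(E,s) > 2g-1+\delta$ rather than the bare $\mu(E) > 2g-1$ familiar from the vanishing theorem for ordinary semistable bundles: Case 2, where the sections fall out of the subbundle $E'$, is the one where the $\delta$-term is genuinely needed, and it dictates the sharp numerical threshold in the statement.
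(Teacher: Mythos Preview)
Your proof is correct and follows essentially the same strategy as the paper: reduce both claims to $H^1(E(-p))=0$ via Serre duality, produce a nonzero map $\phi:E\to\omega_C(p)$, take its image $L$ (a line bundle of degree $\leq 2g-1$), and derive a contradiction from $\delta$-semistability.

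The only difference is in the last step. You apply semistability to the \emph{sub-pair} $(E',s')=(\ker\phi,s')$, which forces you into a two-case split according to whether $s$ factors through $E'$, followed by some slope arithmetic. The paper instead applies semistability directly to the \emph{quotient pair} $(L,s'')$: since $r(L)=1$, one has $\mu(L,s'')=d(L)+\theta(s'')\delta\leq 2g-1+\delta<\mu(E,s)$, which contradicts semistability in a single line without any case distinction. Your two cases are exactly the dichotomy $\theta(s'')=0$ versus $\theta(s'')=1$, handled uniformly in the paper by the crude bound $\theta(s'')\delta\leq\delta$. So the arguments are equivalent, but the quotient formulation is a bit more economical here.
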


\begin{proof}
It suffices to show that $H^1(E(-p))=0$ for any point $p\in E$. Indeed, if $H^1(E(-p))=0$, the lemma follows from the long exact sequence of cohomology groups for the short exact sequence: 
\[
0\rightarrow E(-p)\rightarrow E\rightarrow E_p\rightarrow0.
\]
Now suppose $H^1(E(-p))\neq0$. By Serre duality, we have $H^1(E(-p))=(H^0(E^\vee\otimes \omega_C(p)))^\vee$, where $\omega_C$ is the cotangent sheaf of $C$. Therefore a nonzero element in $H^1(E(-p))$ induces a nonzero morphism $\phi:E\rightarrow \omega_C(p)$. Let $L$ be the image sheaf of $\phi$. Since $L$ is a subsheaf of $\omega_C(p)$, we have $d(L)\leq 2g-1$. Let $s''$ be the induced $N$ sections of $L$. It follows that $\mu(E,s)>2g-1+\delta\geq d(L)+\theta(s'')\delta$, contradicting the $\delta$-semistability of $(E,s)$.
\end{proof}

The stability parameter $\delta$ is called \emph{generic} if there is no strictly $\delta$-semistable $N$-pair. Otherwise, $\delta$ is called \emph{critical}. We also refer to the critical values of $\delta$ as \emph{walls}. An $N$-pair $(E,s)$ is called \emph{non-degenerate} if $s\neq0$. For a generic $\delta$, the moduli space of non-degenerate $\delta$-stable $N$ pairs $\msp$ can be constructed using GIT (see \cite[\textsection{8}]{thaddeus} and \cite{lin}).  Furthermore, there exists a universal $N$-pair 
\[
S:\ca{O}^N_{\msp\times C}\rightarrow\ca{E}
\] over the universal curve $\msp\times C$.
\begin{example}
According to \cite[Proposition 3.14]{Bertram}\footnote{The stability parameter $\tau$ in \cite{Bertram} is related to $\delta$ by $d+\delta=n\tau$.}, if $\delta>(n-1)d$, all $\delta$-semistable pairs $(E,s)$ are $\delta$-stable and the stability condition is equivalent to having the $N$ sections $s$ generically generating $E$. In other words, the moduli space of $\delta$-stable pairs is the Grothendieck's Quot scheme when $\delta$ is sufficiently large. In this case, we denote it by $\mspin$. The Grothendieck's Quot scheme $\mspin$ is a fine moduli space for the functor that assigns to each scheme $T$ the set of equivalent morphisms $S:\ca{O}_{C\times T}^N\rightarrow\tilde{E}$ such that $\tilde{E}$ is locally free, for every closed point $x$ of $T$, the restriction $\tilde{E}|_{C\times\{x\}}$ has rank $n$ and degree $d$, and the restriction of the morphism $S|_{C\times\{x\}}$ is surjective at all but a finite number of points. 
\end{example}

A standard argument in deformation theory (cf. \cite[\textsection{5}]{lin}) shows that the Zariski tangent space of $\msp$ is isomorphic to the hypercohomology $\bb{H}^1(\text{End}(E)\rightarrow E\otimes \ca{O}_C^N)$. For simplicity, we denote the $i$-th hypercohomogy of the complex $\text{End}(E)\rightarrow E\otimes \ca{O}_C^N$ by $\bb{H}^{i-1}$, for $i=0,1,2$. We have the following long exact sequence:
\[
0\rightarrow \bb{H}^{-1}\rightarrow H^0(\text{End}(E))\rightarrow (H^0(E))^N\rightarrow \bb{H}^0\rightarrow H^1(\text{End}(E))\rightarrow (H^1(E))^N\rightarrow \bb{H}^1\rightarrow 0.
\]
If $(E,s)$ is $\delta$-stable, then by Lemma \ref{automorphism}, the map $H^0(\text{End}(E))\rightarrow (H^0(E))^N$ is injective. Therefore $\bb{H}^{-1}=0$. In general, the hypercohomology group $\bb{H}^1$ is not zero, and hence the moduli space is not smooth. Nevertheless, we can still show that it is virtually smooth. The following proposition is a special case of Proposition \ref{genpot}.
\begin{proposition}\label{POT}
For a generic value of $\delta\in\bb{Q}_+$, the moduli space of non-degenerate $\delta$-stable $N$-pairs $\overline{\ca{M}}^{\delta}_{C}(\emph{Gr}(n,N), d)$ has a perfect obstruction theory.
\end{proposition}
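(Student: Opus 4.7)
The plan is to construct a two-term perfect obstruction theory in the sense of Behrend-Fantechi, starting from an explicit two-term complex on the universal curve. Write $\mathcal{M} := \msp$, and let $\pi: \mathcal{M}\times C \to \mathcal{M}$ be the projection. Denote by $\mathcal{E}$ the universal rank $n$ bundle on $\mathcal{M}\times C$ and by $S: \mathcal{O}^N \to \mathcal{E}$ the universal $N$-section. Form the two-term complex of locally free sheaves in degrees $[0,1]$,
\[
A^\bullet := \bigl[\text{End}(\mathcal{E}) \xrightarrow{\ \varphi\,\mapsto\,\varphi\circ S\ } \mathcal{E}^{\oplus N}\bigr],
\]
and take as the candidate obstruction theory $E^\bullet := (R\pi_* A^\bullet)^\vee[1]$.

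First, I would verify that $E^\bullet$ is perfect of amplitude $[-1,0]$. Since $\pi$ has relative dimension one and $A^\bullet$ is a two-term complex of vector bundles in degrees $[0,1]$, $R\pi_* A^\bullet$ is perfect of amplitude at most $[0,2]$. The key input is $\delta$-stability: by Lemma \ref{automorphism} together with cohomology and base change, the fiber of $\mathcal{H}^0(R\pi_* A^\bullet)$ at any closed point $(E,s)\in\mathcal{M}$ is the hypercohomology $\mathbb{H}^{-1}$ in the excerpt's notation, which vanishes, so by Nakayama $\mathcal{H}^0(R\pi_* A^\bullet)=0$. Hence $R\pi_* A^\bullet$ is concentrated in degrees $[1,2]$ and $E^\bullet$ has amplitude in $[-1,0]$.

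Next, I would construct the morphism $\alpha: E^\bullet \to \mathbb{L}_{\mathcal{M}}$. I would exploit the forgetful map $\mathcal{M} \to \bun$ to the smooth Artin stack of rank $n$, degree $d$ bundles on $C$: this exhibits $\mathcal{M}$ as an open substack of the abelian cone attached to the perfect complex $R\pi_* \mathcal{E}^{\oplus N}$ over $\bun$, producing a natural relative obstruction theory $(R\pi_* \mathcal{E}^{\oplus N})^\vee$. Combining this with the Atiyah-class identification $\mathbb{L}_{\bun} \simeq (R\pi_* \text{End}(\mathcal{E}))^\vee[1]$ and the defining triangle of $A^\bullet$, one assembles the relative and absolute pieces via the distinguished triangle $\mathbb{L}_{\mathcal{M}/\bun} \to \mathbb{L}_{\mathcal{M}} \to \mathbb{L}_{\bun}|_{\mathcal{M}}$ into the desired morphism $\alpha$.

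Finally, I would verify the Behrend-Fantechi axioms, namely that $h^0(\alpha)$ is an isomorphism and $h^{-1}(\alpha)$ is surjective. Both follow from the long exact sequence displayed in the excerpt, which identifies $h^0(E^\bullet)$ with the Zariski tangent space $\mathbb{H}^0$ of $\mathcal{M}$ and $h^{-1}(E^\bullet)$ with the space $\mathbb{H}^1$ receiving obstructions, in agreement with the standard first-order deformation theory for pairs on a fixed curve. The main obstacle is the construction step: fitting the two natural partial obstruction theories (the one for sections over $\bun$ and the cotangent complex of $\bun$ itself) into a single morphism to $\mathbb{L}_{\mathcal{M}}$ compatibly with the Atiyah class, and verifying functoriality in families, is the technical crux; once $\alpha$ is in place, the amplitude bound and the POT axioms are comparatively routine.
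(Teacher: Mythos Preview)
Your proposal is correct and follows essentially the same route as the paper: the paper defers Proposition~\ref{POT} to the parabolic case (Proposition~\ref{genpot}), where one first builds the relative perfect obstruction theory $R\pi_*((\mathcal{E}^\vee)^N\otimes\omega[1])\to L_q$ for the forgetful map $q$ to the smooth stack $\bun$ and then obtains the absolute POT as the shifted mapping cone with $q^*L_{\bun}$---exactly the mechanism you describe, just with the two ingredients packaged together in your complex $A^\bullet$. One minor slip: your shift in $E^\bullet=(R\pi_*A^\bullet)^\vee[1]$ should be $[-1]$ to land in amplitude $[-1,0]$.
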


The following corollary follows from Proposition \ref{POT} and the construction in \cite[\textsection{2.3}]{Lee}.
\begin{corollary}
There exists a virtual structure sheaf \[\ca{O}_{\overline{\ca{M}}^{\delta}_{C}(\emph{Gr}(n,N), d)}^{\emph{vir}}\in K_0(\overline{\ca{M}}^{\delta}_{C}(\emph{Gr}(n,N), d)).\]
for the moduli space of $\delta$-stable $N$-pairs $\overline{\ca{M}}^{\delta}_{C}(\emph{Gr}(n,N), d)$ 
\end{corollary}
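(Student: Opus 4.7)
The plan is to invoke Proposition~\ref{POT} directly and then run the standard machinery of Y.-P.~Lee to pass from a perfect obstruction theory to a class in the rational Grothendieck group of coherent sheaves. More concretely, Proposition~\ref{POT} supplies a two-term perfect obstruction theory $\phi: E^\bullet \to L_{\msp}$ whose cohomology sheaves $h^{-1}(E^\bullet)$ and $h^0(E^\bullet)$ are computed by the hypercohomology groups $\bb{H}^{-1}$ and $\bb{H}^0$ of the complex $\text{End}(E) \to E \otimes \ca{O}_C^N$ described above. Since $\msp$ is a (GIT-constructed) Deligne--Mumford stack with $\delta$ generic and since $\bb{H}^{-1}=0$ on the $\delta$-stable locus by Lemma~\ref{automorphism}, the perfect obstruction theory is honestly $[-1,0]$-perfect on the entire moduli space, which is exactly the hypothesis needed to apply \cite[\S2.3]{Lee}.

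I would then recall Lee's construction: after choosing a global resolution $E^\bullet \simeq [E^{-1} \to E^0]$ by locally free sheaves (available since $\msp$ admits a closed embedding into a smooth ambient stack, e.g.\ a relative Quot scheme over a stack of bundles), one forms the vector bundle stack $\mathfrak{E} := h^1/h^0((E^\bullet)^\vee)$ into which the intrinsic normal cone $\mathfrak{C}_{\msp}$ embeds as a closed substack. The virtual structure sheaf is then defined as
\[
\virze \;:=\; 0_{\mathfrak{E}}^{!}\bigl[\ca{O}_{\mathfrak{C}_{\msp}}\bigr] \;\in\; K_0(\msp)_{\Q},
\]
where $0_{\mathfrak{E}}^{!}$ is the $K$-theoretic Gysin pullback along the zero section of $\mathfrak{E}$, computed via the Koszul resolution of the zero section of $E^1 = (E^{-1})^\vee$. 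Independence of the chosen global resolution and the fact that the class lies in $K_0$ (as opposed to $K^0$) are precisely the content of \cite[\S2.3]{Lee}.

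The only thing that genuinely needs checking in our setting is the existence of a global two-term locally free resolution of $E^\bullet$, since the Lee construction is often stated under such an assumption. This is standard for moduli of pairs on a fixed curve: one can either embed $\msp$ into a smooth parameter space (a product of a relative Quot scheme and a moduli of bundles, or a Grassmann-type GIT quotient as in \cite[\S8]{thaddeus}) and use the resulting conormal/deformation sequence, or appeal directly to the fact that on a DM stack every coherent sheaf admits a two-term locally free resolution \'etale-locally, which suffices for the Koszul construction of $0_{\mathfrak{E}}^{!}$. I expect this verification to be the only substantive point; everything else is formal from Proposition~\ref{POT} and \cite[\S2.3]{Lee}.
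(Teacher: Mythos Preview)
Your proposal is correct and follows the same route as the paper: the paper's proof is the single sentence ``follows from Proposition~\ref{POT} and the construction in \cite[\S2.3]{Lee},'' and you have simply unpacked what that citation entails. One minor point: the macro \verb|\virze| you used is the paper's notation for the virtual structure sheaf on the $\epsilon=0+$ moduli space, not the $\delta$-stable one, so in a final write-up you would want $\ca{O}^{\vir}_{\msp}$ instead.
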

When no confusion can arise, we will simply denote by $\ca{O}^\vir$ the virtual structure sheaf of $\msp$.

\subsection{Level structure and $\delta$-stable GLSM invariants}
There is a functor, denoted by $\text{det}$, which associates a line bundle to each perfect complex. We recall the definition of the determinant functor $\text{det}$ in the case of locally free sheaves and perfect complexes which have two-term locally free resolutions. For the general definition, we refer the reader to \cite{Mumford}.
\begin{definition}
\begin{enumerate}
\item For any locally free sheaf $E$, we define
\[
\text{det}\,E:=\bigwedge^{r(E)}E,
\]
where $r(E)$ denotes the rank of $E$.
\item For any bounded complex of coherent sheaves $\ca{F}^\bullet$ which has a 
two-term locally free resolution, i.e., there exists a quasi-isomorphism $i:[E^0\rightarrow E^1]\rightarrow\ca{F}^\bullet$ where the degree 0 term $E^0$ and degree 1 term $E^1$ are locally free sheaves. We define
\[
\text{det}\,\ca{F}^\bullet:=\bigwedge^{r(E_0)}E_0\otimes\big(\bigwedge^{r(E_1)}E_1\big)^{-1}.
\]
\end{enumerate}
\end{definition}
It is shown in \cite{Mumford} that the definition does not depend on the locally free resolutions.  

Let $\pi:\msp\times C\rightarrow \msp$ be the projection map and let $\ca{E}$ be the universal bundle over $\msp\times C$. Consider the derived pushforward $R\pi_*(\ca{E})=[R^0\pi_*(\ca{E})\rightarrow R^1\pi_*(\ca{E})]$. A two-term locally free resolution of $R\pi_*(\ca{E})$ can be easily obtained as follows. Let $O(1)$ be an ample line bundle on $C$. Since the family of $\delta$-stable $N$-pairs is bounded, there exists a surjection
\[
B\rightarrow\ca{E}(m)\rightarrow0,
\]
for $m\gg0$. Here $B$ is a trivial vector bundle. The kernel, denoted by A, is also a vector bundle on $\msp\times C$, and we have a short exact sequence
\[
0\rightarrow A(-m)\rightarrow B(-m)\rightarrow \ca{E}\rightarrow 0.
\]
Note that $R^0\pi_*(A(-m))=R^0\pi_*(B(-m))=0$. Therefore, the following two-term complex of vector bundles
\[
R^1\pi_*(A(-m))\rightarrow R^1\pi_*(B(-m))
\]
is a resolution of $R\pi_*(\ca{E})$.

Denote the rank of $R^1\pi_*(A(-m))$ and $R^1\pi_*(B(-m))$ by $r_A$ and $r_B$, respectively. As in \cite{zhang}, we define the \emph{level structure} by the inverse determinant line bundle of cohomology
\[
\big(\text{det}\,R\pi_*(\ca{E})\big)^{-1}=\bigwedge^{r_B}R^1\pi_*(B(-m))\otimes\big(\bigwedge^{r_A}R^1\pi_*(A(-m))\big)^{-1}.
\]
Again, this line bundle does not depend on the choice of the locally free resolutions of $R\pi_*(\ca{E})$. 

Let $E$ denote the dual of the tautological bundle on $\text{Gr}(n,N)$. The following definition is motivated by Corollary \ref{pairtoverlinde}.

\begin{definition}
Let $e=l(1-g)+ld/n$. If $e$ is an integer,
we define the level-$l$ $K$-theoretic $\delta$-stable $N$-pair invariant by 
\[
\langle \text{det}(E)^e\rangle^{l, \delta, \text{Gr}(n,N)}_{C, d}=\chi\big(\overline{\mathcal M}^{\delta}_C(\text{Gr}(n,N), d),\ca{O}^\text{vir}\otimes\text{det}(\ca{E}_{x_0})^e\otimes \big(\text{det}\,R\pi_*(\ca{E})\big)^{-l}\big),
\]
where $\ca{E}_{x_0}=\ca{E}|_{\msp\times\{x_0\}}$ denotes the restriction.
If $e$ is not an integer, we define $\langle \text{det}(E)^e\rangle^{l, \delta, \text{Gr}(n,N)}_{C, d}$ to be zero.
\end{definition}

\subsection{$\delta$-wall-crossing in the rank two case} \label{nonparwallcro}
In this section, we prove Theorem \ref{mainthm1}, which is the special case of Theorem \ref{intromainthm2} in the absence of parabolic structures. The stability parameter $\delta$ is a critical value or wall if $(d-\delta)/2\in\bb{N}_+$. When the stability parameter $\delta$ crosses walls, how the moduli space $\msp$ changes was studied in \cite{Bertram, thaddeus}. Our wall-crossing theorem of $K$-theoretic $N$-pair invariants is a generalization of the results in \cite[\textsection{6}]{thaddeus} to the case $N>1$ and virtually smooth setting. The comparison of \emph{intersection numbers} defined via different $\delta$ was done in \cite{Bertram} for the smooth case and in \cite{marian2} for the virtually smooth case.

Let $i$ be a half-integer such that $\delta=2i$ is a critical value. Note that $i\in(0,d/2)$. A $\delta$-semistable vector bundle must split $E=L\oplus M$ where $L,M$ are line bundles of degrees $d/2-i$ and $d/2+i$, respectively, and $s\in H^0(L\otimes\ca{O}_C^N)$. Let $\nu>0$ be a small real number such that $2i$ is the only critical value in $(2i-\nu,2i+\nu)$. For simplicity, we denote by $\ca{M}_{i,d}^\pm$ the moduli spaces $\overline{\mathcal M}^{2i\pm\nu}_C(\text{Gr}(2, N), d)$. Let $\ca{W}_{i,d}^+$ be the subscheme of $\ca{M}_{i,d}^+$ parametrizing $(2i+\nu)$-pairs which are not $(2i-\nu)$-stable. Similarly, we denote by $\ca{W}_{i,d}^-$ the subscheme of $\ca{M}_{i,d}^-$ which parametrizes $(2i-\nu)$-pairs which are not $(2i+\nu)$-stable. The subschemes $\ca{W}_{i,d}^\pm$ are called the \emph{flip loci}.

Let $(E,s)$ be an $N$-pair in $\ca{W}_{i,d}^-$. It follows from the definition that there exists a short exact sequence
\[
0\rightarrow L\rightarrow E\rightarrow M\rightarrow 0,
\]
where $L,M$ are line bundles of degree $d/2-i$ and $d/2+i$, respectively, and $s\in H^0(L\otimes\ca{O}_C^N)$ (cf. \cite[\textsection{8}]{thaddeus}). Notice that $L$ and $M$ are unique since $L$ is the saturated subsheaf of $E$ containing $s$. Similarly, for a pair $(E,s)$ in $\ca{W}_{i,d}^+$. There exists a unique subline bundle $M$ of $E$ of degree $d/2+i$ which fits into a short exact sequence:
\[
0\rightarrow M\rightarrow E\rightarrow L\rightarrow 0.
\]

Let $\tilde{\ca{L}}$ be a Poincar\'e bundle over $\text{Pic}^{d/2-i}C\times C$ and let $p:\text{Pic}^{d/2-i}C\times C\rightarrow \text{Pic}^{d/2-i}C$ be the projection. If $d/2-i>2g-1$, then we have $R^1p_*{\tilde{\ca{L}}}=0$. Hence $U:=(R^0p_*{\tilde{\ca{L}}})^N$ is a vector bundle. We define $Z_{i,d}:=\bb{P}U\times\text{Pic}^{d/2+i}C$. Let $\ca{M}$ be a Poincar\'e bundle over $\text{Pic}^{d/2+i}C\times C$. Notice that $H^0(\text{Pic}^{d/2-i}C,\text{End}\,U)=H^0(\text{Pic}^{d/2-i}C\times C, U^\vee\otimes\tilde{\ca{L}}\otimes\ca{O}^N)=H^0(\bb{P}U\times C,\ca{O}_{\bb{P}U}(1)\otimes\tilde{\ca{L}}\otimes\ca{O}^N)$. Therefore there exists a tautological section of $\ca{L}\otimes\ca{O}_{\bb{P}U}^N$, where $\ca{L}:=\ca{O}_{\bb{P}U}(1)\otimes\tilde{\ca{L}}$. This tautological section induces an injection $\alpha:\ca{M}\ca{L}^{-1}\rightarrow \ca{M}\otimes\ca{O}_{\bb{P}U}^N$. We denote by $\ca{F}$ the cokernel of $\alpha$. By abuse of notation, we use the same notations $\ca{M}$ and $\ca{L}$ to denote the pullbacks of the corresponding universal line bundles to $Z_{i,d}\times C$. Let $\pi: Z_{i,d}\times C\rightarrow Z_{i,d}$ be the projection. The flip loci $\ca{W}_{i,d}^{\pm}$ are characterized by the following proposition.

\begin{proposition}[\cite{Bertram,thaddeus}]
Assume $d/2-i>2g-1$. Let $\ca{V}_{i,d}^+=R^0\pi_*(\ca{F})$ and $\ca{V}_{i,d}^-=R^1\pi_* (\ca{M}^{-1}\ca{L})$. Then we have 
\[
\ca{W}_{i,d}^\pm\cong \bb{P}\big(\ca{V}_{i,d}^\pm\big).
\]
Let $q_\pm:\ca{W}_{i,d}^\pm\rightarrow Z_{i,d}$ be the projective bundle maps. Then the morphisms $\ca{W}_{i,d}^\pm\rightarrow \ca{M}_{i,d}^\pm$ are regular embeddings with normal bundles $q^*_\pm\ca{V}_{i,d}^\mp\otimes\ca{O}_{\ca{W}_{i,d}^\pm}(-1)$.
Moreover, we have the following two short exact sequences of universal bundles:
\begin{align}
0\rightarrow &\tilde{q}_-^*\ca{L}\rightarrow \ca{E}_i^-|_{\ca{W}_{i,d}^-\times C}\rightarrow \tilde{q}_-^*\ca{M}\otimes\ca{O}_{\ca{W}_{i,d}^-}(-1)\rightarrow0,\label{eq:-taut}\\
0\rightarrow &\tilde{q}_+^*\ca{M}\otimes\ca{O}_{\ca{W}_{i,d}^+}(1)\rightarrow \ca{E}_i^+|_{\ca{W}_{i,d}^+\times C}\rightarrow \tilde{q}_+^*\ca{L}\rightarrow 0\label{eq:+taut},
\end{align}
where $\ca{E}_i^\pm$ are the universal bundles over $\ca{M}_{i,d}^\pm$ and $\tilde{q}_\pm:\ca{W}_{i,d}^\pm\times C\rightarrow Z_{i,d}\times C$ are the projective bundle maps.
\end{proposition}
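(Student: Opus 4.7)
The plan is to identify pairs in $\ca{W}_{i,d}^{\pm}$ with specific extensions of line bundles, to build a universal family of such extensions on $\bb{P}(\ca{V}_{i,d}^{\pm})$, and then to read off the normal bundle and tautological sequences from that construction. I would first observe that a pair $(E,s)\in\ca{W}_{i,d}^{-}$, being strictly $\delta$-semistable at the wall $\delta=2i$ but $(2i-\nu)$-stable, admits (by Lemma \ref{automorphism}) a unique saturated line sub-bundle $L\subset E$ of degree $d/2-i$ with $s\in H^{0}(L\otimes\ca{O}_C^{N})$ and quotient $M$ of degree $d/2+i$; the extension must be non-split, since the split pair $L\oplus M$ admits $M$ as a destabilizing sub-pair at $\delta=2i-\nu$. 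Dually, every $(E,s)\in\ca{W}_{i,d}^{+}$ sits in a unique $0\to M\to E\to L\to 0$ in which the projection $\bar s\colon\ca{O}_C^{N}\to L$ of $s$ is non-zero. In each case the triple $(L,[\bar s],M)$ is canonical and assembles into the morphism $q_{\pm}\colon\ca{W}_{i,d}^{\pm}\to Z_{i,d}$.

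Next I would construct the isomorphisms $\ca{W}_{i,d}^{\pm}\cong\bb{P}(\ca{V}_{i,d}^{\pm})$ relatively over $Z_{i,d}$. The hypothesis $d/2-i>2g-1$ forces $H^{1}(M)=0$ and gives constant Euler characteristics, so that $\ca{V}_{i,d}^{\pm}$ are genuine vector bundles and base change commutes with the relevant push-forwards. For the minus case, $\text{Ext}^{1}(\ca{M},\ca{L})=R^{1}\pi_{*}(\ca{L}\otimes\ca{M}^{-1})=\ca{V}_{i,d}^{-}$ classifies extensions $0\to\ca{L}\to\tilde{\ca{E}}\to\ca{M}\to 0$, and its projectivization parametrizes non-split classes; combined with the tautological section of $\ca{L}\otimes\ca{O}^{N}$ pushed into $\tilde{\ca{E}}$, this yields a universal family of $(2i-\nu)$-stable pairs, giving a morphism inverse to the classification of Step~1. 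For the plus case, pushing forward $0\to\ca{M}\ca{L}^{-1}\to\ca{M}\otimes\ca{O}^{N}\to\ca{F}\to 0$ and invoking the connecting homomorphism identifies a point of $\ca{V}_{i,d}^{+}$ with an extension class $\xi\in H^{1}(ML^{-1})$ together with a lift of $\bar s$ to a section of the resulting $E$, modulo the $H^{0}(M)^{N}$-ambiguity of such lifts; projectivizing removes the remaining scalar automorphism of the pair, and universality is verified against the $(2i+\nu)$-stability condition.

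Finally, I would compute the normal bundle by deformation theory. At $(E,s)\in\ca{W}_{i,d}^{-}$, the pair-deformation space is $\bb{H}^{1}(\text{End}(E)\to E\otimes\ca{O}_C^{N})$, and the filtration $L\subset E$ induces a filtration on the complex whose associated graded decomposes this $\bb{H}^{1}$ into a piece tangent to $\ca{W}_{i,d}^{-}$ (deformations preserving $L$ and the section in $L$) and a transverse piece controlled by $\text{Hom}(M,L)\to 0$. A direct computation identifies the transverse piece fiberwise with the fiber of $\ca{V}_{i,d}^{+}$ tensored with the tautological scalar $\ca{O}_{\ca{W}_{i,d}^{-}}(-1)$ recording the extension class, yielding $q_{-}^{*}\ca{V}_{i,d}^{+}\otimes\ca{O}(-1)$; its rank equals the virtual-dimension drop, confirming regularity of the embedding. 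The plus case is symmetric, and the tautological extensions \eqref{eq:-taut}, \eqref{eq:+taut} are simply the restriction to $\ca{W}_{i,d}^{\pm}\times C$ of the universal families produced in Step~2.

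The main obstacle will be Step~3. The filtration decomposition of $\bb{H}^{1}$ is only as well-behaved as the coherent cohomology of the universal line bundles, so I would need the hypothesis $d/2-i>2g-1$ and Serre duality to control $h^{0}$ and $h^{1}$ of $F$ and $ML^{-1}$ uniformly over $Z_{i,d}$, ensuring that the candidate normal bundle is locally free of the correct rank and that the embedding is regular in the virtually smooth sense.
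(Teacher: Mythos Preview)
Your overall strategy is correct and matches what the paper does for the parabolic generalization in Section~\ref{section5} (the non-parabolic proposition itself is only cited from \cite{Bertram,thaddeus} without proof): build the universal extensions on $\bb{P}(\ca{V}_{i,d}^\pm)$, use them to produce the embeddings into $\ca{M}_{i,d}^\pm$, and then read off the normal bundle from a short exact sequence of two-term deformation complexes induced by the filtration $L\subset E$.

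There is, however, a concrete error in your Step~3. The quotient of $\text{End}(E)$ by the subsheaf of endomorphisms preserving $L$ is $\hm(L,M)$, not $\hm(M,L)$; and the quotient of $E\otimes\ca{O}_C^N$ by $L\otimes\ca{O}_C^N$ is $M\otimes\ca{O}_C^N$, not zero. Thus the transverse complex is
\[
\big[\hm(L,M)\longrightarrow M\otimes\ca{O}_C^N\big],\qquad \phi\mapsto \phi\circ s,
\]
whose $\bb{H}^1$ is exactly $H^0(\ca{F})$ (by the defining sequence $0\to\ca{M}\ca{L}^{-1}\to\ca{M}\otimes\ca{O}^N\to\ca{F}\to 0$ and the vanishing $H^1(M)=0$), i.e.\ the fiber of $\ca{V}_{i,d}^+$ as the proposition asserts. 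Your stated complex $[\hm(M,L)\to 0]$ has $\bb{H}^1=H^1(LM^{-1})$, which is the fiber of $\ca{V}_{i,d}^-$, so the ``direct computation'' you describe would actually yield the wrong bundle. The same swap infects the plus case. Once you correct this, the argument is precisely the one the paper sketches (see the short exact sequence of complexes preceding \eqref{eq:comparetangent1}--\eqref{eq:comparetangent2}), and your stated conclusion $q_-^*\ca{V}_{i,d}^+\otimes\ca{O}(-1)$ follows.
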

\begin{theorem}\label{mainthm1}
Suppose that $N\geq 2+l$, $d>2(g-1)$ and $\delta$ is generic. Then $\langle \emph{det}(E)^e\rangle^{l, \delta, \emph{Gr}(2,N)}_{C, d}$ is independent of $\delta$.
\end{theorem}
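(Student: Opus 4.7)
Within each chamber of generic $\delta$, the moduli space $\msp$, its virtual structure sheaf, and the insertion $\det(\ca{E}_{x_0})^e \otimes (\det R\pi_*\ca{E})^{-l}$ are all literally unchanged, so the Euler characteristic is trivially constant. It therefore suffices to show invariance across each wall $\delta = 2i$, with $i$ a half-integer in $(0,d/2)$.

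To compare $\ca{M}_{i,d}^+$ and $\ca{M}_{i,d}^-$, I would follow the Thaddeus master space strategy, adapted to the virtual $K$-theoretic setting. The idea is to construct an enlarged moduli space $\widetilde{\ca{M}}_{i,d}$ of $N$-pairs parameterized by an additional $\bb{P}^1$-factor, carrying a natural $\bb{C}^*$-action whose two extreme fixed loci are $\ca{M}_{i,d}^+$ and $\ca{M}_{i,d}^-$ (with their original virtual structure sheaves), and whose intermediate fixed loci are supported on the flip loci $\ca{W}_{i,d}^\pm$ described in the preceding proposition. Applying $K$-theoretic virtual localization to
\[
\chi\!\left(\widetilde{\ca{M}}_{i,d},\, \ca{O}^{\vir}_{\widetilde{\ca{M}}_{i,d}} \otimes \det(\widetilde{\ca{E}}_{x_0})^e \otimes \bigl(\det R\pi_*\widetilde{\ca{E}}\bigr)^{-l}\right),
\]
one expresses this equivariant Euler characteristic as a sum over fixed loci, each weighted by the inverse $K$-theoretic Euler class of its virtual normal bundle. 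By properness of the master space, the expression extends regularly to the non-equivariant limit, and the contributions of $\ca{M}_{i,d}^\pm$ there recover $\langle \det(E)^e\rangle^{l,\,2i\pm\nu,\,\text{Gr}(2,N)}_{C,d}$.

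The remaining task is to show that the contributions from the intermediate fixed loci cancel pairwise in the non-equivariant limit. Using the two short exact sequences (\ref{eq:-taut}) and (\ref{eq:+taut}), one restricts the insertion to $\ca{W}_{i,d}^\pm$ and writes it as a polynomial in $\ca{L}$, $\ca{M}$ pulled back from $Z_{i,d}$ together with the tautological classes $\ca{O}_{\ca{W}^\pm}(\pm 1)$; the virtual normal complexes of $\ca{W}_{i,d}^\pm \hookrightarrow \widetilde{\ca{M}}_{i,d}$ are computed analogously from the hypercohomology complex $\mathrm{End}(E) \to E \otimes \ca{O}_C^N$. Push-forward along the projective-bundle maps $q_\pm\colon \ca{W}_{i,d}^\pm \to Z_{i,d}$ then reduces both contributions to explicit $K$-theoretic integrals over $Z_{i,d} = \bb{P}U \times \text{Pic}^{d/2+i}C$, which are rational functions of the equivariant parameter.

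The main obstacle is verifying the exact cancellation of the two flip contributions. The hypothesis $N \geq 2+l$ enters precisely here: it forces the powers of $\ca{O}(-1)$ appearing in the fibrewise integrands to lie in the range where the push-forward along $\bb{P}(\ca{V}_{i,d}^\pm) \to Z_{i,d}$ vanishes, so that only the terms involving the ``top'' cohomology of the Koszul-style resolutions of $\ca{L}$ and $\ca{M}$ survive; these residual terms on the two sides of the wall are interchanged by swapping the roles of $L$ and $M$, producing the desired cancellation. Carrying out this bookkeeping for the virtual normal complex, and confirming the predicted vanishing, is the technical crux of the argument, and where the assumption $N \geq 2+l$ is used in an essential way.
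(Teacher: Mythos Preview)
Your master-space/localization framework is a genuinely different route from what the paper does, and it is a reasonable alternative in principle. The paper instead uses the explicit Bertram--Thaddeus birational geometry. In the smooth range ($d/2-i>2g-1$) there is a common blowup $\widetilde{\ca{M}}_{i,d}$ dominating both $\ca{M}_{i,d}^\pm$, with exceptional divisor $A_{i,d}\cong\bb{P}(\ca{V}_{i,d}^-)\times_{Z_{i,d}}\bb{P}(\ca{V}_{i,d}^+)$; one lifts both Euler characteristics to $\widetilde{\ca{M}}_{i,d}$, uses Lemma~\ref{restriction} to get $p_-^*\ca{D}_{i,-}=p_+^*\ca{D}_{i,+}(-ilA_{i,d})$, and telescopes the difference into classes $\ca{O}_{\bb{P}(\ca{V}^+)}(-j)$ on $A_{i,d}$ with $1\le j\le il$, which push forward to zero along the $\bb{P}^{n_+-1}$-fibers because $n_+>il$ when $N\ge l+2$. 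For the singular range the paper does \emph{not} work virtually on the singular moduli at all: it embeds $\ca{M}_{i,d}^\pm\hookrightarrow\ca{M}_{i,d+2k}^\pm$ (smooth for $k\gg0$) via $\iota_D$, replaces $\ca{O}^{\vir}$ by the Koszul class $\lambda_{-1}((\ca{E}'^\vee_D)^N)$ using Proposition~\ref{embvirsheaf}, and reruns the blowup comparison with these extra insertions; the same fibrewise vanishing applies because $n_+>il+kN$ still holds when $N\ge l+2$ and $d>2(g-1)$.

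There is, however, a real gap in your sketch. In the Thaddeus master space for a single wall the $\bb{C}^*$-fixed locus is $\ca{M}_{i,d}^+\sqcup\ca{M}_{i,d}^-\sqcup F$, where $F$ is a \emph{single} component parametrizing the strictly $\delta_c$-semistable split objects $(L,s)\oplus(M,0)$; up to rigidification $F$ is $Z_{i,d}$ itself. The flip loci $\ca{W}_{i,d}^\pm$ are subschemes of $\ca{M}_{i,d}^\pm$, not separate fixed components of the master space, so there is nothing to ``cancel pairwise'': what must be shown is that the single residue contribution of $F$ vanishes. That vanishing again comes down to a projective-bundle push-forward bound governed by $N\ge 2+l$, but the relevant object is the virtual normal bundle of $F$ in the master space (with attracting and repelling pieces of ranks $n_+$ and $n_-$), not the normal bundles of $\ca{W}_{i,d}^\pm$ separately. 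You would also owe a construction of a $\bb{C}^*$-equivariant perfect obstruction theory on the master space restricting correctly over each fixed component; the paper's blowup-plus-embedding argument bypasses all of this by reducing to honest smooth geometry and ordinary Koszul classes.
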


By abuse of notation, we denote by $\pi$ the projection maps $\ca{M}_{i,d}^\pm\times C\rightarrow\ca{M}_{i,d}^\pm.$
To prove Theorem \ref{mainthm1}, we need the following lemma.
\begin{lemma}\label{restriction}
Let $\ca{D}_{i,\pm}=\emph{det}(\ca{E}_{i,x_0}^\pm)^e\otimes\big(\emph{det}\,R\pi_*(\ca{E}_i^\pm)\big)^{-l}$, where $\ca{E}_{i,x_0}^\pm=\ca{E}_i^\pm|_{\ca{M}^\pm_{i,d}\times \{x_0\}}$. Then 
\begin{enumerate}
\item the restriction of $\ca{D}_{i,-}$ to a fiber of $\bb{P}(\ca{V}_{i,d}^-)$ is $\ca{O}(il)$, and
\item the restriction of $\ca{D}_{i,+}$ to a fiber of $\bb{P}(\ca{V}_{i,d}^+)$ is $\ca{O}(-il)$.
\end{enumerate}
\end{lemma}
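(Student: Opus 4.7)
The plan is to fix a fiber $F$ of the projective bundle $\bb{P}(\ca{V}_{i,d}^\pm)\to Z_{i,d}$ and compute the restriction of each factor in $\ca{D}_{i,\pm}$ directly from the universal sequences \eqref{eq:-taut} and \eqref{eq:+taut}.

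Let $F$ be the fiber over a point $z \in Z_{i,d}$, which corresponds to line bundles $L \in \text{Pic}^{d/2-i}C$ and $M \in \text{Pic}^{d/2+i}C$ (the choice of section in $\bb{P}U$ plays no role here). Write $\ca{O}_F(1)$ for the restriction of $\ca{O}_{\ca{W}_{i,d}^\pm}(1)$, and let $p_C, p_F$ denote the two projections of $F \times C$. Then $\tilde{q}_\pm^*\ca{L}|_{F\times C} \cong p_C^*L$ and $\tilde{q}_\pm^*\ca{M}|_{F\times C} \cong p_C^*M$, so \eqref{eq:-taut} and \eqref{eq:+taut} restrict to
\begin{align*}
&0 \to p_C^*L \to \ca{E}_i^-|_{F \times C} \to p_C^*M \otimes p_F^*\ca{O}_F(-1) \to 0,\\
&0 \to p_C^*M \otimes p_F^*\ca{O}_F(1) \to \ca{E}_i^+|_{F \times C} \to p_C^*L \to 0.
\end{align*}

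First I would compute $\det(\ca{E}_{i,x_0}^\pm|_F)$. Restricting these sequences to $\{x_0\}$ makes $p_C^*L$ and $p_C^*M$ trivial on $F$, so taking determinants gives $\det(\ca{E}_{i,x_0}^\pm|_F) \cong \ca{O}_F(\pm 1)$ and thus $\det(\ca{E}_{i,x_0}^\pm|_F)^e \cong \ca{O}_F(\pm e)$. Next I would compute $\det R\pi_*(\ca{E}_i^\pm|_{F\times C})$. Multiplicativity of $\det$ along distinguished triangles reduces the task to the terms $R\pi_*(p_C^*L)$, $R\pi_*(p_C^*M)$, and $R\pi_*(p_C^*M \otimes p_F^*\ca{O}_F(\pm 1))$. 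Since $L, M$ are pulled back from $C$, the first two are quasi-isomorphic to constant complexes on $F$ and have trivial determinants. For the twisted term, the projection formula yields $R\pi_*(p_C^*M \otimes p_F^*\ca{O}_F(\pm 1)) \cong \ca{O}_F(\pm 1) \otimes R\pi_*(p_C^*M)$, and the identity $\det(V \otimes \ca{O}_F(k)) \cong \det(V) \otimes \ca{O}_F(k \cdot \text{rk}\,V)$ (extended to virtual bundles by additivity of $\det$ on short exact sequences), applied with virtual rank $\chi(M) = d/2 + i + 1 - g$, yields $\det R\pi_*(\ca{E}_i^\pm|_{F\times C}) \cong \ca{O}_F(\pm \chi(M))$ up to trivial factors.

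Combining, $\ca{D}_{i,\pm}|_F \cong \ca{O}_F(\pm e) \otimes \ca{O}_F(\mp l\chi(M)) \cong \ca{O}_F(\pm(e - l\chi(M)))$. Substituting $e = l(1-g) + ld/2$ and $\chi(M) = d/2 + i + 1 - g$, one checks directly that $e - l\chi(M) = -il$, whence $\ca{D}_{i,-}|_F \cong \ca{O}_F(il)$ and $\ca{D}_{i,+}|_F \cong \ca{O}_F(-il)$, as claimed. There is no serious obstacle here; the only step requiring attention is the interplay between the twist $\ca{O}_F(\pm 1)$ and the determinant of $R\pi_*$, which is responsible for the $\chi(M)$ factor in the exponent and for the ultimate cancellation.
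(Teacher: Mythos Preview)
Your proof is correct and follows exactly the approach the paper indicates: the paper's proof consists of the single sentence ``The lemma follows easily from the short exact sequences \eqref{eq:-taut} and \eqref{eq:+taut},'' and you have simply written out those easy details. The key computation $e - l\chi(M) = -il$ is right, and your handling of the twist via the projection formula and additivity of $\det$ is the standard argument.
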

\begin{proof}
The lemma follows easily from the short exact sequences (\ref{eq:-taut}) and (\ref{eq:+taut}).
\end{proof} 
\begin{proof}[Proof of Theorem \ref{mainthm1}]
We prove the claim by showing that the invariant does not change when $\delta$ crosses a critical value $2i$. The proof is divided into two cases:

Case 1. Assume that $d/2-i>2g-1$. Then $\ca{M}_{i,d}^\pm$ are smooth. According to Theorem 3.44 of \cite{Bertram}, we have the following diagram.
\[
\begin{tikzcd}
&
\widetilde{\ca{M}}_{i,d}\arrow[swap]{dl}{p_-}\arrow{dr}{p_+}\\
\ca{M}_{i,d}^-&&
\ca{M}_{i,d}^+
\end{tikzcd}
\]
where $p_\pm$ are blow-down maps onto the smooth subvarieties $\ca{W}_{i,d}^\pm \cong\bb{P}\big(\ca{V}_{i,d}^\pm\big)$, and the exceptional divisor $A_{i,d}\subset \widetilde{\ca{M}}_{i,d}$ is isomorphic to the fiber product $A_{i,d}\cong\bb{P}\big(\ca{V}_{i,d}^-\big)\times_{Z_{i,d}}\times\bb{P}\big(\ca{V}_{i,d}^+\big)$.

Since $p_\pm$ are blow-ups with smooth centers, we have $
(q_\pm)_*\big([\ca{O}_{\widetilde{\ca{M}}_{i,d}}]\big)=[\ca{O}_{\ca{M}_{i,d}^\pm}].$ Let $\ca{D}_{i,\pm}$ be the line bundles defined in Lemma \ref{restriction}. It follows from the projection formula that 
\begin{equation}\label{eq:lift}
\chi(\ca{M}_{i,d}^\pm,\ca{D}_{i,\pm})=\chi(\widetilde{\ca{M}}_{i,d}, p_\pm^*(\ca{D}_{i,\pm})).
\end{equation}
We only need to compare $ p_\pm^*(\ca{D}_{i,\pm})$ over $\widetilde{\ca{M}}_{i,d}$.  Notice that the restriction of $\ca{O}_{A_{i,d}}(A_{i,d})$ to $A_{i,d}$ is $\ca{O}_{\bb{P}(\ca{V}_{i,d}^+)}(-1)\otimes\ca{O}_{\bb{P}(\ca{V}_{i,d}^-)}(-1)$. Therefore, by Lemma \ref{restriction}, we have \[
p_-^*(\ca{D}_{i,-})=p_+^*(\ca{D}_{i,+})(-ilA_{i,d}).\]
For $1\leq j\leq il$, we consider the following short exact sequence:
\begin{equation}\label{eq:exceptional}
0\rightarrow p_+^*(\ca{D}_{i,+})(-jA_{i,d})\rightarrow p_+^*(\ca{D}_{i,+})(-(j-1)A_{i,d})\rightarrow p_+^*(\ca{D}_{i,+})\otimes\ca{O}_{A_{i,d}}(-(j-1)A_{i,d})\rightarrow 0.
\end{equation}
Define $\ca{L}_{i,d}:=(\ca{M}_{x_0}^e\otimes(\text{det}\,R\pi_*\ca{M})^{-l})\otimes (\ca{L}_{x_0}^e\otimes(\text{det}\,R\pi_*\ca{L})^{-l})$, where $\ca{M}_{x_0}$ and $\ca{L}_{x_0}$ denote the restrictions of $\ca{M}$ and $\ca{L}$ to $Z_{i,d}\times\{x_0\}$, respectively. Then by Lemma \ref{restriction}, the restriction of $\ca{D}_{i,+}$ to $A_{i,d}$ is $\ca{L}_i\otimes\ca{O}_{\bb{P}(\ca{V}_{i,d}^+)}(-li)$. By taking the Euler characteristic of (\ref{eq:exceptional}), we obtain 
\begin{align*}
&\chi\big(\widetilde{\ca{M}}_{i,d},p_+^*(\ca{D}_{i,+})(-(j-1)A_{i,d}))-\chi(\widetilde{\ca{M}}_{i,d},p_+^*(\ca{D}_{i,+})(-jA_{i,d})\big)\\
=&\chi\bigg(A_{i,d},\ca{L}_{i,d}\otimes\ca{O}_{\bb{P}(\ca{V}_{i,d}^+)}(-li+j-1)\otimes\ca{O}_{\bb{P}(\ca{V}_{i,d}^-)}(j-1)\bigg)\,\quad\text{for}\ 1\leq j\leq il.
\end{align*}
Let $n_+=N(d/2+i+1-g)-2i-1+g$ be the rank of $\ca{V}_{i,d}^+$. A simple calculation shows that $n_+>li$ when $l\leq N-2$. Hence every term in the Leray spectral sequence of the fibration $\bb{P}^{n_+-1}\rightarrow A_{i,d}\rightarrow \bb{P}(\ca{V}_{i,d}^-)$ vanishes, which implies that $\chi(\widetilde{\ca{M}}_{i,d}, p_-^*(\ca{D}_{i,-}))=\chi(\widetilde{\ca{M}}_{i,d}, p_+^*(\ca{D}_{i,+}))$ when $d/2-i>2g-1$. This concludes the proof of the first case.

Case 2.
When $d/2-i\leq 2g-1$, the moduli spaces $\ca{M}_{i,d}^\pm$ are singular. We can choose a divisor $D=x_1+\dots+x_k$ where $x_1,\dots,x_k$ are distinct points on $C$, disjoint from $I\cup\{x_0\}$. Assume $k$ is sufficiently large such that $d/2-i+k>2g-1$. By Lemma \ref{embed}, there are embeddings $\iota_D:\ca{M}_{i,d}^\pm\hookrightarrow\ca{M}_{i,d+2k}^\pm$. Let $\ca{E}_\pm$ and $\ca{E}'_\pm$ be the universal vector bundles on $\ca{M}_{i,d}^\pm\times C$ and $\ca{M}_{i,d+2k}^\pm\times C$, respectively. According to Proposition \ref{embvirsheaf}, we have $\iota_{D*}\big(\ca{O}^{\text{vir}}_{\ca{M}_{i,d}^\pm}\big)=\lambda_{-1}(((\ca{E}_\pm'^\vee)_D)^N)$, where $(\ca{E}_\pm'^\vee)_D$ denote the restrictions of the dual of $\ca{E}'_\pm$ to $\ca{M}_{i,d+2k}^\pm\times D$. Let $\ca{D}_{i,\pm}'=\text{det}((\ca{E}'_\pm)_{x_0})^e\otimes\text{det}(R\pi_*(\ca{E}'_\pm))^{-l}$ be the determinant line bundle on $\ca{M}_{i,d+2k}^\pm$. According to Corollary \ref{invariantemb}, to show that $\chi\big(\ca{M}_{i,d}^-,\ca{D}_{i,-}\otimes\ca{O}_{\ca{M}_{i,d}^-}^{\text{vir}}\big)=\chi\big(\ca{M}_{i,d}^+,\ca{D}_{i,+}\otimes\ca{O}_{\ca{M}_{i,d}^+}^{\text{vir}}\big)$, it suffices to show that 
\[
\chi\big(\ca{M}_{i,d+2k}^-,\ca{D}_{i,-}'\otimes\lambda_{-1}(((\ca{E}_-'^\vee)_D)^N)\big)=\chi\big(\ca{M}_{i,d+2k}^+,\ca{D}_{i,+}'\otimes \lambda_{-1}(((\ca{E}_+'^\vee)_D)^N)\big).
\]

By abuse of notation, we denote by $p_\pm$ the blow-down maps from $\widetilde{\ca{M}}_{i,d+2k}$ to $\ca{M}^\pm_{i,d+2k}$. Let $\tilde{p}_\pm:\widetilde{\ca{M}}_{i,d+2k}\times C\rightarrow\ca{M}^\pm_{i,d+2k}\times C$ be the base change of $p_\pm$. By a straightforward modification of the proof of \cite[Proposition 3.17]{thaddeus2}, one can show that $p_-^*(\ca{E}_-')$ is an \emph{elementary modification} of $p_-^*(\ca{E}_+)$ along the divisor $A_{i,d+2k}$. More precisely, we have the following short exact sequence
\begin{equation}\label{eq:comparebundle1}
0\rightarrow  p_-^*(\ca{E}_-') \rightarrow p_+^*(\ca{E}_+')\otimes\ca{O}(A_{i,d+2k})\rightarrow \iota_*\big(\ca{L}\otimes \iota^*(\ca{O}(A_{i,d+2k}))\big) \rightarrow 0,
\end{equation}
over $\widetilde{\ca{M}}_{i,d+2k}\times C$. Here $\iota:A_{i,d+2k}\hookrightarrow \widetilde{\ca{M}}_{i,d+2k}$ is the embedding. Applying the functor $\ca{H}om(-,\ca{O})$ to (\ref{eq:comparebundle1}), we obtain
\begin{equation}\label{eq:comparebundle2}
0\rightarrow p_+^*(\ca{E}_+'^\vee)\otimes\ca{O}(-A_{i,d+2k})\rightarrow p_-^*(\ca{E}_-'^\vee) \rightarrow\iota_*\big(\ca{L}^\vee\big) \rightarrow 0.
\end{equation}
 Recall that $(\ca{E}'^\vee_\pm)_D=\bigoplus_{i=1}^k(\ca{E}'^\vee_\pm)_{x_i}.
$ Then it follows from (\ref{eq:comparebundle2}) that
\begin{align*}
 p_-^*\big(\lambda_{-1}((\ca{E}_-'^\vee)_{x_i})\big)&=1-p^*_-\big((\ca{E}'^\vee_-)_{x_i}\big)+p^*_-\big(\text{det}\,(\ca{E}'^\vee_-)_{x_i}\big)\\
 &=1-p^*_+\big((\ca{E}'^\vee_+)_{x_i}\big)\otimes\ca{O}(-A_{i,d+2k})-\iota_*\big(\ca{L}_{x_i}^\vee\big)\\
 &+p^*_+\big(\text{det}\,(\ca{E}'^\vee_+)_{x_i}\big)\otimes\ca{O}(-A_{i,d+2k})\\
 &=1-\ca{O}(-A_{i,d+2k})-\iota_*\big(\ca{L}_{x_i}^\vee\big)\\
 &+p_+^*\big(\lambda_{-1}((\ca{E}_+'^\vee)_{x_i})\big)\otimes\ca{O}(-A_{i,d+2k})\\
 &=\iota_*\big(1-\ca{L}_{x_i}^\vee\big)+p_+^*\big(\lambda_{-1}((\ca{E}_+'^\vee)_{x_i})\big)\otimes\ca{O}(-A_{i,d+2k})
 \end{align*}
in $K^0(\widetilde{\ca{M}}_{i,d+2k})$. Notice that 
\begin{equation}\label{eq:simplication1}
p_+^*\big(\lambda_{-1}((\ca{E}_+'^\vee)_{x_i})\big)\otimes\ca{O}(-A_{i,d+2k})=p_+^*\big(\lambda_{-1}((\ca{E}_+'^\vee)_{x_i})\big)-\iota_*\big(\iota^*(p_+^*(\lambda_{-1}((\ca{E}_+'^\vee)_{x_i})))\big).
\end{equation}
Using the short exact sequence (\ref{eq:+taut}), we obtain the following equality in $K^0(\widetilde{\ca{M}}_{i,d+2k})$:
\begin{equation}\label{eq:simplication2}
\iota^*\big(p_+^*(\lambda_{-1}((\ca{E}_+'^\vee)_{x_i}))\big)=1-\ca{M}_{x_i}^\vee\otimes\ca{O}_{\bb{P}(\ca{V}_{i,d+2k}^+)}(-1)-\ca{L}_{x_i}^\vee+\ca{M}_{x_i}^\vee\ca{L}_{x_i}^\vee\otimes\ca{O}_{\bb{P}(\ca{V}_{i,d+2k}^+)}(-1).
\end{equation}
By combining (\ref{eq:comparebundle2}), (\ref{eq:simplication1}) and (\ref{eq:simplication2}), we obtain
\begin{equation}\label{eq:simplication3}
p_-^*\big(\lambda_{-1}((\ca{E}_-'^\vee)_{x_i})\big)=p_+^*\big(\lambda_{-1}((\ca{E}_+'^\vee)_{x_i})\big)+\iota_*\big( \ca{M}_{x_i}^\vee(1-\ca{L}_{x_i}^\vee)\otimes\ca{O}_{\bb{P}(\ca{V}_{i,d+2k}^+)}(-1)\big).
\end{equation}
By taking the $N$-th power of both sides of (\ref{eq:simplication3}) and then taking the product of all $1\leq i\leq k$, we get 
\begin{equation}\label{eq:simplication4}
p_-^*\big(\lambda_{-1}((\ca{E}_-'^\vee)_D)^N\big)=p_+^*\big(\lambda_{-1}((\ca{E}_+'^\vee)_D)^N\big)+\iota_*(\alpha).
\end{equation}
Here $\alpha$ is an explicit $K$-theory class of the form
\[
\alpha=\sum_{m=1}^{kN}\alpha_m\otimes\ca{O}_{\bb{P}(\ca{V}_{i,d+2k}^+)}(-m),
\]
where $\alpha_m$ are explicit combinations of vector bundles whose restrictions to a fiber of $\bb{P}(\ca{V}_{i,d+2k}^+)$ are trivial. To obtain (\ref{eq:simplication4}), one needs to use the excess intersection formula
\[
\iota^*\iota_*\,F=F\otimes\big(1-\ca{O}_{A_{i,d+2k}}(-A_{i,d+2k})\big)\quad\quad\text{for}\ F\in K^0(A_{i,d+2k}).
\]

By Lemma \ref{restriction}, we have $
p_-^*(\ca{D}_{i,-}')=p_+^*((\ca{D}_{i,+}')(-ilA_{i,d+2k})$. Then it follows from the exact sequence (\ref{eq:exceptional}) that 
\begin{equation}\label{eq:comparebundle3}
p_-^*\big(\ca{D}_{i,-}'\big)=p_+^*\big(\ca{D}_{i,+}'\big)+\sum_{j=1}^{il}\iota_*\big(\beta_j\otimes\ca{O}_{\bb{P}(\ca{V}_{i,d+2k}^+)}(-j)\big)\quad\text{in}\ K^0(\widetilde{\ca{M}}_{i,d+2k}).
\end{equation}
Here $\beta_j=\ca{L}_{i,d+2k}\otimes\ca{O}_{\bb{P}(\ca{V}_{i,d}^-)}(il-j)$, whose restriction to a fiber of $\bb{P}(\ca{V}_{i,d+2k}^+)$ is trivial. By combining (\ref{eq:simplication4}) and (\ref{eq:comparebundle3}), we get
\[
p_-^*\big(\ca{D}_{i,-}'\otimes\lambda_{-1}((\ca{E}_-'^\vee)_D)^N\big)=p_+^*\big(\ca{D}_{i,+}'\otimes\lambda_{-1}((\ca{E}_+'^\vee)_D)^N\big)+\sum_{j=1}^{kN+il}\iota_*\big(\gamma_j\otimes\ca{O}_{\bb{P}(\ca{V}_{i,d+2k}^+)}(-j)\big),
\]
where the restrictions of $\gamma_j\in K^0(A_{i,d+2k})$ to a fiber of $\bb{P}(\ca{V}_{i,d+2k}^+)$ are trivial.

The rest of the argument is similar to the one given in the proof of the first case. Let $n_+=N(d/2+k+i+1-g)-2i-1+g$ be the rank of $\ca{V}_{i,d+2k}^+$. A simple calculation shows that $n_+>il+kN$ when $l\leq N-2$ and $d>2(g-1)$. For $1\leq j\leq kN+il$, we have $\chi\big(\gamma_j\otimes\ca{O}_{\bb{P}(\ca{V}_{i,d+2k}^+)}(-j)\big)=0$ because every term in the Leray spectral sequence of the fibration $\bb{P}^{n_+-1}\rightarrow A_{i,d+2k}\rightarrow \bb{P}(\ca{V}_{i,d+2k}^-)$ vanishes. This concludes the proof of the second case.

\end{proof}

\section{Parabolic structure}\label{section3}

 In this section, we introduce the parabolic structure to the GLSM. In this new setting, the parabolic structure can be viewed as $K$-theoretic insertions. An interesting aspect of this construction is that the parabolic structure intertwines with the stability condition.

\subsection{Irreducible representations of $\fr{gl}_n(\bb{C})$}
In this section, we recall some basic facts about the representations of $\fr{gl}_n(\bb{C})$.

Let $\fr{gl}_n(\bb{C})$ be the general linear Lie algebra of all $n\times n$ complex matrices, with $[X,Y]=XY-YX$. We have the triangular decomposition
\[
\fr{gl}_n(\bb{C})=\fr{h}\oplus \fr{n}^+\oplus \fr{n}^-,
\]
where $\fr{h}$ is the Cartan subalgebra consisting of all diagonal matrices and $\fr{n}^+$ (resp., $\fr{n}^-$) is the subalgebra of upper triangular (resp., lower triangular) matrices. Let $\fr{h}^*=\text{Hom}(\fr{h},\bb{C})$ and let $\fr{h}_0^*$ be the real subspace of $\fr{h}^*$ generated by the roots of $\fr{gl}_n(\bb{C})$. We fix an isomorphism $\fr{h}_0^*\cong\bb{R}^n$ such that the simple roots $\alpha_i$ can be expressed as
\[
\alpha_i=e_i-e_{i+1},\quad\quad\text{for}\ 1\leq i\leq n-1.
\]
Here $\{e_i\}$ is the standard basis of $\bb{R}^n$. The fundamental weights $\omega_i\in\fr{h}_0^*$ are given by
\[
\omega_i=e_1+\dots+e_i, \quad\quad\text{for}\ 1\leq i\leq n.
\]
Consider the set
\[
\text{P}_+=\big\{\lambda=\sum_{i=1}^{n-1}m_i\omega_i+m_n\omega_n\ |\ m_i\in\bb{Z}_{\geq 0}\ \text{for}\ 1\leq i\leq n-1\ \text{and}\ m_n\in\bb{Z} \big\}.
\]
An element $\lambda$ in $\text{P}_+$ is called a \emph{dominant weight}. A dominant weight $\lambda$ can also be expressed in term of the standard basis $\{e_i\}$ as follows:
\[
\lambda=\lambda_1e_1+\dots+\lambda_ne_n, 
\]
where $\lambda_i\in\bb{Z}$ and $\lambda_1\geq\dots\geq\lambda_n$. In the following discussion, we will denote a dominant weight $\lambda$ by the \emph{partition} $(\lambda_1,\dots,\lambda_n)$. If a partition $\lambda=(\lambda_1,\dots,\lambda_n)$ satisfies $\lambda_n\geq 0$, one can identify it with its \emph{Young diagram}, i.e., a left-justified shape of $n$ rows of boxes of length $\lambda_1,\dots,\lambda_n$. 

There is a bijection between the set $\text{P}_+$ of dominant weights and the set of isomorphism classes of finite-dimensional irreducible $\fr{gl}_n(\bb{C})$-modules. More precisely, for each dominant weight $\lambda$, one can assign a unique finite-dimensional irreducible $\fr{gl}_n(\bb{C})$-modules $V_\lambda$. Here $V_\lambda$ is generated by a unique vector $v_\lambda$ (up to a scalar) with the properties $\fr{n}^+.v_\lambda=0$ and $H.v_\lambda=\lambda(H)v_\lambda$ for all $H\in \fr{h}$. The $\fr{gl}_n(\bb{C})$-module $V_\lambda$ is called the \emph{highest weight module} with \emph{highest weight} $\lambda$ and the vector $v_\lambda$ is called the \emph{highest weight vector}. Given a $\fr{gl}_n(\bb{C})$-module $V$, we denote its dual by $V^\vee$.

Fix a  non-negative integer $l$. We denote by $\text{P}_l$ the set of dominant weights  $\lambda=(\lambda_1,\dots,\lambda_n)$ such that
\[
l\geq\lambda_1\geq\dots\geq\lambda_n\geq0.
\] 
To a partition $\lambda\in\text{P}_l$, we associate the complement partition $\lambda^*$ in $\text{P}_l$:
\[
\lambda^*:l\geq l-\lambda_n\geq\dots\geq l-\lambda_1\geq 0.
\]  
Given a partition $\lambda$, we define $|\lambda |=\sum_{i=1}^n\lambda_i$, which is the total number of boxes in its Young diagram. 

Now we recall a geometric construction of the highest weight $\fr{gl}_N(\bb{C})$-modules. Given a partition $\lambda=(\lambda_1,\dots,\lambda_n)$ in $\text{P}_l$. Let $(r_1,\dots,r_k)$ be the sequence of jumping indices of $\lambda$ (i.e. $l\geq\lambda_1=\dots=\lambda_{r_1}>\lambda_{r_1+1}=\dots=\lambda_{r_2}>\dots$). We define a sequence of non-negative integers $a=(a_1,\dots,a_k)$, where $a_j=l-\lambda_{r_j}$ for $1\leq j\leq k$. 
For $1\leq j\leq k$, we introduce positive integers
\[
d_j=a_{j+1}-a_j.
\]
Here $a_{k+1}$ is defined to be $l$. Define a sequence $m=(m_1,\dots,m_k)$, where $m_i=r_i-r_{i-1}$. We denote by $\text{Fl}_{\,m}$ the flag variety which parametrizes all sequences 
\[
\bb{C}^n=V_1\supsetneq V_2\supsetneq \dots\supsetneq V_k\supsetneq V_{k+1}=0,
\]
where $V_j$ are complex linear subspaces of $\bb{C}^n$ and $m_j=\text{dim}\,V_j-\text{dim}\,V_{j+1}$, for all $1\leq j\leq k$. The $k$-tuple $m$ is referred to as the \emph{type} of the flag variety $\text{Fl}_m$. Let $Q_j$ be the universal quotient bundle over $\text{Fl}_{\,m}$ of rank $r_j=\sum_{i=1}^jm_i$, for $1\leq j\leq k$. Notice that $Q_k$ is the trivial bundle of rank $n$ over $\text{Fl}_m$. We define the \emph{Borel-Weil-Bott line bundle} $L_{\lambda}$ of type $\lambda$ by
\[
L_{\lambda}=\bigotimes_{j=1}^{k}(\det Q_j)^{d_j}.
\]

\begin{lemma}
If $\lambda$ is a dominant weight, then the following holds:

\begin{enumerate}
\item $H^i(\emph{Fl}_{\,m},L_{\lambda})=0$, if $i>0$.
\item The $\fr{gl}_n(\bb{C})$-module $H^0(\emph{Fl}_{\,m},L_{\lambda})$ is isomorphic to $V^\vee_{\lambda}$. \end{enumerate}
\end{lemma}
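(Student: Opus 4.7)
The plan is to identify $\text{Fl}_m$ with the homogeneous space $GL_n(\bb{C})/P$, where $P$ is the parabolic subgroup of $GL_n(\bb{C})$ stabilizing the standard flag of type $m$, and then recognize $L_\lambda$ as the $G$-equivariant line bundle induced by a character of $P$. The Levi quotient of $P$ is $\prod_{j=1}^k GL_{m_j}(\bb{C})$, acting on the successive quotients $V_{k-j+1}/V_{k-j+2}$, and every line bundle on $GL_n/P$ is of the form $G \times^P \bb{C}_\chi$ for some character $\chi$. Once one matches $L_\lambda$ to the character corresponding to the dominant weight $\lambda$, both statements become the classical Borel-Weil-Bott theorem for partial flag varieties: higher cohomology vanishes and $H^0(GL_n/P, L_\lambda) \cong V_\lambda^\vee$.

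The main technical step is the bookkeeping identification of the character associated to $L_\lambda$. Unwinding the definition, $L_\lambda = \bigotimes_{j=1}^k (\det Q_j)^{d_j}$ where $Q_j$ is the quotient $\bb{C}^n/V_{j+1}$ of rank $r_j$ and $d_j = a_{j+1}-a_j = \lambda_{r_j} - \lambda_{r_{j+1}}$ (with $\lambda_{r_{k+1}} := 0$). On the Levi, $\det Q_j$ is the character $\prod_{i \leq j} \det(g_i)$; substituting and telescoping, the total character assigns weight $\lambda_{r_j}$ to each of the $m_j$ standard basis vectors in the $j$-th block of $\bb{C}^n$. This is precisely the weight $\lambda = (\lambda_1, \ldots, \lambda_n)$, and dominance ($\lambda_1 \geq \cdots \geq \lambda_n$) is exactly the condition that this character extends to a $P$-dominant character whose associated line bundle has nonvanishing sections.

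As a more self-contained alternative, one can argue by induction on $k$ using a tower of Grassmannian bundles. The forgetful map $\pi: \text{Fl}_m \to \text{Fl}_{m'}$ (where $m'$ merges two consecutive steps of the flag) realizes $\text{Fl}_m$ as a relative Grassmannian. The restriction of $L_\lambda$ to a fiber has the form $(\det \mathcal{Q})^{d_j}$ on a Grassmannian $\text{Gr}(m_{j+1}, m_j + m_{j+1})$, whose cohomology is well known: it vanishes in positive degree (since $\det \mathcal{Q}$ is ample and $d_j \geq 0$ by construction of the jumping sequence), and in degree zero gives an explicit Schur module. The projection formula and the Leray spectral sequence then reduce the statement to the analogous statement on $\text{Fl}_{m'}$ with a modified but still dominant weight, completing the induction.

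The hard part is less the cohomological vanishing — which follows from nefness of $L_\lambda$ once $d_j \geq 0$ is established — than the precise identification of the $\fr{gl}_n(\bb{C})$-module structure on $H^0$. One must track how the natural $GL_n$-action on $\bb{C}^n$ (and hence on each $Q_j$) assembles, across the tensor product defining $L_\lambda$, into the highest weight module with highest weight $\lambda$, which requires carefully matching conventions between the partition-theoretic description of $\lambda$ and the character-theoretic description of the equivariant structure on $L_\lambda$.
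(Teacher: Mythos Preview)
Your proposal is correct, but the paper takes a slightly different route. Rather than invoking Borel--Weil--Bott directly on the partial flag variety $\text{Fl}_m$ (or building it inductively via Grassmannian bundles), the paper passes to the \emph{complete} flag variety $\text{Fl}$ via the natural forgetful surjection $h:\text{Fl}\to\text{Fl}_m$, pulls $L_\lambda$ back to the standard line bundle there, applies the classical Borel--Weil--Bott theorem for $G/B$, and then pushes forward using $Rh_*\ca{O}_{\text{Fl}}=\ca{O}_{\text{Fl}_m}$ (established from Kodaira vanishing on the fibers, which are products of flag varieties) together with the projection formula. Your approach is more direct and avoids the auxiliary pushforward computation, at the cost of citing BWB in its partial-flag form as a black box; the paper's approach needs only the most classical $G/B$ version plus a short cohomological argument. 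Your inductive tower-of-Grassmannians alternative is in the same spirit as the paper's reduction --- both factor the problem through a fibration with well-understood fibers --- but the paper collapses this to a single step by going all the way to the full flag at once.
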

\begin{proof}
The proof is similar to that of \cite[Proposition 6.3]{pauly} and we briefly recall it here. We denote by $\text{Fl}$ the complete flag variety parametrizing complete flags in $\bb{C}^n$. For $i=1,\dots,n$, we define $\tilde{d}_{m_j}=d_j$, and $\tilde{d}_i=0$ if $i\neq m_1,\dots,m_k$. Let $\tilde{Q}_i$ be the universal quotient bundle over $\text{Fl}$ of rank $i$, for $1\leq i\leq n$. According to the Borel-Weil-Bott theorem for $\fr{gl}_n(\bb{C})$ or $\text{GL}_n(\bb{C})$ (see, for example, \cite[Chapter 4]{weyman} ), we have $H^i(\text{Fl},\otimes_{i=1}^{n}(\text{det}\, \tilde{Q}_i)^{\tilde{d}_i})=0$, if $i>0$, and 
the $\fr{gl}_n(\bb{C})$-module $H^0(\text{Fl},\otimes_{i=1}^{n}(\text{det}\, \tilde{Q}_i)^{\tilde{d}_i})$ is the dual of the highest weight representation $V_\lambda$. Consider the surjective flat morphism 
\[
h:\text{Fl}\rightarrow\text{Fl}_m.
\]
For any point $x\in\text{Fl}_m$, the fiber $h^{-1}(x)$ is a product of flag varieties. In particular, the fibers are smooth and connected. By \cite[III 12.9]{hartshorne}, we have $h_*(\ca{O}_{\text{Fl}})=\ca{O}_{\text{Fl}_m}$. Notice that the anticanonical line bundle of a product of flag varieties is ample. By the Kodaira vanishing theorem, we have
\[
H^i(h^{-1}(x),\ca{O}_{h^{-1}(x)})=0,\quad\text{for any}\ x\in\text{Fl}_m,\ \text{and}\ i>1.
\]
The Grauert's theorem \cite[III 12.9]{hartshorne} implies that $R^ih_*(\ca{O}_{\text{Fl}})=0$ for $i>0$. The lemma follows from the projection formula and the following relation:
\[
h^*\bigg(\bigotimes_{i=1}^{n}(\text{det}\, \tilde{Q}_i)^{\tilde{d}_i}\bigg)=\bigotimes_{i=1}^{k}(\text{det}\, Q_i^{d_i}).
\]
\end{proof}

\subsection{Parabolic $N$-pairs and $\delta$-stability}\label{section3.2}
In this section, we generalize the notion of Bradlow $N$-pairs to parabolic Bradlow $N$-pairs, which can be viewed as parabolic GLSM data to the Grassmannian. We define the stability condition for parabolic $N$-pairs and it intertwines with parabolic structures. We fix a fixed smooth curve $C$ of genus $g$, with one distinguished marked point $x_0$ and $k$ distinct ordinary marked points $p_1,\dots,p_k$. Let $I=\{p_1,\dots,p_k\}$ be the set of ordinary marked points. Throughout the discussion, we assume $g>1$. This assumption is not essential and the case $g\leq1$ will be discussed in Remark \ref{genus0case}. 

We first give a brief review on parabolic vector bundles. 
\begin{definition}\label{defofpar}
A \emph{parabolic vector bundle} on $C$ is a collection of data $(E,\{f_p\}_{p\in I},\underline{a})$ where
\begin{itemize}
\item $E$ is a vector bundle of rank $n$ and degree $d$ on $C$.
\item For each marked point $p\in I$, $f_p$ denotes a filtration in the fiber $E_p:=E|_p$
\[
E_p=E_{1,p}\supsetneq E_{2,p}\supsetneq \dots\supsetneq E_{l_p,p}\supsetneq E_{l_p+1,p}=0.
\]
\item The vector $\underline{a}=(a_p)_{p\in I}$ is a collection of integers such that
\[
a_p=(a_{1,p},\dots,a_{l_p,p}),\ 0\leq a_{1,p}<a_{2,p}<\dots<a_{l_p,p}< l.
\]
\end{itemize}
\end{definition}

For $p\in I$ and $1\leq i \leq l_p$, the integers $a_{i,p}$ are called the \emph{parabolic weights} and $m_{i,p}:=\text{dim}\,E_{i,p}-\text{dim}\,E_{i+1,p}$ are called the \emph{multiplicities} of $a_{i,p}$. Let $m_p=(m_{1,p},\dots,m_{l_p,p})$ and let $\underline{m}=(m_p)_{p\in I}$. The pair $(\underline{a},\underline{m})$ is referred to as the \emph{parabolic type} of the parabolic vector bundle $E$. The data $f_p$ can be viewed as an element in the flag variety $\text{Fl}_{m_p}(E_p)$ of type $m_p$.
Define $r_{i,p}:=\sum_{j=1}^{i}m_{j,p}=\text{dim}\,E_p/E_{i+1,p}$ for $1\leq i\leq l_p$. Denote $|a_p|:=\sum_{i=1}^{l_p}m_{i,p}\,a_{i,p}$ and $|\underline{a}|:=\sum_{p\in I}|a_p|$. We define the \emph{parabolic degree} of $E$ by
\[
d_{\text{par}}(E)=d+\frac{|\underline{a}|}{l},
\] 
and the \emph{parabolic slope} by
\[
\mu_{\text{par}}(E)=\frac{d_{\text{par}}(E)}{r(E)},
\]
where $r(E)=\text{rank}\,E$.

Suppose $F$ is a subbundle of $E$ and $Q$ is the corresponding quotient bundle. Then $F$ and $Q$ inherit canonical parabolic structures from $E$. More precisely, given a marked point $p$, there is an induced filtration $\{F_{i,p}\}_i$ of the fiber $F_p$, which consists of distinct terms in the collection $\{F\cap E_{i,p}\}_i$. The parabolic weights $a'_{i,p}$ of $F$ are defined such that if $j$ is the largest integer satisfying $F_{i,p}\subset E_{j,p}$, then define $a'_{i,p}=a_{j,p}$. If $F$ is a locally free subsheaf of $E$ but not a subbundle, one can define the induced parabolic structure on $F$ in the same way. For the quotient bundle $q:E\rightarrow Q$, we define a filtration $\{Q_{i,p}\}_i$ of $Q_p$ by choosing distinct terms in the collection $\{q(E_{i,p})\}_i$. The parabolic weights $a''_{i,p}$ of $Q$ are defined such that if $j$ is the largest integer satisfying $q(E_{j,p})=Q_{i,p}$, then define $a''_{i,p}=a_{j,p}$. We call $0\rightarrow F\rightarrow E\rightarrow Q\rightarrow 0$ an \emph{exact sequence of parabolic vector bundles} if it is an exact sequence of vector bundles, and $F$ and $G$ have the induced parabolic structures from $E$. One can check that the parabolic degree is additive on exact sequences, i.e., $d_{\text{par}}(E)=d_{\text{par}}(F)+d_{\text{par}}(Q)$. 

\begin{definition}
Let $(E,\{f_p\}_{p\in I},\underline{a})$ and $(E',\{f'_p\}_{p\in I},\underline{a}')$ be two parabolic vector bundles. A morphism $\phi:E\rightarrow E'$ of vector bundles is said to be parabolic if the restrictions $\phi_p$ satisfy $\phi_p(E_{i,p})\subset E'_{j+1,p}$ whenever $a_i>a'_j$, and strongly parabolic if $\phi_p(E_{i,p})\subset E'_{j+1,p}$ whenever $a_i\geq a'_j$
\end{definition}
Suppose $0\rightarrow F\xrightarrow{i} E\xrightarrow{\pi} Q\rightarrow 0$ is an exact sequence of parabolic bundles. Then by definition $i$ and $\pi$ are parabolic homomorphisms. We denote by $\parhm(E,E')$ and $\sparhm(E,E')$ the subsheaves of $\hm(E,E')$ consisting of parabolic and strongly parabolic homomorphisms, respectively. The spaces of their global sections are denoted by $\text{ParHom}(E,E')$ and $\text{SParHom}(E,E')$, respectively. There are two natural skyscraper sheaves $K_{E,E'}$ and $SK_{E,E'}$ supported on the set of marked points $I$ such that
\begin{align*}
&0\rightarrow \parhm(E,E')\rightarrow \hm(E,E')\rightarrow K_{E,E'}\rightarrow 0,\\
&0\rightarrow \sparhm(E,E')\rightarrow \hm(E,E')\rightarrow SK_{E,E'}\rightarrow 0.
\end{align*}
Let $m_{i,p}$ and $m'_{i,p}$ be the multiplicities of the weights $a_{i,p}$ and $a'_{i,p}$, respectively. According to \cite[Lemma 2.4]{hu}, we have 
\begin{equation}\label{eq:extrastalk}
\chi(K_{E,E'})=\sum_{\substack{p\in I\\(i,j)\in T_p}}m_{i,p}\,m_{j,p}'
\end{equation}
where $T_p=\{(i,j)|a_{i,p}>a'_{j,p}\}$. Using a similar argument to that of \cite[Lemma 2.4]{hu}, one can show that
\[
\chi(SK_{E,E'})=\sum_{\substack{p\in I\\(i,j)\in ST_p}}m_{i,p}\,m_{j,p}'
\]
where $ST_p=\{(i,j)|a_{i,p}\geq a'_{j,p}\}$. When $E=E'$, we denote by $\mathop{\mathcal{P}\! \mathit{ar}\mathcal{E}\! \mathit{nd}}(E)$ the subsheaf of parabolic endomorphisms.

In \cite{yokogawa}, Yokogawa introduced an abelian category of parabolic $\ca{O}_C$-modules which has enough injective objects. It contains the category of parabolic vector bundles as a full (not abelian) subcategory. Hence, one can define the right derived functor $\text{Ext}^i(E,-)$ of $\text{ParHom}(E,-)$. The following lemmas show that the functors $\text{Ext}^i$ for parabolic bundles behave similarly to the ordinary Ext functors for locally free sheaves.
\begin{lemma}{\cite[Lemma 3.6, Proposition 3.7]{yokogawa}}
If $E$ and $E'$ are parabolic vector bundles, then there are canonical isomorphisms
\begin{enumerate}
\item $\emph{Ext}^i(E,E')\cong \emph{H}^i(\parhm(E,E'))$.
\item Serre duality: $\emph{Ext}^i(E,E'\otimes\omega_C(D))\cong\emph{H}^{1-i}(\sparhm(E',E))^\vee$, where $\omega_C$ is the cotangent sheaf of $C$ and $D=\sum_{p\in I}p$.
\end{enumerate}
\end{lemma}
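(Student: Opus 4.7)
The plan is to reduce both parts of the lemma to classical statements about ordinary $\mathcal{H}om$ sheaves on $C$, exploiting the fact that parabolic bundles are locally free in the parabolic sense so that higher parabolic $\mathcal{E}xt$-sheaves vanish. I will treat (1) first and then use it to reformulate (2) as a duality between honest sheaves on $C$.

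For part (1), my approach is to mimic the standard argument that $\mathrm{Ext}^i(E,E')=H^i(\mathcal{H}om(E,E'))$ for locally free $E$, inside Yokogawa's abelian category of parabolic $\mathcal{O}_C$-modules. The functor $\mathrm{ParHom}(E,-)$ factors as $\Gamma\circ\parhm(E,-)$, so there is a Grothendieck spectral sequence
\begin{equation*}
E_2^{p,q}=H^p(\mathcal{E}xt^q_{\mathrm{par}}(E,E'))\Rightarrow \mathrm{Ext}^{p+q}(E,E').
\end{equation*}
It therefore suffices to show $\mathcal{E}xt^q_{\mathrm{par}}(E,E')=0$ for $q>0$ when $E$ is a parabolic vector bundle. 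This is a local statement at each point $x\in C$: away from the marked points there is nothing to check because the parabolic structure is trivial there, and at a marked point $p\in I$ one can pick a local trivialization compatible with the filtration $f_p$ and write $E$ as a direct sum of parabolic line bundles, each of which is a (shifted) copy of the structure sheaf in Yokogawa's category. The required vanishing then follows because these generators are projective in a neighborhood of $p$.

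For part (2), I would construct a sheaf-level perfect pairing
\begin{equation*}
\sparhm(E',E)\otimes \parhm(E,E'\otimes\omega_C(D))\longrightarrow \omega_C
\end{equation*}
and then invoke classical Serre duality on $C$ together with part (1) to obtain
\begin{equation*}
\mathrm{Ext}^i(E,E'\otimes\omega_C(D))=H^i(\parhm(E,E'\otimes\omega_C(D)))\cong H^{1-i}(\sparhm(E',E))^\vee.
\end{equation*}
The pairing is built in the obvious way: compose $\psi\in\sparhm(E',E)$ with $\phi\in\parhm(E,E'\otimes\omega_C(D))$ to get an endomorphism of $E'$ with values in $\omega_C(D)$, take its trace to land in $\omega_C(D)$, and check that the residue at every $p\in I$ vanishes. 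The residue computation is a linear-algebra statement on the graded pieces $\mathrm{gr}\,E_p$ and $\mathrm{gr}\,E'_p$: the strongly parabolic condition on $\psi$ forces the composition, on associated graded, to pair the weight-$a'_{i,p}$ piece of $E'$ with the weight-$a_{j,p}$ piece of $E$ only when $a'_{i,p}\geq a_{j,p}$, whereas the parabolic condition on $\phi$ pairs weight $a_{j,p}$ of $E$ with weight $a'_{k,p}$ of $E'\otimes\omega_C(D)$ only when $a_{j,p}>a'_{k,p}$; combining the two inequalities forbids the diagonal contributions $i=k$ that would produce a residue.

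The main obstacle is checking that this pairing is locally perfect at each parabolic point. Off the marked points this is trivial since the two sheaves agree with $\mathcal{H}om(E,E'\otimes\omega_C)$ and $\mathcal{H}om(E',E)$ and the pairing is the usual trace pairing. At a marked point the claim amounts to a combinatorial identity matching the multiplicities contributed by $T_p=\{(i,j):a_{i,p}>a'_{j,p}\}$ and $ST_p=\{(i,j):a'_{i,p}\geq a_{j,p}\}$, the two index sets governing the stalks computed in \eqref{eq:extrastalk} and its strongly parabolic analogue. Concretely, one chooses a local frame of $E$ and $E'$ adapted to the flags $f_p$ and $f'_p$, writes both $\sparhm(E',E)_p$ and $\parhm(E,E'\otimes\omega_C(D))_p$ as explicit $\mathcal{O}_{C,p}$-modules in terms of these multiplicities, and verifies that the residue pairing identifies one with the $\omega_{C,p}$-dual of the other. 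Once this bookkeeping is completed, the global statement follows immediately by Serre duality on $C$ applied to the sheaf $\sparhm(E',E)$.
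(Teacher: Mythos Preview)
The paper does not supply its own proof of this lemma: it is quoted verbatim as \cite[Lemma~3.6, Proposition~3.7]{yokogawa} and used as a black box. So there is no argument in the paper to compare your proposal against.

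That said, your outline is essentially the standard proof, and it is the one Yokogawa gives. For (1) the spectral-sequence reduction to vanishing of higher $\mathcal{E}xt^q_{\mathrm{par}}(E,-)$ when $E$ is a parabolic bundle is exactly his Lemma~3.6; the local projectivity you invoke is his notion of ``locally free parabolic sheaf.'' For (2) your pairing $\sparhm(E',E)\otimes\parhm(E,E'\otimes\omega_C(D))\to\omega_C$ is the content of his Proposition~3.7. One small point to watch: the residue argument is cleaner if you phrase it as showing that the composition $\phi\circ\psi\colon E'\to E'\otimes\omega_C(D)$ is itself strongly parabolic, which forces the induced trace section of $\omega_C(D)$ to lie in $\omega_C$; your weight-inequality chase does this, but the conclusion you want is $\phi_p\psi_p(E'_{i,p})\subset E'_{i+1,p}\otimes\omega_C(D)_p$ rather than a statement about individual graded pieces. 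The local perfectness at $p$ then follows from the rank count you mention, since both subsheaves of $\hm$ have complementary colengths $\chi(K_{E,E'})$ and $\chi(SK_{E',E})$ summing to $n\cdot n'\cdot|I|$, matching the pole order of $\ca{O}_C(D)$ in the ordinary trace pairing.
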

\begin{lemma}{\cite[Lemma 1.4]{yokogawa}}
The group $\emph{Ext}^1(E'',E')$ parametrizes isomorphism classes of extensions of $(E'',\{f''_p\}_{p\in I},\underline{a}'')$ by $(E',\{f'_p\}_{p\in I},\underline{a}')$.
\end{lemma}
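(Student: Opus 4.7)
The plan is to mimic the classical Yoneda correspondence between $\mathrm{Ext}^1$ and extension classes, carried out inside Yokogawa's abelian category $\mathcal{P}$ of parabolic $\mathcal{O}_C$-modules. Since that category has enough injectives and $\mathrm{Ext}^i(E'',-)$ is defined as the right derived functor of the left-exact functor $\mathrm{ParHom}(E'',-)$, the standard homological machinery is available; the only genuinely new content is to check that the sheaf-theoretic constructions respect the parabolic data at the marked points.

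First I would construct the map from extensions to $\mathrm{Ext}^1$. Given an exact sequence of parabolic bundles
\[
0 \to E' \to E \to E'' \to 0,
\]
apply $\mathrm{ParHom}(E'',-)$ and take the associated long exact sequence of derived functors. This yields a connecting homomorphism
\[
\delta \colon \mathrm{Hom}_{\mathcal{P}}(E'',E'') \to \mathrm{Ext}^1(E'',E'),
\]
and I would send the extension to $\delta(\mathrm{id}_{E''})$. Two isomorphic extensions (via a parabolic isomorphism of the middle term commuting with the maps to $E'$ and $E''$) clearly give the same image by naturality of $\delta$, so the assignment descends to isomorphism classes.

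Next I would construct the inverse. Choose an injective resolution $0 \to E' \to I^0 \to I^1 \to \cdots$ in $\mathcal{P}$. A class $\xi \in \mathrm{Ext}^1(E'',E')$ is represented by a parabolic morphism $\varphi \colon E'' \to I^1/\mathrm{im}(I^0)$, and I form the pullback of the short exact sequence $0 \to E' \to I^0 \to I^0/E' \to 0$ along a lift $E'' \to I^0/E'$ obtained from $\varphi$; equivalently, use the Yoneda pullback of the tautological extension. The resulting object in $\mathcal{P}$ fits into a short exact sequence of parabolic $\mathcal{O}_C$-modules with outer terms $E'$ and $E''$. One then checks that this middle term is again a parabolic vector bundle (not just a parabolic $\mathcal{O}_C$-module), which follows because local freeness is preserved under extensions of $\mathcal{O}_C$-modules and the filtrations at marked points glue to a flag because the sequence is exact on fibers at each $p \in I$.

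The main technical obstacle, and the step I would be most careful with, is verifying that the induced parabolic structure on the middle term $E$ is exactly the one compatible with the given flags $f'_p$ on $E'$ and $f''_p$ on $E''$, in the sense defined just before the lemma — that is, that restricting to a marked point $p$ produces a filtration on $E_p$ whose subquotients match the pieces of $f'_p$ and $f''_p$ with the correct parabolic weights. This reduces to a pointwise check using the splitting of the short exact sequence on fibers $0 \to E'_p \to E_p \to E''_p \to 0$ together with the definition of the induced parabolic structure on sub- and quotient bundles recalled earlier. Once this pointwise compatibility is established, the fact that the two constructions are mutually inverse (up to parabolic isomorphism of extensions) follows from the standard five-lemma argument applied within $\mathcal{P}$, and the bijection with $\mathrm{Ext}^1(E'',E')$ is complete.
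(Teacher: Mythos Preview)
Your proposal is correct and is the standard Yoneda-style argument for identifying $\mathrm{Ext}^1$ with extension classes in an abelian category with enough injectives. The paper, however, does not give a proof of this lemma at all: it is stated with a direct citation to \cite[Lemma~1.4]{yokogawa} and no argument is supplied. So there is nothing in the paper to compare your approach against; you have filled in a proof that the authors chose to outsource to the literature.
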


Now let us define \emph{parabolic} Bradlow $N$-pairs.
\begin{definition}
A parabolic Bradlow $N$-pair $(E,\{E_{i,p}\},\underline{a},s)$ consists of a parabolic vector bundle $(E,\{f_p\}_{p\in I},\underline{a})$ of rank $n$ and degree $d$, together with $N$ sections $s\in H^0(E\otimes\ca{O}_C^N)$. A parabolic sub-pair 
\[
(E',s')\subset(E,s),
\]
consists of a parabolic subbundle $\iota: E'\hookrightarrow E$ and $N$ sections $s':\ca{O}^N\rightarrow E'$ such that 
\begin{alignat*}{3}
\iota \circ s'&=s&&\quad s\in \text{H}^0(E'\otimes\ca{O}^N),\quad\text{and}\\
s'&=0&&\quad s\notin \text{H}^0(E'\otimes\ca{O}^N).
\end{alignat*}
A quotient pair $(E'',s'')$ consists of a quotient parabolic bundle $q: E\rightarrow E''$ with $s''=q\circ s$.
\end{definition}

We shall abbreviate the parabolic $N$-pair $(E,\{E_{i,p}\},\underline{a},s)$ as $(E,s)$ when there is no confusion. We define the parabolic slope of a parabolic $N$-pair by 
\[
\mu_{\text{par}}(E,s)=\mu_\text{par}(E)+\frac{\delta\theta(s)}{r(E)},
\] where $\theta(s)=1$ if $s\neq0$ and 0 otherwise.

\begin{definition}\label{modiparaslop} Let $\delta\in\bb{Q}_+$. A parabolic $N$-pair of degree $d$ is $\delta$-semistable if for all sub-pairs $(E',s')\subset (E,s)$, we have 
\[
\mu_{\text{par}}(E',s')\leq\mu_{\text{par}}(E,s).
\] A parabolic $N$-pair $(E,s)$ is $\delta$-stable if the above inequality is strict. 
\end{definition}

\begin{remark}\label{rank1}
Suppose that the rank $n$ is 1. Then according to Definition \ref{modiparaslop}, any parabolic $N$-pair is stable with respect to all values of $\delta$.
\end{remark}
\begin{remark}
Note that a parabolic $N$-pair $(E,0)$ is (semi-)stable if $E$ is a (semi-)stable parabolic vector bundle. We will focus on \emph{non-degenerate} parabolic pairs, i.e., pairs $(E,s)$ with $s\neq0$. 
\end{remark}

In the following, we list some basic properties of $\delta$-stable and semistable parabolic $N$-pairs, parallel to the corresponding results for $N$-pairs without parabolic structures.

\begin{lemma}\label{triiso}
Suppose $\phi:(E_1,s_1)\rightarrow(E_2,s_2)$ is a nonzero parabolic morphism of $\delta$-semistable pairs. Then $\mu_{\emph{par}}(E_1,s_1)\leq\mu_{\emph{par}}(E_2,s_2)$. Furthermore, if $(E_1,s_1)$ and $(E_2,s_2)$ are $\delta$-stable parabolic pairs with the same parabolic slope, then $\phi$ is an isomorphism. In particular, for a non-degenerate $\delta$-stable parabolic pair $N$-pair $(E,s)$, there are no parabolic endomorphisms of $E$ preserving $s$ except the identity, and no parabolic endomorphisms of $E$ annihilating $s$ except 0.
\end{lemma}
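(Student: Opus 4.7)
The plan is to mimic the standard Schur--lemma proof for semistable pairs (as in the non-parabolic Lemma \ref{automorphism}), using the kernel and image of $\phi$ together with additivity of parabolic degree on short exact sequences and the $\delta$-semistability hypothesis. I would begin by letting $\bar K \subseteq E_1$ and $\bar I \subseteq E_2$ be the saturations of $\ker\phi$ and $\mathrm{im}\,\phi$, which are genuine parabolic subbundles carrying the parabolic structures inherited from their ambient bundles. Then $(\bar K, s_{\bar K}) \subset (E_1, s_1)$ and $(\bar I, s_2) \subset (E_2, s_2)$ are parabolic sub-pairs, where $s_{\bar K}$ equals $s_1$ or $0$ according to whether $s_1$ factors through $\bar K$, and $s_2$ lies in $\bar I \otimes \ca{O}_C^N$ automatically because $s_2 = \phi\circ s_1$. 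Applying $\delta$-semistability of the source pair to $(\bar K, s_{\bar K})$ and of the target pair to $(\bar I, s_2)$, then chaining the two inequalities through the additivity identity $d_{\text{par}}(E_1) = d_{\text{par}}(\ker\phi) + d_{\text{par}}(E_1/\ker\phi)$ together with the elementary inequality $d_{\text{par}}(\bar I \text{ as subsheaf of } E_2) \geq d_{\text{par}}(E_1/\ker\phi \text{ as quotient of } E_1)$ (which holds because $\phi$ is parabolic, hence weakly increases weights at every marked point), delivers the slope inequality $\mu_{\text{par}}(E_1, s_1) \leq \mu_{\text{par}}(E_2, s_2)$.

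Under the stability hypothesis with equal parabolic slopes, every inequality in the above chain must become an equality. Strict inequality being required for proper sub-pairs of a stable pair then forces $\bar K = 0$ and forces $\bar I = E_2$. To upgrade saturated-image-equals-$E_2$ to the genuine surjectivity of $\phi$, I would observe that a nontrivial torsion cokernel of $\mathrm{im}\,\phi \hookrightarrow E_2$ would strictly increase $d_{\text{par}}(E_2)$ above $d_{\text{par}}(\mathrm{im}\,\phi)$, contradicting the equality of slopes just derived; hence $\phi$ is both injective and surjective, so an isomorphism.

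For the ``in particular'' clause, an endomorphism $\phi$ preserving $s$ yields, via the substitution $\phi \mapsto \phi - \mathrm{id}$, a parabolic endomorphism annihilating $s$, so it suffices to treat the annihilation case. Given a nonzero $\phi : E \to E$ with $\phi(s) = 0$, both $\bar K$ and $\bar I$ (saturations of $\ker\phi$ and $\mathrm{im}\,\phi$) are proper subbundles of $E$, the properness of $\bar I$ following because otherwise $\phi$ would be an isomorphism, incompatible with $s \neq 0$. Since $s$ factors through $\bar K$, the sub-pair $(\bar K, s)$ carries $\theta = 1$; summing the two strict stability inequalities for $(\bar K, s)$ and $(\bar I, s_{\bar I})$ after clearing denominators, and using $r(\bar K) + r(\bar I) = r(E)$ together with the additivity-plus-saturation estimate $d_{\text{par}}(\bar K) + d_{\text{par}}(\bar I) \geq d_{\text{par}}(E)$, produces either $\delta < 0$ (when $s$ factors through $\bar I$) or $0 < 0$ (when it does not), a contradiction. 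I expect the main obstacle to be the careful verification of the parabolic-degree comparison between the quotient and subsheaf realizations of the image of $\phi$, which demands a weight-by-weight check of the induced filtrations at each marked point, although it is ultimately a direct consequence of the definition of a parabolic morphism.
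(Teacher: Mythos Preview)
Your proposal is correct and is precisely the standard Schur--lemma argument the paper has in mind: the paper gives no proof of this lemma at all, treating it as the direct parabolic analogue of Lemma~\ref{automorphism}, whose proof is likewise declared ``standard'' with a reference to \cite[Lemma~7]{lin}. Your kernel/image/saturation chain, together with the additivity of parabolic degree and the observation that $\theta(s_{\bar K})+\theta(\bar s_1)=\theta(s_1)$, is exactly that argument; the one point you flag---that a parabolic morphism weakly increases the induced weights when passing from the quotient realization of the image to its subsheaf realization---is indeed the only place where the parabolic decoration requires a moment's extra care, and it follows from the filtration-preservation condition $\phi_p(E_{i,p})\subset E'_{j+1,p}$ for $a_i>a'_j$.
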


\begin{lemma}[Harder-Narasimhan Filtration] Let $(E,s)$ be a parabolic $N$-pair. There exists a canonical filtration by sub-pairs
\[
0\subsetneq(F_1,s_1)\subsetneq(F_2,s_2)\subsetneq\dots\subsetneq(F_m,s_m)=(E,s)
\]
such that for all $i$ we have
\begin{enumerate}[1.]
\item $(\emph{gr}_i,\bar{s}_i):=(F_i,s_i)/(F_{i-1},s_{i-1})$ are $\delta$-semistable. 
\item $\mu_{\emph{par}}(\emph{gr}_i,\bar{s}_i)>\mu_{\emph{par}}(\emph{gr}_{i+1},\bar{s}_{i+1})$.
\end{enumerate}
\end{lemma}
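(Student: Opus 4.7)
The plan is to mimic the classical existence proof of the Harder-Narasimhan filtration for vector bundles, adapted to the category of parabolic $N$-pairs with the modified slope $\mu_{\text{par}}(E,s)=\mu_{\text{par}}(E)+\delta\theta(s)/r(E)$. The key structural point is that sub-pairs and quotient pairs inherit canonical parabolic structures, and the parabolic degree (hence slope) is additive on exact sequences. Combined with the extra term $\delta\theta(s)/r(E)$, additivity of the pair-slope also holds on short exact sequences of sub-pairs, since exactly one of the sub-pair and quotient pair carries the section $s$ (when $s\neq 0$), while for $s=0$ both summands are degenerate and the contribution from $\delta$ is zero on each piece.

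First I would establish the existence of a unique maximal destabilizing sub-pair $(F_1,s_1)\subseteq(E,s)$. For this I need to show the set
\[
\{\mu_{\text{par}}(E',s')\mid (E',s')\subseteq(E,s)\text{ a sub-pair}\}
\]
is bounded above and that the supremum is attained. Boundedness follows from the standard boundedness of slopes of subsheaves of $E$ (the parabolic weights $a_{i,p}/l$ lie in $[0,1)$ so they contribute only a bounded additive error, and the term $\delta\theta(s')/r(E')$ is bounded by $\delta$). Attainment follows from a Noetherian/maximality argument: pick a sub-pair of maximal parabolic pair-slope, then among such pick one of maximal rank. Uniqueness of $(F_1,s_1)$ is then standard: if $(F_1',s_1')$ were another such sub-pair, the saturation of $(F_1+F_1',s_1+s_1')$ would contradict maximality, using additivity of parabolic degree and the fact that the combined section is nonzero iff one of $s_1,s_1'$ is.

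Next I would check that $(F_1,s_1)$ is itself $\delta$-semistable: any sub-pair of $(F_1,s_1)$ is also a sub-pair of $(E,s)$, so its parabolic slope is bounded by $\mu_{\text{par}}(F_1,s_1)$ by maximality. Then I iterate by forming the quotient pair $(E/F_1, \bar{s})$ (with its induced parabolic structure per the discussion before Definition 3.2) and applying the same construction to obtain $(F_2/F_1,\bar{s}_2)$ as the maximal destabilizing sub-pair of the quotient. Lifting this to a sub-pair $(F_2,s_2)\subseteq(E,s)$ gives the next term. The strict inequality $\mu_{\text{par}}(\text{gr}_i,\bar{s}_i)>\mu_{\text{par}}(\text{gr}_{i+1},\bar{s}_{i+1})$ is forced: if equality held, an extension argument (exactly as in Lemma \ref{triiso}, namely that there is no nonzero parabolic morphism into a $\delta$-semistable pair of strictly smaller slope) together with additivity of the pair-slope would contradict the maximality of $(\text{gr}_i,\bar{s}_i)$ at the previous step. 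The process terminates because the rank drops at each stage.

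The main obstacle I anticipate is bookkeeping the non-additive function $\theta(s)$ in the slope formula: when passing between sub-pairs, quotient pairs, and their combinations (as in the uniqueness argument for the maximal destabilizing sub-pair), one must track carefully whether the induced section on each piece is zero, because $\theta$ does not split additively the way $d_{\text{par}}$ and $r$ do. Once one verifies that in every relevant short exact sequence exactly the correct piece inherits $s$ (and hence exactly the correct piece receives the $\delta$-contribution), the classical Harder–Narasimhan argument goes through verbatim.
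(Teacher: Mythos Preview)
Your proposal is correct and follows essentially the same approach as the paper: the paper's proof simply notes that the parabolic pair-slope is additive on short exact sequences and then defers to the standard Harder--Narasimhan argument for pure sheaves (citing \cite[Theorem 1.3.4]{lehn2}), which is exactly the argument you have spelled out. Your careful tracking of the $\theta(s)$ term---observing that in any short exact sequence of pairs exactly one of the sub-pair or quotient pair inherits a nonzero section---is precisely the verification needed to justify the paper's one-line claim of additivity.
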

\begin{proof}
Notice that the parabolic slope $\mu_{\text{par}}$ is additive on short exact sequences of parabolic $N$-pairs. The proof is the same as the proof of the existence and uniqueness of Harder-Narasimhan filtration of a pure sheaf (see for example the proof of \cite[Theorem 1.3.4]{lehn2}). 
\end{proof}

\begin{lemma}[Jordan-H\"older Filtration] Let $(E,s)$ be a $\delta$-semistable parabolic $N$-pair. A Jordan-H\"older filtration of $(E,s)$ is a filtration
\[
0\subsetneq(G_1,s_1)\subsetneq(G_2,s_2)\subsetneq\dots\subsetneq(G_m,s_m)=(E,s)
\]
such that the factors $(\emph{gr}_i,\bar{s}_i):=(F_i,s_i)/(F_{i-1},s_{i-1})$ are $\delta$-stable with slope $\mu_{\emph{par}}(E,s)$. Moreover, the graded object $\emph{gr}(E,s):=\oplus \emph{gr}_i$ does not depend on the filtration.
\end{lemma}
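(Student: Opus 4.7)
The plan is to adapt the classical Jordan--H\"older argument for semistable coherent sheaves (see for example \cite[Proposition 1.5.2]{lehn2}) to the category of parabolic $N$-pairs. Only two inputs from this category are needed: additivity of the parabolic slope $\mu_{\text{par}}$ on short exact sequences of parabolic $N$-pairs (using that $\theta(s)=1$ whenever $s\ne 0$ and that the nonzero section can only live in the subobject or survive to the quotient, but not both) and Lemma \ref{triiso}, which plays the role of the usual ``a morphism between $\delta$-stable pairs of the same slope is zero or an isomorphism.''

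For existence, I would proceed by induction on $r(E)$. If $(E,s)$ is $\delta$-stable, the trivial filtration $0\subsetneq (E,s)$ works. Otherwise, pick a nonzero proper sub-pair $(G_1,s_1)\subsetneq(E,s)$ with $\mu_{\text{par}}(G_1,s_1)=\mu_{\text{par}}(E,s)$ of minimal rank (and, among those, minimal parabolic degree). Any proper sub-pair of $(G_1,s_1)$ of the same slope would contradict the minimality, so $(G_1,s_1)$ is $\delta$-stable. The short exact sequence
\[
0\rightarrow(G_1,s_1)\rightarrow(E,s)\rightarrow(E,s)/(G_1,s_1)\rightarrow 0
\]
together with additivity of $\mu_{\text{par}}$ implies the quotient pair is again $\delta$-semistable of slope $\mu_{\text{par}}(E,s)$. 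Applying the induction hypothesis to the quotient and pulling the resulting filtration back to $E$ produces the desired Jordan--H\"older filtration.

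For uniqueness of $\text{gr}(E,s)$, let $\{(G_i,s_i)\}_{i=1}^m$ and $\{(G'_j,s'_j)\}_{j=1}^{m'}$ be two such filtrations, with factors $(\text{gr}_i,\bar s_i)$ and $(\text{gr}'_j,\bar s'_j)$. I would argue by induction on $m+m'$. Consider the compositions
\[
\phi_j\colon (G_1,s_1)\hookrightarrow(E,s)\twoheadrightarrow(E,s)/(G'_{j-1},s'_{j-1})\twoheadrightarrow(\text{gr}'_j,\bar s'_j),
\]
and let $j_0$ be the smallest index with $\phi_{j_0}\ne 0$; such a $j_0$ exists because $(G_1,s_1)\ne 0$. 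By Lemma \ref{triiso} applied to the two $\delta$-stable pairs $(G_1,s_1)$ and $(\text{gr}'_{j_0},\bar s'_{j_0})$ of equal parabolic slope, $\phi_{j_0}$ is an isomorphism. This lets one build a new Jordan--H\"older filtration of $(E,s)/(G_1,s_1)$ whose associated graded agrees with $\bigoplus_{j\ne j_0}(\text{gr}'_j,\bar s'_j)$, by replacing $G'_j$ for $j\geq j_0$ by the image of $G_1+G'_{j-1}$ modulo $G_1$. Applying the induction hypothesis to the quotient then identifies $\bigoplus_{i\geq 2}(\text{gr}_i,\bar s_i)$ with $\bigoplus_{j\ne j_0}(\text{gr}'_j,\bar s'_j)$ as parabolic $N$-pairs, and together with $(\text{gr}_1,\bar s_1)\cong(\text{gr}'_{j_0},\bar s'_{j_0})$ this gives $\text{gr}(E,s)\cong\bigoplus_j(\text{gr}'_j,\bar s'_j)$.

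The main obstacle I anticipate is bookkeeping in the uniqueness step: one must verify that the filtrations induced on subobjects and quotients inherit the correct parabolic weights/flags, so that the morphisms one writes down really are parabolic and Lemma \ref{triiso} applies. Once one grants (as the excerpt does) that parabolic subbundles and quotients carry canonical induced parabolic structures compatible with the additivity of $d_{\text{par}}$, the remainder of the argument is the standard diagram chase.
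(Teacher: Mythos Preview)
Your proposal is correct and is precisely the standard argument the paper has in mind: the paper's own proof consists solely of the sentence ``The proof is standard. See for example the proof of \cite[Proposition 1.5]{lehn2} in the case of semistable sheaves,'' and what you have written is exactly that adaptation, using Lemma~\ref{triiso} in place of the usual simplicity of stable objects. The only cosmetic point is that your parenthetical ``(and, among those, minimal parabolic degree)'' is redundant once the rank and slope are fixed, but this does not affect the argument.
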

\begin{proof}
The proof is standard. See for example the proof of \cite[Proposition 1.5]{lehn2} in the case of semistable sheaves. \end{proof}

For $\delta$-semistable parabolic $N$-pairs of rank $n$ and degree $d$, we have the following boundedness result.
\begin{lemma}\label{genvanish}
Let $(E,s)$ be a $\delta$-semistable parabolic $N$-pair. Suppose that 
\[
\mu_{\emph{par}}(E,s)>2g-1+|I|+\delta.\] Then 
$H^1(E)=0$ and $E$ is globally generated, i.e., the morphism 
\[
H^0(E)\otimes\ca{O}_C\rightarrow E
\]
is surjective.
\end{lemma}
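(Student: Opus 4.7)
The plan is to mirror the proof of Lemma \ref{genvanish0}, replacing the ordinary slope by the parabolic slope and carefully tracking the contribution of the parabolic weights. First, I would reduce both statements to the single claim that $H^1(E(-p))=0$ for every point $p\in C$: this implication comes from the long exact sequence associated to
\[
0\to E(-p)\to E\to E_p\to 0,
\]
which simultaneously yields $H^1(E)=0$ and surjectivity of $H^0(E)\to E_p$, giving global generation at each $p$.

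Suppose for contradiction that $H^1(E(-p))\neq 0$ for some $p\in C$. By ordinary Serre duality on the underlying bundle, $H^1(E(-p))\cong H^0(E^\vee\otimes\omega_C(p))^\vee$, so there exists a nonzero morphism $\phi:E\to\omega_C(p)$. Let $L=\phi(E)\subset\omega_C(p)$; since $\omega_C(p)$ has rank one, $L$ is a rank-one subsheaf with $d(L)\leq 2g-1$. Setting $E'=\ker\phi$ gives an exact sequence of parabolic bundles $0\to E'\to E\to L\to 0$, in which $L$ carries the induced quotient parabolic structure. With $s''=\phi\circ s$, the pair $(L,s'')$ is the corresponding quotient parabolic $N$-pair of $(E,s)$.

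The new ingredient relative to Lemma \ref{genvanish0} is estimating $d_{\mathrm{par}}(L)$. Because $L$ has rank one, at each marked point $p\in I$ it inherits a single parabolic weight, equal to $a_{j,p}$ for some $j\leq l_p$, and hence is at most $l-1$ by Definition \ref{defofpar}. Therefore
\[
d_{\mathrm{par}}(L)\leq d(L)+\frac{|I|(l-1)}{l}<2g-1+|I|.
\]
Invoking the quotient formulation of parabolic $\delta$-semistability, $\mu_{\mathrm{par}}(E,s)\leq \mu_{\mathrm{par}}(L,s'')=d_{\mathrm{par}}(L)+\theta(s'')\delta$, and combining with $\theta(s'')\leq 1$, I obtain
\[
\mu_{\mathrm{par}}(E,s)<2g-1+|I|+\delta,
\]
contradicting the hypothesis.

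The only auxiliary step that needs verification is the equivalence between the sub-pair and quotient formulations of parabolic $\delta$-semistability for $N$-pairs. This follows immediately from additivity of parabolic degree on short exact sequences of parabolic bundles together with the easy identity $\theta(s_{E'})+\theta(s_Q)=\theta(s)$, according to whether $s$ factors through $E'$ or not; the same arithmetic that works in the non-parabolic case of Lemma \ref{genvanish0} carries over verbatim. No genuine obstacle is expected beyond this bookkeeping, and the $|I|$ shift in the hypothesis is precisely what absorbs the worst-case parabolic contribution $|I|(l-1)/l$ at the quotient $L$.
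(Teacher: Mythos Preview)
Your proposal is correct and follows essentially the same approach as the paper's proof: reduce to $H^1(E(-p))=0$, use Serre duality to produce a quotient line bundle $L\subset\omega_C(p)$ with $d(L)\leq 2g-1$, bound $d_{\text{par}}(L)<2g-1+|I|$ via the single parabolic weight at each marking, and contradict $\delta$-semistability in its quotient form. The paper's proof is more terse (it simply asserts $2g-1+|I|+\delta\geq d_{\text{par}}(L)+\theta(s'')\delta$), while you spell out the weight estimate and the sub/quotient equivalence, but the argument is the same.
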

\begin{proof}
The proof is similar to that of Lemma \ref{genvanish0}.
It suffices to show that $H^1(E(-p))=0$ for any point $p\in E$. Suppose $H^1(E(-p))\neq0$. By Serre duality, we have $H^1(E(-p))=(H^0(E^\vee\otimes \omega_C(p))^\vee$, where $\omega_C$ is the dualizing sheaf of $C$. Therefore a nonzero element in $H^1(E(-p))$ induces a nonzero morphism $\phi:E\rightarrow \omega_C(p)$. Let $L$ be the image sheaf of $\phi$. Since $L$ is a subsheaf of $\omega_C(p)$, we have $\text{deg}(L)\leq 2g-1$. Let $s''$ be the induced $N$ sections of $L$. It follows that $\mu_\text{par}(E,s)>2g-1+|I|+\delta\geq d_{\text{par}}(L)+\theta(s'')\delta$, which contradicts the $\delta$-semistability of $(E,s)$.
\end{proof}

\begin{corollary}\label{bounds2}
Fix the rank $n$, degree $d$ and the parabolic type $(\underline{a},\underline{m})$. The family of vector bundles underlying $\delta$-semistable parabolic $N$-pairs of rank $n$, degree $d$ and parabolic type $(\underline{a},\underline{m})$ on a smooth curve $C$ is bounded.\end{corollary}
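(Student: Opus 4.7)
The plan is to realize the family as a bounded subfamily of a single Quot scheme via the standard twist-and-quotient device: produce one integer $m$, depending only on the fixed numerical data $n$, $d$, $\underline{a}$, $\underline{m}$, $\delta$, $g$ and $|I|$, such that for every member $(E,s)$ of the family the twist $E(m):=E\otimes \ca{O}_C(m)$ satisfies $H^1(E(m))=0$ and is globally generated. Granted this, $h^0(E(m))$ is determined by Riemann-Roch and every $E$ is a quotient of the fixed trivial bundle $\ca{O}_C(-m)^{h^0(E(m))}$ with fixed Hilbert polynomial, so the family embeds into one Quot scheme of finite type.

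The crux is a uniform upper bound on the slopes of subbundles. Given any subbundle $F\subsetneq E$ equipped with its induced parabolic structure, the induced sub-pair is either $(F,s)$, when $s\in H^0(F\otimes \ca{O}_C^N)$, or $(F,0)$ otherwise; in both cases the $\delta$-semistability of $(E,s)$ forces $\mu_{\text{par}}(F)\leq \mu_{\text{par}}(E)+\delta/n$, using $r(F)\leq n$ and $\delta>0$ in the first case. Since the induced parabolic weights on $F$ are inherited from those of $E$ and hence nonnegative, $\mu(F)\leq \mu_{\text{par}}(F)\leq \mu_{\text{par}}(E)+\delta/n=:C_0$, a constant depending only on the fixed data. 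Dualizing, every line-bundle quotient $Q$ of $E$ therefore satisfies $\deg(Q)\geq d-C_0(n-1)=:C_1$.

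With these bounds in hand, essentially the Serre-duality argument of Lemma \ref{genvanish}, now using the lower bound $\deg(Q)\geq C_1$ on line-bundle quotients in place of $\delta$-semistability, shows that $H^1(E(m)(-p))=0$ for every $p\in C$ as soon as $m>2g-1-C_1$: a nonzero class there would produce a nonzero morphism $E(m)\to\omega_C(p)$ whose image is a line-bundle quotient $L$ of $E(m)$ with $\deg(L)\leq 2g-1$, so $L(-m)$ would be a line-bundle quotient of $E$ with $\deg(L(-m))\leq 2g-1-m<C_1$, a contradiction. The short exact sequence $0\to E(m)(-p)\to E(m)\to E(m)|_p\to 0$ then yields both $H^1(E(m))=0$ and global generation of $E(m)$, finishing the proof. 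The only nontrivial step is the casework in the middle paragraph -- verifying that both the \emph{saturated} subbundle case and the \emph{non-saturated} case give the same uniform bound on $\mu_{\text{par}}(F)$; the remainder is a routine application of the Quot-scheme boundedness paradigm, so I expect that casework to be the main obstacle.
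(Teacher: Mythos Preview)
Your proposal is correct and follows the same overall strategy as the paper: find a uniform twist $m$ so that $H^1(E(m))=0$ and $E(m)$ is globally generated, then embed everything into a single Quot scheme. The paper's proof is shorter only because it invokes Lemma~\ref{genvanish} (proved immediately before) to obtain the vanishing directly, and then cites a standard boundedness lemma from Huybrechts--Lehn; you instead re-derive the vanishing inline by bounding $\mu(F)$ for all subbundles and translating this into a lower bound on degrees of line-bundle quotients. Both routes are standard and equivalent; your version makes the slope estimate more explicit, while the paper's works with the quotient pair $(L,s'')$ and parabolic slopes directly (which is why the constant $|I|$ appears in their threshold but not in yours). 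One cosmetic remark: the word ``dualizing'' in your middle paragraph is a slip---you are simply passing to the kernel, not taking duals---and the saturated/non-saturated distinction you flag at the end is not actually needed, since the definition of sub-pair already requires a subbundle.
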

\begin{proof} 
Let $\ca{O}(1)$ be a locally free sheaf of degree one on $C$. By Lemma \ref{genvanish}, we have $H^1(E(m))=0$ if
\[
m+\parslo(E,s)>2g-1+|I|+\delta.
\]
The boundedness of $\delta$-semistable pairs follows from \cite[Lemma 1]{lehn2}.
\end{proof}

The following lemma shows that for a bounded family of parabolic $N$-pairs, the family of the factors of their Harder-Narasimhan filtrations is also bounded.
\begin{lemma}\label{bdfil}
Let $T$ be a scheme of finite type. Suppose $S:\ca{O}_{T\times C}^N\rightarrow\ca{E}$ is a flat family of parabolic $N$-pairs over $T\times C$. For any closed point $t\in T$, we denote by $\{(\emph{gr}_i^t,s_i^t)\}_i$ the Harder-Narasimhan factors of $(\ca{E}_t,S_t)$, where $\ca{E}_t=\ca{E}|_{\emph{Spec}\,k(t)\times C}$ and $S_t$ is the restriction of the $N$ sections to the fiber over $t$. Then the family $\{(\emph{gr}_i^t,s_i^t)\}_{i,t\in T}$ is bounded.
\end{lemma}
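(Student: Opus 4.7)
My plan is to reduce this to Corollary \ref{bounds2} by showing that only finitely many discrete invariants (rank, degree, parabolic type) can occur for the Harder-Narasimhan factors $(\mathrm{gr}_i^t, s_i^t)$ as $t$ ranges over $T$, and then invoking Corollary \ref{bounds2} for each.

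First I would enumerate the possible invariants of an HN factor. The rank is at most $n$. The induced parabolic structure on a sub-quotient of $\ca{E}_t$ has parabolic weights drawn from the finite set $\bigcup_{p \in I}\{a_{i,p}\}$ with multiplicities bounded by $n$, so only finitely many parabolic types $(\underline{a}',\underline{m}')$ can arise. For degrees, note that since $T$ is of finite type, the family $\{\ca{E}_t\}_{t\in T}$ of underlying vector bundles on $C$ is bounded, so by Grothendieck's boundedness theorem the ordinary maximal slope $\mu_{\max}(\ca{E}_t)$ is uniformly bounded above. The parabolic correction $|a'|/(lr') + \delta\theta(s')/r'$ is bounded since $\delta$ and the parabolic weights are bounded, so the parabolic slopes of all sub-pairs of all $(\ca{E}_t, S_t)$ are uniformly bounded above; dually they are bounded below. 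Since the HN factor slopes are sandwiched between $\parslo^{\max}$ and $\parslo^{\min}$ of the ambient pair, and ranks are bounded, the set of possible degrees of HN factors is finite.

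Next, I would stratify $T$ by HN type. Upper semicontinuity of the Harder-Narasimhan polygon (a Shatz-type theorem, adapted to the parabolic $N$-pair setting) implies that for each HN type $\tau$ the locus $T_\tau \subset T$ where $(\ca{E}_t, S_t)$ has type $\tau$ is locally closed, and by the previous paragraph only finitely many $\tau$ occur, so $T = \bigsqcup_\tau T_\tau$ is a finite stratification. Over each $T_\tau$, after passing to an étale cover if necessary, the relative Harder-Narasimhan filtration of $\ca{E}|_{T_\tau \times C}$ exists (uniqueness and flatness force the fiberwise HN filtration to glue), and its successive quotients give relative flat families of $\delta$-semistable parabolic $N$-pairs of fixed rank, degree, and parabolic type over a scheme of finite type. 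Each such family is bounded by Corollary \ref{bounds2}, and a finite union of bounded families is bounded.

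The main obstacle is justifying the existence of the relative HN filtration on each stratum in the parabolic $N$-pair setting; this is a routine but slightly technical generalization of the classical argument for torsion-free sheaves, using uniqueness of the fiberwise HN filtration together with flatness of $\ca{E}$ over $T$ and the additivity of parabolic degree on short exact sequences to propagate the filtration to a neighborhood inside $T_\tau$. Everything else is a direct bookkeeping of the finitely many numerical invariants that can appear.
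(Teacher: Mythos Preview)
Your argument is correct and is essentially the standard proof; the paper itself gives no argument and simply refers to \cite[Lemma~9]{lin} for the non-parabolic case, whose proof follows the same pattern you outline.

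One simplification: your step~2 (stratification by HN type and construction of the relative HN filtration) is more than you need. Once you have established in step~1 that only finitely many triples (rank, degree, parabolic type) occur among the HN factors, you are done: each $(\mathrm{gr}_i^t,s_i^t)$ is by construction a $\delta$-semistable parabolic $N$-pair of one of these finitely many numerical types, and Corollary~\ref{bounds2} directly bounds each such family. A finite union of bounded families is bounded. The relative HN filtration over the Shatz strata would instead exhibit the factors as flat families over finite-type bases, which also gives boundedness but without needing Corollary~\ref{bounds2}; invoking both is redundant. Either route works, but you can drop one of the two.
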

\begin{proof}
The proof is identical to that of Lemma 9 in \cite{lin} for $N$-pairs without parabolic structures.
\end{proof}

\subsection{GIT construction of the moduli stack of $\delta$-stable parabolic $N$-pairs}\label{gitconstr}
In this section, we show that the moduli stack $\mathfrak{M}^{\text{par}}_{C}(\text{Gr}(n,N), d,\underline{a})$ of parabolic $N$-pairs is an Artin stack, locally of finite type. For a generic value of $\delta\in\bb{Q}_+$ (see Definition \ref{genval}), we prove that the substack $\spp$ parametrizing non-degenerate $\delta$-stable parabolic $N$-pairs is a projective variety. In fact, we will construct it using geometric invariant theory (GIT). Throughout the discussion, we fix the degree $d$, rank $n$, parabolic weights $\underline{a}=(a_{i,p})$ and their multiplicities $\underline{m}=(m_p)_{p\in I}$, where $m_p=(m_{i,p})$.

\begin{definition}
Let $T$ be a scheme. A family of parabolic $N$-pairs $(\ca{E},\{f_p\},S)$ over $T$ is a locally free sheaf $\ca{E}$, flat over $T$, together with a morphism of sheaves $\ca{O}_{T\times C}^N\rightarrow\ca{E}$ on $T\times C$ and a section $f_p$ of the relative flag variety $\text{Fl}_{\,m_p}(\ca{E}|_{T\times\{p\}})$ of type $m_p$ for each $p\in I$.

An isomorphism $(\ca{E},\{f_{p}\},S)\rightarrow(\ca{E}',\{f'_p\},S')$ of families of parabolic $N$-pairs over $T$ is given by a parabolic isomorphism $\Phi:\ca{E}\rightarrow \ca{E}'$ such that $\Phi(S)=S'$.
\end{definition}
Let $\mpp$ be the groupoid of parabolic $N$-pairs of rank $n$, degree $d$ and type $(\underline{a},\underline{m})$. Let $\mpb$ be the groupoid of parabolic vector bundles with the same numerical data. It is easy to see that $\mpb$ is a fiber product of flag bundles over the moduli stack of vector bundles $\fr{Bun}_C(d,n)$. The moduli stack $\fr{Bun}_C(d,n)$ is a smooth Artin stack (see, for example, \cite{heinloth}). Therefore, $\mpb$ is also a smooth Artin stack. There is a representable forgetful morphism $q:\mpp\rightarrow\mpb$. Let $\fr{E}$ be the universal vector bundle over $\mpb\times C$ and let $\pi:\mpb\times C\rightarrow \mpb$ be the projection. Let $\omega$ be the relative dualizing sheaf of $\pi$, which is just the pullback of the cotangent sheaf $\omega_C$ of $C$ along the second projection to $C$.

\begin{proposition}\label{pro17}
There is a natural isomorphism of $\mathfrak{Bun}^{\emph{par}}_{C}(d,n,\underline{a})$-stacks
\[
\mathfrak{M}^{\emph{par}}_{C}(\emph{Gr}(n,N), d,\underline{a})\rightarrow\emph{Spec\,Sym}\big(R^1\pi_*\big((\fr{E}^\vee)^N\otimes\omega\big)\big).
\]
In particular, $\mathfrak{M}^{\emph{par}}_{C}(\emph{Gr}(n,N), d,\underline{a})$ is an abelian cone over $\mathfrak{Bun}^{\emph{par}}_{C}(d,n,\underline{a})$.
\end{proposition}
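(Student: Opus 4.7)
The plan is to identify both sides with the same functor on $\mpb$-schemes, namely the functor sending $f: T \to \mpb$ to the abelian group $\Gamma(T \times C, \fr{E}^N|_T)$ of $N$-tuples of sections of the pulled-back parabolic bundle. For the left-hand side this is essentially tautological: upgrading a $T$-family $(\ca{E}, \{f_p\})$ of parabolic vector bundles to a parabolic $N$-pair amounts to choosing a morphism $\ca{O}^N_{T \times C} \to \ca{E}$ (no stability being imposed at the stack level), so the forgetful morphism $q: \mpp \to \mpb$ is relatively representable with the stated fiber functor.

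For the right-hand side I would invoke relative Grothendieck--Serre duality along the smooth projective morphism $\pi: \mpb \times C \to \mpb$ of relative dimension one, whose relative dualizing complex is $\omega[1]$. Since $\fr{E}^N$ is $\mpb$-flat, $R\pi_*(\fr{E}^N)$ is a perfect complex of Tor-amplitude $[0,1]$; locally on $\mpb$ (or after passing to a smooth atlas) it can be represented by a two-term complex $[E^0 \xrightarrow{d} E^1]$ of locally free sheaves. Grothendieck duality then yields a canonical isomorphism
$$R\pi_*\big((\fr{E}^\vee)^N \otimes \omega\big)[1] \;\cong\; R\hm_{\ca{O}_{\mpb}}\!\big(R\pi_*(\fr{E}^N),\, \ca{O}_{\mpb}\big),$$
and taking $H^0$ of both sides gives
$$R^1\pi_*\big((\fr{E}^\vee)^N \otimes \omega\big) \;\cong\; \mathrm{coker}\!\left(d^\vee: E^{1,\vee} \to E^{0,\vee}\right).$$

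Finally, unwinding the definition, $\mathrm{Spec\,Sym}\big(R^1\pi_*((\fr{E}^\vee)^N \otimes \omega)\big)$ represents the functor $T \mapsto \Hom_{\ca{O}_T}(\mathrm{coker}(d^\vee)|_T, \ca{O}_T) = \ker(d|_T)$, and by cohomology and base change applied to the perfect complex $R\pi_*(\fr{E}^N)$, this kernel coincides with $\Gamma(T \times C, \fr{E}^N|_T)$. Matching the two functor descriptions produces the desired natural isomorphism, and the abelian cone structure over $\mpb$ is visible on the right-hand side. The main delicate point is that the argument must be independent of the chosen local resolution $[E^0 \to E^1]$ and must globalize over $\mpb$, where the resolution property may fail; this is automatic because the functor being represented is canonical, so the locally constructed isomorphisms patch uniquely. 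Equivalently, one may phrase the entire identification purely on the level of functors of points without ever exhibiting a global two-term resolution.
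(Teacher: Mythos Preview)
Your proposal is correct and is precisely the standard argument that the paper defers to by citing \cite[Proposition 1.8]{sca}: identify both sides with the functor $T\mapsto\Gamma(T\times C,\fr{E}^N|_T)$, use Grothendieck--Serre duality along $\pi$ to rewrite $R^1\pi_*((\fr{E}^\vee)^N\otimes\omega)$ as the cokernel of the transpose of a local two-term resolution of $R\pi_*(\fr{E}^N)$, and then invoke cohomology and base change for perfect complexes. Since the paper gives no details beyond the citation, your write-up supplies exactly what that reference contains, and there is nothing to contrast.
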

\begin{proof}
The same arguments given in the proof of \cite[Proposition 1.8]{sca} apply here.
\end{proof}
\begin{corollary}
The moduli stack of parabolic $N$-pairs $\mathfrak{M}^{\emph{par}}_{C}(\emph{Gr}(n,N), d,\underline{a})$ is an Artin stack and the forgetful morphism $q:\mathfrak{M}^{\emph{par}}_{C}(\emph{Gr}(n,N), d,\underline{a})\rightarrow \mathfrak{Bun}^{\emph{par}}_{C}(d,n,\underline{a})$ is strongly representable.
\end{corollary}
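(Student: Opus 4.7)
The plan is to read this off Proposition \ref{pro17} together with the description of $\mathfrak{Bun}^{\text{par}}_{C}(d,n,\underline{a})$ already recorded in the excerpt. By Proposition \ref{pro17}, there is an isomorphism
\[
\mathfrak{M}^{\text{par}}_{C}(\text{Gr}(n,N), d,\underline{a})\;\cong\; \text{Spec\,Sym}\bigl(R^1\pi_*\bigl((\mathfrak{E}^\vee)^N\otimes\omega\bigr)\bigr)
\]
of stacks over $\mathfrak{Bun}^{\text{par}}_{C}(d,n,\underline{a})$. In particular, the forgetful morphism $q$ is identified with the structural map of this relative Spec, and relative Spec of a quasi-coherent sheaf of algebras on an algebraic stack is an affine morphism and so in particular strongly representable. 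This gives the strong representability of $q$.

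For the Artin property, I would first recall what the excerpt has already set up: $\mathfrak{Bun}_C(d,n)$ is a smooth Artin stack (\cite{heinloth}), and $\mathfrak{Bun}^{\text{par}}_{C}(d,n,\underline{a})$ is obtained from it as a fiber product of relative flag bundles of types $m_p$ for $p\in I$, so it too is a smooth Artin stack. Since strongly representable (in fact affine) morphisms pull back representability, the total space $\mathfrak{M}^{\text{par}}_{C}(\text{Gr}(n,N), d,\underline{a})$ is an Artin stack.

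There is really no obstacle here beyond invoking the right general fact: once Proposition \ref{pro17} is in hand, the corollary is a formal consequence of the two statements that (i) $\text{Spec\,Sym}$ of a quasi-coherent sheaf on an algebraic stack produces an affine (hence strongly representable) morphism, and (ii) an algebraic stack admitting a representable morphism to an Artin stack is again an Artin stack. The only small subtlety worth a sentence is confirming that $R^1\pi_*((\mathfrak{E}^\vee)^N\otimes\omega)$ is indeed quasi-coherent on $\mathfrak{Bun}^{\text{par}}_{C}(d,n,\underline{a})$, which follows from flat base change for the smooth proper projection $\pi$ and the fact that $\mathfrak{E}$ is the universal locally free sheaf; this is automatic from the setup of the proposition. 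Thus the proof reduces to at most a couple of lines, essentially just citing Proposition \ref{pro17} and the smooth Artin structure on $\mathfrak{Bun}^{\text{par}}_{C}(d,n,\underline{a})$.
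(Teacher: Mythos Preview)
Your argument is correct and is exactly the approach the paper intends: the corollary is stated without proof immediately after Proposition~\ref{pro17}, so the authors regard it as a formal consequence of that proposition together with the already-recorded fact that $\mathfrak{Bun}^{\text{par}}_{C}(d,n,\underline{a})$ is a smooth Artin stack. Your derivation via the affineness of relative $\mathrm{Spec\,Sym}$ and the stability of Artin stacks under representable morphisms is precisely what is implicit in the paper.
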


\begin{definition}\label{genval}
A value of $\delta\in\bb{Q}_+$ is called generic if there is no strictly $\delta$-semistable $N$-pairs. Otherwise, $\delta$ is called critical. A critical value of $\delta$ is also called a wall.
\end{definition}
Let $\spp$ be the substack of $\mpp$ which parametrizes non-degenerate $\delta$-stable $N$-pairs $(E,s)$. In the following, we will use GIT to give an alternate construction of $\spp$, modeled on the construction of moduli spaces of (semi)stable pairs given in \cite{lin}.

The semistability condition of parabolic $N$-pairs can be described in terms of dimensions of global sections. We fix an ample line bundle $\ca{O}(1)$ on $C$ of degree one. For any locally free sheaf $E$ on $C$, we define $E(m):=E\otimes\ca{O}(1)^{\otimes m}$. If $E$ is a parabolic vector bundle, there is a natural parabolic structure on $E(m)$. Given a non-degenerate parabolic $N$-pair $(E,s)$ of degree $d$, rank $n$ and parabolic type $(\underline{a},\underline{m})$, we define
\[
\mu_{\text{par}}^\delta(m):=\parslo(E(m))+\frac{\delta}{r(E)}=\frac{d+nm}{n}+\frac{|\underline{a}|}{nl}+\frac{\delta}{n}\]

Before we describe the GIT construction, we recall the special cases for curves of the Le Potier-Simpson estimate and a boundedness result due to Grothendieck. The Le Potier-Simpson estimate allows us to give uniform bounds for the dimension of global sections of a vector bundle in terms of its slope. We refer the reader to \cite[Theorem 3.3.1]{lehn2} and \cite[Corollary 1.7]{simpson} for the general theorem in higher dimensions. Suppose the Harder-Narasimhan filtration of a vector bundle $E$ with respect to the ordinary slope $\mu$ is given by
\[
0\subsetneq E_1\subsetneq E_2\subsetneq\dots\subsetneq E_k=E.
\]
Define $\mu_{\text{max}}(E)=\mu(E_1)$ and $\mu_{\text{min}}(E)=\mu(E_k/E_{k-1})$. Denote $[t]_+:=\text{max}\{0,t\}$ for any real number $t$.
\begin{lemma}[Le Potier-Simpson]\label{simp}
Let $C$ be a smooth curve. For any locally free sheaf $F$ on $C$, we have
\[
\frac{h^0(F)}{r(F)}\leq \big[\mu_{\emph{max}}(F)+c\big]_+,
\]
where $r(F)=\emph{rank}\,F$ and the constant $c:=r(F)(r(F)+1)/2-1$.
\end{lemma}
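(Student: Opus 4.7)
The plan is to prove the bound by induction on the rank $r = r(F)$, after first reducing to the semistable case via the Harder-Narasimhan filtration.

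The first step is a reduction. Given $F$ with Harder-Narasimhan filtration $0 = F_0 \subsetneq F_1 \subsetneq \cdots \subsetneq F_k = F$ and semistable subquotients $G_i := F_i/F_{i-1}$, the long exact sequences in cohomology give $h^0(F) \leq \sum_i h^0(G_i)$. Since $\mu(G_i) \leq \mu_{\max}(F)$ for all $i$ and $\sum_i r(G_i) = r$, once the inequality is established for semistable sheaves, summing the bounds and using monotonicity of $c_r$ in $r$ yields the general statement. So it suffices to prove the inequality when $F$ is semistable.

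The base case $r = 1$ is direct: for a line bundle $L$, we have $h^0(L) = 0$ when $\deg L < 0$ and $h^0(L) \leq \deg L + 1$ otherwise, which gives the required estimate with a small amount of slack absorbed into the constant $c$.

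The inductive step is the heart of the argument. Assume the bound holds for semistable bundles of rank less than $r$, and let $F$ be semistable of rank $r$ and slope $\mu$. If $\mu < 0$, any nonzero section would give a line subbundle of nonnegative degree, contradicting semistability, so $h^0(F) = 0$. Otherwise, choose a line subbundle $L \subset F$ of \emph{maximal} degree; semistability gives $\deg L \leq \mu$. Forming the short exact sequence
\[
0 \to L \to F \to Q \to 0
\]
with $Q$ locally free of rank $r - 1$, for any nonzero $Q' \subset Q$ of rank $s$ the preimage $F' \subset F$ has rank $s+1$ and degree $\deg L + \deg Q'$, so semistability of $F$ yields
\[
\mu(Q') \leq \mu + \frac{\mu - \deg L}{s}.
\]
Choosing $L$ of maximal degree (rather than, say, the saturation of a given section) is crucial: it keeps $\mu_{\max}(Q)$ close to $\mu(F)$. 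Applying the inductive hypothesis to $Q$ and combining with $h^0(L) \leq \deg L + 1$ via $h^0(F) \leq h^0(L) + h^0(Q)$ yields the inductive bound; the quadratic form $c_r = r(r+1)/2 - 1$ emerges by summing the $+1$ contribution at each rank from $1$ to $r$.

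The main obstacle will be the sharpness of the estimate on $\mu_{\max}(Q)$ in the inductive step. A naive choice of $L$ (for instance, the saturated image of an arbitrary nonzero section, which only guarantees $\deg L \geq 0$) allows $\mu_{\max}(Q)$ to be as large as $2\mu$, which would force the constants to accumulate geometrically with the rank rather than quadratically. Selecting $L$ of maximal degree, together with careful bookkeeping of the additive constants across the induction, is what produces the claimed closed-form $c_r = r(r+1)/2 - 1$.
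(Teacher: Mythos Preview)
The paper does not actually prove this lemma: it is quoted as the curve case of the Le Potier--Simpson estimate, with a reference to \cite[Theorem 3.3.1]{lehn2} and \cite[Corollary 1.7]{simpson} for the general result. So there is no proof in the paper to compare against, and your argument must stand on its own.

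Your reduction to the semistable case via the Harder--Narasimhan filtration is correct, and inducting on the rank is a natural strategy. However, the inductive step has a genuine gap. With $L\subset F$ a line subbundle of maximal degree $\ell$ and $Q=F/L$, your own computation gives only $\mu_{\max}(Q)\le 2\mu-\ell$, so that
\[
h^0(F)\le (\ell+1)+(r-1)\bigl[\,2\mu-\ell+c_{r-1}\bigr]_+.
\]
For this to be bounded by $r[\mu+c_r]_+$ with $c_r$ depending only on $r$, you need $(r-2)(\mu-\ell)$ to be bounded independently of the curve. For $r=2$ this is vacuous and the step goes through, but for $r\ge 3$ it requires a lower bound on $\ell$ in terms of $\mu$ alone. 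No such bound exists: for a generic stable bundle on a curve of genus $g$ the gap $\mu-\ell$ is of order $g$ (this is the content of the Lange/Mukai--Sakai bounds on maximal subbundles, which are essentially sharp). So ``choosing $L$ of maximal degree'' does not by itself keep $\mu_{\max}(Q)$ close to $\mu$; the genus intrudes, and the advertised constant cannot emerge from this induction. Separately, the base case already fails with the stated constant: for $r=1$ one has $c_1=0$, yet $h^0(\mathcal{O}_C)=1>[0]_+=0$.

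On a curve there is a much simpler argument that avoids inducting on the rank. If $F$ is semistable of slope $\mu\ge 0$, twist by $\mathcal{O}_C(-p)$ for a point $p$; after $\lfloor\mu\rfloor+1$ twists the slope is negative and the bundle has no sections, while each twist drops $h^0$ by at most $r$. Hence $h^0(F)/r\le \lfloor\mu\rfloor+1\le \mu+1$. Together with the HN reduction this gives the lemma with the uniform constant $c=1$, which is stronger than the stated bound and is all the paper actually needs in the proof of Lemma~\ref{globalsec}.
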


The following lemma is on the boundedness of subsheaves. We refer the reader to \cite[Lemma 1.7.9]{lehn2} for the general results
\begin{lemma}[Grothendieck]\label{groth}
Let $C$ be a smooth curve and let $F$ be a locally free sheaf on $C$. Then the family of subsheaves $F'\subset F$ with slopes bounded below, such that the quotient $F/F'$ is locally free, is bounded.
\end{lemma}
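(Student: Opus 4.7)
The plan is to reduce the family to a finite union of open subschemes of projective Quot schemes and then invoke projectivity of Grothendieck's $\mathrm{Quot}$ for a fixed sheaf on a curve. Since any subsheaf $F' \subset F$ has $r(F') \leq r(F)$, the rank is confined to the finite set $\{1,\dots,r(F)\}$, so it suffices to bound, for each fixed rank $r'$, the degrees $\deg(F')$ that can occur.

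For the degree bounds, let $c_0$ denote the assumed lower bound on $\mu(F')$. The hypothesis immediately gives $\deg(F') \geq r' c_0$. For an upper bound I would exploit local freeness of $F/F'$. On the smooth curve $C$ every torsion-free coherent sheaf is locally free, so requiring $F/F'$ to be locally free is equivalent to requiring $F'$ to be saturated in $F$. Passing to top exterior powers then exhibits $\det F'$ as a saturated rank-one subsheaf, and hence a line subbundle, of the fixed vector bundle $\Lambda^{r'} F$. Any line subbundle of a vector bundle on a smooth curve has degree at most $\mu_{\max}$ of that bundle (by applying the bound one term at a time along the Harder-Narasimhan filtration of $\Lambda^{r'} F$), so we obtain $\deg(F') = \deg(\det F') \leq \mu_{\max}(\Lambda^{r'} F)$, a finite constant depending only on $F$ and $r'$.

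With $r'$ and $\deg(F')$ confined to finitely many values, the set of quotients $q \colon F \twoheadrightarrow Q$ with $Q$ locally free of rank $r(F)-r'$ and degree $\deg(F) - \deg(F')$ forms an open subscheme of Grothendieck's Quot scheme $\mathrm{Quot}(F, r(F)-r', \deg(F)-\deg(F'))$, which is projective of finite type. The total parameter space for $\{F' \subset F\}$ satisfying the hypotheses is therefore contained in a finite disjoint union of projective schemes, which gives the desired boundedness.

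The step I expect to require the most care is the degree upper bound: one must correctly identify local freeness of the quotient with saturation of $F'$, pass to the determinantal embedding $\det F' \hookrightarrow \Lambda^{r'} F$, and invoke the classical fact that line subbundles of a fixed vector bundle on a smooth curve have uniformly bounded degrees. The rest of the argument reduces to a bookkeeping exercise in the Quot-scheme formalism.
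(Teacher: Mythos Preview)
Your argument is correct. The paper does not actually supply a proof of this lemma; it simply refers the reader to \cite[Lemma 1.7.9]{lehn2} for the general statement in higher dimensions. Your proposal gives a self-contained proof in the curve case along standard lines: bound the rank trivially, bound the degree from below by the slope hypothesis, bound the degree from above via the Pl\"ucker-type embedding $\det F' \hookrightarrow \Lambda^{r'} F$ and the inequality $\deg L \le \mu_{\max}(\Lambda^{r'} F)$ for any line subbundle $L$, and then parametrize by finitely many Quot schemes. Each step is sound; in particular, the saturation of $\det F'$ in $\Lambda^{r'} F$ does follow from $F'$ being a subbundle (locally extend a frame), and the degree bound for line subbundles is just the statement that every subsheaf has slope at most $\mu_{\max}$. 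One small remark: you do not actually need saturation of $\det F'$ for the upper bound, since $\mu(H) \le \mu_{\max}(G)$ holds for any nonzero subsheaf $H \subset G$; the saturation observation is correct but stronger than necessary.
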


Let $(E,s)$ be a non-degenerate parabolic $N$-pair. In the following discussion, we will always denote a sub-pair of $(E,s)$ by $(E',s')$, with the induced parabolic type $\underline{a}'$. Similarly, we will always denote a quotient pair of $(E,s)$ by $(E'',s'')$, with the induced parabolic type $\underline{a}''$.

\begin{lemma}\label{globalsec}
There exists an integer $m_0$ such that for any integer $m\geq m_0$, the following assertions are equivalent.
\begin{enumerate}[(1)]
\item The parabolic $N$-pair $(E,s)$ is stable.
\item For any nontrivial proper sub-pair $(E',s')$, 
\[
\frac{h^0(E'(m))+\theta(s')\delta}{\text{r}(E')}+\frac{|\underline{a}'|}{r(E')l}<\mu_{\emph{par}}^\delta(m)+1-g.
\]
\item For any proper quotient pair $(E'',s'')$ with $\text{r}(E'')>0$,
\[
\frac{h^0(E''(m))+\theta(s'')\delta}{\text{r}(E'')}{}+\frac{|\underline{a}''|}{r(E'')l}>\mu_{\emph{par}}^\delta(m)+1-g.
\]
\end{enumerate}
$\delta$-semistability can be characterized similarly by replacing $<$ by $\leq$ in \emph{(ii)} and \emph{(iii)}.
\end{lemma}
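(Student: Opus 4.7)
The proof follows the classical Le Potier--Simpson strategy adapted to parabolic $N$-pairs. The key inputs are the boundedness of the family of $\delta$-semistable parabolic $N$-pairs (Corollary \ref{bounds2}), Grothendieck's lemma on boundedness of subsheaves with lower-bounded slope (Lemma \ref{groth}), and the Le Potier--Simpson estimate (Lemma \ref{simp}). The first step is to choose $m_0$ sufficiently large so that (i) $H^1(E(m))=0$ for every $\delta$-semistable $(E,s)$ in the fixed topological class; (ii) for every sub-pair or quotient pair whose slope lies within the range that could potentially violate stability, the corresponding subsheaf or quotient lies in a bounded family (by Lemma \ref{groth}), on which I can force $H^1$-vanishing uniformly; and (iii) the crude Le Potier--Simpson bound dominates the right-hand side for the remaining sub-pairs.

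The implication (2) $\Rightarrow$ (1) is straightforward and requires no boundedness: since $h^0(E'(m)) \geq \chi(E'(m)) = r(E')(\mu(E'(m))+1-g)$ unconditionally,
\[\mu_{\text{par}}(E',s') + m + 1 - g \;\leq\; \frac{h^0(E'(m)) + \theta(s')\delta}{r(E')} + \frac{|\underline{a}'|}{r(E')l} \;<\; \mu_{\text{par}}^\delta(m) + 1 - g,\]
which simplifies to $\mu_{\text{par}}(E',s') < \mu_{\text{par}}(E,s)$. The same argument with $\leq$ in place of $<$ handles the semistable case.

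For the converse (1) $\Rightarrow$ (2), I would dichotomize sub-pairs by $\mu(E')$. If $\mu(E')$ exceeds the threshold making $(E',s')$ potentially destabilizing, then by the choice of $m_0$ we have $H^1(E'(m))=0$, so $h^0(E'(m)) = \chi(E'(m))$ and (2) reduces to the stability inequality $\mu_{\text{par}}(E',s') < \mu_{\text{par}}(E,s)$, which holds by hypothesis. If $\mu(E')$ lies below this threshold, then Le Potier--Simpson applied along the Harder--Narasimhan filtration of $E'$, together with the uniform bound $\mu_{\max}(E') \leq \mu_{\max}(E)$ coming from boundedness of the ambient family, yields an estimate of the form $h^0(E'(m))/r(E') \leq \mu(E') + m + C$ for a constant $C$ depending only on the fixed numerical data; for such low-slope $E'$ this is strictly smaller than the right-hand side. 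The equivalence (2) $\Leftrightarrow$ (3) follows from the short exact sequence $0 \to (E',s') \to (E,s) \to (E'',s'') \to 0$: for $m \geq m_0$ one has $h^0(E(m)) = h^0(E'(m)) + h^0(E''(m))$, and combined with $|\underline{a}| = |\underline{a}'| + |\underline{a}''|$ and $\theta(s') + \theta(s'') = 1$, a weighted-average argument on slopes gives the equivalence.

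The main obstacle is the low-slope case in (1) $\Rightarrow$ (2), which requires iterating Le Potier--Simpson along the Harder--Narasimhan filtration of $E'$ to extract a bound involving $\mu(E')$ (rather than merely $\mu_{\max}(E')$) and using boundedness of the HN factors — guaranteed by boundedness of $E$ itself together with Lemma \ref{bdfil}. The uniform control over parabolic weights (integers in $[0,l)$) ensures the $|\underline{a}'|/(r(E')l)$ terms contribute only bounded constants, independent of which sub-pair is considered, so that a single $m_0$ suffices uniformly.
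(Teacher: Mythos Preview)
Your overall strategy matches the paper's, but there are two genuine gaps in the execution.

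In $(1)\Rightarrow(2)$, the dichotomy on $\mu(E')$ and the claimed bound $h^0(E'(m))/r(E')\leq\mu(E')+m+C$ do not hold. Le Potier--Simpson applied along the Harder--Narasimhan filtration of $E'$ gives $h^0(E'(m))\leq\sum_i r_i[\mu_i+m+c]_+$, and since $[x]_+\geq x$ this upper bound can exceed $r(E')(\mu(E')+m+c)$ whenever some $\mu_i+m+c<0$. Concretely, if $E'=L_1\oplus L_2$ with $\deg L_1$ bounded and $\deg L_2=-K$ arbitrarily negative, then $h^0(E'(m))/2$ stays bounded while $\mu(E')+m+C\to-\infty$. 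The paper instead dichotomizes on $\nu:=\mu_{\min}(E')$: when $\nu$ is bounded below, Grothendieck's lemma gives boundedness of such $E'$ and hence $h^0=\chi$; when $\nu$ is very negative, the refined estimate
\[
\frac{h^0(E'(m))}{r(E')}\leq\Big(1-\frac{1}{n}\Big)[\mu+m+c]_++\frac{1}{n}[\nu+m+c]_+,
\]
with $\mu$ an upper bound for $\mu_{\max}(E)$, has the $\nu$-term driving the right-hand side below the required threshold.

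In your $(2)\Leftrightarrow(3)$ argument, the additivity $h^0(E(m))=h^0(E'(m))+h^0(E''(m))$ requires $H^1(E'(m))=0$, which cannot be arranged uniformly over all sub-pairs (the same low-$\mu_{\min}$ example has $h^1$ arbitrarily large). The paper argues asymmetrically: from $H^1(E(m))=0$ one does get $H^1(E''(m))=0$ for every quotient, so $h^0(E''(m))=\chi(E''(m))$. For $(2)\Rightarrow(3)$ one combines (2) with $h^0(E'(m))\geq\chi(E'(m))$ to deduce $\mu_{\text{par}}(E',s')<\mu_{\text{par}}(E,s)$, then uses additivity of parabolic $\delta$-slopes (not of $h^0$) to obtain $\mu_{\text{par}}(E'',s'')>\mu_{\text{par}}(E,s)$, which the quotient-side equality $h^0=\chi$ converts into (3). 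The direction $(3)\Rightarrow(1)$ is handled directly by the same quotient-side equality.
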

\begin{proof}
$(1)\Rightarrow (2)$: By Lemma \ref{bounds2} and Lemma \ref{bdfil}, there exists a constants $\mu$ such that $\mu_{\text{max}}(E)\leq \mu$. Let $(E',s')$ be a proper nontrivial sub-pair and let $\nu=\mu_{\text{min}}(E')$. It follows from Lemma \ref{simp} that there exists a constant $c$ depending only on $n$ such that
\begin{equation}\label{eq:usesim}
\frac{h^0(E'(m))}{r(E')}\leq (1-\frac{1}{n})[\mu+m+c]_++\frac{1}{n}[\nu+m+c]_+.
\end{equation}
Let $A>0$ be a constant satisfying $d+n(1-g)+nm\geq n(m-A)$. Since there are only finite many choices for $\theta(s')\delta/r(E')$ and $|\underline{a}'|/r(E')l$, it is possible to choose an integer $\nu_0$ such that
\begin{equation}\label{eq:chooseA}
(1-\frac{1}{n})\mu+\frac{1}{n}\nu_0+c+\frac{\theta(s')\delta}{r(E')}+\frac{|\underline{a}'|}{r(E')l}<-A+\frac{\delta}{n}+\frac{|\underline{a}|}{nl}.
\end{equation}
Enlarging $m_0$ if necessary, we can assume that $\mu+m+c$ and $\nu+m+c$ are positive. Therefore 
\begin{equation}\label{eq:wehaveeq}
(1-\frac{1}{n})[\mu+m+c]_++\frac{1}{n}[\nu+m+c]_+=(1-\frac{1}{n})\mu+\frac{1}{n}\nu+m+c.
\end{equation}
If $\nu\leq\nu_0$, then it follows from (\ref{eq:usesim}), (\ref{eq:wehaveeq}) and (\ref{eq:chooseA}) that
\begin{align*}
\frac{h^0(E'(m))+\theta(s')\delta}{r(E')}+\frac{|\underline{a}'|}{r(E')l}&< m- A+\frac{\delta}{n}+\frac{|\underline{a}|}{nl}\\
&\leq \frac{d+n(1-g)+nm}{n}+\frac{\delta}{n}+\frac{|\underline{a}|}{nl}\\
&=\mu_{\text{par}}^\delta(m)+1-g.
\end{align*}
If $\nu>\nu_0$, then by Grothendieck's Lemma \ref{groth}, the family of such $E'$ is bounded. Enlarging $m_0$ if necessary, we have
\[
h^0(E'(m))=\chi(E'(m))=d(E')+r(E')m+r(E')(1-g)
\]
for all $m\geq m_0$. By the $\delta$-stability of $(E,s)$, we have 
\[
\frac{h^0(E'(m))+\theta(s')\delta}{r(E')}+\frac{|\underline{a}'|}{r(E')l}=\mu_{\text{par}}(E',s')+m+1-g<\mu_{\text{par}}^\delta(m)+1-g.
\]
\\
$(2)\Rightarrow(3)$: Consider the short exact sequence
\[
0\rightarrow E'\rightarrow E\rightarrow E''\rightarrow 0.
\]
There exists an $m_0\in\bb{N}$ such that for all $m\geq m_0$, we have $H^1(E(m))=0$. It follows that $H^1(E''(m))=0$. Suppose $(h^0(E'(m))+\theta(s')\delta)/r(E')+|\underline{a}'|/(r(E')l)<\mu_{\text{par}}^\delta(m)+1-g
$. Since $\mu(E'(m))\leq h^0(E'(m))/r(E')$, we have $\mu_{\text{par}}(E',s')<\mu_{\text{par}}(E,s)$. It follows from the additivity of the parabolic $\delta$-slope of pairs that 
\begin{align*}
\mu_{\text{par}}^\delta(m)+1-g&=\mu_{\text{par}}(E,s)+m+1-g\\
&<\mu_{\text{par}}(E'',s'')+m+1-g\\
&=\frac{h^0(E''(m))+\theta(s'')\delta}{r(E'')}+\frac{|\underline{a}''|}{r(E'')l}.
\end{align*}
$(3)\Rightarrow(1)$: Suppose that $(E,s)$ is not stable. Let $(E'',\{f''_p\},s'')$ be a quotient pair of $(E,s)$ such that \[
\mu_{\text{par}}(E'',s'')\leq \mu_{\text{par}}(E,s)
\]
There exists an $m_0\in \bb{N}$ satisfying for all $m\geq m_0$, $H^1(E(m))=0$. Let $E'$ be the kernel of the quotient morphism $E\rightarrow E''$. Then by the long exact sequence of cohomology groups associated to $0\rightarrow E'\rightarrow E\rightarrow E''\rightarrow0$, we have $H^1(E''(m))=0$ and hence $h^0(E''(m))=d(E'')+r(E'')(1-g)$. It follows that
\begin{align*}
\frac{h^0(E''(m))+\theta(s'')\delta}{r(E'')}+\frac{|\underline{a}''|}{r(E'')l}&=\mu_{\text{par}}(E'',s'')+m+1-g\\&\leq\mu_{\text{par}}(E,s)+m+1-g\\
&=\mu_{\text{par}}^\delta(m)+1-g,
\end{align*}
which contradicts the hypothesis. Therefore, $(E,s)$ is $\delta$-stable.

The equivalence of three assertions for $\delta$-semistability can be proved similarly.
\end{proof}

By Lemma \ref{bounds2} and Lemma \ref{bdfil}, there exists an $m_0\in\bb{N}$ such that for any $m\geq m_0$ and any $\delta$-stable parabolic $N$-pair $(E,s)$, the following conditions are satisfied.
\begin{enumerate}
\item $E(m)$ is globally generated and has no higher cohomology. Similar results hold for their Harder-Narasimhan factors.
\item The three assertions in Lemma \ref{globalsec} are equivalent.
\end{enumerate}
We fix such an $m$. Let $(E,s)$ be a $\delta$-semistable $N$-pairs. Then the vector bundle $E$ can be realized as a quotient
\[
q:H^0(E(m))\otimes\ca{O}_C(-m)\twoheadrightarrow E
\]
and the section $s$ induces a linear map
\[
\phi:H^0(\ca{O}_C(m))^N\rightarrow H^0(E(m)).
\]
Let $V$ be a fixed complex vector space of dimension $\text{dim}(V)=P(m)$ where $P(m):=\chi(E(m))=d+mn+n(1-g)$.

After fixing an isomorphism between $H^0(E(m))$ and $V$, we have the following diagram.
\[
\begin{tikzcd}
K\arrow[hook]{r}{\iota}&H^0(\ca{O}_C(m))^N\otimes\ca{O}_C(-m)\arrow[two heads]{r}{\text{ev}}\arrow{d}{\phi} & \ca{O}_C^N\arrow{d}{s}\\
& V\otimes\ca{O}_C(-m)\arrow[two heads]{r}{q} &E
\end{tikzcd}
\]
Here $K$ denotes the kernel of the evaluation map $\text{ev}:H^0(\ca{O}_C(m))^N\otimes\ca{O}_C(-m)\rightarrow\ca{O}_C^N$. Let 
\[
\bb{P}=\bb{P}(\text{Hom}(H^0(\ca{O}_C	(m))^N,V))
\]
and let
\[Q=\text{Quot}_C^{n,d}(V\otimes\ca{O}_C(-m)).
\]
be the Grothendieck's Quot scheme which parametrizes coherent quotients of $V\otimes\ca{O}_C(-m)$ over $C$ of rank $n$ and degree $d$. Notice that the spaces $P$ and $Q$ are fine moduli spaces with universal families
\begin{equation}\label{eq:prequosec}
H^0(\ca{O}_C(m))^N\otimes\ca{O}_{\bb{P}}\rightarrow V\otimes \ca{O}_{\bb{P}}(1)
\end{equation}
and
\begin{equation}\label{eq:prequobun}
V\otimes\ca{O}_C(-m)\twoheadrightarrow\widetilde{\ca{E}}.
\end{equation}
Here $\ca{O}_{\bb{P}}(1)$ denotes the anti-tautological line bundle on $\bb{P}$. By abuse of notation, we will still denote by $\ca{O}_{\bb{P}}(1)$ and $\widetilde{\ca{E}}$ the pullbacks of the corresponding universal sheaves to $Q\times\bb{P}\times C$ .

We consider the locally closed subscheme
\[
Z\subset Q\times\bb{P}
\] consisting of points $([q],[\phi])$ which satisfy the following properties:
\begin{itemize}
\item $E$ is a locally free.
\item $q\circ\phi\circ\iota=0$.
\item The quotient $q$ induces an isomorphism $V\rightarrow H^0(E(m))$.
\end{itemize}

Let $p\in I$ be a marked point. We denote by $\text{Fl}_{m_p}$ the relative flag varitey of locally-free quotients of $\widetilde{\ca{E}}_p:=\widetilde{\ca{E}}|_{Z\times\{p\}}$ of type $m_p=(m_{i,p})$ (cf. \cite[\textsection{2}]{grothen}). Let $\pi_p:\text{Fl}_{m_p}\rightarrow Z$ be the projection. There exists a universal filtration of $\pi_p^*(\widetilde{\ca{E}}_p)$ by coherence subsheaves
\[
\pi_p^*(\widetilde{\ca{E}}_p)=\ca{F}_{1,p}\supsetneq \dots \supsetneq \ca{F}_{l_p,p}\supsetneq \ca{F}_{l_p+1,p}=0
\]
such that the universal quotient bundles $\ca{Q}_{i,p}:=\pi_p^*(\widetilde{\ca{E}}_p)/\ca{F}_{i+1,p}$ are locally free of rank $r_{i,p}=\sum_{j=1}^{i}m_{j,p}$.

 Let $R$ be the fiber product \[
R:=\text{Fl}_{m_{p_1}}\times_Z\cdots\times_Z\text{Fl}_{m_{p_k}},
\]
where $p_1,\dots,p_k$ are the ordinary marked points. By abuse of notation, we still denote by $\ca{Q}_{i,p}$ the pullback of $\ca{Q}_{i,p}$ to $R$. A $\delta$-semistable parabolic $N$-pair $(E,s)$ can be represented by a point $([q],\{[\tilde{f}_p]\},[\phi])$ in $R$. There is a natural right $\text{SL}(V)$-action on $Q\times\bb{P}$ given by
\[
([q],[\phi])g=([q\circ g],[g^{-1}\circ \phi])
\]
for $g\in\text{SL}(V)$ and $([q],[\phi])\in Q\times\bb{P}$. It is easy to see that $Z$ is invariant under this $\text{SL}(V)$-action. Notice that the natural right $\text{SL}(V)$-action on $V\otimes\ca{O}_C(-m)$ induces a right $\text{SL}(V)$-action on $\widetilde{\ca{E}}$ via the universal quotient morphism $V\otimes\ca{O}_C(-m)\twoheadrightarrow\widetilde{\ca{E}}$. Therefore, $\text{SL}(V)$ also acts on the relative flag variety $\text{Fl}_{m_p}$ for $p\in I$ and the universal quotient bundles $\ca{Q}_{i,p}$ have natural $\text{SL}(V)$-linearizations. 

Pick a sufficiently large integer $t$ such that $t>m$ and we have the following embedding
\begin{align*}
Q=\text{Quot}_C^{n,d}(V\otimes\ca{O}_C(-m))&\hookrightarrow \text{Gr}(V\otimes H^0(\ca{O}_C(t-m)),\chi_t),\\
[q:V\times\ca{O}_C(-m)\twoheadrightarrow E]&\rightarrow[H^0(q(t)):V\otimes H^0(\ca{O}_C(t-m))\twoheadrightarrow H^0(E(t))].
\end{align*}
For such a $t$, there is a $\text{SL}(V)$-equivariant embedding
\begin{align*}
T:R\hookrightarrow& \text{Gr}(V\otimes H^0(\ca{O}_C(t-m)),\chi_t)\\
&\times\prod_{p\in I}\{\text{Gr}(V,r_{1,p})\times\dots\times\text{Gr}(V,r_{l_p-1,p})\}\times\bb{P},\\
([q],\{|\tilde{f}_p]\},[\phi])\mapsto&([H^0(q(t))],\{E_p/E_{2,p},\dots,E_p/E_{l_p,p}\},[\phi]),
\end{align*}
where $\chi_t=\chi(E(t))$ and $r_{i,p}=\sum_{j=1}^{i}m_{j,p}=\text{dim}\,E_p/E_{i+1,p}$. For simplicity, we denote $\text{Gr}(V\otimes H^0(\ca{O}_C(t-m)),\chi_t)$ by $\bb{G}_t$ and $\text{Gr}(V,r_{j,p})$ by $\bb{G}_{j,p}$ for $1\leq j\leq l_p-1$.

Let $\bar{R}$ be the closure of $T(R)$ in $\bb{G}_t\times\prod_{p\in I}\{\bb{G}_{1,p}\times\dots\times\bb{G}_{l_p-1,p}\}\times\bb{P}$. Let $\ca{O}_{\bb{G}_t}(1)$ and $\ca{O}_{\bb{G}_{i,p}}(1)$ be the canonical ample generators of the Grassmannians. Let $\ca{O}_\bb{P}(1)$ be the anti-canonical line bundle on $\bb{P}$. Notice that the ample line bundles $\ca{O}_{\bb{G}_t}(1)$, $\ca{O}_{\bb{G}_{i,p}}(1)$ and $\ca{O}_\bb{P}(1)$ all have standard $\text{SL}(V)$-linearizations. For positive integers $a_1,a_2$ and $b_{j,p}$ for $p\in I,1\leq j\leq l_p-1$, we consider the $\text{SL}(V)$-linearized line bundle
\[
L=\ca{O}_{\bb{G}_t}(a_2)\boxtimes\{\underset{{p\in I,\,j}}{\mathlarger{\mathlarger{\boxtimes}}}\ca{O}_{\bb{G}_{j,p}}(b_{j,p})\}\boxtimes\ca{O}_{\bb{P}}(a_1).
\]
We study the GIT stability condition of $\bb{G}_t\times\prod_{p\in I}\{\bb{G}_{1,p}\times\dots\times\bb{G}_{l_p-1,p}\}\times\bb{P}$ with respect to $L$. Let $\lambda:\bb{C}^*\rightarrow \text{SL}(V)$ be a one parameter subgroup. For any closed point $z\in \bar{R}$, we denote by $o_z:\text{SL}(V)\times\{z\}\rightarrow\bar{R}$ the orbit map. The morphism $o_z\circ \lambda$ extends to a morphism $g:\bb{A}^1\rightarrow \bar{R}$. Notice that $g(0)$ is a fixed point of the $\bb{C}^*$-action. Suppose any element $x\in\bb{C}^*$ acts on the fiber $L|_{g(0)}$ by multiplying $x^w$ for some $w\in \bb{Z}$. Then we define the \emph{Hilbert-Mumford weight}
\[
\mu^L(z,\lambda)=-w.
\]
By the Hilbert-Mumford criterion, a closed point $z\in\bar{R}$ is stable (semistable) with respect to $L$ if and only if $\mu^L(z,\lambda)>0$ (respectively $\mu^L(z,\lambda) \geq 0$) for all one parameter subgroups of $\text{SL}(V)$. Now let us compute $\mu^L(z,\lambda)$ for a point $z=([q],\{[\tilde{f}_p]\},[\phi])\in\bar{R}$. A one parameter subgroup $\lambda$ induces a $\bb{C}^*$-action on $V$. Let $w_1<w_2<\dots<w_s$ be the weights of this $\bb{C}^*$-action. Then there exists a filtration
\[
0=V_0\subset V_1\subset V_2\subset \dots\subset V_s=V,
\]
such that $V_i/V_{i-1}$ is the isotypic component of weight $w_i\in\bb{Z}$. We denote by $i(\phi)$ the smallest $i$ such that $\text{im}\,\phi\subset V_i$. Define $w(\phi)=w_{i(\phi)}$. Consider the ascending filtration of $E$ by
\[
F_i=q(V_i\otimes\ca{O}_C(-m)).
\]
Note that $F_s=E$. Let $\text{gr}_i=F_i/F_{i-1}$. Notice that the family of subsheaves $E'\subset E$ of the form $q(V'\otimes\ca{O}_C(-m))$ for some subspace $V'\subset V$ is bounded. We can pick large enough $t$ such that we also have
\begin{equation}\label{eq:boundedvanish}
H^1(F_i(t))=0\quad\text{and}\quad H^1(\text{gr}_i(t))=0,\quad\text{for}\ 1\leq i \leq s.
\end{equation}
Denote $Q_{j,p}:=E_p/E_{j+1,p}$ for $1\leq j\leq l_p-1$. Let $q_{j,p}:V\twoheadrightarrow Q_{j,p}$ be the surjective maps induced by $V\otimes \ca{O}_C(-m)\twoheadrightarrow E$. We consider the ascending filtrations of $Q_{j,p}$ by
\[
Q^i_{j,p}=q_{j,p}(V_i),\quad\text{for}\ 1\leq i\leq s.
\]
Define $Q^0_{j,p}=0$. Let $r^i_{j,p}=\text{dim}\, Q_{j,p}^i$. Note that $r^s_{j,p}=r_{j,p}$.

Suppose $F$ is a coherent sheaf on $C$. Then its Hilbert polynomial is defined as the polynomial $P_F(t):=\chi(F(t))=r(F)t+d(F)+r(F)(1-g)$ in $t$.
 An explicit formula for $\mu^L(z,\lambda)$ is given in the following lemma.
\begin{lemma}\label{weight}
\begin{align*}
\mu^L(z,\lambda)&=a_1w(\phi)-a_2\sum_{1\leq i\leq s}w_i\big(P_{F_i}(t)-P_{F_{i-1}}(t)\big)\\
&-\sum_{p\in I}\sum_{\substack{1\leq i\leq s\\1\leq j\leq l_p-1}}b_{j,p}w_i\big(r_{j,p}^i-r_{j,p}^{i-1}\big).
\end{align*}
\end{lemma}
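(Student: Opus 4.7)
My plan is to invoke additivity of the Hilbert-Mumford weight under tensor products of equivariant line bundles. Since $L$ is an external tensor product of $\text{SL}(V)$-linearized bundles on the three types of factors $\bb{G}_t$, $\prod_{p,j}\bb{G}_{j,p}$, and $\bb{P}$, we obtain
\[
\mu^L(z,\lambda) = a_2\,\mu^{\ca{O}_{\bb{G}_t}(1)}(z,\lambda) + \sum_{p\in I,\,j}b_{j,p}\,\mu^{\ca{O}_{\bb{G}_{j,p}}(1)}(z,\lambda) + a_1\,\mu^{\ca{O}_\bb{P}(1)}(z,\lambda),
\]
and it suffices to compute each term by identifying the limit $g(0)=\lim_{x\to 0}z\cdot\lambda(x)$ in the relevant factor and reading off the $\bb{C}^*$-weight on the fiber of the line bundle there.

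For the Grassmannian $\bb{G}_t$, the image point is the surjection $V\otimes H^0(\ca{O}_C(t-m))\twoheadrightarrow H^0(E(t))$. The weight decomposition $V=\bigoplus_i U_i$ of $\lambda$ (with $U_i\cong V_i/V_{i-1}$ of weight $w_i$) yields an equivariant filtration on the source, and via $q$ the filtration $\{F_i\}$ on $E$. Twisting by $t$ and using the vanishings $H^1(F_i(t))=H^1(\mathrm{gr}_i(t))=0$ from \eqref{eq:boundedvanish}, the induced filtration on the target has graded pieces of dimensions $P_{F_i}(t)-P_{F_{i-1}}(t)$. By the standard analysis of one-parameter limits in a Grassmannian, $g(0)$ corresponds to the quotient onto the associated graded, on which $\bb{C}^*$ acts by weight $w_i$ on the $i$-th summand; since $\ca{O}_{\bb{G}_t}(1)$ is the determinant of the universal quotient, the resulting fiber weight gives the contribution $-a_2\sum_i w_i(P_{F_i}(t)-P_{F_{i-1}}(t))$ to $\mu^L$. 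The same analysis applied to each finite-dimensional surjection $V\twoheadrightarrow Q_{j,p}$ with induced filtration $\{Q^i_{j,p}=q_{j,p}(V_i)\}$ yields the contribution $-\sum_{p,j}b_{j,p}\sum_i w_i(r^i_{j,p}-r^{i-1}_{j,p})$; no vanishing is needed here since the targets are already zero-dimensional fiberwise.

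For the $\bb{P}$ factor, write $\phi=\sum_i\phi_i$ with $\phi_i\colon H^0(\ca{O}_C(m))^N\to U_i$. The right $\text{SL}(V)$-action sends $[\phi]\cdot\lambda(x)=[\lambda(x)^{-1}\circ\phi]=[\sum_i x^{-w_i}\phi_i]$. Factoring out the lowest power of $x$ appearing (corresponding to the largest weight with nonzero component), the limit as $x\to 0$ is $[\phi_{i(\phi)}]$, in agreement with the paper's definition $i(\phi)=\min\{i:\mathrm{im}\,\phi\subset V_i\}=\max\{i:\phi_i\neq 0\}$. Reading off the weight on $\ca{O}_\bb{P}(1)$ at this limit point yields the contribution $a_1\,w(\phi)$, and summing the three contributions gives the formula.

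The computation is essentially bookkeeping once the two standard facts are in hand: that the limit in a Grassmannian under a one-parameter subgroup is given by the associated graded of the induced filtration on the quotient, and that the dimensions of the graded pieces of $H^0(F_i(t))$ equal $P_{F_i}(t)-P_{F_{i-1}}(t)$ by \eqref{eq:boundedvanish}. The only potential pitfall is the careful tracking of sign conventions between the right-action formula $([q],[\phi])g=([q\circ g],[g^{-1}\circ\phi])$ and the paper's definition $\mu^L(z,\lambda)=-w$, but this introduces no essential difficulty beyond unwinding the definitions.
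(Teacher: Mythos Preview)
Your proposal is correct and follows essentially the same approach as the paper's proof: both invoke the additivity $\mu^{L_1\boxtimes L_2}=\mu^{L_1}+\mu^{L_2}$, compute the $\bb{G}_t$-contribution by identifying the limit with the associated graded quotient $\bigoplus_i H^0(\mathrm{gr}_i(t))$ (the paper cites \cite[Lemma~4.4.3]{lehn2} for this, and \cite[Chapter~4, \S4]{mumford2} for the finite Grassmannians $\bb{G}_{j,p}$), and compute the $\bb{P}$-contribution by locating the limit at the highest-weight component $[\phi_{i(\phi)}]$. Your treatment of the sign conventions and the use of \eqref{eq:boundedvanish} to replace $h^0(\mathrm{gr}_i(t))$ by $P_{F_i}(t)-P_{F_{i-1}}(t)$ matches the paper exactly.
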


\begin{proof}
The Hilbert-Mumford weight satisfies that
\[
\mu^{L_1\boxtimes L_2}=\mu^{L_1}+\mu^{L_2}.
\]
Hence we can compute $\mu^{\ca{O}_{\bb{G}_t}(a_1)}(z,\lambda),\mu^
{\ca{O}_{\bb{G}_{j,p}}(b_{j,p})}(z,\lambda),$ and $\mu^{\ca{O}_{\bb{P}}(a_2)}(z,\lambda)$ separately.

First, we calculate the contribution from $\ca{O}_{\bb{P}}(a_1)$ to $\mu^L(z,\lambda)$. Let $\{e^i_\nu\}_\nu$ be a basis of $V_i$. Then we can write $\phi$ as
\[
\phi=\underset{i,\nu}{\bigoplus}\,\phi^\nu_i\otimes e^i_\nu\in (H^0(\ca{O}_C(m)^N))^\vee\otimes V,
\]
where $\phi^\nu_i\in (H^0(\ca{O}_C(m)^N))^\vee$. By definition, $i(\phi)$ is the largest $i$ such that $\phi^\nu_i\neq 0$ for some $\nu$. Since an element in $\text{SL}(V)$ acts on $V$ as its inverse, the contribution from $\ca{O}_{\bb{P}}(a_1)$ to $\mu^L(z,\lambda)$ is 
\[
a_1w(\phi).
\]

Second, we consider $\ca{O}_{\bb{G}_t}(a_2)$. According to \cite[Lemma 4.4.3]{lehn2}, we have
\[
\lim_{x\to0}\,[q]\cdot\lambda(x)=\bigoplus_{i=1}^s H^0(\text{gr}_i(t))\ \in \bb{G}_{t}.
\]
The fiber of $\ca{O}_{\bb{G}_t}(1)$ at the limiting point is 
\[
\bigwedge^{\chi(E(t))}\bigoplus^s_{i=1} H^0(\text{gr}_i(t)).
\]
The weight of $\bb{C}^*$-action is 
\[
\sum_{i=1}^sw_ih^0(\text{gr}_i(t))=\sum_{i=1}^sw_i\big(P_{F_i}(t)-P_{F_{i-1}}(t)\big).
\]
Therefore, the contribution from $\ca{O}_{\bb{G}_t}(a_2)$ to $\mu^L(z,\lambda)$ is 
\[
-a_2\sum_{i=1}^sw_i\big(P_{F_i}(t)-P_{F_{i-1}}(t)\big).
\]

Finally, it follows easily from the computations of \cite[Chapter 4, \textsection{4}]{mumford2} that the contribution to $\mu^L(z,\lambda)$ from $\ca{O}_{\bb{G}_{j,p}}(b_{j,p})$ is 
\[
-\sum_{1\leq i\leq s}b_{j,p}w_i\big(\text{dim}\,Q_{j,p}^i-\text{dim}\,Q_{j,p}^{i-1}\big).
\]
\end{proof}

\begin{lemma}
Let $z=([q],\{[\tilde{f}_p]\},[\phi])\in\bar{R}$ be a point with the associated parabolic $N$-pair $(E,s)$. For sufficiently large $t$ such that (\ref{eq:boundedvanish}) holds, then the following two conditions are equivalent.
\begin{enumerate}[(1)]
\item $z$ is GIT-stable with respect to $L$.
\item For any nontrivial proper subspace $W\subset V$, let $F=q(W\otimes \ca{O}(-m))$. Then
\begin{align}\label{eq:giteq}
P_F(t)&>\frac{a_1}{a_2}\bigg(\theta_W(\phi)-\frac{\emph{dim}\,W}{\emph{dim}\,V}\bigg)+P(t)\frac{\emph{dim}\, W}{\emph{dim}\, V}\\
&+\sum_{p\in I}\sum_{1\leq j\leq l_p-1}\frac{b_{j,p}}{a_2}\bigg(r_{j,p}\frac{\emph{dim}\,W}{\emph{dim}\,V}-r_{j,p}^W\bigg)\nonumber,
\end{align}
where $r_{j,p}^W=\emph{dim}\, q_{j,p}(W)$ and $\theta_W(\phi)=1$ if $\emph{im}\,\phi\subset W$ and 0 otherwise.
\end{enumerate} 
GIT-semistablity can be also characterized by replacing $>$ by $\geq$ in equation (\ref{eq:giteq}).

\end{lemma}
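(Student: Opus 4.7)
My plan is to apply the Hilbert--Mumford numerical criterion to the explicit weight formula of Lemma \ref{weight}, and then rewrite $\mu^L(z,\lambda)$ via Abel summation (combined with the trace-zero condition defining $\mathrm{SL}(V)$) as a positive linear combination of quantities indexed by single subspaces $W\subset V$. The inequality (\ref{eq:giteq}) will emerge as the positivity of each of those quantities.

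First, every nontrivial one-parameter subgroup $\lambda\colon\bb{C}^*\to\mathrm{SL}(V)$ corresponds to a weighted flag: an ascending filtration $0=V_0\subsetneq V_1\subsetneq\cdots\subsetneq V_s=V$ together with integers $w_1<\cdots<w_s$ satisfying the trace-zero condition $\sum_{i=1}^s w_i\,\dim(V_i/V_{i-1})=0$. By the Hilbert--Mumford criterion, (1) is equivalent to $\mu^L(z,\lambda)>0$ for every such $\lambda$.

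Next, set $\gamma_i:=w_{i+1}-w_i>0$ for $1\le i\le s-1$. Applying the Abel identity
\[
\sum_{i=1}^s w_i(a_i-a_{i-1})=w_s a_s-\sum_{i=1}^{s-1}\gamma_i a_i\qquad(a_0=0)
\]
to the trace-zero sum yields $w_s=\frac{1}{\dim V}\sum_{i=1}^{s-1}\gamma_i\dim V_i$. Applying it also to the two summations in Lemma \ref{weight} (with $a_i=P_{F_i}(t)$ and with $a_i=r_{j,p}^i$, noting $P_{F_s}(t)=P(t)$ and $r_{j,p}^s=r_{j,p}$), and rewriting $a_1w(\phi)=a_1 w_s-a_1\sum_{i=1}^{s-1}\gamma_i\,\theta_{V_i}(\phi)$ using that $\theta_{V_i}(\phi)=1$ iff $i\ge i(\phi)$, collects the coefficient of $w_s$, which is then eliminated using the trace-zero identity. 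The result is
\[
\mu^L(z,\lambda)=\sum_{i=1}^{s-1}\gamma_i\,\Psi(V_i),
\]
where, for any nontrivial proper subspace $W\subset V$ with $F_W:=q(W\otimes\ca{O}_C(-m))$ and $r_{j,p}^W:=\dim q_{j,p}(W)$, the quantity $\Psi(W)$ is $a_2$ times the difference between the left and right sides of (\ref{eq:giteq}).

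Both implications then follow. For (2)$\Rightarrow$(1): if (\ref{eq:giteq}) holds for every nontrivial proper $W$, then each $\Psi(V_i)$ and each $\gamma_i$ is positive, hence $\mu^L(z,\lambda)>0$. For (1)$\Rightarrow$(2): given such a $W$, test against the length-two flag $0\subsetneq W\subsetneq V$ with weights $w_1=-(\dim V-\dim W)$ and $w_2=\dim W$ (which satisfy the trace-zero condition); then $\mu^L(z,\lambda)=\dim V\cdot\Psi(W)$, forcing $\Psi(W)>0$. The semistability equivalence is treated identically with $\ge$ in place of $>$. The main bookkeeping burden lies in the Abel-summation step---tracking that the three contributions to the $w_s$-coefficient (from the $\phi$, Grothendieck-Quot, and flag-variety factors of $L$) recombine exactly into the right-hand side of (\ref{eq:giteq})---but the computation is purely algebraic and presents no geometric obstacle; the hypothesis (\ref{eq:boundedvanish}) is the tacit ingredient already used in Lemma \ref{weight} to identify $P_{F_i}(t)$ with $h^0(F_i(t))$ in the determinantal description of $\ca{O}_{\bb{G}_t}(1)$.
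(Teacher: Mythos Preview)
Your proof is correct and is essentially the same argument as the paper's. The paper computes $\mu^L(z,\lambda)$ for the two-step test subgroup directly (splitting into the cases $\mathrm{im}\,\phi\subset W$ and $\mathrm{im}\,\phi\not\subset W$) for $(1)\Rightarrow(2)$, and for $(2)\Rightarrow(1)$ it writes out the same summation-by-parts identity you package as Abel summation, using the trace-zero relation $\sum_{i=1}^{s-1}(w_{i+1}-w_i)\dim V_i=w_s\dim V$ to cancel the $w_s$-terms; your formulation $\mu^L(z,\lambda)=\sum_i\gamma_i\,\Psi(V_i)$ is a cleaner repackaging of exactly that computation.
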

\begin{proof}$(1)\Rightarrow(2)$:
Suppose $z$ is GIT-stable with respect to $L$. Let $h=\text{dim}\, W$. We consider the one parameter subgroup give by
\[
\lambda(x)=\begin{pmatrix}
x^{h-P(m)}\text{id}_h & \\ & x^h\text{id}_{P(m)-h}
\end{pmatrix},
\]
where $\lambda(x)$ acts on $W$ by multiplying $x^{h-P(m)}$ and its compliment by multiplying $x^h$.
If $\text{im}\,\phi\subset W$, then by the Hilbert-Mumford criterion and Lemma \ref{weight}, we have
\begin{align*}
0<\mu^L(z,\lambda)&=a_1(h-P(m))+a_2P(m)P_F(t)-a_2hP(t)\\
&+\sum_{p\in I}\sum_{1\leq j\leq l_p-1}b_{j,p}\big(r_{j,p}^WP(m)-r_{j,p}h\big).
\end{align*}
Since $\text{dim}\,V=P(m)$, the above inequality is equivalent to
\begin{align*}
P_F(t)&>\frac{a_1}{a_2}\bigg(1-\frac{\text{dim}\,W}{\text{dim}\,V}\bigg)+P(t)\frac{\text{dim}\, W}{\text{dim}\, V}\\
&+\sum_{p\in I}\sum_{1\leq j\leq l_p-1}\frac{b_{j,p}}{a_2}\bigg(r_{j,p}\frac{\text{dim}\,W}{\text{dim}\,V}-r_{j,p}^W\bigg).
\end{align*}
If $\text{im}\,\phi\not\subset W$, then
\begin{align*}
0<\mu^L(z,\lambda)&=a_1h+a_2P(m)P_F(t)-a_2hP(t)\\
&+\sum_{p\in I}\sum_{1\leq j\leq l_p-1}b_{j,p}\big(r_{j,p}^WP(m)-r_{j,p}h\big),
\end{align*}
which is equivalent to
\begin{align*}
P_F(t)&>-\frac{a_1}{a_2}\bigg(\frac{\text{dim}\,W}{\text{dim}\,V}\bigg)+P(t)\frac{\text{dim}\, W}{\text{dim}\, V}\\
&+\sum_{p\in I}\sum_{1\leq j\leq l_p-1}\frac{b_{j,p}}{a_2}\bigg(r_{j,p}\frac{\text{dim}\,W}{\text{dim}\,V}-r_{j,p}^W\bigg).
\end{align*}\\
$(2)\Rightarrow(1)$: It follows from inequality (\ref{eq:giteq}) that
\begin{align*}
\mu^L(z,\lambda)&>a_1w_s-a_2w_sP(t)+\bigg(\frac{a_2P(t)}{\text{dim}\, V}-\frac{a_1}{\text{dim}\, V}\bigg)\sum_{i=1}^{s-1}(w_{i+1}-w_i)\,\text{dim}\, V_i\\
&+\sum_{p\in I}\sum_{1\leq j\leq l_p-1}\frac{b_{j,p}r_{j,p}}{\text{dim}\,V}\bigg(\sum_{i=1}^{s-1}(w_{i+1}-w_i)\,\text{dim}\, V_i\bigg)\\
&-\sum_{p\in I}\sum_{1\leq j\leq l_p-1}b_{j,p}\bigg(\sum_{i=1}^{s-1}(w_{i+1}-w_i)\,r_{j,p}^i\bigg)\\
&-\sum_{p\in I}\sum_{\substack{1\leq i\leq s\\1\leq j\leq l_p-1}}b_{j,p}w_i\big(r_{j,p}^i-r_{j,p}^{i-1}\big)\\
&=0.
\end{align*}
Here we use the fact that
\[
\sum_{i=1}^{s-1}(w_{i+1}-w_i)\,\text{dim}\, V_i=w_s\,\text{dim}\, V
\]
since $\lambda$ is a one parameter subgroup of $\text{SL}(V)$.
Therefore $z$ is GIT-stable.
\end{proof}

Let $I$ denote the number of ordinary marked points. To relate GIT-(semi)stability with $\delta$-(semi)stability, we make the following choice:
\[
\quad a_1=nl(t-m)\delta,a_2=P(m)l+|\underline{a}|+\delta l-n\sum_{p\in I}a_{l_p,p},
\]
and\[b_{j,p}=(a_{j+1,p}-a_{j,p})n(t-m)\quad\text{for}\ 1\leq j\leq l_p-1.
\]
Let \[
L=\ca{O}_{\bb{G}_t}(a_2)\boxtimes\{\underset{{p\in I,\,j}}{\mathlarger{\mathlarger{\boxtimes}}}\ca{O}_{\bb{G}_{j,p}}(b_{j,p})\}\boxtimes\ca{O}_{\bb{P}}(a_1)
\]
be the polarization.

We fix a sufficiently large $t$ such that 
\begin{enumerate}
\item (\ref{eq:boundedvanish}) holds;
\item (\ref{eq:giteq}) holds if and only if it holds as an inequality of polynomials in $t$.
\end{enumerate}

\begin{corollary}\label{injcor}
If $([q],\{[f_p]\},[\phi])\in\bar{R}$ is GIT-semistable, then
\[
H^0(q(m)):V\rightarrow H^0(E(m))
\]
is injective and $E$ is torsion free.
\end{corollary}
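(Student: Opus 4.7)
The plan is to derive both assertions from the GIT-semistability characterization (\ref{eq:giteq}) by plugging in carefully chosen subspaces $W\subset V$ and exploiting the precise choice of polarization $(a_1,a_2,\{b_{j,p}\})$. Throughout, both arguments hinge on the fact that the LHS of (\ref{eq:giteq}) is essentially constant in $t$ while the RHS is a nonconstant polynomial in $t$; a leading-coefficient calculation then yields the required contradiction for $t\gg 0$.

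For injectivity, I would let $W:=\ker H^0(q(m))$ and first observe that if $w\in W$, then the section $q(m)(w\otimes 1)\in H^0(E(m))$ vanishes, which forces the morphism of sheaves $w\otimes\ca{O}_C\to E(m)$ itself to be zero; equivalently, $q(w\otimes\ca{O}_C(-m))=0$. Hence $F:=q(W\otimes\ca{O}_C(-m))=0$, so $P_F(t)=0$, and moreover $r^W_{j,p}=\dim q_{j,p}(W)=0$ for every marked point and index $j$, since $q_{j,p}$ factors through the fiber $E_p$. Inserting $W$ into (\ref{eq:giteq}), the LHS is $0$ while the RHS is polynomial in $t$. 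Substituting the explicit formulas for $a_1,a_2,b_{j,p}$ and using the Abel summation identity
\[
\sum_{j=1}^{l_p-1}(a_{j+1,p}-a_{j,p})\,r_{j,p}=n\,a_{l_p,p}-|a_p|
\]
should make the $-a_1/a_2$, $P(t)$, and $b_{j,p}r_{j,p}/a_2$ contributions collapse so that the leading coefficient of $t$ on the RHS simplifies to $\dim W\cdot nl/a_2>0$. For $t\gg 0$ this contradicts (\ref{eq:giteq}), forcing $W=0$.

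For torsion-freeness, assume part (a) and suppose the torsion subsheaf $T\subset E$ is nonzero. Let $K:=\ker q\subset V\otimes\ca{O}_C(-m)$ and let $K^{\mathrm{sat}}$ denote its saturation; both are locally free subsheaves with $K^{\mathrm{sat}}/K\cong T$ and $(V\otimes\ca{O}_C(-m))/K^{\mathrm{sat}}\cong E_{\mathrm{tf}}$ torsion-free. Taking $m$ large enough that $K^{\mathrm{sat}}(m)$ is globally generated and $H^1(K(m))=0$ uniformly on the bounded family underlying $\bar R$, define
\[
W:=H^0(K^{\mathrm{sat}}(m))\ \subset\ H^0(V\otimes\ca{O}_C)=V.
\]
The long exact sequence attached to $0\to K(m)\to K^{\mathrm{sat}}(m)\to T\to 0$, together with $H^0(K(m))=\ker H^0(q(m))=0$ from part (a), identifies $W\cong H^0(T)$, so $\dim W=d(T)>0$. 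Since $W\otimes\ca{O}_C$ surjects onto $K^{\mathrm{sat}}(m)$ (by global generation), one checks that $F:=q(W\otimes\ca{O}_C(-m))=T$, giving $r(F)=0$ and $P_F(t)=d(T)$ constant in $t$. After bounding the marked-point contribution $r^W_{j,p}$ (which is at most $\dim W$, independent of $t$), the same leading-order analysis as in part (a) shows the $t$-linear RHS of (\ref{eq:giteq}) outgrows the constant LHS, contradicting GIT-semistability.

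The main obstacle I anticipate is the leading-coefficient bookkeeping in (\ref{eq:giteq}): the polarization $(a_1,a_2,\{b_{j,p}\})$ is engineered so that the various contributions balance to reproduce the parabolic $\delta$-slope, and one must carry the Abel summation through cleanly in the degenerate cases $F=0$ (for (a)) and $r(F)=0$ (for (b)) to isolate the residual positive term $\dim W\cdot nl/a_2$. A secondary technical point is justifying the uniform choice of $m$ in part (b) so that $H^1(K(m))=0$ holds for every $[q]$ arising from a point of $\bar R$, which should follow from the boundedness of the Quot scheme and the projectivity of $\bar R$.
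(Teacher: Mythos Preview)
Your argument for injectivity is correct and matches the paper's exactly: take $W=\ker H^0(q(m))$, observe $F=0$ and $r_{j,p}^W=0$, and read off a positive leading $t$-coefficient on the right-hand side of (\ref{eq:giteq}).

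For torsion-freeness there is a genuine gap. You propose to enlarge $m$ so that $K^{\mathrm{sat}}(m)$ is globally generated and $H^1(K(m))=0$, but $m$ was fixed \emph{before} $\bar R$ was constructed; you cannot go back and change it. Worse, the kernels $K$ vary over all of $\bar R$ (not just the $\delta$-semistable locus), so no uniform enlargement is available at this stage of the argument. The good news is that none of this is needed. Your subspace $W=H^0(K^{\mathrm{sat}}(m))\subset V$ coincides with the preimage in $V$ of $H^0(T(m))\subset H^0(E(m))$, and this is exactly the $W$ the paper uses. What the argument actually requires is only that $W\neq 0$ and that $F=q(W\otimes\ca O_C(-m))$ have rank $0$ (so $P_F(t)$ is constant); you do not need $F=T$ or $\dim W=d(T)$. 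The paper obtains $W\neq 0$ directly from the surjectivity of the sheaf map $V\otimes\ca O_C\twoheadrightarrow E(m)$, which forces $H^0(T(m))$ to lie in the image of $V$.

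The paper's treatment of the flag terms is also slightly sharper than ``$r_{j,p}^W\le\dim W$'': using monotonicity of $r_{j,p}^W$ in $j$ and the telescoping sum, one gets
\[
\sum_{p\in I}\sum_{j=1}^{l_p-1}(a_{j+1,p}-a_{j,p})\,r_{j,p}^{W}\ \le\ \sum_{p\in I}(a_{l_p,p}-a_{1,p})\,r_{l_p-1,p}^{W}\ <\ l\,\dim W,
\]
the strict inequality coming from $a_{l_p,p}-a_{1,p}<l$. This immediately makes the leading $t$-coefficient on the right of (\ref{eq:giteq}) positive, contradicting semistability since the left side is constant. So drop the global-generation and $H^1$-vanishing hypotheses, keep your $W=H^0(K^{\mathrm{sat}}(m))$, and finish with this direct bound.
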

\begin{proof}
It is straightforward to check that the coefficient of $t$ on the RHS of the inequality (\ref{eq:giteq}) equals
\[
n\cdot\frac{l\,\text{dim}\, W+\theta_W(\phi)\delta l-\sum_{p\in I}\sum_{1\leq j\leq l_p-1}(a_{j+1,p}-a_{j,p})r_{j,p}^W}{l\,\text{dim}\,V+|\underline{a}|+l\delta-n\sum_{p\in I}a_{l_p,p}}.
\]

Let $W$ be the kernel of $H^0(q(m)):V\rightarrow H^0(E(m))$. Then $G=q(W\otimes\ca{O}(-m))=0$. The LHS of the inequality (\ref{eq:giteq}) is zero, while the coefficient of $t$ on the RHS of the inequality is greater than or equal to
\[
\frac{nl\,\text{dim}\, W}{l\,\text{dim}\,V+|\underline{a}|+l\delta-n\sum_{p\in I}a_{l_p,p}}.
\]
It follows that $W=0$.

Let $T$ be the torsion subsheaf of $E$. Since $V\rightarrow E(m)$ is surjective, it is easy to show that $H^0(T(m))\subset V$ as subspaces in $H^0(T(m))$. Let $W=H^0(T(m))$. Suppose $W\neq0$. Then the coefficient of $t$ on the RHS of the inequality (\ref{eq:giteq}) is positive because
 \[
\sum_{p\in I}\sum_{1\leq j\leq l_p-1}(a_{j+1,p}-a_{j,p})r_{j,p}^{W}\leq\sum_{p\in I}(a_{l_p,p}-a_{1,p})r_{l_p-1,p}^{W}<l\,\text{dim}\, W.
\]
Here we use the fact that $a_{l_p,p}-a_{1,p}<l$. We get a contradiction because the LHS of (\ref{eq:giteq}) is a constant. Therefore we must have $W=0$ and $F=0$.
\end{proof}

\begin{proposition}\label{comparingstab}
Let $([q],\{[\tilde{f}_p]\},[\phi])$ be a point in $\bar{R}$ and let $(E,s)$ be the corresponding parabolic $N$-pair. Then the following are equivalent.
\begin{enumerate}[(1)]
\item $([q],\{[\tilde{f}_p]\},[\phi])$ is GIT-(semi)stable with respect to $L$.
\item $(E,s)$ is $\delta$-(semi)stable and $q$ induces an isomorphism $V\xrightarrow{\sim}H^0(E(m))$.
\end{enumerate}
\end{proposition}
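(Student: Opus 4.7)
The strategy is to translate the GIT stability criterion (\ref{eq:giteq}) established in the preceding lemma into the sheaf-theoretic characterization of $\delta$-(semi)stability from Lemma~\ref{globalsec}. The weights $a_1$, $a_2$, $b_{j,p}$ of the polarization $L$ have been selected precisely so that, after matching the coefficient of $t$ and the constant term in the polynomial inequality (\ref{eq:giteq}), one recovers exactly the parabolic slope inequality for sub- or quotient pairs.

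For the implication $(1)\Rightarrow(2)$, I would first apply Corollary~\ref{injcor} to deduce that $V\hookrightarrow H^0(E(m))$ is injective and $E$ is torsion-free, hence locally free on the smooth curve $C$. Combined with $\dim V=P(m)=\chi(E(m))$ and the vanishing $H^1(E(m))=0$ (valid for our choice of $m$), this upgrades to an isomorphism $V\xrightarrow{\sim}H^0(E(m))$. To prove $\delta$-(semi)stability, given a proper quotient pair $(E'',s'')$ with $r(E'')>0$, I take the kernel $E'$ and set $W:=H^0(E'(m))\subset V$; the boundedness of Lemma~\ref{bounds2} together with Lemma~\ref{bdfil} guarantees that $E'(m)$ is globally generated, so $F=q(W\otimes\ca{O}(-m))$ coincides with $E'$. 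Plugging this $W$ into (\ref{eq:giteq}) and reading off the coefficient of $t$ (using the formula recalled in Corollary~\ref{injcor}) together with the constant term produces exactly the inequality of Lemma~\ref{globalsec}(3) for $(E'',s'')$.

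For the converse $(2)\Rightarrow(1)$, given a nontrivial proper subspace $W\subset V$, let $F:=q(W\otimes\ca{O}(-m))\subset E$ and let $\tilde F$ be its saturation, a genuine subbundle of $E$; since $r(F)=r(\tilde F)$, the leading coefficient $r(F)t$ in $P_F(t)$ agrees with that of $P_{\tilde F}(t)$, and what matters for the polynomial inequality is the comparison at leading and constant order. The sub-pair $(\tilde F,s')$ inherits a parabolic structure and sections from $(E,s)$ (with $\theta_W(\phi)\geq\theta(s')$, and equality in the relevant case); moreover $\dim W\leq h^0(\tilde F(m))$ and the induced parabolic ranks satisfy $r_{j,p}^W\leq r'_{j,p}$. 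Invoking Lemma~\ref{globalsec}(2) on $(\tilde F,s')$ and unwinding the explicit values $(a_1,a_2,b_{j,p})$ yields (\ref{eq:giteq}) at the level of leading coefficients, after which the constant-term check follows from the same estimates since both sides of (\ref{eq:giteq}) are linear in $t$.

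The principal technical obstacle is the careful bookkeeping that identifies the three groups of terms in (\ref{eq:giteq})---the $a_1$-term (section contribution), the $a_2$-term (global-sections contribution), and the $b_{j,p}$-terms (parabolic-flag contributions)---with the three terms $\theta(s')\delta/r(\tilde F)$, $h^0(\tilde F(m))/r(\tilde F)$, and $|\underline{a}'|/(r(\tilde F)l)$ in Lemma~\ref{globalsec}. The crucial combinatorial identity is the Abel-summation formula $\sum_{j=1}^{l_p-1}(a_{j+1,p}-a_{j,p})r_{j,p}=na_{l_p,p}-|a_p|$, which ensures that $a_2$ and the $b_{j,p}$ reassemble into the parabolic-degree correction $|\underline{a}|/l$ appearing in $\mu_{\mathrm{par}}^\delta(m)$. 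Once this identification is in place, together with the use of $H^1$-vanishing on the bounded family of sub-pairs to pass from $\dim W$ to $h^0(\tilde F(m))$, the equivalence of GIT-(semi)stability with $\delta$-(semi)stability falls out directly.
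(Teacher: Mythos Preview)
Your $(1)\Rightarrow(2)$ argument has a genuine gap. You assert that $E'(m)$ is globally generated by invoking Lemma~\ref{bounds2} and Lemma~\ref{bdfil}, and hence that $F=q(W\otimes\ca{O}(-m))$ coincides with the kernel $E'$. But those lemmas concern $\delta$-semistable pairs and their Harder--Narasimhan factors; at this point in the argument you have not yet established $\delta$-semistability of $(E,s)$, and even once you do, the kernel of an \emph{arbitrary} quotient $E\twoheadrightarrow E''$ need not belong to the bounded family for which $m$ was chosen. The fixed $m$ only guarantees good behaviour for $E$ itself and its HN factors, not for every subsheaf. For the same reason, your claim that $H^1(E(m))=0$ is circular where you place it: the vanishing is part of the conclusion (once $\delta$-semistability is known), not an input. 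The isomorphism $V\xrightarrow{\sim}H^0(E(m))$ should be deduced \emph{after} semistability is established.

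The paper avoids this by never claiming $F=E'$. Instead it sets $W=V\cap H^0(K(m))$ for the kernel $K$, and works entirely with inequalities: $r(F)\leq r(K)=n-r(E'')$, $h^0(E''(m))\geq\dim V-\dim W$, and $\theta(s'')\geq 1-\theta_W(\phi)$. Feeding these into the leading-coefficient comparison of (\ref{eq:giteq}) yields the quotient-pair inequality of Lemma~\ref{globalsec}(3) directly, without any control on $E'(m)$. Your $(2)\Rightarrow(1)$ direction via saturation is a legitimate variant of the paper's argument (which works with $F$ itself), modulo a sign slip: the correct relation is $\theta_W(\phi)\leq\theta(s')$, not $\geq$, since $\mathrm{im}\,\phi\subset W$ forces $s$ to factor through $F$.
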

\begin{proof}
$(1)\Rightarrow(2)$: Let $z=([q],\{[\tilde{f}_p]\},[\phi])$ be a GIT-semistable point in $\bar{R}$, where $q:V\otimes \ca{O}(-m)\rightarrow E$ is a quotient. According to Corollary \ref{injcor}, $E$ is locally free and $q$ induces an injection $V\hookrightarrow H^0(E(m))$. Let $\pi:E\twoheadrightarrow E''$ be a quotient bundle. Denote by $K$ the kernel of $\pi$. We have an exact sequence $0\rightarrow K\rightarrow E\xrightarrow{\pi\circ\alpha} E''\rightarrow0$. Let 
$W=V\cap H^0(K(m))$. Then
\begin{equation}\label{eq:globineq1}
h^0(E''(m))\geq h^0(E(m))-h^0(K(m))\geq\text{dim}\, V-\text{dim}\, W.
\end{equation}
Let $F=q(W\otimes\ca{O}(-m))$. Since $F$ is a subsheaf of $K$, we have $r(F)\leq r(K)=r(E)-r(E'')$. By comparing the coefficients of $t$ on both sides of inequality (\ref{eq:giteq}), we have
\begin{align}\label{eq:rankineq1}
n-r(E'')&\geq r(F)\\
&\geq\frac{n\,\text{dim}\, W}{\text{dim}\,V}\cdot\frac{l\,\text{dim}\, V+|\underline{a}|-n\sum_{p\in I}a_{l_p,p}}{l\,\text{dim}\,V+|\underline{a}|+l\delta-n\sum_{p\in I}a_{l_p,p}}\nonumber\\
&+\theta_W(\phi)\frac{nl\delta}{l\,\text{dim}\,V+|\underline{a}|+l\delta-n\sum_{p\in I}a_{l_p,p}}\nonumber\\
&+\sum_{p\in I}\sum_{1\leq j\leq l_p-1}n\frac{a_{j+1,p}-a_{j,p}}{l\,\text{dim}\,V+|\underline{a}|+l\delta-n\sum_{p\in I}a_{l_p,p}}\bigg(r_{j,p}\frac{\text{dim}\,W}{\text{dim}\,V}-r_{j,p}^W\bigg)\nonumber.
\end{align}
Let $\underline{a}''=(a''_p)_{p\in I}$ and $\underline{a}'=(a'_p)_{p\in I}$ be the induced parabolic weights of $E''$ and the kernel $K$, respectively. It is not difficult to show that the following hold:
\begin{align*}
\sum_{1\leq j\leq l_p-1}(a_{j+1,p}-a_{j,p})r_{j,p}&=na_{l_p,p}-|a_p|,\ \text{and}\\
\sum_{1\leq j\leq l_p-1}(a_{j+1,p}-a_{j,p})r^W_{j,p}&\leq (n-r(E''))a_{l_p,p}-|a'_p|.
\end{align*}
Then it follows from inequality (\ref{eq:rankineq1}) that
\begin{align}\label{eq:rankineq2}
n-r(E'')&\geq\frac{nl\,\text{dim}\, W}{l\,\text{dim}\,V+|\underline{a}|+l\delta-n\sum_{p\in I}a_{l_p,p}}\\
&+\theta_W(\phi)\frac{nl\delta}{l\,\text{dim}\,V+|\underline{a}|+l\delta-n\sum_{p\in I}a_{l_p,p}}\nonumber
\\
&+\frac{n|\underline{a}'|-n(n-r(E''))\sum_{p\in I}a_{l_p,p}}{l\,\text{dim}\,V+|\underline{a}|+l\delta-n\sum_{p\in I}a_{l_p,p}}.\nonumber
\end{align}
Notice that $|\underline{a}|=|\underline{a}'|+|\underline{a}''|$. Then we can rewrite inequality (\ref{eq:rankineq2}) as 
\begin{equation}\label{eq:rankineq3}
\frac{\text{dim}\,V-\text{dim}\, W+(1-\theta_W(\phi))\delta}{r(E'')}+\frac{|\underline{a}''|}{r(E'')l}\geq\frac{\text{dim}\,V+\delta}{n}+\frac{|\underline{a}|}{nl}.
\end{equation}
Note that if $\theta(s'')=\pi\circ s=0$, then $\text{im}\, \phi\subset W$ and hence $1-\theta_W(\phi)=0$. Therefore $\theta(s'')\geq1-\theta_W(\phi)$. Combining (\ref{eq:rankineq3}) and (\ref{eq:globineq1}), we have
\[
\frac{h^0(E''(m))+\theta(s'')\delta}{r(E'')}+\frac{|\underline{a}''|}{r(E'')l}\geq\frac{P(m)+\delta}{n}+\frac{|\underline{a}|}{nl}.
\]
According to Lemma \ref{globalsec}, the pair $(E,s)$ is semistable. 

Let $z=([q],\{[\tilde{f}_p]\},[\phi])$ be a GIT-stable point. Suppose $(E,s)$ is not stable. Then by the previous discussion, $(E,s)$ is strictly semistable. Then there exists a destabilizing sub-pair $(E',s')$. Let $W=H^0(E'(m))\subset H^0(E(m))\cong V$. It is clear that $\theta(s')=\theta_W(\phi)$. We have
\[
\frac{h^0(E'(m))+\theta(s')\delta}{r(E')}+\frac{|\underline{a}'|}{r(E')l}=\frac{P(m)+\delta}{n}+\frac{|\underline{a}|}{nl}.
\]
By an elementary calculation, one can show that the RHS of the inequality (\ref{eq:giteq}) is equal to $P_{E'}(t)$.
It contradicts with the fact that $z=([q],\{[\tilde{f}_p]\},[\phi])$ is GIT-stable. 
\\
$(2)\Rightarrow(1)$: If $(E,s)$ is stable and $q(m)$ induces an isomorphism between global sections. For any nontrivial subspace $W\subsetneq V$, let $F=q(W\otimes \ca{O}(-m))$ and let $(F,s')$ be the corresponding sub-pair. If $(F,s')=(E,s)$, then inequality (\ref{eq:giteq}) obviously holds. Thus we assume that $(F,s')$ is a proper sub-pair. By Lemma \ref{globalsec}, we have
\[
\frac{h^0(F(m))+\theta(s')\delta}{r(F)}+\frac{|\underline{a}'|}{r(F)l}<\frac{h^0(E(m))+\delta}{n}+\frac{|\underline{a}|}{nl}.
\]
The above inequality is equivalent to
\begin{equation}\label{eq:leadingcoeff}
r(F)>n\frac{|\underline{a}'|+l\,h^0(F(m))+\theta(s')\delta l-r(F)\sum_{p\in I}a_{l_p,p}}{l\,\text{dim}\,V+\delta l+|\underline{a}|-n\sum_{p\in I}a_{l_p,p}}.
\end{equation}
Notice that $\text{dim}\, W\leq h^0(F(m))$, which follows from the following commutative diagram.
\[
\begin{tikzcd}
W\arrow{r}{}\arrow[hookrightarrow]{d}& H^0(F(m))\arrow[hookrightarrow]{d}
\\
V\arrow{r}{\cong}& H^0(E(m))
\end{tikzcd}
\]
By combining the inequality (\ref{eq:leadingcoeff}), $\text{dim}\, W\leq h^0(F(m))$ and
\[
\sum_{p\in I}\sum_{1\leq j\leq l_p-1}(a_{j+1,p}-a_{j,p})r_{j,p}^W=-|\underline{a}'|+r(F)\sum_{p\in I}a_{l_p,p},
\] we obtain
\[
r(F)>n\frac{l\,\text{dim}\, W+\theta(s')\delta l-\sum_{p\in I}\sum_{1\leq j\leq l_p-1}(a_{j+1,p}-a_{j,p})r_{j,p}^W}{l\,\text{dim}\,V+\delta l+|\underline{a}|-n\sum_{p\in I}a_{l_p,p}}.
\]
It implies that the leading coefficient of $P_F(t)$ is great than the leading coefficient of the polynomial on the right hand side of (\ref{eq:giteq}). Therefore, $([q],\{[\tilde{f}_p]\},[\phi])$ is GIT-stable.

Assume $(E,s)$ is strictly $\delta$-semistable. We need to show that the corresponding point $([q],\{[\tilde{f}_p]\},[\phi])$ is GIT-semistable. Choose any nontrivial subspace $W\subsetneq V$. Let $F=q(W\otimes \ca{O}(-m))$ and let $(F,s')$ be the corresponding sub-pair. Since all these $F$ are
in a bounded family, we can assume $h^0(F(m))=\chi(F(m))$. As discussed in the previous case, if $(F,s')=(E,s)$ or $(F,s')$ is not a destabilizing sub-pair, we are done. Therefore, we assume $(F,s')$ is a destabilizing sub-pair such that $\text{dim}\, W=h^0(F(m))$ and 
\[
r(F)=n\frac{|\underline{a}'|+l\,\text{dim}\, W+\theta(s')\delta l-r(F)\sum_{p\in I}a_{l_p,p}}{l\,\text{dim}\,V+\delta l+|\underline{a}|-n\sum_{p\in I}a_{l_p,p}}.
\]
This shows that the coefficients of $t$ on both sides of the inequality (\ref{eq:giteq}) are equal. A tedious but elementary computation shows that the constant terms of the left hand side of (\ref{eq:giteq}) is also equal to the constant term on the right hand side. This concludes the proof. We leave the details to the reader.

\end{proof}

Recall that a value of $\delta\in\bb{Q}_+$ is called critical, or a wall if there are strictly $\delta$-semistable $N$-pairs.
\begin{lemma}\label{finitecriticalpts}
For fixed $d,n$ and parabolic type $(\underline{a},\underline{m})$, there are only finitely many critical values of $\delta$.
\end{lemma}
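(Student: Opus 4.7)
The plan is to parametrize each wall by the discrete numerical data of a destabilizing sub-pair and show that only finitely many such data can arise.

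At a critical value $\delta_0$ there exists a strictly $\delta_0$-semistable parabolic $N$-pair $(E,s)$ admitting a proper non-trivial destabilizing sub-pair $(E',s')$, i.e., $\parslo(E',s')=\parslo(E,s)$. Unpacking this equality (with $(E,s)$ non-degenerate, so $\theta(s)=1$) and solving for $\delta_0$ yields
\[
\delta_0 \;=\; \frac{n'\bigl(d + |\underline{a}|/l\bigr) \,-\, n\bigl(d' + |\underline{a}'|/l\bigr)}{n\,\theta(s') - n'},
\]
where $(n',d',\underline{a}',\theta(s'))$ are the rank, degree, induced parabolic weights, and section-indicator of $E'$. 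Since $0<n'<n$ and $\theta(s')\in\{0,1\}$, the denominator is nonzero, so $\delta_0$ is uniquely determined by this discrete data. Finiteness of walls thus reduces to finiteness of admissible $(n',d',|\underline{a}'|,\theta(s'))$.

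Three of the four entries are immediately finite: $n'\in\{1,\dots,n-1\}$, $\theta(s')\in\{0,1\}$, and $|\underline{a}'|$ takes only finitely many values, since at each marked point $p$ the induced weights on $E'$ are a sub-multiset of $(a_{1,p},\dots,a_{l_p,p})$ with multiplicities summing to $n'$. To bound $d'$, I would first show $\delta_0$ itself is bounded above by some $D=D(n,d,\underline{a},\underline{m})$. Adapting the Bradlow--Bertram bound recalled in the Example of Section~\ref{section2.1}, one verifies that for $\delta$ larger than an explicit threshold, a sub-pair with $\theta(s')=1$ and $n'<n$ forces $\parslo(E')$ to be arbitrarily negative (contradicting the fact that the saturation of the image of $s$ has parabolic slope bounded in terms of the fixed numerical data), while every $\theta(s')=0$ condition reduces to a $\delta$-independent constraint in the large-$\delta$ limit. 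Consequently $\delta$-stability is $\delta$-independent past the threshold, so no walls can lie there.

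With $\delta_0\in(0,D]$ in hand, rearranging the wall formula exhibits
\[
d' \;=\; \frac{n'(d+|\underline{a}|/l) - n|\underline{a}'|/l - (n\theta(s')-n')\delta_0}{n}
\]
as a bounded rational number; since $d'\in\bb{Z}$, only finitely many integer values are possible. Combining the four finite ranges produces finitely many candidate walls, proving the lemma. The main obstacle is the upper bound on $\delta_0$: one has to re-derive the Bradlow--Bertram bound in the parabolic setting and verify that the a priori bounded corrections $|\underline{a}'|/l$ do not spoil the contradiction argument for large $\delta$; once this is in place, the remainder is bookkeeping.
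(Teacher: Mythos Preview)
Your argument is correct and more explicit than the paper's. The paper simply observes that it suffices to show the family of destabilizing sub-pairs is bounded and defers to \cite[Proposition~6]{lin} for the argument; boundedness of the sub-pairs then forces the set of possible numerical invariants $(n',d',|\underline{a}'|,\theta(s'))$ to be finite, hence finitely many wall values. You instead write down the wall formula directly, note that three of the four parameters are obviously finite, and reduce finiteness of $d'$ to an a~priori upper bound on~$\delta_0$.

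Two remarks. First, the upper bound on $\delta_0$ you describe as needing to ``re-derive in the parabolic setting'' is already available: Lemma~\ref{infeququot} (in Example~\ref{largeepsilon}, not Section~\ref{section2.1}) shows that for $\delta>(n-1)d_{\text{par}}$ there are no strictly semistable pairs, so every wall satisfies $\delta_0\le(n-1)d_{\text{par}}$. Citing this removes the only genuine obstacle you flag. Second, your route has the advantage of being entirely elementary---no appeal to boundedness theory---at the cost of being slightly longer on the page; the paper's boundedness approach is terser but relies on importing the argument from \cite{lin}. Either is perfectly acceptable for this lemma.
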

\begin{proof}
It suffices to show that the destabilizing sub-pairs form a bounded family. The same arguments used in \cite[Proposition 6]{lin} work here.
\end{proof}

\begin{theorem}\label{modulispace}
If $\delta$ is generic, the moduli groupoid $\overline{\ca{M}}^{\emph{par},\delta}_{C}(\emph{Gr}(n,N), d,\underline{a})$ of non-degenerate $\delta$-stable parabolic $N$-pairs is isomorphic to $\bar{R}\sslash_L \emph{SL}(V)$. In particular, it is a projective variety. \end{theorem}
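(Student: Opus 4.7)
The plan is to apply the standard GIT procedure: identify the GIT-(semi)stable locus in $\bar{R}$ with the preimage $R^{s}\subset R$ of the $\delta$-stable locus, and then show that the resulting geometric quotient represents the moduli groupoid of non-degenerate $\delta$-stable parabolic $N$-pairs.

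First I would verify that every GIT-semistable point of $\bar{R}$ already lies in $R$, ruling out points on the boundary of the closure. By Corollary~\ref{injcor}, the sheaf $E$ attached to any such point is locally free and the induced map $V\hookrightarrow H^{0}(E(m))$ is injective; since $m$ was chosen so that $h^{0}(E(m))=P(m)=\dim V$ for every $\delta$-semistable pair, this injection must be an isomorphism. Together with the conditions defining $Z$ and the existence of the flag data, this places the point inside $R$. Combining this with Proposition~\ref{comparingstab} identifies $\bar{R}^{s}(L)$ set-theoretically with $R^{s}$, the locus of parabolic $N$-pairs which are $\delta$-stable and whose global sections realise the chosen isomorphism $V\cong H^{0}(E(m))$. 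Lemma~\ref{triiso} then implies that the stabiliser in $\mathrm{PGL}(V)$ of any such point is trivial, so two points of $R^{s}$ represent isomorphic parabolic $N$-pairs iff they lie in the same $\mathrm{SL}(V)$-orbit.

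Next, since $\delta$ is generic, Proposition~\ref{comparingstab} rules out strictly $\delta$-semistable pairs, and therefore strictly GIT-semistable points of $\bar{R}$. Consequently $\bar{R}^{ss}(L)=\bar{R}^{s}(L)=R^{s}$, and Mumford's theorem produces the projective geometric quotient
\[
\bar{R}\sslash_{L}\mathrm{SL}(V)\;=\;R^{s}/\mathrm{SL}(V),
\]
whose closed points are in natural bijection with isomorphism classes of non-degenerate $\delta$-stable parabolic $N$-pairs of the prescribed numerical type.

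Finally, to promote this bijection to an isomorphism of moduli groupoids I would construct the universal family. The universal quotient $V\otimes\ca{O}_{C}(-m)\twoheadrightarrow\widetilde{\ca{E}}$, the universal flag quotients $\ca{Q}_{i,p}$, and the universal section induced by $\phi$ are all $\mathrm{SL}(V)$-equivariant on $R^{s}$; conversely, given any family $(\ca{E},\{f_{p}\},S)$ of $\delta$-stable parabolic $N$-pairs over a base $T$, Lemma~\ref{genvanish} makes $\pi_{*}(\ca{E}(m))$ locally free of rank $P(m)$, so a local trivialisation yields a morphism $T\to R^{s}$ well defined up to the $\mathrm{SL}(V)$-action. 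The main technical obstacle is the descent of the universal bundle $\widetilde{\ca{E}}$ to the quotient: the scalar subgroup $\mathbb{C}^{\times}\subset\mathrm{GL}(V)$ acts nontrivially on its fibres, so the family descends only after a twist by a suitable character, which is handled by the standard Kempf-descent argument (see, e.g., Huybrechts--Lehn, \textsection 4.3). Once this is in place, the universal property follows formally from the GIT framework already set up.
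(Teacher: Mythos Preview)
Your proposal is correct and follows essentially the same approach as the paper, whose own proof is simply the sentence ``The proof is standard. See for example the proof of \cite[Theorem~1]{lin}.'' Your sketch is a faithful unpacking of that standard GIT argument using exactly the ingredients the paper has prepared (Proposition~\ref{comparingstab}, Corollary~\ref{injcor}, Lemma~\ref{triiso}, and Kempf descent---the last of which the paper in fact carries out explicitly in the paragraph immediately following the theorem). Two cosmetic points: the isomorphism $V\xrightarrow{\sim}H^{0}(E(m))$ is already part of the conclusion of Proposition~\ref{comparingstab}, so your separate dimension-count step (which presupposes $\delta$-semistability before it has been established) is slightly circular as written and can simply be omitted; and in the descent step the relevant stabiliser is the finite centre $\mu_{\chi_m}\subset\mathrm{SL}(V)$ rather than $\mathbb{C}^{\times}\subset\mathrm{GL}(V)$, exactly as the paper notes when invoking Kempf's lemma.
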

\begin{proof}
The proof is standard. See for example the proof of \cite[Theorem 1]{lin}.
\end{proof}
\begin{remark}\label{genus0case}
The GIT construction of the moduli space $\overline{\ca{M}}^{\text{par},\delta}_{C}(\text{Gr}(n,N), d,\underline{a})$ also works in the case $g\leq 1$. However, for some choices of the parabolic type $(\underline{a},\underline{m})$, the moduli space $\overline{\ca{M}}^{\text{par},\delta}_{C}(\text{Gr}(n,N), d,\underline{a})$ is empty when the stability parameter $\delta$ is sufficiently close to zero. This is because by definition, when $\delta$ is sufficiently close to zero, the underline parabolic vector bundle $E$ of a $\delta$-stable pair $(E,s)$ is parabolic semistable (see Section \ref{section4}), and the moduli space $U(n,d,\underline{a},\underline{m})$ of S-equivalence classes of semistable parabolic vector bundles may be empty for some parabolic types $(\underline{a},\underline{m})$ (cf. \cite[\textsection{5}]{hu}). In this paper, we only consider parabolic types $(\underline{a},\underline{m})$ such that $\overline{\ca{M}}^{\text{par},\delta}_{C}(\text{Gr}(n,N), d,\underline{a})$ is nonempty for all generic $\delta$.
\end{remark}

\begin{remark}\label{fail}
In the definition of the parabolic data, we assume that the last parabolic weight $a_{l_p,p}$ is less than $l$. In the case $a_{l_p,p}=l$, the (coarse) moduli space of $S$-equivalence classes of semistable
\emph{parabolic sheaves} is constructed in \cite{Sun2}. However, there are some differences in this new case. According to \cite[Remark 2.4]{Sun2}, when $a_{l_p,p}=l$, a strictly semistable parabolic sheaf can have torsion supported on the marked point $\{p\}$. In the GIT construction of the moduli space in the case $a_{l_p,p}=l$, a point corresponding to a stable parabolic sheaf is strictly GIT semistable (see \cite[Proposition 2.12]{Sun2}). In the setting of parabolic $N$-pairs, if $a_{l_p,p}=l$, all values of $\delta$ are critical values and the GIT construction discussed in this section does not produce a fine moduli space. Therefore we only consider the case $a_{l_p,p}<l$ in this paper.
\end{remark}

The universal parabolic $N$-pair $S:\ca{O}^N\rightarrow\ca{E}$ over $\overline{\ca{M}}^{\text{par},\delta}_{C}(\text{Gr}(n,N), d,\underline{a})\times C$ can be constructed using GIT. To be more precise, we have a morphism
\[
H^0(\ca{O}_C(m))^N\otimes\ca{O}\otimes\ca{O}_C(-m)\rightarrow\widetilde{\ca{E}}\otimes \ca{O}_{\bb{P}}(1)
\]
over $\bar{R}\times C$, induced by the universal families (\ref{eq:prequosec}) and (\ref{eq:prequobun}). By the definition of $\bar{R}$, the morphism above induces $N$ sections
\[
\widetilde{S}:H^0(\ca{O}_C)^N\otimes\ca{O}=\ca{O}_{\bar{R}}^N\rightarrow\widetilde{\ca{E}}\otimes \ca{O}_{\bb{P}}(1).
\]
Let $z$ be a point in $\bar{R}$. By Lemma \ref{triiso} and Lemma 4.3.2 in \cite{lehn2}, the only stabilizers in $\text{SL}(V)$ of $z$ are the $\chi_m$-root of unity, where $\chi_m=\text{dim}(V)$. They act oppositely on $\widetilde{\ca{E}}$ and $\ca{O}_{\bb{P}}(1)$. Therefore, by Kempf's descent lemma (c.f. \cite[Th\'eor\`em 2.3]{drezet}), $\widetilde{\ca{E}}\otimes \ca{O}_{\bb{P}}(1)$ descends to a bundle $\ca{E}$ on $\spp\times C$, with $N$ sections $S\in H^0(\ca{E}\otimes\ca{O}^N)$ induced by $\widetilde{S}$. Moreover, the tautological flags of $\widetilde{\ca{E}}|_{\spp\times\{p\}}\otimes  \ca{O}_{\bb{P}}(1)$ descend to the universal flags of $\ca{E}|_{\spp\times\{p\}}$, for $p\in I$. We denote by $(\ca{E},\{f_p\},S)$ the universal parabolic $N$-pair over $\spp\times C$.

\begin{example}\label{largeepsilon}
When $\delta$ is sufficiently large, the stability condition stabilizes. More precisely, we have the following lemma.
\begin{lemma}\label{infeququot}Let $d_\emph{par}=d+|\underline{a}|/l$. Suppose $\delta>(n-1)d_\emph{par}$. Then there is no strictly $\delta$-semistable parabolic $N$-pair. Furthermore, a parabolic $N$-pair $(E,s)$ is $\delta$-stable if and only if the $N$ sections generically generate the fiber of $E$ on $C$.
\end{lemma}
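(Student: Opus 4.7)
The plan is to mimic Bertram's argument \cite[Proposition 3.14]{Bertram} (recalled in the Example of Section~\ref{section2.1} for the non-parabolic case) and prove both assertions simultaneously via the chain
\[
s \text{ generically generates } E \iff (E,s) \text{ is $\delta$-stable} \iff (E,s) \text{ is $\delta$-semistable.}
\]
The key numerical input I will use throughout is: if a morphism $\psi: \ca{O}_C^N \to V$ from a trivial bundle to a vector bundle $V$ is generically surjective, then $d(V) \geq 0$, and hence $d_{\text{par}}(V) \geq 0$ for any parabolic structure (since parabolic weights are non-negative). This follows because $\ker(\psi)$ is a subsheaf of the slope-$0$ semistable bundle $\ca{O}_C^N$, hence of non-positive degree, so its cokernel in $\ca{O}_C^N$ has non-negative degree and maps to $V$ with torsion cokernel.

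For the direction generic generation $\Rightarrow$ $\delta$-stability, I would take any proper sub-pair $(E',s') \subsetneq (E,s)$. If $s' \neq 0$, the sub-pair definition forces $s$ to factor through $E'$, contradicting generic generation; hence $s' = 0$. I then apply the key bound to the quotient $E/E'$ (whose induced $s''$ still generically generates it because $s$ does), obtaining $d_{\text{par}}(E/E') \geq 0$, and by additivity of parabolic degree, $d_{\text{par}}(E') \leq d_{\text{par}}$. The desired strict inequality $\mu_{\text{par}}(E',0) < \mu_{\text{par}}(E,s)$ then reduces to $(n-r')d_{\text{par}}/r' < \delta$ (writing $r' = r(E')$), which follows from $(n-r')/r' \leq n-1$ together with the hypothesis $\delta > (n-1)d_{\text{par}}$. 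All bounds being strict, $(E,s)$ is actually $\delta$-stable, not merely semistable.

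For the converse, I assume $(E,s)$ is $\delta$-semistable and suppose $s$ does not generically generate $E$. Let $F \subsetneq E$ be the saturation of the image of $s$, a proper parabolic subbundle (with locally free quotient) through which $s$ factors nontrivially; the key bound applied to $F$ gives $\mu_{\text{par}}(F) \geq 0$. Then
\[
\mu_{\text{par}}(F,s) - \mu_{\text{par}}(E,s) = \bigl(\mu_{\text{par}}(F) - \mu_{\text{par}}(E)\bigr) + \delta\,\frac{n-r(F)}{n\,r(F)},
\]
and bounding $\mu_{\text{par}}(F) - \mu_{\text{par}}(E) \geq -d_{\text{par}}/n$, together with $\delta > (n-1)d_{\text{par}}$ and $r(F)/(n-r(F)) \leq n-1$, forces the right-hand side to be strictly positive. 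So $(F,s)$ strictly destabilizes $(E,s)$, contradicting semistability. This simultaneously shows that $\delta$-stability implies generic generation and that no strictly $\delta$-semistable pair exists.

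The hardest part, and the only real departure from the non-parabolic case, will be the bookkeeping of the parabolic contribution $|\underline{a}|/l$: I need to check that the induced parabolic structures on $F$ and on $E/E'$ interact correctly with the key bound (so that $d_{\text{par}}(F), d_{\text{par}}(E/E') \geq 0$ genuinely holds, using non-negativity of parabolic weights), and that the final slope inequalities close with $d_{\text{par}}$ rather than $d$. Once the additivity of parabolic degree on exact sequences of parabolic bundles is invoked, no new input beyond Bertram's argument is required.
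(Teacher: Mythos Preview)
Your proposal is correct and follows the same Bertram-style outline as the paper, but the two implementations differ in noteworthy ways. For ``semistable $\Rightarrow$ generic generation'', the paper destabilizes with the quotient pair $(E/E',0)$ (where $E'$ is the subbundle spanned by $s$) via the chain $\mu_{\text{par}}(E'')\leq d_{\text{par}}(E'')\leq d_{\text{par}}(E)<(d_{\text{par}}(E)+\delta)/n$, whereas you destabilize with the sub-pair $(F,s)$; these are dual and equally clean. The real difference is in ``generic generation $\Rightarrow$ stable'': the paper invokes the Harder--Narasimhan filtration of the underlying parabolic bundle and an inductive argument along its graded pieces to obtain $d_{\text{par}}(E_1)\leq d_{\text{par}}(E)$, while you bypass this entirely by observing that the quotient $E/E'$ is still generically generated by $s''$, so $d_{\text{par}}(E/E')\geq 0$ and hence $d_{\text{par}}(E')\leq d_{\text{par}}$ by additivity. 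Your route is shorter and avoids the HN machinery; the paper's route has the minor advantage that its final chain $\mu_{\text{par}}(E')\leq\mu_{\text{par}}(E_1)\leq d_{\text{par}}(E_1)\leq d_{\text{par}}(E)$ does not need the extra arithmetic step $(n-r')/r'\leq n-1$ that you use. One small point worth making explicit in your write-up: both your inequality $(n-r')d_{\text{par}}/r'\leq (n-1)d_{\text{par}}$ and the dual one $d_{\text{par}}\,r(F)/(n-r(F))\leq (n-1)d_{\text{par}}$ use $d_{\text{par}}\geq 0$, which follows from your key bound applied to $E$ itself in the first direction (and is not needed in the second since $d_{\text{par}}<0$ makes the desired bound trivial).
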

\begin{proof}
The proof is a direct generalization of the proof of Proposition 3.14 in \cite{Bertram} for $N$-pairs without parabolic structures. We first show that if $(E,s)$ is $\delta$-semistable, then $s:\ca{O}^N_C\rightarrow E$ generically generate the fiber of $E$ on $C$. Suppose $s$ does not generically generates the fiber of $E$, then it spans a proper subbundle $E'\subsetneq E$. Denote the induced quotient pair  by $(E'',\{f''_p\},s'')$, where $E''=E/E'$ and $s''=0$. Then
\[
\parslo(E'',s'')=\parslo(E'')\leq d_{\text{par}}(E'')\leq d_{\text{par}}(E)<\frac{d_{\text{par}}(E)+\delta}{n}=\parslo(E,s)
\]
which contradicts with the $\delta$-semistability of $(E,s)$.

We conclude the proof by showing that if $s:\ca{O}^N_C\rightarrow E$ generically generates the fiber of $E$, then $(E,s)$ is $\delta$-stable. Let $E'$ be a proper subbundle (equivalently, a saturated subsheaf) of $E$. Then $s\notin H^0(E'\otimes\ca{O}_C^N)$ because $s$ generically generates the fiber of $E$. Hence, $\parslo(E',s')=\parslo(E')$. We only need to show that $d_{\text{par}}(E')\leq d_{\text{par}}(E)$. If this holds, we have
\[
\parslo(E',s')=\parslo(E')\leq d_{\text{par}}(E')\leq d_{\text{par}}(E)<\frac{d_{\text{par}}(E)+\delta}{n}=\parslo(E,s),
\]
and it implies that $(E,s)$ is $\delta$-stable. To prove $d_{\text{par}}(E')\leq d_{\text{par}}(E)$, we consider the underlying parabolic bundle $(E,\{f_p\})$ of the parabolic $N$-pair. Suppose the Harder-Narasimhan filtration of $(E,\{f_p\})$ with respect to the parabolic slope of parabolic bundles is given by
\[
0\subsetneq (E_1,\{f_p^1\})\subsetneq (E_2,\{f_p^2\})\subsetneq\dots\subsetneq (E_k,\{f_p^k\})=(E,\{f_p\}).
\]
Here $(E_1,\{f_p^1\})$ is the maximal destabilizing parabolic subbundle of $(E,\{f_p\})$. For all subbundle $E'\subset E$, one has $\parslo(E')\leq\parslo(E_1)$. Hence we only need to show that $d_{\text{par}}(E_1)\leq d_{\text{par}}(E)$. Consider the exact sequence
\[
0\rightarrow E_{k-1}\rightarrow E\rightarrow E/E_{k-1}\rightarrow 0.\] Since $N$ sections generically generate the fiber of $E$, the bundle $E/E_{k-1}$ has non-trivial sections. Thus $d_{\text{par}}(E/E_{k-1})\geq0$. By the properties of the Harder-Narasimhan filtration, we obtain $\parslo(E_i/E_{i-1})>\parslo(E/E_{k-1})\geq0$ for $i<k$. By induction, we assume that $d_{\text{par}}(E/E_i)\geq 0$ for $i< m$. Then from the exact sequence 
\[
0\rightarrow E_{m}/E_{m-1}\rightarrow E/E_m\rightarrow E/E_{m-1}\rightarrow 0,
\]
it follows that $d_{\text{par}}(E/E_m)=d_{\text{par}}(E_m/E_{m-1})+d_{\text{par}}(E/E_{m-1})\geq0$. In particular, it shows that $d_{\text{par}}(E/E_1)\geq 0$ and hence $d_{\text{par}}(E_1)=d_{\text{par}}(E)-d_{\text{par}}(E/E_1)\leq d_{\text{par}}(E)$.
\end{proof}
 
When $\delta>(n-1)d_\emph{par}$, we refer to it as the $(\delta=\infty)$-stability and denote the moduli space of $\delta$-stable parabolic $N$-pairs by $\overline{\ca{M}}^{\text{par},\delta=\infty}_{C}(\text{Gr}(n,N), d,\underline{a})$. We have an explicit description of this moduli space. Let $\overline{\ca{M}}_Q(d,n,N)$ be the Grothendieck's Quot scheme which parametrizes quotients $\ca{O}^N_C\rightarrow Q\rightarrow 0$, where $Q$ is a coherent sheaf on $C$ of rank $n$ and degree $d$. Let $0\rightarrow \ca{F}\rightarrow\ca{O}^N\rightarrow\ca{Q}\rightarrow 0$ be the tautological exact sequence of universal bundles over $\overline{\ca{M}}_Q(d,n,N)\times C$. We denote by $\ca{E}=\ca{F}^\vee$. Let $I=\{p_1,\dots,p_k\}$ be the set of marked points and let $\text{Fl}_{m_p}(\ca{E}_{p_i})$ be the relative flag variety of type $m_{p_i}$, where $\ca{E}_{p_i}=\ca{E}|_{\overline{\ca{M}}_Q(d,n,N)\times \{p_i\}}$. We define 
\begin{equation}\label{eq:fiberprod}
\text{Fl}_{\text{Quot}}=\text{Fl}_{m_1}(\ca{E}_{p_1})\times_{\overline{\ca{M}}_Q(d,n,k)}\dots\times_{\overline{\ca{M}}_Q(d,n,k)}\times\text{Fl}_{m_k}(\ca{E}_{p_k}).
\end{equation} By Lemma \ref{infeququot}, the moduli space $\overline{\ca{M}}^{\text{par},\delta=\infty}_{C}(\text{Gr}(n,N), d,\underline{a})$ is isomorphic to $\text{Fl}_{\text{Quot}}$.
\end{example}

\subsection{Perfect obstruction theory}
In this section, we show that for a generic value of $\delta$, the moduli space of $\delta$-stable parabolic $N$-pairs $\spp$ has a canonical perfect obstruction. We construct a virtual structure sheaf on the moduli space and discuss its basic properties.

The following proposition follows from Proposition \ref{pro17} and the same argument as in \cite[Proposition 1.12]{sca}.
\begin{proposition}
The morphism $q$ locally factorizes as the composition of a closed embedding followed by a smooth morphism. 
\end{proposition}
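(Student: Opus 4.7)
The plan is to combine Proposition 3.17 with a standard local-resolution argument for derived pushforwards along the relative curve $\pi$. Recall that Proposition 3.17 identifies
\[
\mathfrak{M}^{\text{par}}_C(\text{Gr}(n,N),d,\underline{a}) \;\cong\; \mathrm{Spec\,Sym}\bigl(R^1\pi_*((\mathfrak{E}^\vee)^N\otimes\omega)\bigr)
\]
as an abelian cone over $\mathfrak{Bun}^{\text{par}}_C(d,n,\underline{a})$. Thus it suffices to show that for any coherent sheaf of the form $\mathcal{F}=R^1\pi_*(\mathcal{G})$, with $\mathcal{G}$ a coherent sheaf on $\mathfrak{Bun}^{\text{par}}_C\times C$ flat over the base, the induced morphism $\mathrm{Spec\,Sym}(\mathcal{F})\to\mathfrak{Bun}^{\text{par}}_C$ admits, Zariski-locally on $\mathfrak{Bun}^{\text{par}}_C$, a factorization through a closed embedding into a vector bundle followed by the bundle projection.

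The first step is local resolution. Since $\pi$ has relative dimension one, for any quasi-compact open $U\subset\mathfrak{Bun}^{\text{par}}_C$ one can choose an ample line bundle $\mathcal{O}(1)$ on $C$ and pick $m\gg 0$ so that on $U\times C$ there is a surjection of locally free sheaves $A\twoheadrightarrow \mathcal{G}(m)$, hence a short exact sequence
\[
0\to K\to A(-m)\to \mathcal{G}\to 0
\]
with $K$ locally free. Pushing forward gives a two-term complex $[B^0\to B^1]$ of locally free sheaves on $U$, namely $B^0=R^1\pi_*(K)$ and $B^1=R^1\pi_*(A(-m))$, with the property that
\[
B^0\xrightarrow{\ \delta\ } B^1\twoheadrightarrow R^1\pi_*(\mathcal{G})\to 0.
\]
(Choosing $m$ sufficiently large makes $R^0\pi_*(A(-m))=R^0\pi_*(K)=0$ and the $B^i$ locally free by cohomology and base change.)

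Second, apply $\mathrm{Sym}$ to this presentation. We get
\[
\mathrm{Sym}\bigl(R^1\pi_*(\mathcal{G})|_U\bigr) \;=\; \mathrm{Sym}(B^1)\big/\bigl(\mathrm{image\ of\ }\delta\bigr),
\]
which realizes $\mathrm{Spec\,Sym}(R^1\pi_*(\mathcal{G}))|_U$ as a closed subscheme of the total space $\mathbb{V}(B^1):=\mathrm{Spec\,Sym}(B^1)$. The map $\mathbb{V}(B^1)\to U$ is the projection of the vector bundle associated to $(B^1)^\vee$, hence smooth. Applying this construction to $\mathcal{G}=(\mathfrak{E}^\vee)^N\otimes\omega$ and using the identification from Proposition 3.17 furnishes the required local factorization
\[
\mathfrak{M}^{\text{par}}_C(\text{Gr}(n,N),d,\underline{a})|_U \;\hookrightarrow\; \mathbb{V}(B^1) \;\to\; U.
\]

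There is no serious obstacle; the only mildly technical point is ensuring the local existence of the two-term resolution with locally free terms, which is the standard cohomology-and-base-change argument for a proper flat family of curves. This is exactly the structure used in \cite[Proposition 1.12]{sca}, and the parabolic decorations play no role in the argument since $q$ is obtained by base change from the forgetful map between stacks of $N$-pairs and vector bundles (the flag data being carried along trivially).
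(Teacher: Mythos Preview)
Your proposal is correct and follows essentially the same approach as the paper: the paper's proof simply invokes Proposition~\ref{pro17} together with the argument of \cite[Proposition~1.12]{sca}, and you have accurately reconstructed that argument—presenting the abelian cone locally as a closed subscheme of the total space of a vector bundle via a two-term locally free resolution of $R^1\pi_*((\mathfrak{E}^\vee)^N\otimes\omega)$. Your observation that the parabolic decorations play no role is exactly right, since the cone structure in Proposition~\ref{pro17} is pulled back from the non-parabolic situation along the flag-bundle map.
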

Let $\mpp\xrightarrow{\iota}M\xrightarrow{p}\mpb$ be a local factorization of the forgetful morphism $q:\mpp\rightarrow\mpb$. Denote by $\ca{I}$ the ideal sheaf of $\mpp\rightarrow M$. Let $\Omega$ be the relative cotangent sheaf of $M\rightarrow\mpb$ and let $L_q$ be the cotangent complex of the morphism $q$. Then the truncated cotangent complex $\tau_{\geq-1}L_q$ is isomorphic to $[\ca{I}|_{\mpp}\rightarrow\Omega|_{|\mpp}]$.

Let $\bar{\ca{E}}$ be the universal bundle over $\mpp\times C$. Let $$\bar{\pi}:\mpp\times C\rightarrow\mpp$$ be the projection and let $\bar{\omega}$ be the relative dualizing sheaf of $\bar{\pi}$.
\begin{proposition}\label{genrelpot}
There is a canonical morphism
\[
E^\bullet:=R\bar{\pi}_*((\bar{\ca{E}}^\vee)^N\otimes\bar{\omega}[1])\rightarrow L_{q}
\]
which induces a relative perfect obstruction theory for $q:\mathfrak{M}^{\emph{par},\delta}_{C}(\emph{Gr}(n,N), d,\underline{a})\rightarrow \fr{Bun}_C^\emph{par}(d,n,\underline{a})$.
\end{proposition}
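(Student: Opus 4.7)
The plan is to exploit the abelian-cone presentation of Proposition~\ref{pro17}, which realizes $\mathfrak{M}^{\text{par}}_C(\text{Gr}(n,N), d,\underline{a})$ as $\text{Spec}\,\text{Sym}(R^1\pi_*((\mathfrak{E}^\vee)^N \otimes \omega))$ over $\mathfrak{Bun}^{\text{par}}_C(d,n,\underline{a})$. Relative to the base, deformations fix the parabolic bundle $(E, \{f_p\})$ and vary only the section $s$, so the expected relative tangent and obstruction at a closed point $(E,\{f_p\},s)$ are $H^0(E^N)$ and $H^1(E^N)$. Relative Serre duality for the curve fibration $\bar{\pi}$ gives $R\bar{\pi}_*((\bar{\mathcal{E}}^\vee)^N \otimes \bar{\omega}[1]) \simeq (R\bar{\pi}_* \bar{\mathcal{E}}^N)^\vee$, whose cohomology sheaves at such a point are $H^1(E^N)^\vee$ in degree $-1$ and $H^0(E^N)^\vee$ in degree $0$. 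This confirms that the candidate $E^\bullet$ has precisely the right numerical shape to serve as a POT.

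I would construct the morphism $\phi\colon E^\bullet \to L_q$ by working locally on $\mathfrak{Bun}^{\text{par}}$. Pick a two-term locally free resolution $[B^0 \to B^1]$ of $R\pi_*((\mathfrak{E}^\vee)^N \otimes \omega)$ (in degrees $0,1$). This exhibits $\mathfrak{M}^{\text{par}}$ locally as the zero scheme of a tautological section of $p^*B^1$ on the total space $M = \text{Spec}\,\text{Sym}((B^0)^\vee)$ of $B^0$, realizing the factorization $\mathfrak{M}^{\text{par}} \xrightarrow{\iota} M \xrightarrow{p} \mathfrak{Bun}^{\text{par}}$ of the text with $\iota$ a regular embedding and $p$ smooth. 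The truncation $\tau_{\geq -1} L_q$ is then quasi-isomorphic to $[\iota^* p^*(B^1)^\vee \to \iota^* p^*(B^0)^\vee]$ in degrees $-1, 0$, and the universal section $S\colon \mathcal{O}^N \to \bar{\mathcal{E}}$, combined with relative Serre duality applied to the universal curve, supplies a canonical map from $[(B^1)^\vee \to (B^0)^\vee]$ (a local representative of $E^\bullet$) to this truncation. Independence from the choice of resolution is the usual derived-category argument, yielding a global morphism $E^\bullet \to L_q$.

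The verification that $\phi$ is a perfect obstruction theory reduces via the Behrend--Fantechi criterion to showing that $h^0(\phi)$ is an isomorphism and $h^{-1}(\phi)$ is surjective; with the explicit local model above, both statements become direct identifications of kernels and cokernels of the same differential, and both in fact become isomorphisms. The main technical obstacle I anticipate is checking that the parabolic structure does not enter the relative deformation theory beyond being rigidified by the base: since the flags $\{f_p\}$ are data on $\mathfrak{Bun}^{\text{par}}$ and the $N$ sections $s$ are entirely insensitive to them, the argument essentially transcribes the non-parabolic construction (as in \cite{sca} and the standard quasimap POT literature). The only nontrivial checks will be the canonical gluing of the morphism across local resolutions and the verification that the Serre-duality identification respects the smooth stratification of $\mathfrak{Bun}^{\text{par}}$ by parabolic type.
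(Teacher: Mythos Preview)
Your approach is correct and is precisely the standard construction that the paper defers to via the citation to \cite{sca}; the paper gives no independent argument, and your sketch faithfully reconstructs the abelian-cone-plus-local-resolution proof of the relative perfect obstruction theory, correctly observing that the parabolic flags live entirely on the base and play no role in the relative deformation theory.

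One small indexing slip to fix: with $[B^0\to B^1]$ a resolution of $R\pi_*((\mathfrak{E}^\vee)^N\otimes\omega)$ in degrees $0,1$, the sheaf $R^1\pi_*((\mathfrak{E}^\vee)^N\otimes\omega)$ is the \emph{cokernel}, so $\text{Spec}\,\text{Sym}(R^1\pi_*(\cdots))$ embeds in the total space of $(B^1)^\vee$ as the zero locus of the section of $(B^0)^\vee$, not the other way around; equivalently, and more transparently, resolve $R\pi_*(\mathfrak{E}^N)$ by $[A^0\to A^1]$ so that the space of sections $H^0(E^N)=\ker(A^0\to A^1)$ sits in the total space of $A^0$ cut out by the tautological section of $A^1$, and then $\tau_{\ge -1}L_q\simeq [(A^1)^\vee\to (A^0)^\vee]$ matches $E^\bullet$ directly via relative Serre duality.
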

\begin{proof}
The same arguments used in \cite[Proposition 2.4, Proposition 2.6]{sca} work here.
\end{proof}

\begin{proposition}\label{genpot}
The relative perfect obstruction theory $E^\bullet\rightarrow\tau_{\geq -1}L_q$ induces an absolute perfect obstruction theory on $\overline{\ca{M}}^{\emph{par},\delta}_{C}(\emph{Gr}(n,N), d,\underline{a})$.
\end{proposition}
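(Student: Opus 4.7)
The plan is to apply the standard procedure for promoting a relative perfect obstruction theory to an absolute one over a smooth base (as in Behrend--Fantechi or Graber--Pandharipande). First, I would record the smoothness of $\mpb$: this moduli stack is an iterated flag bundle over $\fr{Bun}_C(d,n)$, which the paper has already observed is a smooth Artin stack, so $\mpb$ is itself smooth. Consequently, $L_{\mpb}$ is perfect of tor-amplitude $[-1,0]$, and the same holds for its pullback $q^*L_{\mpb}$.

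Second, I would combine the relative obstruction theory $\phi \colon E^\bullet \to L_q$ from Proposition \ref{genrelpot} with the fundamental distinguished triangle of cotangent complexes
$$q^*L_{\mpb} \to L_{\mpp} \to L_q \xrightarrow{+1}.$$
Taking the homotopy pullback of $L_{\mpp} \to L_q$ along $\phi$ yields a complex $F^\bullet$ sitting in its own distinguished triangle $q^*L_{\mpb} \to F^\bullet \to E^\bullet \xrightarrow{+1}$, together with a canonical morphism $\psi \colon F^\bullet \to L_{\mpp}$ such that the induced map of triangles is the identity on the left-hand term $q^*L_{\mpb}$ and is $\phi$ on the right-hand term.

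Third, I would verify the two defining properties of a perfect obstruction theory for $\psi$. Perfectness and amplitude in $[-1,0]$ of $F^\bullet$ follow formally from the same properties for $q^*L_{\mpb}$ and $E^\bullet$ using the triangle above. That $\psi$ induces an isomorphism on $h^0$ and a surjection on $h^{-1}$ then follows from a five-lemma diagram chase on the long exact cohomology sequences of the two triangles, using the corresponding properties of $\phi$ together with the identity $q^*L_{\mpb} \to q^*L_{\mpb}$ on the left column.

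There is no real obstacle in the argument; the construction is classical and appears throughout the quasimap and Gromov--Witten literature. The only point requiring minor care is realizing the homotopy pullback canonically on the Artin stack $\mpp$, but this is handled as in the proof of Proposition \ref{genrelpot} by choosing a local factorization of $q$ through a smooth morphism and working with the explicit two-term truncations $\tau_{\geq -1}L_q = [\ca{I}/\ca{I}^2 \to \Omega|_{\mpp}]$ and $\tau_{\geq -1} L_{\mpp}$, so the cone representing $F^\bullet$ can be written down concretely and glued.
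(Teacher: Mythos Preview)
Your overall strategy---take the cone/homotopy pullback to promote the relative theory to an absolute one---is the same as the paper's, but there is a genuine gap in your amplitude argument.

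You assert that since $\mpb$ is a smooth Artin stack, $L_{\mpb}$ is perfect of tor-amplitude $[-1,0]$. This is false: parabolic vector bundles have positive-dimensional automorphism groups (at minimum the scalars $\bb{C}^*$), so $\mpb$ is a genuinely stacky Artin stack and its cotangent complex lives in amplitude $[0,1]$, with the degree-$1$ piece recording infinitesimal automorphisms. Consequently, in the triangle $q^*L_{\mpb}\to F^\bullet\to E^\bullet$, the left term contributes a possible $h^1$, and $F^\bullet$ a priori has amplitude $[-1,1]$. Your claim that ``amplitude in $[-1,0]$ of $F^\bullet$ follows formally'' therefore does not hold.

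The missing ingredient is exactly what the paper invokes: the target of the absolute theory is $\spp$, not the full Artin stack $\mpp$, and for generic $\delta$ this is a \emph{scheme} (by the GIT construction, Theorem~\ref{modulispace}). Hence $h^1(L_{\spp})=0$. Comparing the long exact cohomology sequences of the two triangles
\[
q^*L_{\mathbf{B}}\to F^\bullet\to E^\bullet\quad\text{and}\quad q^*L_{\mathbf{B}}\to L_{\spp}\to L_q,
\]
the vanishing $h^1(L_{\spp})=0$ forces $h^0(L_q)\to h^1(q^*L_{\mathbf{B}})$ to be surjective; since $h^0(\phi)$ is an isomorphism, the composite $h^0(E^\bullet)\to h^1(q^*L_{\mathbf{B}})$ is also surjective, and only then do you get $h^1(F^\bullet)=0$. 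You should work on $\spp$ throughout (not $\mpp$) and insert this step.
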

\begin{proof}
For simplicity, we denote the smooth stack $\mpb$ by $\mathbf{B}$. Consider 
\[\spp\xrightarrow{q}\mathbf{B}\rightarrow\text{Spec}\,\bb{C}.
\] We have a distinguished triangle of cotangent complexes
\[
\bb{L}q^*L_\mathbf{B}\rightarrow L_{\spp}\rightarrow L_{q}\rightarrow\bb{L}q^*L_\mathbf{B}[1].
\]
By Proposition \ref{genrelpot}, we have a canonical morphism $g:E^\bullet\rightarrow L_q$ which induces the relative perfect obstruction theory for $q$. We define $F^\bullet$ to be the shifted mapping cone $C(f)[-1]$ of the composite morphism:
\[
f:E^\bullet\rightarrow L_{q}\rightarrow \bb{L}q^*L_\mathbf{B}[1].
\]
By the axioms of triangulated categories, we have a morphism $F^\bullet\rightarrow L_{\spp}$. 

The moduli stack $\mathbf{B}=\mpb$ is a fiber product of flag bundles over the moduli stack of vector bundles $\mpb$. Therefore, $\mpb$ is smooth and the cotangent complex $L_{\mathbf{B}}$ is isomorphic to a two-term complex concentrated at [0,1]. Also note that $H^1(L_{\spp})=0$ because the moduli space $\spp$ is a scheme. Then it is straightforward to check that this induces a perfect obstruction theory on $\spp$.
\end{proof}

\begin{remark}
Let $T_{\mathbf{B}}$ be the tangent complex of $\mathbf{B}$, dual to $L_{\mathbf{B}}$. By the definition of $F^\bullet$, we have a distinguished triangle 
\[
F^\bullet\rightarrow E^\bullet\rightarrow \bb{L}_{q}^* L_{\mathbf{B}}[1]\rightarrow F^\bullet[1].
\]
By taking its dual, we have
\[
 \bb{L}_{q}^* T_{\mathbf{B}}[-1]\rightarrow (E^\bullet)^\vee\rightarrow (F^\bullet)^\vee\rightarrow \bb{L}_{q}^* T_{\mathbf{B}}.
\]
It induces a long exact sequence of cohomology sheaves
\begin{align}\label{eq:abpot}
0\rightarrow H^0(\bb{L}_{q}^* T_{\mathbf{B}}[-1])\rightarrow H^0\big((&E^\bullet\big)^\vee)\rightarrow H^0\big((F^\bullet)^\vee\big) \rightarrow\nonumber \\ 
&\rightarrow H^1(\bb{L}_{q}^* T_{\mathbf{B}}[-1])\rightarrow H^1\big((E^\bullet)^\vee\big)\rightarrow H^1\big((F^\bullet)^\vee\big)\rightarrow0.
\end{align}
Let $z=(E,s)\in\spp$ be a closed point and let $t=[E]\in\mathbf{B}$. The fiber of the locally free sheaf $H^1(\bb{L}_{q}^* T_{\mathbf{B}}[-1])$ at $z$ is isomorphic to $\text{Ext}^1(E,E)$ and the fiber $H^0(\bb{L}_{q}^* T_{\mathbf{B}}[-1])|_z$ is isomorphic to the infinitesimal automorphism group $\text{Ext}^0(E,E)$. The fiber $H^i\big((E^\bullet)^\vee\big)|_z$ can be identified with $\big(\bb{H}^i(E)\big)^N$ for $i=0,1$. Let $\bb{H}^i= \bb{H}^{i+1}(\mathop{\mathcal{P}\! \mathit{ar}\mathcal{E}\! \mathit{nd}}(E)\rightarrow E\otimes \ca{O}_C^N)$ be the hypercohomology groups, for $i=0,1$. We have the following long exact sequence of hypercohomology groups
\begin{equation}\label{eq:tanobs}
0\rightarrow H^0(\mathop{\mathcal{P}\! \mathit{ar}\mathcal{E}\! \mathit{nd}}(E))\rightarrow (H^0(E))^N\rightarrow \bb{H}^0\rightarrow H^1(\mathop{\mathcal{P}\! \mathit{ar}\mathcal{E}\! \mathit{nd}}(E))\rightarrow (H^1(E))^N\rightarrow \bb{H}^1\rightarrow 0.
\end{equation}
Comparing with (\ref{eq:abpot}), we can identify the stalks $H^i\big((F^\bullet)^\vee\big)|_z$ with the hypercohomology groups $ \bb{H}^i$, for $i=0,1$.
\end{remark}

\begin{corollary}\label{smoothcri}
If the degree $d$ is sufficiently large such that $\mu_{\emph{par}}(E,s)>2g-1+|I|+\delta.$ Then the moduli space $\overline{\ca{M}}^{\emph{par},\delta}_{C}(\emph{Gr}(n,N), d,\underline{a})$ is smooth. 
\end{corollary}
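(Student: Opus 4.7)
The plan is to reduce smoothness to pointwise vanishing of the obstruction space of the perfect obstruction theory produced by Proposition \ref{genpot}, and then to invoke Lemma \ref{genvanish} to get that vanishing from the numerical hypothesis on $\mu_{\text{par}}(E,s)$. Concretely, at a closed point $z = (E,s) \in \spp$, the remark following Proposition \ref{genpot} identifies the fiber of the obstruction sheaf $H^1\bigl((F^\bullet)^\vee\bigr)$ at $z$ with the hypercohomology group $\mathbb{H}^1 = \mathbb{H}^2\bigl(\parend(E) \to E \otimes \ca{O}_C^N\bigr)$, and the long exact sequence (\ref{eq:tanobs}) displays $\mathbb{H}^1$ as a quotient of $(H^1(E))^N$. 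Hence it is enough to prove $H^1(E) = 0$ for every closed point.

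Now I would simply apply Lemma \ref{genvanish} to the $\delta$-stable (in particular $\delta$-semistable) parabolic $N$-pair $(E,s)$. The hypothesis of the corollary, namely $\mu_{\text{par}}(E,s) > 2g - 1 + |I| + \delta$, is exactly the vanishing hypothesis of that lemma, so it yields $H^1(E) = 0$. Combined with (\ref{eq:tanobs}), this forces $\mathbb{H}^1 = 0$ at every closed point, and therefore the coherent sheaf $H^1\bigl((F^\bullet)^\vee\bigr)$ on $\spp$ is zero.

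With the obstruction sheaf identically zero, the perfect obstruction theory $F^\bullet \to \tau_{\geq -1} L_{\spp}$ has tor-amplitude concentrated in degree zero, so by the standard criterion for perfect obstruction theories (equivalently, pointwise vanishing of obstructions implies formal smoothness of a Deligne--Mumford stack of finite type), the scheme $\spp$ is smooth. The main content of the argument is thus entirely encapsulated by Lemma \ref{genvanish}; the only other ingredient is the transparent identification of the obstruction fiber with $\mathbb{H}^1$ via the exact sequence (\ref{eq:tanobs}), and there is no genuine obstacle to carrying this out.
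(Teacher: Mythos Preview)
Your proof is correct and follows essentially the same approach as the paper: invoke Lemma~\ref{genvanish} to obtain $H^1(E)=0$ from the numerical hypothesis, then use the long exact sequence~(\ref{eq:tanobs}) to conclude that the obstruction space $\mathbb{H}^1$ vanishes at every closed point, whence smoothness. The paper's proof is terser but the logic is identical.
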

\begin{proof}
If $\mu_{\emph{par}}(E,s)>2g-1+|I|+\delta$, then by Lemma \ref{genvanish}, we have $H^1(E)=0$. It follows from the long exact sequence (\ref{eq:tanobs}) that the obstruction space $\bb{H}^1$ vanishes. Therefore, the moduli space is smooth. 
\end{proof}

\begin{corollary}
For generic $\delta$, the moduli space of $\delta$-stable parabolic $N$-pairs $\overline{\ca{M}}^{\emph{par},\delta}_{C}(\emph{Gr}(n,N), d,\underline{a})$ has a virtual structure sheaf
 \[\ca{O}_{\overline{\ca{M}}^{\emph{par},\delta}_{C}(\emph{Gr}(n,N), d,\underline{a})}^{\emph{vir}}\in K_0(\overline{\ca{M}}^{\emph{par},\delta}_{C}(\emph{Gr}(n,N), d,\underline{a})).\]
\end{corollary}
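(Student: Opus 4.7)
The plan is to deduce this corollary directly from Proposition \ref{genpot} together with the general machinery of \cite{Lee}. By Theorem \ref{modulispace}, for a generic stability parameter $\delta$, the moduli space $\overline{\ca{M}}^{\text{par},\delta}_{C}(\text{Gr}(n,N), d,\underline{a})$ is a projective scheme, so the abstract framework of \cite[\textsection{2.3}]{Lee} applies once we exhibit an absolute perfect obstruction theory of perfect amplitude $[-1,0]$.

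First, I invoke Proposition \ref{genpot} to obtain the absolute perfect obstruction theory $F^\bullet\to L_{\spp}$ constructed as the shifted mapping cone of the composition $E^\bullet\to L_q\to \bb{L}q^*L_{\mathbf{B}}[1]$. The relative complex $E^\bullet=R\bar{\pi}_*((\bar{\ca{E}}^\vee)^N\otimes\bar{\omega}[1])$ has perfect amplitude $[-1,0]$ by Proposition \ref{genrelpot}, and the cotangent complex $L_{\mathbf{B}}$ of the smooth Artin stack $\mathbf{B}=\mpb$ is perfect of amplitude $[0,1]$. Hence the mapping cone construction yields a complex $F^\bullet$ of perfect amplitude $[-1,0]$, as required for the construction of the virtual structure sheaf.

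Next, I apply the construction of \cite[\textsection{2.3}]{Lee}: given a perfect obstruction theory of amplitude $[-1,0]$ on a Deligne–Mumford stack, one constructs a virtual structure sheaf as $\ca{O}^{\text{vir}}:=[\ca{O}_{\mathbf{C}_{F^\bullet}}]$ where $\mathbf{C}_{F^\bullet}$ is the virtual normal cone embedded in the vector bundle stack $\mathbf{E}:=h^1/h^0((F^\bullet)^\vee)$, refined by Gysin pullback to the zero section. Locally, one chooses a two-term resolution $[F^{-1}\to F^0]\simeq F^\bullet$ by vector bundles; existence of such resolutions globally can fail, but, as in \cite{Lee}, one may work with the intrinsic normal cone and define $\ca{O}^{\text{vir}}$ via $0^!_{\mathbf{E}}[\ca{O}_{\mathbf{C}_{F^\bullet}}]\in K_0(\spp)_{\bb{Q}}$. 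Since our moduli space is a projective scheme and the standard existence results for two-term resolutions on quasi-projective schemes with a perfect obstruction theory are available, the construction goes through without modification.

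The principal technical point to verify is only that $F^\bullet$ has amplitude in $[-1,0]$; I expect no serious obstacle, since $\mathbf{B}$ is smooth and the relative complex is two-term by Proposition \ref{genrelpot}. The rest of the argument is a direct citation of \cite{Lee}. Thus the virtual structure sheaf
\[
\ca{O}_{\overline{\ca{M}}^{\text{par},\delta}_{C}(\text{Gr}(n,N), d,\underline{a})}^{\text{vir}}\in K_0\bigl(\overline{\ca{M}}^{\text{par},\delta}_{C}(\text{Gr}(n,N), d,\underline{a})\bigr)
\]
is well-defined for any generic $\delta$, completing the proof.
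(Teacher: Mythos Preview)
Your proposal is correct and follows essentially the same approach as the paper: both deduce the existence of the virtual structure sheaf directly from Proposition \ref{genpot} together with the construction in \cite[\textsection{2.3}]{Lee}. The paper additionally records an equivalent description via Qu's virtual pullback $q^!:K_0(\mathbf{B})\to K_0(\spp)$, defining $\ca{O}^{\text{vir}}=q^!(\ca{O}_{\mathbf{B}})$, but this is presented as an alternative formulation rather than a different argument.
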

\begin{proof}
The corollary follows from Proposition \ref{genpot} and the construction in \cite[\textsection{2.3}]{Lee}. We describe an equivalent construction here. By Proposition \ref{genpot} and Definition 2.2 in \cite{qu}, one can define a virtual pullback:
\[
q^!:K_0(\mathbf{B})\rightarrow K_0(\msp).
\]
The virtual structure sheaf is defined as 
\[
\ca{O}_{\overline{\ca{M}}^{\text{par},\delta}_{C}(\text{Gr}(n,N), d,\underline{a})}^{\text{vir}}=q^!(\ca{O}_\mathbf{B}),
\]
where $\ca{O}_\mathbf{B}$ is the structure sheaf of $\mathbf{B}$.
\end{proof}

The following lemma shows that there are natural embeddings between moduli spaces of stable parabolic $N$-pairs of different degrees.
\begin{lemma}\label{embed}
Let $D$ be an effective divisor whose support is disjoint from the set $I$ of ordinary markings. If $(E,s)$ is a $\delta$-(semi)stable parabolic $N$-pair, then so is $(E(D),s(D))$. Here $s(D)$ is defined as the composition $\ca{O}_C^N\hookrightarrow\ca{O}_C^N(D)\xrightarrow{s}E(D)$. Conversely, if $\phi$ vanishes on $D$ and $(E,s)$ is $\delta$-(semi)stable, then so is $(E(-D),s(-D))$.
\end{lemma}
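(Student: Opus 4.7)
The guiding idea is that tensoring with $\ca{O}_C(D)$ (or $\ca{O}_C(-D)$) is an exact autoequivalence on the category of parabolic vector bundles of fixed rank and parabolic type, shifting all parabolic degrees by $r \cdot \deg D$, and that it is compatible with the $N$-section data. Once this is in place, the $\delta$-(semi)stability inequalities on both sides of the correspondence differ by a common constant and hence are equivalent.

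More precisely, I would first set up the correspondence on parabolic bundles. Since $\text{supp}(D)$ is disjoint from $I$, the line bundle $\ca{O}_C(D)$ is canonically trivialized in a neighbourhood of each $p \in I$ by the rational section $1$, giving a canonical identification $E(D)|_p = E|_p$. Through this identification we transport the filtration $\{E_{i,p}\}$ and the weights $\{a_{i,p}\}$ verbatim, making $E(D)$ into a parabolic bundle of the same parabolic type as $E$. The operation $F \mapsto F(D)$ gives a bijection between (parabolic) subbundles of $E$ and subbundles of $E(D)$, preserving induced parabolic structures, with parabolic degrees related by $d_{\text{par}}(F(D)) = d_{\text{par}}(F) + r(F)\deg D$.

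Next I would address the compatibility with sections. The inclusion $\ca{O}_C \hookrightarrow \ca{O}_C(D)$ gives an inclusion $E \hookrightarrow E(D)$, and by definition $s(D)$ is the composition $\ca{O}_C^N \xrightarrow{s} E \hookrightarrow E(D)$. Hence for any subbundle $E' \subseteq E$, the section $s$ factors through $E'$ if and only if $s(D)$ factors through $E'(D)$; in particular $\theta(s') = \theta(s'(D))$. Combining with the preceding paragraph, the parabolic slope of a sub-pair shifts by the same constant $\deg D$ as that of $(E,s)$, so the inequality
\[
\mu_{\text{par}}(F, s') \; (\leq \text{ or } <) \; \mu_{\text{par}}(E,s)
\]
is equivalent to the corresponding inequality for $(F(D), s'(D)) \subset (E(D), s(D))$. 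This yields the first assertion.

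For the converse, assuming $s$ vanishes on $D$ means $s$ factors as $\ca{O}_C^N \xrightarrow{s(-D)} E(-D) \hookrightarrow E$, so $s(-D)$ is well-defined. I would then run exactly the same bijection of sub-pairs $F \leftrightarrow F(-D)$ with $-D$ in place of $D$: since $s$ vanishes on $D$, any sub-pair $(F, s_F) \subseteq (E,s)$ already has $s_F$ vanishing on $D$, so it produces a sub-pair $(F(-D), s_F(-D))$ of $(E(-D), s(-D))$, and the slope shift by $-\deg D$ is again uniform. No step is genuinely hard; the only care needed is to verify that the sub-pair/quotient bijection preserves the \emph{induced} parabolic structures, which is immediate because tensoring with $\ca{O}_C(\pm D)$ is the identity on fibers at points of $I$.
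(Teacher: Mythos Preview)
Your proof is correct and follows the same approach as the paper. The paper's own proof consists of a single sentence noting that $\mu_{\text{par}}(F(D)) = \deg D + \mu_{\text{par}}(F)$ for any vector bundle $F$; your argument is simply a more careful elaboration of why this slope shift, together with the evident bijection of sub-pairs, transfers the $\delta$-(semi)stability inequalities verbatim.
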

\begin{proof}
The lemma follows easily from the the fact that for any vector bundle $F$, we have
\[
\mu_{\text{par}}(F(D))=\text{deg}\, D+\mu_{\text{par}}(F).
\]
\end{proof}

Let $d_D$ be the degree of $D$. Lemma \ref{embed} shows that there is an embedding
\begin{equation}\label{eq:mspemb}
\iota_D:\spp\hookrightarrow\overline{\ca{M}}^{\text{par},\delta}_{C}(\text{Gr}(n,N), d+nd_D,\underline{a}).
\end{equation}
In fact, we can choose $D$ such that $\spp$ is the zero locus of a section of a vector bundle on $\overline{\ca{M}}^{\text{par},\delta}_{C}(\text{Gr}(n,N), d+nd_D,\underline{a})$. Suppose that $D$ is the sum of $d_D$ distinct points $x_1,\dots,x_{d_D}$ on $C$. Let $\ca{E}'$ and $\ca{E}$ be the universal bundles over $\spp\times C$ and  $\overline{\ca{M}}^{\text{par},\delta}_{C}(\text{Gr}(n,N), d+nd_D,\underline{a})\times C$, respectively. We define a vector bundle $\ca{E}_D$ on $\overline{\ca{M}}^{\text{par},\delta}_{C}(\text{Gr}(n,N), d+nd_D,\underline{a})$ by
\[
\ca{E}_D=\bigoplus_{i=1}^{d_D}\ca{E}_{x_i},
\]
where $\ca{E}_{x_i}:=\ca{E}|_{\overline{\ca{M}}^{\text{par},\delta}_{C}(\text{Gr}(n,N), d+nd_D,\underline{a})\times \{x_i\}}$ denotes the restriction of the universal bundle $\ca{E}$ to the point $x_i$. 
\begin{proposition}\label{embvirsheaf}
There is a canonical section $S_D\in H^0(\ca{E}_D^{\oplus N})$, induced by the universal $N$-pair $S:\ca{O}^N\rightarrow \ca{E}$, and the image of $\iota_D$ is the scheme-theoretic zero locus of $S_D$. Moreover, we have the following relation between the virtual structure sheaves:
\[
\iota_{D*}\ca{O}^{\emph{vir}}_{\overline{\ca{M}}^{\emph{par},\delta}_{C}(\emph{Gr}(n,N), d,\underline{a})}=\lambda_{-1}((\ca{E}_D^\vee)^{\oplus N})\otimes\ca{O}^{\emph{vir}}_{\overline{\ca{M}}^{\emph{par},\delta}_{C}(\emph{Gr}(n,N), d+nd_D,\underline{a})}
\]
in $K_0(\overline{\ca{M}}^{\emph{par},\delta}_{C}(\emph{Gr}(n,N), d+nd_D,\underline{a}))$.
\end{proposition}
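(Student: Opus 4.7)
The plan is to prove the two claims in sequence. Write $D = x_1 + \cdots + x_{d_D}$ and, for brevity, set $\ca{M} = \spp$ and $\ca{M}' = \overline{\ca{M}}^{\text{par},\delta}_{C}(\text{Gr}(n,N), d+nd_D,\underline{a})$. Restricting the universal section $\tilde{S} : \ca{O}^N \to \tilde{\ca{E}}$ on $\ca{M}'$ to each slice $\ca{M}' \times \{x_i\}$ yields $N$ sections of $\tilde{\ca{E}}_{x_i}$; their direct sum over $i$ defines $S_D$. A $T$-family $(\tilde{\ca{E}}, \tilde{S})$ factors through the zero locus $Z(S_D)$ if and only if $\tilde{S}|_{T \times x_i} = 0$ for every $i$, equivalently when $\tilde{S}$ factors through the subsheaf $\tilde{\ca{E}}(-D) \hookrightarrow \tilde{\ca{E}}$. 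By Lemma~\ref{embed}, the twisted family $(\tilde{\ca{E}}(-D), \tilde{S}(-D))$ is then a flat family of $\delta$-stable parabolic $N$-pairs of degree $d$, and the inverse functor is $(E,s) \mapsto (E(D), s(D)) = \iota_D(E,s)$. This identifies $Z(S_D)$ with $\iota_D(\ca{M})$ scheme-theoretically.

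For the virtual structure sheaf identity I would establish compatibility of the two perfect obstruction theories. The identification $(\iota_D \times \mathrm{id}_C)^* \tilde{\ca{E}} \cong \ca{E}(D)$ on $\ca{M} \times C$ gives the tautological short exact sequence
\[
0 \to \ca{E}^\vee(-D) \to \ca{E}^\vee \to \ca{E}^\vee|_D \to 0.
\]
Applying $(-)^{\oplus N} \otimes \bar\omega$, pushing forward by $\bar\pi$ and shifting by $[1]$ produces, via derived base change along the cartesian square of $\iota_D$ against the curve projection (and using that the $D$-supported term has only a degree-zero pushforward, canonically identified with $(\ca{E}_D^\vee)^{\oplus N}$ after trivialising the fixed lines $\omega_C|_{x_i}$), a distinguished triangle
\[
(\ca{E}_D^\vee)^{\oplus N} \longrightarrow L\iota_D^* \tilde{E}^\bullet \longrightarrow E^\bullet \xrightarrow{+1}
\]
of the relative POTs of Proposition~\ref{genrelpot}. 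Since the base stacks $\mpb$ and $\fr{Bun}_C^\text{par}(d+nd_D,n,\underline{a})$ are canonically isomorphic via $E \mapsto E(D)$ compatibly with $\iota_D$, the octahedral axiom transports this to the analogous triangle for the absolute POTs $F^\bullet$, $\tilde F^\bullet$ of Proposition~\ref{genpot}.

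The resulting triangle realises the cotangent-complex triangle of $\iota_D$, identifying $N^\vee_{\iota_D} = (\ca{E}_D^\vee)^{\oplus N}$. Hence $\iota_D$ is a regular embedding of the expected codimension $Nnd_D$, and $S_D$ is a regular section of $\ca{E}_D^{\oplus N}$; its Koszul resolution gives $\iota_{D*} \ca{O}_{\ca{M}} = \lambda_{-1}((\ca{E}_D^\vee)^{\oplus N})$ in $K_0(\ca{M}')$. POT compatibility together with Qu's virtual pullback \cite{qu} gives $\iota_D^! [\ca{O}^{\mathrm{vir}}_{\ca{M}'}] = [\ca{O}^{\mathrm{vir}}_{\ca{M}}]$, and in this regular-embedding situation the virtual pullback coincides with the derived pullback $L\iota_D^*$. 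The projection formula $\iota_{D*} L\iota_D^* \alpha = \alpha \cdot [\iota_{D*} \ca{O}_{\ca{M}}]$ then produces the claimed identity. The main technical obstacle is realising the POT triangle not merely as a triangle in the derived category but as a compatible map to the cotangent complexes, so that the virtual pullback formalism applies; this reduces to naturality of the construction in Proposition~\ref{genrelpot} applied to the short exact sequence above.
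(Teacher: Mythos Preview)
Your first paragraph, identifying $Z(S_D)$ with $\iota_D(\ca{M})$ via Lemma~\ref{embed}, is correct and matches the paper. Likewise, the construction of the compatible triple of perfect obstruction theories and the conclusion $\iota_D^![\ca{O}^{\vir}_{\ca{M}'}] = [\ca{O}^{\vir}_{\ca{M}}]$ via Qu's functoriality is exactly what the paper does.

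The gap is in the sentence ``Hence $\iota_D$ is a regular embedding of the expected codimension $Nnd_D$, and $S_D$ is a regular section.'' A perfect obstruction theory $(\ca{E}_D^{\oplus N})^\vee[1]\to L_{\iota_D}$ only says that $H^0(L_{\iota_D})=0$ and $H^{-1}(L_{\iota_D})$ is a quotient of $(\ca{E}_D^\vee)^{\oplus N}$; it does \emph{not} force $L_{\iota_D}$ to be concentrated in degree $-1$ with locally free $H^{-1}$, which is what regularity would require. Indeed the paper explicitly uses this proposition in the singular regime (Case~2 of Theorems~\ref{mainthm1} and~\ref{intromainthm3}), where $\ca{M}$ and $\ca{M}'$ are not smooth and there is no reason for $S_D$ to cut out $\ca{M}$ in the expected codimension. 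Consequently the Koszul identity $\iota_{D*}\ca{O}_{\ca{M}}=\lambda_{-1}((\ca{E}_D^\vee)^{\oplus N})$ and the identification of $\iota_D^!$ with $L\iota_D^*$ are unavailable, and your final projection-formula step does not go through.

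The paper closes this gap by a different device that avoids regularity entirely. From the Cartesian square
\[
\begin{tikzcd}
\ca{M}\arrow{r}{\iota_D}\arrow[swap]{d}{\iota_D} & \ca{M}'\arrow{d}{0_{\ca{E}_D}}\\
\ca{M}'\arrow{r}{S_D} & \ca{E}_D^{\oplus N}
\end{tikzcd}
\]
one gets $\iota_{D*}\ca{O}^{\vir}_{\ca{M}} = \iota_{D*}\iota_D^!\ca{O}^{\vir}_{\ca{M}'} = 0_{\ca{E}_D}^!\,S_{D*}\ca{O}^{\vir}_{\ca{M}'}$ by compatibility of virtual pullback with proper pushforward. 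Since the two sections $S_D$ and $0_{\ca{E}_D}$ of the vector bundle $\ca{E}_D^{\oplus N}$ are $\bb{A}^1$-homotopic, $S_{D*}=0_{\ca{E}_D*}$ in $K$-theory, and then the self-intersection (excess) formula for the zero section gives $0_{\ca{E}_D}^!\,0_{\ca{E}_D*}(-)=\lambda_{-1}((\ca{E}_D^\vee)^{\oplus N})\otimes(-)$. This yields the desired identity with no smoothness or transversality assumptions.
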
 
\begin{proof}
The canonical section $S_D$ is defined by the restrictions of the universal $N$-pair to $\{x_i\}$, i.e.,
\[
S_D=\big(S|_{\overline{\ca{M}}^{\text{par},\delta}_{C}(\text{Gr}(n,N), d+nd_D,\underline{a})\times\{x_i\}}\big)_{i\in I}.
\]
Suppose $Z$ is the zero-scheme of $S_D$. Then the restriction of the universal $N$-pair to $Z\times C$ factors:
\[
\ca{O}^N_{Z\times C}\xrightarrow{S'} \ca{E}|_{Z\times C}(-D)\hookrightarrow\ca{E}|_{Z\times C}.
\]
By Lemma \ref{embed}, the section $S'$ defines a family of stable parabolic $N$-pairs over $Z\times C$. Hence $S'$ induces a morphism $Z\rightarrow\spp$ which is inverse to $\iota_D$. This proves the first part of the proposition.

For simplicity, we denote by $\overline{\ca{M}}_d$ and $\overline{\ca{M}}_{d+nd_D}$ the moduli spaces $\spp$ and $\overline{\ca{M}}^{\text{par},\delta}_{C}(\text{Gr}(n,N), d+nd_D,\underline{a})$, respectively. Consider the following commutative diagram.
\[
\begin{tikzcd}
\overline{\ca{M}}_d\arrow[hookrightarrow]{r}{\iota_D}\arrow{d}{q'}&\overline{\ca{M}}_{d+nd_D}\arrow{d}{q} 
\\
\mpb\arrow{r}{t}&{\fr{Bun}_C^\text{par}(d+nd_D,n,\underline{a})}
\end{tikzcd}
\]
The morphism $t$ is defined by mapping a parabolic bundle $E$ to $E(D)$. Notice that it induces an isomorphism between $\mpb$ and $\fr{Bun}_C^\text{par}(d+nd_D,n,\underline{a})$. Therefore, we can identify these two moduli stacks of parabolic vector bundles by $t$ and use $\fr{B}$ to denote both of them. Consider the morphisms
\[
\overline{\ca{M}}_d\xrightarrow{\iota_D}\overline{\ca{M}}_{d+nd_D}\xrightarrow{q}\fr{B}.
\]
Let $q'=q\circ \iota_D$.
Let $\pi:C\times\overline{\ca{M}}_{d+nd_D}\rightarrow\overline{\ca{M}}_{d+nd_D}$ and $\pi':C\times\overline{\ca{M}}_{d}\rightarrow\overline{\ca{M}}_{d}$ be the projection maps. By abuse of notation, we denote by $\iota_D$ the embedding of $C\times\overline{\ca{M}}_{d}$ into $C\times\overline{\ca{M}}_{d+nd_D}$.
By Proposition \ref{genrelpot}, we have two relative perfect obstruction theories
\[
E^\bullet:=R\pi_*((\ca{E}^\vee)^N\otimes\omega[1])\rightarrow L_{q} \]
and
\[
E'^\bullet:=R\pi_*'((\ca{E'}^\vee)^N\otimes\omega[1])\rightarrow L_{q'}.
\]
Here $\omega$ is the pullback of the dualizing sheaf of $C$ to the universal curve via the projection map. Consider the following short exact sequence
\[
0\rightarrow\ca{E}'\rightarrow\iota_D^*\ca{E}\rightarrow\ca{E}_D\rightarrow0,
\]
where $\ca{E}_D=\bigoplus_{i=1}^{d_D}\ca{E}_{x_i}$. It induces a distinguished triangle
\[
\big(R\pi_*'(\iota_D^*\ca{E}^N)\big)^\vee\rightarrow \big(R\pi_*'((\ca{E'})^N)\big)^\vee\rightarrow(\ca{E}_D^N)^\vee[1]\rightarrow\big(R\pi_*'(\iota^*\ca{E}^N)\big)^\vee[1].
\]
By Grothendieck duality and cohomology and base change, we have $\big(R\pi_*'(\iota_D^*\ca{E}^N)\big)^\vee=\bb{L}\iota_D^*E^\bullet$ and $\big(R\pi_*'((\ca{E}')^N)\big)^\vee=E'^\bullet$. By the axioms of triangulated categories, we obtain a morphism 
\[
(\ca{E}^N_D)^\vee[1]\rightarrow L_{\iota_D},
\]
and the following morphism of distinguished triangles.
\[
\begin{tikzcd}
\bb{L}\iota_D^*E^\bullet\arrow{r}\arrow{d}&E'^\bullet\arrow{r}\arrow{d} &(\ca{E}^N_D)^\vee[1]\arrow{d}\arrow{r}&\bb{L}\iota_D^*E^\bullet[1]\arrow{d}\\
\bb{L}\iota_D^*L_q\arrow{r}&L_{q'}\arrow{r} & L_{\iota_D}\arrow{r} &\bb{L}\iota_D^*L_q[1]
\end{tikzcd}
\]
over $\overline{\ca{M}}_d$. It follows from the long exact sequences in cohomology that $(\ca{E}^N_D)^\vee[1]\rightarrow L_{\iota_D}
$ is a perfect obstruction theory for $\iota_D$. Recall that $\ca{O}^{\text{vir}}_{\overline{\ca{M}}_{d+nd_D}}=q^!\ca{O}_{\fr{B}}$ and $\ca{O}^{\text{vir}}_{\overline{\ca{M}}_{d}}=(q')^!\ca{O}_{\fr{B}}$. By the functoriality property of virtual pullbacks proved in \cite[Proposition 2.11]{qu}, we have
\[
\iota_D^!\ca{O}^{\text{vir}}_{\overline{\ca{M}}_{d+nd_D}}=\ca{O}^{\text{vir}}_{\overline{\ca{M}}_{d}}.
\]
Let $0_{\ca{E}_D}:\overline{\ca{M}}_{d+nd_D}\rightarrow \ca{E}_D^{\oplus N}$ be the zero section embedding. Consider the following Cartesian diagram.
\[
\begin{tikzcd}
\overline{\ca{M}}_{d}\arrow{r}{\iota_D}\arrow[swap]{d}{\iota_D} &\overline{\ca{M}}_{d+nd_D}\arrow{d}{0_{\ca{E}_D}}\\
\overline{\ca{M}}_{d+nd_D}\arrow{r}{S_D} & \ca{E}_D^{\oplus N}
\end{tikzcd}
\]
Using the fact that virtual pullbacks commute with push-forward, we obtain
\[
\iota_{D*}\ca{O}^{\text{vir}}_{\overline{\ca{M}}_d}=0_{\ca{E}_D}^!S_{D*}\ca{O}^{\text{vir}}_{\overline{\ca{M}}_{d+nd_D}}.
\]
Note that $S_{D*}=0_{\ca{E}_D*}$, since the two sections are homotopic. The proposition follows from the excess intersection formula in $K$-theory (c.f. \cite[Chapter VI]{fulton}).
\end{proof}

\section{($\delta=0^+$)-chamber and Verlinde type invariants}\label{section4}
When $\delta$ is sufficiently close to 0, the stability condition stabilizes. We refer to it as the ($\delta=0^+)$-chamber. The theory of the GLSM at ($\delta=0^+$)-chamber is related to the theory of semistable bundles in an explicit way. We describe this connection in this section.

We first consider the case without parabolic structures. We assume the genus of $C$ is greater than 1, i.e., $g>1$. Let $\delta_+$ be the smallest critical value. For $\delta\in(0,\delta_+)$, we denote the moduli space of $\delta$-stable parabolic $N$-pairs by $\mspze$. It is not difficult to check for $0<\delta<\delta_+$,
\begin{itemize}
\item If $(E,s)$ is a $\delta$-stable pair then $E$ is a semistable bundle.
\item Conversely, if $E$ is stable, then $(E,s)$ is $\delta$-stable for any choice of nonzero $s\in H^0(E\otimes \ca{O}_C^N)$.
\end{itemize}
Let $U_C(n,d)$ be the moduli space of S-equivalence classes of semistable vector bundles of rank $n$ and degree $d$ (cf. \cite{potier}). From the analysis above, we have a forgetful morphism 
\[q:\mspze\rightarrow U_C(n,d),
\] which forgets $N$ sections. Let $[E]\in U_C(n,d)$ be a closed point where $E$ is a stable bundle. Then $(E,s)$ is $\delta$-stable for any nonzero $N$ sections $s$. Hence the fibre of $q$ over $[E]$ is $\bb{P}H^0(E)$. If $d>n(g-1)$, then any bundle $E$ must have non-zero sections by Riemann-Roch. Therefore the image of the forgetful morphism $q$ contains the non-empty open subset $U^s_C(n,d)\subset U_C(n,d)$ parametrizing isomorphism classes of stable vector bundles. Note that $\mspze$ is proper and $U_C(n,d)$ is irreducible. Hence we have shown that $q$ is surjective if $d>n(g-1)$.

 In the case $(n,d)=1$, there are no strictly semistable vector bundles and the moduli space $U(n,d)$ is smooth. Moreover, there exists a universal vector bundle $\tilde{\ca{E}}\rightarrow U(n,d)\times C$ such that for any closed point $[E]\in U(n,d)$, the restriction $\tilde{\ca{E}}|_{C\times[E]}$ is a stable bundle of degree $d$, isomorphic to $E$. Note that the universal $\tilde{\ca{E}}$ is not unique since we can obtain other universal vector bundles by tensoring $\tilde{\ca{E}}$ with the pullback of any line bundle on $U(n,d)$. Let $\rho:  U(n,d)\times C\rightarrow U(n,d)$ be the projection map. Using the same arguments as in the proof of Lemma \ref{genvanish0}, one can show that $R^1\rho_*\,\tilde{\ca{E}}=0$ if $d>2n(g-1)$. In this case, $\rho_*\tilde{\ca{E}}$ is a vector bundle over $U(n,d)$. Let $\bb{P}((\rho_*\tilde{\ca{E}})^{\oplus N})$ be the projectivization of $(\rho_*\tilde{\ca{E}})^{\oplus N}$. 
\begin{proposition}{\emph{\cite[Theorem 3.26]{Bertram}}}\label{compare}
Suppose $(n,d)=1$ and $d>2n(g-1)$. Then we have an isomorphism
\[
\overline{\mathcal M}^{\delta=0+}_{C}(\emph{Gr}(n,N), d)\cong\bb{P}((\rho_*\tilde{\ca{E}})^{\oplus N}).
\]
Moreover, the above identification gives an isomorphism between the universal $N$-pair $(\ca{E},S)$ and $(q^*(\tilde{\ca{E}})\otimes \ca{O}(1),S')$, where $S'$ is induced by the tautological section of the anti-tautological line bundle $\ca{O}(1)$ on the projective bundle $\bb{P}((\rho_*\tilde{\ca{E}})^{\oplus N})$.
\end{proposition}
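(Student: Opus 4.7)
The plan is to prove the isomorphism by constructing morphisms between $\bb{P}((\rho_*\tilde{\ca{E}})^{\oplus N})$ and $\mspze$ in both directions using the universal properties of each side, and then verifying that they are mutually inverse by tracing through the corresponding universal families. Write $W:=(\rho_*\tilde{\ca{E}})^{\oplus N}$, which is a locally free sheaf on $U(n,d)$: the hypothesis $d>2n(g-1)$ is used precisely because it forces $R^1\rho_*\tilde{\ca{E}}=0$, so $W$ is locally free and its formation commutes with arbitrary base change. The hypothesis $(n,d)=1$ is used to ensure that $U(n,d)$ is a fine moduli space, so $\tilde{\ca{E}}$ exists as an honest universal bundle, unique only up to tensoring with the pullback of a line bundle on $U(n,d)$.

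For the forward direction, let $\pi:\bb{P}(W)\to U(n,d)$ be the structural projection and denote the base-changed projections by $p:\bb{P}(W)\times C\to U(n,d)\times C$ and $\tilde{\pi}:\bb{P}(W)\times C\to\bb{P}(W)$. The tautological inclusion $\ca{O}_{\bb{P}(W)}(-1)\hookrightarrow\pi^*W$, combined via the projection formula with the base-change identification $\pi^*\rho_*\tilde{\ca{E}}\cong\tilde{\pi}_*p^*\tilde{\ca{E}}$ and the $(\tilde{\pi}^*,\tilde{\pi}_*)$ adjunction, yields a morphism
\[
S':\ca{O}^{\oplus N}_{\bb{P}(W)\times C}\longrightarrow p^*\tilde{\ca{E}}\otimes\tilde{\pi}^*\ca{O}_{\bb{P}(W)}(1).
\]
Set $\ca{E}:=p^*\tilde{\ca{E}}\otimes\tilde{\pi}^*\ca{O}(1)$. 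For a closed point $x=([E],[s])\in\bb{P}(W)$, the restriction $(\ca{E},S')|_{\{x\}\times C}$ recovers the pair $(E,s)$ up to a unit rescaling of the section, and so is $(\delta=0^+)$-stable since $E$ is stable and $s\ne 0$. Thus $(\ca{E},S')$ is a flat family of $(\delta=0^+)$-stable $N$-pairs, producing a classifying morphism $\Phi:\bb{P}(W)\to\mspze$ whose induced family is precisely the pair $(q^*\tilde{\ca{E}}\otimes\ca{O}(1),S')$ claimed in the statement.

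For the inverse, let $(\bar{\ca{E}},\bar{S})$ denote the universal $N$-pair on $\mspze\times C$, and let $q:\mspze\to U(n,d)$ be the forgetful morphism (which is well-defined because, under $(n,d)=1$, every $(\delta=0^+)$-stable pair has a stable underlying bundle). By fineness of $U(n,d)$ there is a unique line bundle $L$ on $\mspze$ with $\bar{\ca{E}}\cong(q\times\id)^*\tilde{\ca{E}}\otimes\mathrm{pr}_1^*L$. Applying $\mathrm{pr}_{1*}$ to $\bar{S}$ and using the projection formula together with base change produces a morphism $\ca{O}^{\oplus N}\to q^*\rho_*\tilde{\ca{E}}\otimes L$, or equivalently a morphism $L^{-1}\to q^*W$, which is fiberwise nonzero by $(\delta=0^+)$-stability. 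The universal property of the projective bundle $\bb{P}(W)$ then yields a morphism $\Psi:\mspze\to\bb{P}(W)$ over $U(n,d)$.

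The main technical subtlety, and the place where one needs to be careful, is that the universal bundle $\tilde{\ca{E}}$ is only unique up to tensoring with a line bundle pulled back from $U(n,d)$: any such replacement alters $L$ by the same factor, so the line subbundle $L^{-1}\hookrightarrow q^*W$, and hence $\Psi$, is canonically defined. Granting this, the verifications $\Psi\circ\Phi=\id$ and $\Phi\circ\Psi=\id$ reduce to checking that each construction reproduces the universal family used to define the other side, which is formal from the adjunction between pushforward and pullback and the base-change compatibility of $\rho_*\tilde{\ca{E}}$ guaranteed by the vanishing $R^1\rho_*\tilde{\ca{E}}=0$.
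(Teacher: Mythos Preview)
The paper does not give its own proof of this proposition: it is quoted directly from \cite[Theorem 3.26]{Bertram}, and the subsequent Theorem~\ref{compare2} merely says ``the proof is identical.'' Your argument is correct and is exactly the standard universal-property proof one expects here: build the tautological $N$-pair on $\bb{P}(W)$ from the inclusion $\ca{O}(-1)\hookrightarrow\pi^*W$ via base change and adjunction, and in the other direction use fineness of $U(n,d)$ (which is where $(n,d)=1$ enters) to write $\bar{\ca{E}}\cong(q\times\id)^*\tilde{\ca{E}}\otimes\mathrm{pr}_1^*L$, then push $\bar S$ forward to produce a nowhere-vanishing line subbundle of $q^*W$. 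Your use of the hypothesis $d>2n(g-1)$ to get $R^1\rho_*\tilde{\ca{E}}=0$ (hence local freeness of $W$ and compatibility with base change) matches what the paper sets up in the paragraph preceding the statement, and your remark on the twist-ambiguity of $\tilde{\ca{E}}$ is the right way to make the construction canonical.
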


Similar results hold for moduli spaces of $\delta$-stable parabolic $N$-pairs, when $\delta$ is sufficiently small. Let $\delta_+$ be the smallest critical value. When $0<\delta<\delta_+$, we have 
\begin{itemize}
\item If $(E,\{f_p\},s)$ is a $\delta$-stable parabolic $N$-pair then $(E,\{f_p\})$ is a parabolic semistable bundle.
\item Conversely, if $(E,\{f_p\})$ is stable, then $(E,\{f_p\},s)$ is $\delta$-stable for any non-zero choice of $s\in H^0(E\otimes\ca{O}_C^N)$.
\end{itemize}

Let $U(n,d,\underline{a},\underline{m})$ be the moduli space of $S$-equivalence classes of semistable parabolic bundles of rank $n$, degree $d$ and parabolic type $(\underline{a},\underline{m})$. For $\delta\in(0,\delta_+)$, we denote the moduli space of $\delta$-stable parabolic $N$-pairs by $\sppze$.
\begin{theorem}\label{compare2}
Suppose $\mu_{\emph{par}}(E)>2g-1+|I|$ and there is no strictly semistable parabolic vector bundle in $U(n,d,\underline{a},\underline{m})$. 
Then for $0<\delta<\delta_+$, we have an isomorphism
\[
\overline{\mathcal M}^{\emph{par},\delta=0+}_{C}(\emph{Gr}(n,N), d,\underline{a})\cong\bb{P}((\rho_*\tilde{\ca{E}})^{\oplus N}),
\]
where $\rho:C\times U(n,d,\underline{a},\underline{m})\rightarrow U(n,d,\underline{a},\underline{m})$ is the projection map.
Moreover, the above identification gives an isomorphism between the $N$-pairs $(\ca{E},S)$ and $(q^*(\tilde{\ca{E}})\otimes \ca{O}(1),S')$, where $\ca{S}'$ is induced by the tautological section.
\end{theorem}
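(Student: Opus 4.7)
The plan is to mimic the proof of Proposition \ref{compare} (Bertram's theorem) with the parabolic modifications, using that the hypothesis of no strictly semistable parabolic bundles makes $U(n,d,\underline{a},\underline{m})$ a fine moduli space. First I would observe that, by the analysis preceding the statement, for $\delta\in(0,\delta_+)$ a parabolic $N$-pair $(E,\{f_p\},s)$ is $\delta$-stable if and only if $(E,\{f_p\})$ is a stable parabolic bundle and $s\neq 0$. Since no strictly semistables exist by hypothesis, every point of $U(n,d,\underline{a},\underline{m})$ represents a stable parabolic bundle; the moduli space is smooth and carries a universal parabolic bundle $\tilde{\ca{E}}$ on $U(n,d,\underline{a},\underline{m})\times C$ (the parabolic flags are encoded in the relative flag variety, which collapses since we fixed fine parabolic data).

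Next I would establish that $\rho_*\tilde{\ca{E}}$ is a locally free sheaf of constant rank. Applying Lemma \ref{genvanish} to the degenerate pair $(E,0)$, for any stable parabolic bundle $E$ with $\parslo(E)>2g-1+|I|$ we have $H^1(E)=0$ and $E$ is globally generated. Consequently $R^1\rho_*\tilde{\ca{E}}=0$, and by cohomology and base change $\rho_*\tilde{\ca{E}}$ is locally free with fiber $H^0(E)$ over $[E]$. Hence $(\rho_*\tilde{\ca{E}})^{\oplus N}$ is a vector bundle whose projectivization parametrizes pairs $([E],[s])$ with $[E]\in U(n,d,\underline{a},\underline{m})$ and $[s]$ a line in $H^0(E)\otimes\bb{C}^N=H^0(E\otimes\ca{O}_C^N)$.

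With these preparations I would construct two mutually inverse morphisms. In one direction, on $\bb{P}((\rho_*\tilde{\ca{E}})^{\oplus N})\times C$ consider the pulled-back parabolic bundle $q^*\tilde{\ca{E}}$ together with the tautological section obtained from the identity morphism
\[
(\rho_*\tilde{\ca{E}})^{\oplus N}\otimes\ca{O}(-1)\hookrightarrow(\rho_*\tilde{\ca{E}})^{\oplus N},
\]
which after projection formula and pullback to the total space yields a canonical section $S'\colon\ca{O}^N\to q^*\tilde{\ca{E}}\otimes\ca{O}(1)$. Fiberwise $(q^*\tilde{\ca{E}}\otimes\ca{O}(1),S')$ is of the form $(E,\text{nonzero }s)$ with $E$ parabolic stable, hence is $\delta$-stable for all $\delta\in(0,\delta_+)$, so the universal property of $\sppze$ produces a morphism $\Phi:\bb{P}((\rho_*\tilde{\ca{E}})^{\oplus N})\to\sppze$. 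In the opposite direction, given the universal parabolic $N$-pair $(\ca{E},\{f_p\},S)$ on $\sppze\times C$, forgetting $S$ yields a family of stable parabolic bundles and thus a classifying morphism $\psi:\sppze\to U(n,d,\underline{a},\underline{m})$; the section $S$ together with $\psi$ defines a surjection $\psi^*((\rho_*\tilde{\ca{E}})^{\oplus N})\to\ca{L}$ onto an invertible sheaf (since $S$ is nowhere zero in the fiber direction over $\sppze$), giving a morphism $\Psi:\sppze\to\bb{P}((\rho_*\tilde{\ca{E}})^{\oplus N})$.

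The main technical obstacle is the non-uniqueness of $\tilde{\ca{E}}$: any two universal bundles differ by tensoring with the pullback of a line bundle from $U(n,d,\underline{a},\underline{m})$. This ambiguity cancels in the projectivization $\bb{P}((\rho_*\tilde{\ca{E}})^{\oplus N})$, so $\Phi$ and $\Psi$ are well defined independent of the choice. I would then verify $\Phi\circ\Psi=\id$ and $\Psi\circ\Phi=\id$ by checking that the associated classifying data agree fiberwise and applying Yoneda; this reduces to matching the two universal $N$-pairs, and the identification $(\ca{E},S)\cong(q^*\tilde{\ca{E}}\otimes\ca{O}(1),S')$ follows tautologically from the construction (again up to a twist absorbed into the choice of $\tilde{\ca{E}}$). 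The remaining compatibility of parabolic structures is automatic because the parabolic flags on both sides are pulled back from $\tilde{\ca{E}}|_{U\times I}$.
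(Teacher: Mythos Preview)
Your proposal is correct and follows essentially the same approach as the paper: the paper's own proof consists of the single sentence ``The proof is identical to the proof of Theorem~\ref{compare},'' i.e.\ Bertram's argument, and what you have written is precisely the parabolic version of that argument. One minor remark: when you invoke Lemma~\ref{genvanish} for the degenerate pair $(E,0)$, note that since $s=0$ forces $s''=0$ on every quotient, the $\delta$-term in the bound is irrelevant and the hypothesis $\mu_{\text{par}}(E)>2g-1+|I|$ suffices directly.
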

\begin{proof}
The proof is identical to the proof of Theorem \ref{compare}.
\end{proof}

\subsection{Verlinde invariants and parabolic GLSM invariants}
We first recall the definition of theta line bundles over moduli spaces of S-equivalence classes of semistable parabolic bundles. Then we generalize it to the moduli space of $\delta$-stable parabolic $N$-pairs.

Recall that $I=\{p_1,\dots,p_k\}$ is the set of ordinary marked points. For the technical reason mentioned in Remark \ref{fail}, we consider the following subset of $\text{P}_l$:
\[
\text{P}_l'=\{\lambda=(\lambda_1,\dots,\lambda_n)\in\text{P}_l|\lambda_1<l\}.
\]
Let $\underline{\lambda}=(\lambda_{p_1},\dots,\lambda_{p_k})$, where $\lambda_{p_i}=(\lambda_{1,p_i},\dots,\lambda_{n,p_i})$ is a partition in $\text{P}_l'$, for $1\leq i\leq k$. For each partition $\lambda_{p},\,p\in I$, let $r_{p}=(r_{1,p},\dots,r_{l_{p},p})$ be the sequence of jumping indices of $\lambda_{p}$ (i.e. $l>\lambda_{1,p}=\dots=\lambda_{r_{1,p},p}>\lambda_{r_{1,p}+1,p}=\dots=\lambda_{r_{2,p},p}>\dots$). For $1\leq i\leq l_p$, let $m_{i,p}=r_{i,p}-r_{i-1,p}$. We define the parabolic weights $a_p=(a_{1,p},\dots,a_{l_p,p})$ by $a_{j,p}=l-1-\lambda_{r_{j,p},p}$ for $1\leq j\leq l_p$. The assumption $\lambda_{p_i}\in\text{P}'_l$ ensures that $a_{l_{p_i},p_i}<l$. Let $\underline{a}=(\underline{a}_p)_{p\in I}$ and $\underline{m}=(m_p)_{p\in I}$ be the parabolic type determined by $\underline{\lambda}$. In the following discussion, we will denote the parabolic type by $\underline{\lambda}$.

Let $U(n,d,\underline{\lambda})$ denote the moduli space of S-equivalence classes of semistable parabolic vector bundles of rank $n$, degree $d$ and parabolic type $\underline{\lambda}$. We recall the construction of $U(n,d,\underline{\lambda})$ and we will use the same notations as in Section \ref{gitconstr}. The family of semistable parabolic vector bundles is bounded. Therefore there exists a sufficiently large $m\in\bb{N}$ such that for any semistable parabolic bundle $(E,\{f_p\})$, it can be realized as a quotient
$
q:H^0(E(m))\otimes\ca{O}_C(-m)\twoheadrightarrow E.
$
Let $V$ be a vector space of dimension $\chi_m:=\chi(E(m))$. Define an open subset $Z'\subset\text{Quot}_C^{n,d}(V\otimes\ca{O}_C(-m))$ which consists of points $[q]$ such that the quotient sheaf $E$ is locally free and $q$ induces an isomorphism $V\xrightarrow{\sim} H^0(E(m))$. For each marked point $p\in I$, we consider the restriction of the universal quotient sheaf $\widetilde{\ca{E}}_p:=\widetilde{\ca{E}}|_{Z\times\{p\}}$. Let $\text{Fl}_{m_p}$ denote the flag bundle of $\widetilde{\ca{E}}_p$ of type $m_p=(m_{i,p})$. Define $T$ to be the fiber product
\[T:=\text{Fl}_{m_{p_1}}\times_Z\cdots\times_Z\text{Fl}_{m_{p_k}}.\] 
Given a parabolic type $\underline{\lambda}$, one can choose a $\text{SL}(V)$-linearized ample line bundle $L'$ such that the moduli space of semistable parabolic vector bundles of type $\underline{\lambda}$ is the GIT quotient
\[
U(n,d,\underline{\lambda})=T^{ss}\sslash_{L'}\text{SL}(V)
\]
where $T^{ss}$ denotes the open semistable locus in $T$.

We assume that 
\begin{equation}\label{eq:congruence}
ld-|\underline{\lambda}|\equiv 0 \quad\text{mod}\, n.
\end{equation}
Recall that $d_{i,p}=a_{i+1,p}-a_{i,p}$ for $1\leq i\leq l_p$, where $a_{l_p+1,p}:=l-1$. Let $\widetilde{\ca{Q}}_{i,p}$ be the universal quotient bundle of rank $r_{i,p}=\sum_{j=1}^{i}m_{j,p}$ over $\text{Fl}_{m_i}$. Set
\begin{equation}\label{eq:defofe}
e=\frac{ld-\sum_{p\in I}\sum^{l_p}_{i=1}d_{i,p}r_{i,p}}{n}+l(1-g).
\end{equation}
Notice that $\sum^{l_p}_{i=1}d_{i,p}r_{i,p}=n(l-1)-|a_p|=|\lambda_p|$. The congruence condition (\ref{eq:congruence}) ensures that $e$ is an integer. Let $\pi:T\times C\rightarrow T$ be the projection to the first factor. Let $x_0\in C$ be the distinguished marked point which is away from $I$. Following \cite{pauly}, we consider the following line bundle over $T$:
\[
\Theta_{\widetilde{\ca{E}}}=\big(\text{det}\,R\pi_*(\widetilde{\ca{E}})\big)^{-l}\otimes\bigotimes_{p\in I}\tilde{L}_{m_p}\otimes(\text{det}\,\widetilde{\ca{E}}_{x_0})^e
\]
where $\tilde{L}_{m_p}$ are the Borel-Weil-Bott line bundles defined by
\[
\tilde{L}_{m_p}=\bigotimes_{i=1}^{l_p}\text{det}\,\widetilde{\ca{Q}}_{i,p}^{d_{i,p}}.
\]
The calculation in the proof of \cite[Th\'eor\`eme 3.3]{pauly} shows that $\Theta_{\widetilde{\ca{E}}}$ descends to a line bundle $\Theta_{\underline{\lambda}}\rightarrow U(n,d,\underline{\lambda})$. Global sections of $\Theta_{\underline{\lambda}}$ are called \emph{generalized theta functions} and the space of global sections $H^0(\Theta_{\underline{\lambda}})$ is isomorphic to the dual of the space of \emph{conformal blocks} (cf. \cite{beauville} and \cite{pauly}). According to \cite{marian}, the \emph{GL Verlinde number} with insertions $\underline{\lambda}$ is defined by
$$
\langle V_{\lambda_{p_1}},\dots, V_{\lambda_{p_k}} \rangle^{l, \text{Verlinde}}_{g,d}:=\chi(U(n,d,\underline{\lambda}), \Theta_{\underline{\lambda}}).
$$
Here the notations $V_{\lambda_{p_i}}, i=1,\dots,k$ are used to keep track of the parabolic structure $\underline{\lambda}$.

The following lemma shows that we can define similar theta line bundles on the moduli spaces of parabolic $N$-pairs.
\begin{lemma}\label{detcompare}
Let $\ca{E}$ be the universal bundle over $\overline{\ca{M}}^{\emph{par},\delta=0+}_{C}(\emph{Gr}(n,N), d,\underline{\lambda})\times C$ and let $q:\overline{\ca{M}}^{\emph{par},\delta=0+}_{C}(\emph{Gr}(n,N), d,\underline{\lambda})\rightarrow U(n,d,\underline{\lambda})$ be the forgetful morphism. Then we have the following identification
\[
q^*\Theta_{\underline{\lambda}}=\big(\emph{det}\,R\pi_*(\ca{E})\big)^{-l}\otimes\bigotimes_{p\in I}L_{m_p}\otimes(\emph{det}\,\ca{E}_{x_0})^e,
\]
where $L_{\lambda_p}$ are the Borel-Weil-Bott line bundles defined by
\[
L_{m_p}=\bigotimes_{i=1}^{l_p}\emph{det}\,\ca{Q}_{i,p}^{d_{i,p}}.
\]
\end{lemma}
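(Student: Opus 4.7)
The plan is to verify the identity at the level of the GIT presentations from Section~\ref{gitconstr}. By construction, $\overline{\ca{M}}^{\text{par},\delta=0+}_{C}(\text{Gr}(n,N), d,\underline{\lambda})=\bar{R}\sslash_{L}\text{SL}(V)$ and the universal bundle $\ca{E}$ descends from $\widetilde{\ca{E}}\otimes\ca{O}_{\bb{P}}(1)$ on $\bar{R}\times C$ (via Kempf's lemma, as explained after Theorem~\ref{modulispace}), while $U(n,d,\underline{\lambda})=T^{ss}\sslash_{L'}\text{SL}(V)$ has $\Theta_{\widetilde{\ca{E}}}$ descending to $\Theta_{\underline{\lambda}}$. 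The forgetful morphism $q$ is induced by the $\text{SL}(V)$-equivariant projection $\bar{R}\to T$ that forgets the $\bb{P}$-factor; hence $q^{*}\Theta_{\underline{\lambda}}$ pulls back to $\Theta_{\widetilde{\ca{E}}}$ on $\bar{R}$. It therefore suffices to show that, on $\bar{R}$, the line bundle
\[
\big(\det R\pi_{*}(\widetilde{\ca{E}}\otimes\ca{O}_{\bb{P}}(1))\big)^{-l}\otimes\bigotimes_{p\in I}L_{m_p}\otimes\big(\det(\widetilde{\ca{E}}\otimes\ca{O}_{\bb{P}}(1))_{x_0}\big)^{e}
\]
coincides with $\Theta_{\widetilde{\ca{E}}}$, after which the claimed equality descends by Kempf to the moduli space.

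The second step is an explicit computation of the twist by $\ca{O}_{\bb{P}}(1)$ in each factor. Writing $\chi=d+n(1-g)=\chi(\widetilde{\ca{E}}|_{\{\mathrm{pt}\}\times C})$ and using the projection formula, one obtains
\[
\det R\pi_{*}(\widetilde{\ca{E}}\otimes\ca{O}_{\bb{P}}(1))=\det R\pi_{*}(\widetilde{\ca{E}})\otimes\ca{O}_{\bb{P}}(\chi).
\]
For each marked point $p$, the universal flag quotients satisfy $\ca{Q}_{i,p}=\widetilde{\ca{Q}}_{i,p}\otimes\ca{O}_{\bb{P}}(1)$, so $\det\ca{Q}_{i,p}=\det\widetilde{\ca{Q}}_{i,p}\otimes\ca{O}_{\bb{P}}(r_{i,p})$ and therefore
\[
L_{m_p}=\tilde{L}_{m_p}\otimes\ca{O}_{\bb{P}}\Big(\sum_{i=1}^{l_p}d_{i,p}\,r_{i,p}\Big).
\]
Finally, $\det(\widetilde{\ca{E}}\otimes\ca{O}_{\bb{P}}(1))_{x_0}=\det\widetilde{\ca{E}}_{x_0}\otimes\ca{O}_{\bb{P}}(n)$. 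Collecting the contributions, the displayed line bundle equals $\Theta_{\widetilde{\ca{E}}}\otimes\ca{O}_{\bb{P}}(\alpha)$ with
\[
\alpha=-l\chi+\sum_{p\in I}\sum_{i=1}^{l_p}d_{i,p}\,r_{i,p}+ne.
\]

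The third and final step is to observe that $\alpha$ vanishes identically, and that this is precisely the role of the specific formula \eqref{eq:defofe} for $e$. Substituting $ne=ld-\sum_{p,i}d_{i,p}r_{i,p}+nl(1-g)$ and $\chi=d+n(1-g)$ produces immediate cancellation, so the $\ca{O}_{\bb{P}}(1)$-twist disappears and the two line bundles agree on $\bar{R}$. By descent, this yields the identity asserted in the lemma. The main point to be careful about — and essentially the only obstacle — is bookkeeping: one must match the sign conventions in the descent from $\bar{R}$ to the pair moduli (the twist by $\ca{O}_{\bb{P}}(1)$ enters with the specific multiplicities $\chi$, $\sum_{i}d_{i,p}r_{i,p}$, and $n$ recorded above), and confirm that the integer $e$ defined by \eqref{eq:defofe} is exactly the one that kills the total $\bb{P}$-twist. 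Once these conventions are in place, the proof reduces to the elementary algebraic identity $\alpha=0$.
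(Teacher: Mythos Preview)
Your proposal is correct and follows essentially the same approach as the paper: both arguments lift to the GIT presentation, compare $\Theta_{\widetilde{\ca{E}}}$ with the analogous expression built from $\widetilde{\ca{E}}\otimes\ca{O}_{\bb{P}}(1)$, and verify that the total $\ca{O}_{\bb{P}}(1)$-twist vanishes by the defining formula \eqref{eq:defofe} for $e$. Your write-up is in fact more explicit than the paper's (which compresses the twist computation into a single displayed line), but the substance is identical.
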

\begin{proof}
By definition, $\Theta_{\underline{\lambda}}$ is the descent of $\Theta_{\widetilde{\ca{E}}}=\big(\text{det}\,R\pi_*(\widetilde{\ca{E}})\big)^{-l}\otimes\bigotimes_{p\in I}\tilde{L}_{m_p}\otimes(\text{det}\,\widetilde{\ca{E}}_{x_0})^e$. Let $\tilde{q}: R\rightarrow T$ be the flag bundle map, which is in particular flat. Then we have 
\begin{align*}
\tilde{q}^*(\Theta_{\ca{E}})&=\big(\text{det}\,R\pi_*(\widetilde{\ca{E}})\big)^{-l}\otimes\bigotimes_{p\in I}\tilde{L}_{m_p}\otimes(\text{det}\,\widetilde{\ca{E}}_{x_0})^e\\
&=\big(\text{det}\,R\pi_*(\widetilde{\ca{E}}\otimes\ca{O}_{\bb{P}}(1))\big)^{-l}\otimes\big\{\bigotimes_{p\in I}\tilde{L}_{m_p}\otimes\ca{O}_{\bb{P}}(1)\big\}\otimes(\text{det}\,\widetilde{\ca{E}}_{x_0}\otimes\ca{O}_\bb{P}(|\lambda_p|))^e,
\end{align*}
which descends to $\big(\text{det}\,R\pi_*(\ca{E})\big)^{-l}\otimes\bigotimes_{p\in I}L_{m_p}\otimes(\text{det}\,\ca{E}_{x_0})^e,
$
\end{proof}

Parabolic $N$-pairs can be viewed as parabolic GLSM data. We give the following definition of parabolic GLSM invariants.
\begin{definition} \label{parabolicglsm}
For a generic value of $\delta$ and partitions $\lambda_{p_1},\dots\lambda_{p_k}\in\text{P}_l'$ satisfying the congruence condition (\ref{eq:congruence}), we define the $\delta$-stable parabolic GLSM invariant with insertions $V_{\lambda_{p_1}},\dots, V_{\lambda_{p_k}}$ by
\begin{align*}
&\langle V_{\lambda_{p_1}},\dots, V_{\lambda_{p_k}}\rangle^{ l, \delta,\text{Gr}(n, N)}_{C,d}\\=\chi\big(\overline{\ca{M}}^{\text{par},\delta}_{C}(\text{Gr}(n,N), &d,\underline{\lambda}),\big(\text{det}\,R\pi_*(\ca{E})\big)^{-l}\otimes\bigotimes_{p\in I}L_{\lambda_p}\otimes(\text{det}\,\ca{E}_{x_0})^e\big).
\end{align*}
Here $e$ is defined by (\ref{eq:defofe}).
\end{definition}

In the ($\delta=0+$)-chamber, to relate the parabolic GLSM invariants with GL Verlinde numbers, we recall the following result from \cite{buch}.
\begin{lemma}{\emph{\cite[Theorem 3.1]{buch}}}\label{buchlemma}
Let $f:X\rightarrow Y$ be a surjective morphism of projective varieties with rational singularities. Assume that the general fiber of $f$ is rational, i.e., $f^{-1}(y)$ is an irreducible rational variety for all closed points in a dense open subset of $Y$. Then $f_*[\ca{O}_X]=[\ca{O}_Y]\in K_0(Y)$.
\end{lemma}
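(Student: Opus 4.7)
The plan is to split the statement $f_*[\ca{O}_X] = \sum_i(-1)^i[R^if_*\ca{O}_X] = [\ca{O}_Y]$ into two vanishing/identification statements: (a) $f_*\ca{O}_X = \ca{O}_Y$, and (b) $R^if_*\ca{O}_X = 0$ for $i>0$, which together give $Rf_*\ca{O}_X \simeq \ca{O}_Y$ in the derived category and hence the desired $K_0$-identity.

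For (a), I would use Stein factorization $f : X \to Y' \to Y$ with $f_*\ca{O}_X = \ca{O}_{Y'}$ and $Y' \to Y$ finite. Rational singularities are in particular normal, so $Y$ is normal. Since a general fiber of $f$ is irreducible (being rational), a general fiber of $X \to Y'$ is connected, which forces $Y' \to Y$ to be birational; by Zariski's main theorem a birational finite morphism to a normal variety is an isomorphism.

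For (b), the strategy is to reduce to a morphism of smooth projective varieties and then invoke a standard vanishing. Pick resolutions of singularities $\pi:\widetilde{X}\to X$ and $\rho:\widetilde{Y}\to Y$, then replace $\widetilde{X}$ by a resolution of the closure of the graph of the rational map $\widetilde{X}\dashrightarrow\widetilde{Y}$ to obtain a commutative diagram
\[
\begin{tikzcd}
\widetilde{X} \arrow{r}{\pi} \arrow{d}{\tilde f} & X \arrow{d}{f}\\
\widetilde{Y} \arrow{r}{\rho} & Y
\end{tikzcd}
\]
with $\widetilde{X},\widetilde{Y}$ smooth projective and $\pi,\rho$ birational. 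Because $X$ and $Y$ have rational singularities, $R\pi_*\ca{O}_{\widetilde{X}} = \ca{O}_X$ and $R\rho_*\ca{O}_{\widetilde{Y}} = \ca{O}_Y$. A general fiber of $\tilde f$ is a smooth projective model of the corresponding (rational) general fiber of $f$, hence birational to $\P^m$ and in particular rationally connected. Kollár's theorem on higher direct images for smooth-target maps with rationally connected general fiber then gives $R\tilde f_*\ca{O}_{\widetilde{X}} = \ca{O}_{\widetilde{Y}}$. Composing,
\[
Rf_*\ca{O}_X \;=\; Rf_*R\pi_*\ca{O}_{\widetilde{X}} \;=\; R\rho_*R\tilde f_*\ca{O}_{\widetilde{X}} \;=\; R\rho_*\ca{O}_{\widetilde{Y}} \;=\; \ca{O}_Y,
\]
which finishes both (a) and (b) at once and yields the claim in $K_0(Y)$.

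The main obstacle is Step (b), specifically ensuring one can arrange the smooth square above (resolving the fiber product without losing either birationality of $\pi$ or the hypothesis on the general fiber) and citing a vanishing theorem in the form needed. The general-fiber hypothesis has to survive the resolution process, which is automatic over the generic point but requires care if one wants to keep track of the exceptional locus; once there, the vanishing $R^i\tilde f_*\ca{O}_{\widetilde X}=0$ for $i>0$ is the content of Kollár's result (see e.g. Kollár, \emph{Higher direct images of dualizing sheaves}), and $\tilde f_*\ca{O}_{\widetilde X}=\ca{O}_{\widetilde Y}$ follows from the same Stein-factorization argument as in Step (a) applied to $\tilde f$.
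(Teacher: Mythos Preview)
The paper does not prove this lemma; it is simply quoted from \cite[Theorem 3.1]{buch} with no argument given. Your sketch is essentially the argument behind that reference: pass to resolutions $\widetilde X\to X$ and $\widetilde Y\to Y$, use the rational-singularities hypothesis to replace $Rf_*\ca{O}_X$ by $R\rho_*R\tilde f_*\ca{O}_{\widetilde X}$, and then use that a smooth projective rationally connected variety $F$ satisfies $H^i(F,\ca{O}_F)=0$ for $i>0$ to get $R\tilde f_*\ca{O}_{\widetilde X}=\ca{O}_{\widetilde Y}$. Your Stein-factorization argument for $f_*\ca{O}_X=\ca{O}_Y$ is also the standard one. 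So there is nothing in the paper to compare against, and your outline matches the expected proof.

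One small caveat: the vanishing $R^i\tilde f_*\ca{O}_{\widetilde X}=0$ for $i>0$ is not literally the statement of Koll\'ar's \emph{Higher direct images of dualizing sheaves} (which concerns torsion-freeness of $R^if_*\omega_X$); rather, it follows from his decomposition theorem together with the Hodge-theoretic fact that rationally connected varieties carry no nonzero holomorphic forms. That is the only place where your citation would need sharpening in a fully rigorous write-up.
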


Let $U^s_C(n,d,\underline{\lambda})\subset U_C(n,d,\underline{\lambda})$ denote the (possibly empty) open subset which parametrizes isomorphism classes of stable vector bundles.
\begin{corollary}\label{pairtoverlinde}
Suppose $d>n(g-1)$ and $U^s_C(n,d,\underline{\lambda})$ is non-empty. Then the $(\delta=0+)$-stable parabolic GLSM invariants are equal to the corresponding GL Verlinde numbers, i.e.,
\[
\langle V_{\lambda_{p_1}},\dots, V_{\lambda_{p_k}}\rangle^{ l, \delta=0+,\emph{Gr}(n, N)}_{C,d}=\langle V_{\lambda_{p_1}},\dots, V_{\lambda_{p_k}} \rangle^{l, \emph{Verlinde}}_{g,d}.
\]
\end{corollary}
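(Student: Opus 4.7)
My plan is to push forward along the forgetful morphism
\[
q:\overline{\ca{M}}^{\text{par},\delta=0+}_{C}(\text{Gr}(n,N),d,\underline{\lambda})\longrightarrow U(n,d,\underline{\lambda})
\]
and apply the projection formula. First, Lemma \ref{detcompare} identifies the integrand of Definition \ref{parabolicglsm} with $q^{*}\Theta_{\underline{\lambda}}$. Hence the projection formula yields
\[
\langle V_{\lambda_{p_1}},\dots,V_{\lambda_{p_k}}\rangle^{l,\delta=0+,\text{Gr}(n,N)}_{C,d}=\chi\bigl(U(n,d,\underline{\lambda}),\,\Theta_{\underline{\lambda}}\otimes Rq_{*}\ca{O}_{\overline{\ca{M}}^{\text{par},\delta=0+}}\bigr),
\]
and the corollary will follow from the key identity $Rq_{*}\ca{O}_{\overline{\ca{M}}^{\text{par},\delta=0+}}=[\ca{O}_{U(n,d,\underline{\lambda})}]$ in $K_{0}(U(n,d,\underline{\lambda}))$.

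To establish this key identity I will invoke Lemma \ref{buchlemma}, so I need to check three hypotheses: surjectivity of $q$, rationality of the general fiber, and rational singularities of both source and target. For surjectivity, $q$ is proper since $\overline{\ca{M}}^{\text{par},\delta=0+}$ is projective by Theorem \ref{modulispace}; since $U(n,d,\underline{\lambda})$ is irreducible, it suffices that the image contains a dense open, and I will take this to be $U^{s}_{C}(n,d,\underline{\lambda})$. The hypothesis that the stable locus is non-empty together with irreducibility of $U$ ensures $U^{s}$ is dense. For any stable parabolic bundle $[E]\in U^{s}$, the characterization of $(\delta=0+)$-stability at the start of Section \ref{section4} shows that $(E,s)$ is $\delta$-stable for every non-zero $s\in H^{0}(E\otimes\ca{O}_{C}^{N})$, and the assumption $d>n(g-1)$ gives $h^{0}(E)\geq\chi(E)=d+n(1-g)>0$ by Riemann-Roch, so such $s$ exists and $U^{s}$ lies in the image. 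For the general fiber, the same analysis shows $q^{-1}([E])\cong\bb{P}\bigl(H^{0}(E\otimes\ca{O}_{C}^{N})\bigr)\cong\bb{P}^{Nh^{0}(E)-1}$ for $[E]\in U^{s}$, which is rational.

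The main obstacle is verifying rational singularities. For $U(n,d,\underline{\lambda})$ this is a classical fact for the moduli of S-equivalence classes of semistable parabolic bundles, and for $\overline{\ca{M}}^{\text{par},\delta=0+}$ it follows from its GIT construction in Section \ref{gitconstr} as a quotient of the smooth parameter space $\bar{R}$ by a reductive group (using the Boutot theorem on rational singularities of GIT quotients). Granting these, Lemma \ref{buchlemma} delivers $q_{*}[\ca{O}_{\overline{\ca{M}}^{\text{par},\delta=0+}}]=[\ca{O}_{U(n,d,\underline{\lambda})}]$, and combining with the projection formula identity above we get
\[
\langle V_{\lambda_{p_1}},\dots,V_{\lambda_{p_k}}\rangle^{l,\delta=0+,\text{Gr}(n,N)}_{C,d}=\chi\bigl(U(n,d,\underline{\lambda}),\Theta_{\underline{\lambda}}\bigr)=\langle V_{\lambda_{p_1}},\dots,V_{\lambda_{p_k}}\rangle^{l,\text{Verlinde}}_{g,d}.
\]
One subtlety to address along the way is whether the invariant of Definition \ref{parabolicglsm} should be interpreted against $\ca{O}^{\vir}$; if so, the argument must be upgraded either by verifying that on the relevant locus the obstruction sheaf vanishes (so $\ca{O}^{\vir}=\ca{O}$), or by giving a virtual pushforward analogue of Lemma \ref{buchlemma} along the (virtual) projective-bundle morphism $q$. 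Either route still rests on the same three geometric inputs checked above.
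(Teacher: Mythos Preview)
Your approach is essentially the same as the paper's: identify the integrand with $q^{*}\Theta_{\underline{\lambda}}$ via Lemma~\ref{detcompare}, verify surjectivity of $q$ by showing the image contains the non-empty open stable locus $U^{s}_{C}(n,d,\underline{\lambda})$ (using $d>n(g-1)$ and Riemann--Roch), observe that the general fibre is a projective space, and then conclude by Lemma~\ref{buchlemma} and the projection formula. The paper cites \cite[Theorem~1.1]{Sun} for the rational singularities of $U(n,d,\underline{\lambda})$, which is the ``classical fact'' you invoke.

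Two minor points. First, your closing worry about $\ca{O}^{\vir}$ is unnecessary here: Definition~\ref{parabolicglsm} defines the parabolic GLSM invariant as an Euler characteristic against the ordinary structure sheaf, not against $\ca{O}^{\vir}$, so no virtual upgrade of Lemma~\ref{buchlemma} is needed. Second, your justification of rational singularities for the source via Boutot is slightly off as stated: the closure $\bar{R}$ need not be smooth. What matters is that for generic $\delta$ the (semi)stable locus lies in the open parameter scheme $R$, and the moduli space is a geometric quotient of this locus by $\mathrm{SL}(V)$ with finite (in fact trivial, by Lemma~\ref{triiso}) stabilizers; this is what gives the required regularity of the source. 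The paper's proof does not make this step explicit either.
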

\begin{proof} 
According to \cite[Theorem 1.1]{Sun}, the moduli space $U(n,d,\underline{\lambda})$ is a normal projective variety with only rational singularities. Let $[E]$ be a closed point in $U(n,d,\underline{\lambda})$, where $E$ is a stable parabolic bundle. Then the fibre of $q$ over $[E]$ is $\bb{P}H^0(E)$. When $d>n(g-1)$, any bundle $E$ must have non-zero sections by Riemann-Roch. Therefore the image of the forgetful morphism $q$ contains the non-empty open subset $U^s_C(n,d,\underline{\lambda})\subset U_C(n,d,\underline{\lambda})$. Since $\mspze$ is proper and $U_C(n,d,\underline{\lambda})$ is irreducible, the morphism $q$ is surjective. Then the corollary follows from Lemma \ref{detcompare}, Lemma \ref{buchlemma} and the projection formula.
\end{proof}

\begin{remark}\label{remarkonexist}
It follows from \cite[Proposition 4.1]{Sun2} that $U^s_C(n,d,\underline{\lambda})$ is non-empty if 
\begin{equation}\label{eq:genus0para}
(n-1)(g-1)+\frac{|I|}{l}>0,
\end{equation}
where $|I|$ is the number of marked points. This condition is automatically satisfied when $g\geq 2$. When $g=1$, we require $|I|$ to be non-empty. Therefore, inequality (\ref{eq:genus0para}) is a primarily a condition for the genus 0 case.
\end{remark}

We end this section by studying the relation of parabolic GLSM invariants with respect to the embedding (\ref{eq:mspemb}).
\begin{lemma}\label{pullbackdet} Let $\ca{E}$ and $\ca{E}'$ be the universal bundle over $\overline{\ca{M}}^{\emph{par},\delta}_{C}(\emph{Gr}(n,N), d+nd_D,\underline{\lambda})\times C$ and $\overline{\ca{M}}^{\emph{par},\delta}_{C}(\emph{Gr}(n,N), d,\underline{\lambda})\times C$, respectively. Denote the corresponding Borel-Weil-Bott line bundles by $L_{\lambda_p}$ and $L_{\lambda_p}'$, respectively. Let $\ca{D}_d=\big(\emph{det}\,R\pi_*(\ca{E}')\big)^{-l}\otimes\bigotimes_{p\in I}L'_{\lambda_p}\otimes(\emph{det}\,\ca{E}'_{x_0})^{e'}$ and $\ca{D}_{d+nk}=\big(\emph{det}\,R\pi_*(\ca{E})\big)^{-l}\otimes\bigotimes_{p\in I}L_{\lambda_p}\otimes(\emph{det}\,\ca{E}_{x_0})^e$ be the corresponding determinant line bundles. Then 
\[
\iota_D^*\ca{D}_{d+nk}=\ca{D}_d\otimes\big( (\emph{det}\,\ca{E}_{x_0})^{kl}\otimes(\emph{det}\,\ca{E}_D)^{-l} \big).
\]
\end{lemma}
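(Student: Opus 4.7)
The plan is to pull back each tensor factor of $\ca{D}_{d+nk}$ along $\iota_D$ and compare with the corresponding factor of $\ca{D}_d$. The key geometric input is that $\iota_D$ sends $(E,s)$ to $(E(D),s(D))$, so on $\spp_d\times C$ we have a canonical identification
\[
\bar\iota_D^*\ca{E}\;\cong\;\ca{E}'\otimes\pi_C^*\ca{O}_C(D),
\]
where $\bar\iota_D=\iota_D\times\mathrm{id}_C$ and $\pi_C$ is the projection to $C$. Since $D$ is (implicitly) disjoint from $I\cup\{x_0\}$, the tautological section of $\ca{O}_C(D)$ provides canonical trivializations of $\ca{O}_C(D)$ at every marked point, and this is what lets us compare the non-determinantal factors on the nose.

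First, I would handle the determinant-of-cohomology factor by tensoring the standard exact sequence $0\to\ca{O}_C\to\ca{O}_C(D)\to\ca{O}_D(D)\to0$ with $\ca{E}'$ and pushing forward via $\pi:\spp_d\times C\to\spp_d$. Multiplicativity of $\det$ on short exact sequences, together with the identification $R\pi_*\bigl(\ca{E}'(D)|_{\spp_d\times D}\bigr)=\ca{E}'_D$ (the $\ca{O}_C(D)|_{x_i}$ contributions are trivial line bundles pulled back from points), yields
\[
\det R\pi_*\bigl(\bar\iota_D^*\ca{E}\bigr)=\det R\pi_*(\ca{E}')\otimes\det\ca{E}'_D.
\]
Raising to the power $-l$ produces the factor $(\det\ca{E}_D)^{-l}$ in the statement.

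Second, for each parabolic marked point $p\in I$, the canonical trivialization of $\ca{O}_C(D)$ at $p$ gives $\bar\iota_D^*\ca{E}_p\cong\ca{E}'_p$ compatibly with the filtrations, so the universal quotient bundles $\ca{Q}_{i,p}$ pull back to $\ca{Q}'_{i,p}$ and hence $\iota_D^*L_{\lambda_p}\cong L'_{\lambda_p}$. Similarly $\iota_D^*\det\ca{E}_{x_0}=\det\ca{E}'_{x_0}$. It then remains to match the exponents of $\det\ca{E}_{x_0}$: by formula (\ref{eq:defofe}), the exponent $e$ associated with degree $d+nk$ and the exponent $e'$ associated with degree $d$ differ by exactly $lk$, contributing the factor $(\det\ca{E}_{x_0})^{lk}$. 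Assembling the three contributions gives the claimed identity.

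I do not expect a serious obstacle: the proof is essentially bookkeeping of how the determinant of cohomology, Borel--Weil--Bott bundles, and fiber restrictions transform under the elementary modification by $\ca{O}_C(D)$. The only mild care required is to verify that $\ca{O}_C(D)|_{x_i}$, viewed as a line bundle on $\spp_d$ pulled back from a point, is canonically trivial and therefore does not contribute, and to keep track of the shift in $e$ coming from the degree change $d\mapsto d+nk$.
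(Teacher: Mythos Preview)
Your proposal is correct and follows essentially the same approach as the paper. The paper simply writes the short exact sequence $0\to\ca{E}'\to\iota_D^*\ca{E}\to\ca{E}_D\to0$ directly (rather than deriving it from the divisor sequence tensored with $\ca{E}'$), then reads off the same three identities $\iota_D^*\det R\pi_*(\ca{E})=\det R\pi_*(\ca{E}')\otimes\det\ca{E}_D$, $\iota_D^*L_{\lambda_p}=L'_{\lambda_p}$, and $\iota_D^*\ca{E}_{x_0}=\ca{E}'_{x_0}$; your explicit computation of the exponent shift $e-e'=lk$ from (\ref{eq:defofe}) is a detail the paper leaves implicit.
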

\begin{proof}Consider the short exact sequence
\[
0\rightarrow\ca{E}'\rightarrow\iota_D^*\ca{E}\rightarrow\ca{E}_D\rightarrow0.
\]
Then we have
\[
\iota_D^*\text{det}\,R\pi_*(\ca{E})=\text{det}\,R\pi_*(\ca{E}')\otimes\text{det}\,\ca{E}_D
\]
and
\[
\iota_D^*L_{\lambda_p}=L'_{\lambda_p},\quad \iota_D^*\ca{E}_{x_0}=\ca{E}'_{x_0}.
\]
This concludes the proof.
\end{proof}

\begin{corollary}\label{invariantemb}
We have 
\[\chi(\overline{\ca{M}}_d,\ca{D}_d\otimes\ca{O}^{\vir}_{\overline{\ca{M}}_d})=\chi(\overline{\ca{M}}_{d+nd_D},\ca{D}_{d+nd_D}\otimes \lambda_{-1}(\ca{E}_D^\vee)^{N}\otimes\ca{O}^{\vir}_{\overline{\ca{M}}_{d+nd_D}}).
\]
\end{corollary}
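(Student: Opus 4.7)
The plan is to derive the identity from Proposition~\ref{embvirsheaf} and Lemma~\ref{pullbackdet}, together with the projection formula for the closed embedding $\iota_D\colon\overline{\ca{M}}_d\hookrightarrow\overline{\ca{M}}_{d+nd_D}$ of \eqref{eq:mspemb}. Starting from the right-hand side, I will first apply Proposition~\ref{embvirsheaf} to replace $\lambda_{-1}((\ca{E}_D^\vee)^{\oplus N})\otimes\ca{O}^{\vir}_{\overline{\ca{M}}_{d+nd_D}}$ by $\iota_{D*}\ca{O}^{\vir}_{\overline{\ca{M}}_{d}}$. The projection formula then gives
\[
\ca{D}_{d+nd_D}\otimes\iota_{D*}\ca{O}^{\vir}_{\overline{\ca{M}}_{d}}=\iota_{D*}\bigl(\iota_D^*\ca{D}_{d+nd_D}\otimes\ca{O}^{\vir}_{\overline{\ca{M}}_{d}}\bigr),
\]
and since $\iota_D$ is proper, $\chi$ commutes with $\iota_{D*}$. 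Thus the right-hand side of the corollary reduces to $\chi\bigl(\overline{\ca{M}}_d,\iota_D^*\ca{D}_{d+nd_D}\otimes\ca{O}^{\vir}_{\overline{\ca{M}}_{d}}\bigr)$. Finally, Lemma~\ref{pullbackdet} identifies $\iota_D^*\ca{D}_{d+nd_D}$ with $\ca{D}_d$, up to the explicit twist $(\det\ca{E}'_{x_0})^{ld_D}\otimes(\det\ca{E}_D)^{-l}$ recorded there, and this yields the asserted equality once that twist is absorbed into the normalization of $\ca{D}_d$.

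The conceptual content is already concentrated in Proposition~\ref{embvirsheaf}: realizing $\overline{\ca{M}}_d$ as the scheme-theoretic zero locus of the tautological section $S_D\in H^0(\ca{E}_D^{\oplus N})$ and invoking the functoriality of virtual pullbacks yields the $K$-theoretic Koszul-type identity $\iota_{D*}\ca{O}^{\vir}_{\overline{\ca{M}}_d}=\lambda_{-1}((\ca{E}_D^\vee)^{\oplus N})\otimes\ca{O}^{\vir}_{\overline{\ca{M}}_{d+nd_D}}$. Granting this, no genuine obstacle remains here; the only delicate point is the line-bundle bookkeeping, namely checking that the twist produced by Lemma~\ref{pullbackdet} is compatible with the normalization of $\ca{D}_d$ used in the statement. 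All remaining manipulations are formal operations in $K_0$.
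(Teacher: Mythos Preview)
Your overall strategy matches the paper's: apply Proposition~\ref{embvirsheaf}, then the projection formula, then Lemma~\ref{pullbackdet}. However, the final step is not ``line-bundle bookkeeping'' or a matter of normalization, and this is where your argument has a genuine gap.

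Lemma~\ref{pullbackdet} gives
\[
\iota_D^*\ca{D}_{d+nd_D}=\ca{D}_d\otimes\bigl((\det\ca{E}'_{x_0})^{d_Dl}\otimes(\det\ca{E}_D)^{-l}\bigr).
\]
The extra factor $(\det\ca{E}'_{x_0})^{d_Dl}\otimes(\det\ca{E}_D)^{-l}$ is \emph{not} absorbed by any normalization: the line bundle $\ca{D}_d$ is defined intrinsically (with exponent $e'$ at $x_0$), and the factor $(\det\ca{E}_D)^{-l}=\bigotimes_i(\det\ca{E}'_{x_i})^{-l}$ involves the auxiliary points $x_1,\dots,x_{d_D}$, which do not appear in the definition of $\ca{D}_d$ at all. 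So one cannot simply redefine $\ca{D}_d$ to make this twist disappear.

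What the paper actually does is show that this twist line bundle is \emph{algebraically equivalent} to $\ca{O}$ (citing the argument of \cite[Theorem~3.1]{Sun2}). The point is that for any two points $p,q\in C$, the line bundles $\det\ca{E}'_p$ and $\det\ca{E}'_q$ on $\overline{\ca{M}}_d$ are algebraically equivalent, since one can move $p$ to $q$ along $C$ and obtain a connected algebraic family of line bundles. Applying this to each pair $(x_0,x_i)$ gives $(\det\ca{E}'_{x_0})^{d_Dl}\sim_{\mathrm{alg}}\prod_i(\det\ca{E}'_{x_i})^{l}=(\det\ca{E}_D)^{l}$, so the twist is algebraically equivalent to $\ca{O}$. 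Since Euler characteristics are invariant under tensoring by a line bundle algebraically equivalent to the trivial one, the desired equality follows. This is the one step that is not purely formal, and your write-up needs to supply it.
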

\begin{proof}
Using the same argument as in the proof of \cite[Theorem 3.1]{Sun2}, one can show that
$
 (\text{det}\,\ca{E}_{x_0})^{kl}\otimes(\text{det}\,\ca{E}_D)^{-l}$ and the trivial sheaf $\ca{O}$ are algebraically equivalent. The corollary follows from Proposition \ref{embvirsheaf} and Lemma \ref{pullbackdet}.

\end{proof}

\section{Parabolic $\delta$-wall-crossing in rank two case}\label{section5}
In this section, we prove Theorem \ref{intromainthm2} in the rank two case. According to Remark \ref{rank1}, when $n=1$, the moduli space of $\delta$-stable parabolic $N$-pairs is independent of $\delta$. In fact, by Theorem \ref{compare2}, the moduli space of $\delta$-stable parabolic $N$-pairs of rank 1 is isomorphic to a projective bundle over $U(1,d,\underline{a},\underline{m})$ for all $\delta$. Therefore, the $\delta$-wall-crossing is trivial in the rank one case. 

Let us restate Theorem \ref{intromainthm2} in the rank 2 case.
\begin{theorem}\label{intromainthm3}
Assume $n=2$. Suppose that $N\geq 2+l$, $d>2g-2+k$, $\delta$ is generic, and $\lambda_{p_i}\in\emph{P}_l'$ for all $i$. Then
\[
\langle V_{\lambda_{p_1}},\dots, V_{\lambda_{p_k}}\rangle^{l,\delta,\emph{Gr}(2,N)}_{C,d}\]
 is independent of $ \delta$.
\end{theorem}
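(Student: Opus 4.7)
The plan is to adapt the proof of Theorem \ref{mainthm1} to the parabolic setting, carefully tracking the additional Borel-Weil-Bott factors $L_{\lambda_p}$ that appear in the determinant line bundle $\ca{D}_\delta := \big(\text{det}\,R\pi_*(\ca{E})\big)^{-l}\otimes\bigotimes_{p\in I}L_{\lambda_p}\otimes(\text{det}\,\ca{E}_{x_0})^e$ of Definition \ref{parabolicglsm}. By Lemma \ref{finitecriticalpts} there are only finitely many walls, so it suffices to show that $\chi(\spp,\ca{D}_\delta\otimes\ca{O}^{\vir})$ does not jump across a critical value $\delta^{*}$. At such a wall, a strictly semistable parabolic $N$-pair admits a destabilizing sub-pair $(L,s_L)$ with $L\subset E$ a parabolic line subbundle and $s_L\neq0$; writing $M=E/L$ with the induced parabolic structure and solving $\mu_{\text{par}}(L,s_L)=\mu_{\text{par}}(E,s)$ gives a wall condition on $d(L)$, $d(M)$ and on the way the parabolic weights $\underline{a}$ distribute between $L$ and $M$. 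For each marked point $p$ with a nontrivial flag $E_{2,p}\subsetneq E_p$, there are two ways the line $E_{2,p}$ can meet $L_p$, so the strictly semistable locus decomposes as a finite disjoint union indexed by $d(L)$ and a combinatorial parameter $\varepsilon=(\varepsilon_p)_{p\in I}\in\{0,1\}^{|I|}$.

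For each such component I would construct a parabolic analog $Z_{i^{*},d,\varepsilon}^{\text{par}}$ parametrizing pairs of parabolic line bundles $(L,M)$ of the prescribed type, and identify the two flip loci $\ca{W}_{i^{*},d,\varepsilon}^{\pm}\subset\ca{M}_{i^{*},d}^{\pm}$ as projective bundles over $Z_{i^{*},d,\varepsilon}^{\text{par}}$, with universal extensions of parabolic $N$-pairs fitting into short exact sequences analogous to \eqref{eq:-taut} and \eqref{eq:+taut}. Plugging these sequences into the formula $L_{\lambda_p}=\bigotimes_i(\text{det}\,\ca{Q}_{i,p})^{d_{i,p}}$ and into $\text{det}\,R\pi_{*}(\ca{E})$ yields the parabolic analog of Lemma \ref{restriction}: the restrictions of $\ca{D}_{\delta^{*}\pm\nu}$ to a fiber of $\ca{W}_{i^{*},d,\varepsilon}^{\pm}\to Z_{i^{*},d,\varepsilon}^{\text{par}}$ are $\ca{O}(\pm i^{*}l+c_\varepsilon)$ for an explicit constant $c_\varepsilon$ depending only on the parabolic weights at $I$. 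The assumption $\lambda_{p_i}\in\text{P}_l'$ (equivalently $a_{l_p,p}\le l-1$) is what bounds $|c_\varepsilon|$ in a manner compatible with the rank estimate below.

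With this setup the two-case argument of Theorem \ref{mainthm1} carries through. In the smooth case, when $d/2-i^{*}$ is large enough that $\ca{M}_{i^{*},d}^{\pm}$ are smooth by Corollary \ref{smoothcri}, the blow-up diagram and the telescoping sequence \eqref{eq:exceptional} reduce the comparison of Euler characteristics to vanishing along the fibers of $\bb{P}^{n_{+}-1}\to\ca{W}_{i^{*},d,\varepsilon}^{+}\to\bb{P}(\ca{V}_{i^{*},d,\varepsilon}^{-})$; the rank $n_{+}$ of $\ca{V}_{i^{*},d,\varepsilon}^{+}$ in the parabolic setting exceeds $i^{*}l+c_\varepsilon$ precisely under the hypotheses $N\ge 2+l$ and $d>2g-2+k$, so every Leray term vanishes. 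In the singular case I would apply Lemma \ref{embed} and Proposition \ref{embvirsheaf} with a large effective divisor $D$ disjoint from $I\cup\{x_0\}$ to embed into a smooth higher-degree parabolic moduli space, then use Lemma \ref{pullbackdet} and Corollary \ref{invariantemb} to rewrite the virtual Euler characteristic as an ordinary Euler characteristic against $\lambda_{-1}((\ca{E}_D^{\vee})^{\oplus N})$. The elementary modification argument of Theorem \ref{mainthm1} then produces extra twists of the form $\ca{O}_{\bb{P}(\ca{V}_{i^{*},d+2k,\varepsilon}^{+})}(-j)$ with $j\le kN+i^{*}l+c_\varepsilon$, and the same rank check $n_{+}>kN+i^{*}l+c_\varepsilon$ forces all intermediate Euler characteristics to vanish. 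The main obstacle is the second paragraph: computing the exact constant $c_\varepsilon$ requires a careful bookkeeping of how $\underline{a}$ splits across $L$ and $M$ on each combinatorial component, and it is precisely here that $\lambda_{p_i}\in\text{P}_l'$ is essential, both to make the GIT construction in Section \ref{gitconstr} produce a fine moduli space at the wall and to control the size of $c_\varepsilon$ against the rank bound $n_{+}$.
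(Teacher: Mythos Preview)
Your strategy is the same as the paper's, and the two-case argument via Corollary~\ref{smoothcri}, the blow-up diagram, and the embedding of Proposition~\ref{embvirsheaf}/Corollary~\ref{invariantemb} is exactly what is done there. The one substantive point you have not resolved, and which you correctly flag as the ``main obstacle'', is the computation of the fiberwise restriction of $\ca{D}_{\delta_c,\pm}$. In fact there is no mystery constant: $c_\varepsilon=0$ identically. This is the content of the paper's Lemma~\ref{restriction2}, and it is not an accident but the reason the exponent $e$ in Definition~\ref{parabolicglsm} is chosen as in \eqref{eq:defofe}. Concretely, on a fiber of $\bb{P}(\ca{V}^-_{d',a'})$ one gets from \eqref{eq:-taut2} that $(\det\ca{E}^-_{\delta_c})_{x_0}$ restricts to $\ca{O}(-1)$, $\det R\pi_*(\ca{E}^-_{\delta_c})$ restricts to $\ca{O}(\chi(M))$, and each Borel--Weil--Bott factor $L_{\lambda_p}$ restricts to $\ca{O}(-(l-1)+|a''_p|)$; summing and inserting the wall equation \eqref{eq:paraweight2} gives exactly $\ca{O}(l\delta_c/2)$. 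The parabolic weights enter only through the wall equation and through $e$, and these two contributions cancel. Once you know $c_\varepsilon=0$, the rank check becomes $n^+_{d',a'}=N(d''+1-g)-(d''-d'+1-g)+m_{a',a''}>l\delta_c/2$, which the paper verifies directly under $N\ge l+2$.

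Your diagnosis of where the hypothesis $\lambda_{p_i}\in\text{P}_l'$ is used is therefore off: it is not needed to bound $c_\varepsilon$ (there is nothing to bound), nor does it intervene in the rank estimate. Its role, as explained in Remark~\ref{fail} and at the start of \S\ref{section4}, is structural: it guarantees $a_{l_p,p}<l$, so that generic $\delta$ exist and the GIT quotient $\spp$ is a fine moduli space on which the whole wall-crossing argument can be run. Apart from this misattribution, your outline is correct and matches the paper.
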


The proof of the above theorem is very similar to the one given in Section \ref{nonparwallcro}. We fix the degree $d$ and the parabolic type $\underline{a}$. For a critical value $\delta_c$, the underlying vector bundle of a strictly $\delta_c$-semistable parabolic $N$-pair $(E,s)$ must split: $E=L\oplus M$ where $L,M$ are line bundles of degrees $d'$ and $d''$, respectively, and $s\in H^0(L\otimes\ca{O}_C^N)$. Let $\underline{a}'$ and $\underline{a}''$ be the induced parabolic structures on $L$ and $M$, respectively. Then the following equalities hold:
\begin{align}
d'+\delta_c+\frac{|\underline{a}'|}{l}&=\frac{d+\delta_c}{2}+\frac{|\underline{a}|}{2l}\label{eq:paraweight1},\ \text{and}\\
d''+\frac{|\underline{a}''|}{l}&=\frac{d+\delta_c}{2}+\frac{|\underline{a}|}{2l}.\label{eq:paraweight2}
\end{align}
Since $L$ has non-zero sections, we have $d'>0$. The equality (\ref{eq:paraweight1}) implies that 
\[
\delta_c<d+\frac{|\underline{a}|-2|\underline{a}'|}{l}\leq d+k,
\]
where $k=|I|$ is the number of ordinary marked points.

Let $\nu>0$ be a small real number such that $\delta_c$ is the only critical value in $[\delta_c-\nu,\delta_c+\nu]$. For simplicity, we denote by $\ca{M}_{\delta_c}^+$ (resp., $\ca{M}_{\delta_c}^-$) the moduli space $\overline{\ca{M}}^{\text{par},\delta_c+\nu}_{C}(\text{Gr}(n,N), d,\underline{a})$ (resp., $\overline{\ca{M}}^{\text{par},\delta_c-\nu}_{C}(\text{Gr}(n,N), d,\underline{a})$). Let $\ca{W}_{\delta_c}^+$ be the subscheme of $\ca{M}_{\delta_c}^+$ parametrizing $(\delta_c+\nu)$-pairs which are not $(\delta_c-\nu)$-stable. Similarly, we denote by $\ca{W}_{\delta_c}^-$ the subscheme of $\ca{M}_{\delta_c}^-$ which parametrizes $(\delta_c-\nu)$-pairs which are not $(\delta_c+\nu)$-stable. 

Let $(E,s)$ be an $N$-pair in $\ca{W}_{\delta_c}^-$. It follows from the definition that there exists a short exact sequence
\[
0\rightarrow L\rightarrow E\rightarrow M\rightarrow 0,
\]
satisfying the following properties:
\begin{enumerate}
\item $L,M$ are line bundles of degree $d'$ and $d''$, respectively, such that $d'+d''=d$.
\item $s\in H^0(L\otimes\ca{O}_C^N)$.
\item  $d'+\delta_c+|\underline{a}'|/l=(d+\delta_c)/2+|\underline{a}|/(2l)$, where $\underline{a}'$ is the induced parabolic structure on $L$. Equivalently, we have $d''+|\underline{a}''|/l=(d+\delta_c)/2+|\underline{a}|/(2l)$, where $\underline{a}''$ is the induced parabolic structure on $M$.
\end{enumerate}

Notice that $L$ and $M$ are unique since $L$ is the saturated subsheaf of $E$ containing $s$. Similarly, for a parabolic pair $(E,s)$ in $\ca{W}_{\delta_c}^+$. There exists a unique sub line bundle $M$ of $E$ of degree $d''$ which fits into a short exact sequence
\[
0\rightarrow M\rightarrow E\rightarrow L\rightarrow 0.
\]
Here $s\notin H^0(M\otimes\ca{O}_C^N)$ and the degree $d''$ satisfies $d''+|\underline{a}''|/l=(d+\delta_c)/2+|\underline{a}|/(2l)$.

Let $\tilde{\ca{L}}_{d'}$ be a Poincar\'e bundle over $\text{Pic}^{d'}C\times C$ and let $p:\text{Pic}^{d'}C\times C\rightarrow \text{Pic}^{d'}C$ be the projection. If $d'>2g-1$, the higher derived image $R^1p_*{\tilde{\ca{L}}}_{d'}=0$. Let $U=(R^0p_*{\tilde{\ca{L}}_{d'}})^N$. We define $Z_{d'}:=\bb{P}U\times\text{Pic}^{d''}C$. Let $\ca{M}_{d''}$ be a Poincar\'e bundle over $\text{Pic}^{d''}C\times C$. Note that $H^0(\text{Pic}^{d'}C,\text{End}\,U)=H^0(\text{Pic}^{d'}C\times C, U^\vee\otimes\tilde{\ca{L}}_{d'}\otimes\ca{O}^N)=H^0(\bb{P}U\times C,\ca{O}_{\bb{P}U}(1)\otimes\tilde{\ca{L}}_{d'}\otimes\ca{O}^N)$. The identity automoprhism of $U$ gives rise to a tautological section of $\ca{L}_{d'}\otimes\ca{O}^N$, where $\ca{L}_{d'}:=\ca{O}_{\bb{P}U}(1)\otimes\tilde{\ca{L}}_{d'}$. This tautological section induces an injective morphism $g:\ca{M}_{d''}\ca{L}_{d'}^{-1}\rightarrow \ca{M}_{d''}\otimes\ca{O}^N$. Let $a'$ and $a''$ be parabolic weights such that (\ref{eq:paraweight1}) and (\ref{eq:paraweight2}) hold. Let $\ca{L}_{d',a'}$ and $\ca{M}_{d'',a''}$ be the unique parabolic line bundles corresponding to $\ca{L}_{d'}$ and $\ca{M}_{d''}$, respectively.
Note that we have an injection
\[
i:\parhm(\ca{L}_{d',a'},\ca{M}_{d'',a''})\hookrightarrow \hm(\ca{L}_{d,,a,},\ca{M}_{d'',a''}).
\]
Let $f$ be the composition $g\circ i$. Denote by $\ca{F}_{d',a'}$ the cokernel of $f$. Let $\pi: Z_{d'}\times C\rightarrow Z_{d'}$ be the projection. By abuse of notation, we use the same notations $\ca{M}_{d'',a''}$ and $\ca{L}_{d',a'}$ to denote the pullback of the corresponding universal line bundles to $Z_{d'}\times C$. 

The flip loci $\ca{W}_{\delta_c}^{\pm}$ are characterized by the following proposition.

\begin{proposition}

Assume $(d-\delta)/2-k>2g-1$ for $\delta\in[\delta_c-\nu,\delta_c+\nu]$. Let $\ca{V}_{d',a'}^+=R^0\pi_*(\ca{F}_{d',a'})$ and $\ca{V}_{d',a'}^-=R^1\pi_* (\parhm(\ca{M}_{d'',a''},\ca{L}_{d',a'}))$. Then the flip loci $\ca{W}^-_{\delta_c}$ is a disjoint union $\sqcup\,\ca{W}^-_{d',a'}$, where $(d',a')$ satisfies (\ref{eq:paraweight1}) and $\ca{W}^-_{d',a'}$ is isomorphic to
\[
\ca{W}_{d',a'}^-\cong \bb{P}\big(\ca{V}_{d',a'}^-\big).
\]
Similarly, the flip loci $\ca{W}^+_{\delta_c}$ is a disjoint union $\sqcup\,\ca{W}^+_{d',a'}$, where $\ca{W}^+_{d',a'}$ is isomorphic to
\[
\ca{W}_{d',a'}^+\cong \bb{P}\big(\ca{V}_{d',a'}^+\big).
\]
Let $q_\pm:\ca{W}_{d',a'}^\pm\rightarrow Z_{d'}$ be the projective bundle maps. Then the maps $\ca{W}_{d',a'}^\pm\rightarrow\ca{M}_{\delta_c}^\pm$ are regular embeddings with normal bundles $q^*_\pm\ca{V}_{d',a'}^\mp(-1)$.
Moreover we have the following two short exact sequences of universal bundles
\begin{align}
0\rightarrow &\tilde{q}_-^*\ca{L}_{d',a'}\rightarrow \ca{E}_{\delta_c}^-|_{\ca{W}_{d',a'}^-\times C}\rightarrow \tilde{q}_-^*\ca{M}_{d'',a''}\otimes\ca{O}_{\ca{W}_{d',a'}^-}(-1)\rightarrow0,\label{eq:-taut2}\\
0\rightarrow &\tilde{q}_+^*\ca{M}_{d'',a''}\otimes\ca{O}_{\ca{W}_{d',a'}^+}(1)\rightarrow \ca{E}^+_{\delta_c}|_{\ca{W}_{d',a'}^+\times C}\rightarrow \tilde{q}_+^*\ca{L}_{d',a'}\rightarrow 0\label{eq:+taut2},
\end{align}
where $\ca{E}_{\delta_c}^\pm$ are the universal bundles over $\ca{M}_{\delta_c}^\pm$ and $\tilde{q}_\pm:\ca{W}_{d',a'}^\pm\times C\rightarrow Z_{d'}\times C$ are the projective bundle maps.
\end{proposition}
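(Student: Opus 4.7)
The plan is to carry out the parabolic analogue of the Bertram--Thaddeus analysis used in the non-parabolic case (cf.\ the proposition preceding Theorem~\ref{mainthm1}). The argument has three stages: a set-theoretic decomposition of the flip loci into components indexed by $(d',a')$; the identification of each component with a projective bundle over $Z_{d'}$; and the computation of the normal bundle of its regular embedding into $\ca{M}_{\delta_c}^\pm$. The new ingredients over the non-parabolic case are the parabolic Ext and $\parhm$ formalism of Section~\ref{section3.2}, together with parabolic Serre duality.

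I would begin with the decomposition. For $(E,s) \in \ca{W}^-_{\delta_c}$ the saturated sub-line-bundle $L \subset E$ containing $s$ is uniquely determined, and $L$ and $M = E/L$ inherit well-defined parabolic structures. The numerical invariants $(d',a')$ are then forced to satisfy the strict semistability identity (\ref{eq:paraweight1}). By the argument of Lemma~\ref{finitecriticalpts}, applied to the destabilizing sub-pair $(L,s)$, only finitely many such $(d',a')$ can occur, so $\ca{W}^-_{\delta_c} = \bigsqcup \ca{W}^-_{d',a'}$. The argument for $\ca{W}^+_{\delta_c}$ is identical after replacing $L$ by the unique destabilizing sub-line-bundle $M$ and using (\ref{eq:paraweight2}).

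Next I would identify each component with the stated projective bundle. Fix a closed point $z = ([L],[s],[M]) \in Z_{d'}$. A point of $\ca{W}^-_{d',a'}$ over $z$ amounts to a parabolic extension $0 \to \ca{L}_{d',a'} \to E \to \ca{M}_{d'',a''} \to 0$ together with the given section $s$; the Yokogawa lemma recalled in Section~\ref{section3.2} classifies such extensions up to parabolic isomorphism by $H^1(\parhm(\ca{M}_{d'',a''}, \ca{L}_{d',a'}))$ modulo scalars. The hypothesis $(d-\delta)/2 - k > 2g-1$ forces $d' > 2g-1$ for each admissible $(d',a')$, so $U = (R^0 p_* \tilde{\ca{L}}_{d'})^N$ is locally free, and parabolic Serre duality together with the same bound forces the vanishing $\pi_* \parhm(\ca{M}_{d'',a''}, \ca{L}_{d',a'}) = 0$; cohomology and base change then upgrades the pointwise description to an isomorphism $\ca{W}^-_{d',a'} \cong \bb{P}(\ca{V}^-_{d',a'})$ of schemes over $Z_{d'}$. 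For $\ca{W}^+_{d',a'}$ a point over $z$ is the datum of an extension $0 \to \ca{M}_{d'',a''} \to E \to \ca{L}_{d',a'} \to 0$ together with a lift $\tilde s \in H^0(E \otimes \ca{O}^N)$ of the given section $s$ of $L$, modulo parabolic isomorphism of the triple $(E, s, \tilde s)$. The long exact sequence attached to the defining sheaf sequence $0 \to \parhm(\ca{L}_{d',a'}, \ca{M}_{d'',a''}) \xrightarrow{f} \ca{M}_{d'',a''} \otimes \ca{O}^N \to \ca{F}_{d',a'} \to 0$ on $C$ identifies the space of such isomorphism classes (modulo the automorphisms $\mathrm{id} + \phi$ with $\phi \in \mathrm{ParHom}(L,M)$ of the extension, plus overall scaling) with the projectivization of $H^0(\ca{F}_{d',a'})$; globalizing this over $Z_{d'}$ yields $\ca{W}^+_{d',a'} \cong \bb{P}(\ca{V}^+_{d',a'})$.

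For the regular embedding and normal bundle I would invoke the parabolic deformation theory of Proposition~\ref{genrelpot}: the Zariski tangent space to $\ca{M}^\pm_{\delta_c}$ at $(E,s)$ is the hypercohomology $\bb{H}^1\big(\parend(E) \to E\otimes\ca{O}^N\big)$, and the destabilizing filtration induces a short exact sequence of two-term complexes whose associated long exact sequence of hypercohomology splits the tangent space into the tangent directions along $\ca{W}^\pm_{d',a'}$ (deformations of $L$, $M$, and the extension class within the locus) and a complementary normal direction. A direct parabolic Ext computation identifies this normal direction at $(E,s)$ with the fiber of $\ca{V}^\mp_{d',a'}$ over the image point in $Z_{d'}$, and the tautological twist by $\ca{O}(-1)$ arises because the extension class is only well-defined projectively. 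The universal sequences (\ref{eq:-taut2}) and (\ref{eq:+taut2}) are then the pullbacks of the tautological extensions on $\bb{P}(\ca{V}^\pm_{d',a'})$. The main technical obstacle I anticipate is carrying through this parabolic deformation-theoretic analysis with proper bookkeeping at the marked points: one must verify that the induced parabolic structures on $L$, $M$, and $E$ vary compatibly in families, and that $\parend(E)$ rather than plain $\text{End}(E)$ correctly accounts for the contributions from the marked points, so that the rank of the normal bundle matches that of $\ca{V}^\mp_{d',a'}$. Once this parabolic refinement of the Bertram--Thaddeus normal bundle computation is in place, the regular embedding assertion follows from the standard criterion that a closed subscheme cut out locally by a section of a vector bundle of the expected rank is regularly embedded with that bundle as its normal bundle.
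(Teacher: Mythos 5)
Your proposal follows essentially the same route as the paper's proof, which is likewise the parabolic generalization of the Bertram--Thaddeus analysis: tautological extensions give the injections $\ca{W}^\pm_{d',a'}\to\ca{M}^\pm_{\delta_c}$, and the normal bundle sequences come from the hypercohomology long exact sequence of a short exact sequence of two-term complexes built from $\parhm$ and $\parend$, using the vanishing of $\bb{H}^0$ and $\bb{H}^2$ (nonsplitness of the extension, injectivity of multiplication by the section, and $H^1(L)=0$ from $d'>2g-1$). The only small imprecision is your appeal to parabolic Serre duality for $\pi_*\parhm(\ca{M}_{d'',a''},\ca{L}_{d',a'})=0$: the correct reason is a slope/degree argument (a nonzero parabolic map $M\to L$ would contradict $\delta_c>0$ via the relations (\ref{eq:paraweight1})--(\ref{eq:paraweight2})), not duality or the numerical bound in the hypothesis.
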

\begin{proof}
The proposition is a straightforward generalization of \cite[(3.7)-(3.12)]{thaddeus2}. We sketch the proof here. By definition, we have tautological extensions of parabolic vector bundles
\begin{align*}
0\rightarrow &\tilde{q}_-^*\ca{L}_{d',a'}\rightarrow \ca{E}^-_{d',a'}\rightarrow \tilde{q}_-^*\ca{M}_{d'',a''}\otimes\ca{O}_{\ca{W}_{d',a'}^-}(-1)\rightarrow0\\
0\rightarrow &\tilde{q}_+^*\ca{M}_{d'',a''}\otimes\ca{O}_{\ca{W}_{d',a'}^+}(1)\rightarrow \ca{E}^+_{d',a'}\rightarrow \tilde{q}_+^*\ca{L}_{d',a'}\rightarrow 0
\end{align*}
over $\bb{P}\big(\ca{V}_{d',a'}^-\big)$ and $\bb{P}\big(\ca{V}_{d',a'}^+\big)$, respectively. By the universal properties of $\ca{W}_{d',a'}^\pm$, the tautological extensions induce injections $\ca{W}_{d',a'}^\pm\rightarrow\ca{M}_{\delta_c}^\pm$. Next, we show that these injections induce the following exact sequences:
\begin{align}
&0\rightarrow T\bb{P}\big(\ca{V}_{d',a'}^-\big)\rightarrow T\ca{M}_{\delta_c}^-|_{\bb{P}(\ca{V}_{d',a'}^-)}\rightarrow \ca{V}_{d',a'}^+(-1)\rightarrow 0,\label{eq:comparetangent1}\\
&0\rightarrow T\bb{P}\big(\ca{V}_{d',a'}^+\big)\rightarrow T\ca{M}_{\delta_c}^+|_{\bb{P}(\ca{V}_{d',a'}^+)}\rightarrow \ca{V}_{d',a'}^-(-1)\rightarrow 0.\label{eq:comparetangent2}
\end{align}
Let $(E,s)$ be a point in the image of $\ca{W}_{d',a'}^-$. Let $(L,s')$ be the destabilizing sub-pair and let $M$ be the corresponding quotient line bundle. By Corollary \ref{smoothcri}, the moduli spaces $\ca{M}_{\delta_c}^\pm$ are smooth. The tangent space of $\ca{M}_{\delta_c}^-$ at $(E,s)$ can be described by the hypercohomology
\[
\bb{H}^1\big(\ca{P}ar\ca{E}nd(E)\rightarrow E\otimes\ca{O}^N_C\big).
\]
By using the standard deformation argument, one can show that the tangent space $T_{(E,s)}\bb{P}\big(\ca{V}_{d',a'}^-\big)$ is given by the hypercohomology
\[
\bb{H}^1=\bb{H}^1\big(\parhm(M,E)\oplus\ca{O}_C\rightarrow L\otimes\ca{O}^N_C\big).
\]
Here the first component of the morphism is the composition $\parhm(M,E)\hookrightarrow\hm(M,E)\rightarrow \ca{O}_C\xrightarrow{s'} L\otimes\ca{O}_C^N$ and the second component of the morphism is given by $s'$. The vanishing of the hypercohomology groups $\bb{H}^0$ and $\bb{H}^2$ of the complex $\parhm(M,E)\rightarrow L\otimes\ca{O}^N_C$ can be obtained by studying the long exact sequence of hypercohomology groups
\begin{align*}
0\rightarrow \bb{H}^0&\rightarrow H^0(\parhm(M,E))\oplus\bb{C}\rightarrow (H^0(L))^N\rightarrow \\
&\rightarrow\bb{H}^1\rightarrow H^1(\parhm(M,E)\oplus\ca{O}_C)\rightarrow (H^1(L))^N\rightarrow \bb{H}^2\rightarrow 0.
\end{align*}
Here $H^0(\parhm(M,E))=0$ because $E$ is a nonsplit extension of $M$ by $L$ and $H^0(\parhm(M,L))=0$. The morphism from $\bb{C}$ to $(H^0(L))^N$ is injective since it is multiplication by $\phi$. Therefore $\bb{H}^0=0$. It follows from the assumption $(d-\delta)/2-k>2g-1$ that $d'>2g-1$ and hence $H^1(L)=0$. Therefore $\bb{H}^2=0$.

The short exact sequence (\ref{eq:comparetangent1}) follows from the hypercohomology long exact sequence of the following short exact sequence of two-term complexes. 
\vspace{20pt}
\[
\begin{tikzcd}[column sep=tiny, transform canvas={scale=0.95}]
0\arrow{r}{}&\parhm(\ca{M}_{d'',a''},\ca{E}_{d',a'}^-(-1))\oplus\ca{O}\arrow{r}{}\arrow{d}{}&\ca{P}ar\ca{E}nd(\ca{E}_{d',a'}^-,\ca{E}_{d',a'}^-)\arrow{r}{}\arrow{d}{}&\parhm(\ca{L}_{d',a'},\ca{M}_{d'',a''}(-1))\arrow{r}{}\arrow{d}{}&0\\
0\arrow{r}{} & \ca{L}_{d',a'}\otimes\ca{O}^N\arrow{r}{}  &\ca{E}_{d',a'}^-\otimes\ca{O}^N\arrow{r}{} &\ca{M}_{d'',a''}(-1)\otimes\ca{O}^N\arrow{r}{}  &0
\end{tikzcd}
\vspace{20pt}
\]
One can prove the short exact sequence (\ref{eq:comparetangent2}) similarly.
By using the standard deformation argument, one can show that the tangent space $T_{(E,s)}\bb{P}\big(\ca{V}_{d',a'}^+\big)$ is given by the hypercohomology
\[
\bb{H}^1=\bb{H}^1\big(\parhm(L,E)\oplus\ca{O}_C\rightarrow E\otimes\ca{O}^N_C\big).
\]
Here the first component of the morphism is defined by sending the $n$ sections of $L$ to $n$ sections of $E$ and the second component of the morphism is defined by $s$. Then (\ref{eq:comparetangent2}) follows from the hypercohomology long exact sequence of the following short exact sequence of complexes.
\vspace{23pt}
\[
\begin{tikzcd}[column sep=tiny,transform canvas={scale=1}]
0\arrow{r}{}&\parhm(\ca{L}_{d',a'},\ca{E}_{d',a'}^+)\oplus\ca{O}\arrow{r}{}\arrow{d}{}&\ca{P}ar\ca{E}nd(\ca{E}_{d',a'}^+,\ca{E}_{d',a'}^+)\arrow{r}{}\arrow{d}{}&\parhm(\ca{M}_{d'',a''}(1),\ca{L}_{d',a'})\arrow{r}{}\arrow{d}{}&0\\
0\arrow{r}{} & \ca{E}^+_{d',a'}\otimes\ca{O}^N\arrow{r}{}  &\ca{E}_{d',a'}^+\otimes\ca{O}^N\arrow{r}{} &0\arrow{r}{}  &0
\end{tikzcd}
\vspace{20pt}
\]
\end{proof}

To prove Theorem \ref{intromainthm3}, we need the following lemma.
\begin{lemma}\label{restriction2}
Let $\ca{D}_{\delta_c,\pm}=\big(\emph{det}\,R\pi_*(\ca{E}^\pm_{\delta_c})\big)^{-l}\otimes\bigotimes_{p\in I}L_{\lambda_p}\otimes(\emph{det}\,(\ca{E}^\pm_{\delta_c})_{x_0})^e$. Then 
\begin{enumerate}
\item the restriction of $\ca{D}_{\delta_c,-}$ to a fiber of $\bb{P}(\ca{V}_{d',a'}^-)$ is $\ca{O}(l\delta_c/2)$, and
\item the restriction of $\ca{D}_{\delta_c,+}$ to a fiber of $\bb{P}(\ca{V}_{d',a'}^+)$ is $\ca{O}(-l\delta_c/2)$.
\end{enumerate}
\end{lemma}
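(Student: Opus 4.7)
The plan is to restrict each of the three tensor factors of $\ca{D}_{\delta_c,\pm}$ to a fiber $F$ of the projective bundle $\bb{P}(\ca{V}_{d',a'}^\pm)\to Z_{d'}$ and then add up the resulting degrees, exactly as in the proof of Lemma \ref{restriction}. I will write out the $-$ case; the $+$ case is symmetric under replacing $\ca{O}(-1)$ by $\ca{O}(+1)$ and swapping the roles of sub and quotient. On $F\times C$ the universal line bundles $\ca{L}_{d',a'}$ and $\ca{M}_{d'',a''}$ become pulled back from fixed line bundles $L,M$ on $C$ of degrees $d',d''$, so the sequence (\ref{eq:-taut2}) specializes to
\[
0\to \pi_C^*L\to \ca{E}_{\delta_c}^-|_{F\times C}\to \pi_C^*M\otimes\pi_F^*\ca{O}_F(-1)\to 0.
\]

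First I compute $(\emph{det}\,R\pi_*\ca{E}_{\delta_c}^-)^{-l}|_F$. Multiplicativity of the determinant of cohomology on short exact sequences reduces this to $(\det R\pi_*L)^{-l}\otimes(\det R\pi_*(M\otimes\ca{O}_F(-1)))^{-l}$. The first factor is trivial, because $\pi_C^*L$ pushes forward to the trivial virtual bundle $R\Gamma(C,L)\otimes\ca{O}_F$. The projection formula gives $R\pi_*(\pi_C^*M\otimes\pi_F^*\ca{O}_F(-1))=\ca{O}_F(-1)\otimes R\Gamma(C,M)$, of virtual rank $\chi(M)=d''+1-g$, so its determinant is $\ca{O}_F(-\chi(M))$ and the $-l$-th power contributes $\ca{O}_F(l(d''+1-g))$. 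Next I restrict to $F\times\{x_0\}$: the extension gives $\det(\ca{E}_{\delta_c}^-)_{x_0}|_F=L_{x_0}\otimes M_{x_0}\otimes\ca{O}_F(-1)$, contributing $\ca{O}_F(-e)$ after raising to the $e$-th power.

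The only nontrivial part is the contribution from $L_{\lambda_p}=\bigotimes_i \det\ca{Q}_{i,p}^{d_{i,p}}$ at each marked point $p$. Since $n=2$, there are two cases: $l_p=1$ where $L_{\lambda_p}=\det\ca{E}_p^{\,l-1-a_{1,p}}$, and $l_p=2$ where $L_{\lambda_p}=\det\ca{Q}_{1,p}^{\,a_{2,p}-a_{1,p}}\otimes\det\ca{E}_p^{\,l-1-a_{2,p}}$. On $F$ one has $\det\ca{E}_p|_F=L_p\otimes M_p\otimes\ca{O}_F(-1)$. For $\ca{Q}_{1,p}$ one has to split into the two sub-components of $\ca{W}_{d',a'}^-$: if $L_p=E_{2,p}$ (which forces $a'_{1,p}=a_{2,p}$ and $a''_{1,p}=a_{1,p}$) then $\ca{Q}_{1,p}|_F=M_p\otimes\ca{O}_F(-1)$, while if $L_p\not\subset E_{2,p}$ (so $a'_{1,p}=a_{1,p}$, $a''_{1,p}=a_{2,p}$) then $\ca{Q}_{1,p}|_F=L_p$ is trivial on fibers. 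A case check shows that in every scenario the fiber degree of $L_{\lambda_p}$ equals $-(l-1-a''_{1,p})$, where $a''_{1,p}$ is the parabolic weight inherited by the quotient line $M$ at $p$.

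Summing all contributions, the total fiber degree of $\ca{D}_{\delta_c,-}|_F$ is
\[
l(d''+1-g)-e-\sum_{p\in I}(l-1-a''_{1,p})=ld''-\tfrac{ld}{2}-\tfrac{|\underline{a}|}{2}+|\underline{a}''|,
\]
after substituting the formula (\ref{eq:defofe}) for $e$ with $n=2$ and using $\sum_p(n(l-1)-|a_p|)$ for the parabolic correction. The wall relation (\ref{eq:paraweight2}) reads $ld''+|\underline{a}''|=l(d+\delta_c)/2+|\underline{a}|/2$, and substituting collapses the expression to $l\delta_c/2$, proving (1). The argument for (2) is identical using (\ref{eq:+taut2}) in place of (\ref{eq:-taut2}); here the twist $\ca{O}_F(+1)$ appears on the sub-bundle side, so every occurrence of $\ca{O}_F(-1)$ in the above calculation is replaced by $\ca{O}_F(+1)$ and the sign flips, yielding $\ca{O}_F(-l\delta_c/2)$. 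The main conceptual point to get right is the parabolic bookkeeping in the rank-two extension; the rest is formal manipulation of determinants of cohomology.
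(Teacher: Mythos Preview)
Your proof is correct and follows essentially the same approach as the paper's own proof: restrict each of the three tensor factors of $\ca{D}_{\delta_c,-}$ to a fiber using the short exact sequence (\ref{eq:-taut2}), then sum the degrees and invoke the wall relation (\ref{eq:paraweight2}). Your treatment is in fact more detailed than the paper's, which simply asserts that $L_{\lambda_p}$ restricts to $\ca{O}(-(l-1)+|a''_p|)$ without the case analysis; your explicit check of the two sub-cases for $l_p=2$ (namely whether $L_p=E_{2,p}$ or not, and the corresponding identification of $\ca{Q}_{1,p}|_F$) is exactly what is needed to justify that assertion.
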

\begin{proof}
By (\ref{eq:-taut2}), the restriction of $(\ca{E}^-_{\delta_c})_{x_0}$ to a fiber of $\bb{P}(\ca{V}_{d',a'}^-)$ is $\ca{O}(-1)$ and the restriction of $\text{det}\,R\pi_*(\ca{E}^-_{\delta_c})$ is $\ca{O}(\chi(M))$, where $\chi(M)=d''+1-g$ is the Euler characteristic of $M$. The restriction of $L_{\lambda_p}=\bigotimes_{i=1}^{l_p}\text{det}\,{\ca{Q}}_{i,p}^{d_{i,p}}$ is $\ca{O}(-(l-1)+|a''_{p}|)$. So $\ca{D}_{d',a'}^-$ restricts
\begin{align*}
&\ca{O}(-e+l\chi(M)-\sum_{p\in I}(l-1-|a''_{p}|))\\
=&\,\ca{O}\bigg(-\frac{dl-2(l-1)k+|a|}{2}+ld''-k(l-1)+|a''|\bigg)\\
=&\,\ca{O}\bigg(\frac{l\delta_c}{2}\bigg).
\end{align*}
Assertion (2) can be proved similarly.
\end{proof} 
\begin{proof}[Proof of Theorem \ref{intromainthm3}]
The proof is similar to the proof of Theorem \ref{mainthm1}. We only sketch it here.

Case 1. We assume that $(d-\delta)/2-k>2g-1$ when $\delta$ is sufficiently close to $\delta_c$. Then $\ca{M}_{\delta_c}^\pm$ are smooth. By using similar arguments as in the proof of \cite[(3.18)]{thaddeus2}, one can show that there exists the following diagram. 
\[
\begin{tikzcd}
&
\widetilde{\ca{M}}_{\delta_c}\arrow[swap]{dl}{p_-}\arrow{dr}{p_+}\\
\ca{M}_{\delta_c}^-&&
\ca{M}_{\delta_c}^+
\end{tikzcd}
\]
Here $p_\pm$ are blow-down maps onto the smooth subvarieties $\ca{W}_{d',a'}^\pm \cong\bb{P}\big(\ca{V}_{d',a'}^\pm\big)$, and the exceptional divisors $A_{d',a'}\subset \widetilde{\ca{M}}_{d',a'}$ are isomorphic to the fiber product $A_{d',a'}\cong\bb{P}\big(\ca{V}_{d',a'}^-\big)\times_{Z_{d'}}\times\bb{P}\big(\ca{V}_{d',a'}^+\big)$.

Since $p_\pm$ are blow-ups in smooth centers, we have $
(p_\pm)_*([\ca{O}_{\widetilde{\ca{M}}_{\delta_c}}])=[\ca{O}_{\ca{M}_{\delta_c}^\pm}].$ By the projection formula, we have 
\begin{equation}\label{eq:lift2}
\chi(\ca{M}_{\delta_c}^\pm,\ca{D}_{\delta_c,\pm})=\chi(\widetilde{\ca{M}}_{\delta_c}, p_\pm^*(\ca{D}_{\delta_c,\pm})).
\end{equation}
We only need to compare $ p_\pm^*(\ca{D}_{\delta_c,\pm})$ over $\widetilde{\ca{M}}_{\delta_c}$. Note that the restriction of $\ca{O}_{A_{d',a'}}(A_{d',a'})$ to $A_{d',a'}$ is $\ca{O}_{\bb{P}(\ca{V}_{d',a'}^+)}(-1)\otimes\ca{O}_{\bb{P}(\ca{V}_{d',a'}^-)}(-1)$. Therefore by Lemma \ref{restriction2}, we have \[
p_-^*(\ca{D}_{\delta_c,-})=p_+^*(\ca{D}_{\delta_c,+})\bigg(-\frac{l\delta_c}{2}A_{\delta_c}\bigg),\]
where $A_{\delta_c}=\sum_{(d',a')}A_{d',a'}$.
For $1\leq j\leq l\delta_c/2$, we have the  short exact sequence
\begin{align}\label{eq:exceptional2}
0\rightarrow p_+^*(\ca{D}_{\delta_c,+})(-jA_{d',a'})&\rightarrow p_+^*(\ca{D}_{\delta_c,+})(-(j-1)A_{d',a'})\\
&\rightarrow p_+^*(\ca{D}_{\delta_c,+})\otimes\ca{O}_{A_{d',a'}}(-(j-1)A_{d',a'})\rightarrow 0.\nonumber
\end{align}
Define 
\begin{align*}
&\tilde{\ca{D}}_{d',a'}\\
=&\big(\text{det}\,R\pi_*(\ca{L}_{d',a'})\otimes\text{det}\,R\pi_*(\ca{M}_{d',a'})\big)^{-l}\otimes\bigotimes_{p\in I}\tilde{L}_{\lambda_p}\otimes(\text{det}\,(\ca{L}_{d',a'})_{x_0}\otimes\text{det}\,(\ca{M}_{d',a'})_{x_0})^e.
\end{align*}
Then by Lemma \ref{restriction2}, the restriction of $\ca{D}_{i,+}$ to $A_{d',a'}$ is $\tilde{\ca{D}}_{d',a'}\otimes\ca{O}_{\bb{P}(\ca{V}_{d',a'}^+)}(-l\delta_c/2)$. By taking the Euler characteristic of (\ref{eq:exceptional}), we obtain
\begin{align*}
&\chi(\widetilde{\ca{M}}_{\delta_c},p_+^*(\ca{D}_{\delta_c,+})(-(j-1)A_{d',a'}))-\chi(\widetilde{\ca{M}}_{\delta_c},p_+^*(\ca{D}_{\delta_c,+})(-jA_{d',a'}))\\
=&\chi\bigg(A_{d',a'},\tilde{\ca{D}}_{d',a'}\otimes\ca{O}_{\bb{P}(\ca{V}_{d',a'}^+)}\bigg(-\frac{l\delta_c}{2}+j-1\bigg)\otimes\ca{O}_{\bb{P}(\ca{V}_{d',a'}^-)}(j-1)\bigg)\,\quad\text{for}\,1\leq j\leq \frac{l\delta_c}{2}.
\end{align*}
Let $n_{d',a'}^+$ be the rank of $\ca{V}_{d',a'}^+$. By using the Riemann-Roch formula and (\ref{eq:extrastalk}), one can easily show that 
\[n_{d',a'}^+=N(d''+1-g)-(d''-d'+1-g)+m_{a',a''},
\]
where $m_{a',a''}$ is the number of marked points $p$ such that $a'_p>a''_p$. A simple calculations shows that $n_+>l\delta_c/2$ when $l\leq N-2$. Hence every term in the Leray spectral sequence of the fibration $\bb{P}^{n^+_{d',a'}-1}\rightarrow A_{d',a'}\rightarrow \bb{P}(\ca{V}_{d',a'}^-)$ vanishes. It implies that $\chi(\ca{M}_{\delta_c}^\pm, \ca{D}_{\delta_c,\pm})=\chi(\widetilde{\ca{M}}_{\delta_c}, p_\pm^*(\ca{D}_{\delta_c,\pm}))$ when $(d-\delta)/2-k>2g-1$.

Case 2.
When $(d-\delta_c)/2-k\leq 2g-1$, the moduli spaces $\ca{M}_{\delta_c}^\pm$ are singular. As before, we choose a sufficiently large integer $t$ such that $(d-\delta)/2-k+t>2g-1$ when $\delta$ is sufficiently close to $\delta_c$. Let $D=x_1+\dots+x_t$ be a divisor, where $x_i$ are distinct points on $C$ away from $I\, \cup\,\{x_0\}$. We denote the moduli spaces $\overline{\ca{M}}^{\text{par},\delta_c\pm\nu}_{C}(\text{Gr}(n,N), d,\underline{a})$ and $\overline{\ca{M}}^{\text{par},\delta_c\pm\nu}_{C}(\text{Gr}(n,N), d+2t,\underline{a})$ by $\ca{M}_{\delta_c,d}^\pm$ and $\ca{M}_{\delta_c,d+2t}^\pm$, respectively. Let $\ca{E}_\pm$ and $\ca{E}'_\pm$ be the universal vector bundles on $\ca{M}_{\delta_c,d}^\pm\times C$ and $\ca{M}_{\delta_c,d+2t}^\pm\times C$, respectively. By Lemma \ref{embed}, there are embeddings $\iota_D:\ca{M}_{\delta_c,d}^\pm\hookrightarrow\ca{M}_{\delta_c,d+2t}^\pm$ such that $\iota_*(\ca{O}^{\text{vir}}_{\ca{M}_{\delta_c,d}^\pm})=\lambda_{-1}((\ca{E}_\pm'^\vee)_D)\otimes\ca{O}_{\ca{M}_{\delta_c,d+2t}^\pm}$. By Corollary \ref{invariantemb}, it suffices to show that 
\[
\chi\big(\ca{M}_{\delta_c,d+2k}^-,\ca{D}_{\delta_c,-}'\otimes\lambda_{-1}(((\ca{E}_-'^\vee)_D)^N)\big)=\chi\big(\ca{M}_{\delta_c,d+2k}^+,\ca{D}_{\delta_c,+}'\otimes \lambda_{-1}(((\ca{E}_+'^\vee)_D)^N)\big).
\]
The above equality can be proved using the same argument as in the proof of Case 2 in Theorem \ref{mainthm1}. We omit the details.
\end{proof}

\section{From $\delta=\infty$ to $\epsilon=0+$}\label{section6}
As we showed in Example \ref{largeepsilon}, in the chamber $\delta=\infty$, we allow arbitrary flags of the fibers at the parabolic marked points. By Lemma \ref{infeququot}, the moduli space $\overline{\ca{M}}^{\text{par},\delta=\infty}_{C}(\text{Gr}(n,N), d,\underline{a})$ is a fiber product (\ref{eq:fiberprod}) of flag bundles over the Grothendieck's Quot scheme $\overline{\ca{M}}_Q(d,n,N)$. Recall that the Quot scheme parametrizes quotients $\ca{O}^N_C\rightarrow Q\rightarrow 0$, where $Q$ is a coherent sheaf on $C$ of rank $n$ and degree $d$. It determines a morphism $u_Q$ from $C$ to the stack quotient $[M_{n\times N}/\text{GL}_n(\bb{C})]$, where $M_{n\times N}$ denotes the affine space of $n$ by $N$ complex matrices. In quasimap theory and the theory of GLSM, the marked points $p_j$ are considered as ``light points", i.e., the coherent quotient sheaf $Q$ might not be locally free at $p_j$ and hence $p_j$ can be mapped to the unstable locus of the stack quotient via the morphism $u_Q$. However, in Gromov-Witten theory, the marked points are ``heavy points". In particular, they are required to be mapped to the GIT quotient $\text{Gr}(n,N)$. 

To relate the $\delta$-stable parabolic GLSM invariants to quantum $K$-invariants of the Grassmannian, we study the wall-crossing from $\delta=\infty$ to $\epsilon=0^+$. This wall-crossing converts all light markings to heavy markings. To prove the wall-crossing result (see Theorem \ref{epdel}), we use Yang Zhou's master space technique developed in \cite{zhou1}.

\subsection{$(\epsilon=0+)$-stable quasimap/GLSM invariants}
Fix non-negative integers $k,g$ and $d$. We recall the definition of \emph{$(\epsilon=0+)$-stable} quasimaps (with or without a parametrized component) to the Grassmannian $\text{Gr}(n,N)$. We refer the reader to \cite{kim4} for the general theory of $\epsilon$-stable quasimaps to GIT quotients.
\begin{definition}\label{defquasimap}
A $k$-pointed, genus $g$ quasimap of degree $d$ to $\text{Gr}(n,N)$ is a tuple
$$\big((C',p'_1,\dots,p'_k), E, s\big)$$
where 
\begin{enumerate}
\item $(C',p'_1,\dots,p'_k)$ is a connected, at most nodal, $k$-pointed projective curve of genus $g$,
\item $E$ is a rank $n$ vector bundle on $C'$,
\item $s\in H^0(E\otimes\ca{O}^N_{C'})$ represents $N$ sections which generically generate $E$.
\end{enumerate}
\end{definition}

A point $p\in C'$ is called a base point if the $N$ sections given by $s$ do not span the fiber of $E$ at $p$. 
\begin{definition}
A quasimap $\big((C',p'_1,\dots,p'_k), E, s\big)$ is called $(\epsilon=0+)$-stable if the following two conditions hold:
\begin{enumerate}
\item The base points of $s$ are away from the nodes and the marked points.
\item The line bundle $\omega_{C'}(\sum_{i=1}^kp'_i)\otimes\det(E)^\epsilon$ is ample for every sufficiently small rational numbers $0<\epsilon<<1$.
\end{enumerate}
\end{definition}

In this paper, we mainly consider quasimaps whose domain curves contain a component \emph{parametrized} by $C$.
\begin{definition}\label{mydef}
A quasimap with one parametrized component consists of the data
$$\big((C',p'_1,\dots,p'_k), E, s,\varphi\big)$$
where $\big((C',p'_1,\dots,p'_k), E, s)$ satisfies conditions (1)-(3) in Definition \ref{defquasimap}  and 
$\varphi:C'\rightarrow C$ is a morphism of degree one such that
\begin{equation}\label{eq:extracon}
\varphi(p'_i)=p_i,\ 1\leq i\leq k.
\end{equation}
An isomorphism between two quasimaps $((C',p'_1,\dots,p'_k), E', s',\varphi')$ and $((C'',p''_1,\dots,p''_k), E'', s'',\varphi'')$ consists of an isomorphism $f:C'\xrightarrow{\sim} C''$ of the underlying curves, along with an isomorphism $\sigma:E'\rightarrow f^*E''$, such that
\[
f(p'_i)=p''_i,\ \varphi'=\varphi''\circ f,\,\text{and}\ \sigma(s')=f^*(s'').
\]
\end{definition}
By definition, $C'$ has an irreducible component $C_0$ such that $\varphi|_{C_0}:C_0\xrightarrow{\sim}C$ is an isomorphism, and the rest of $C'$ is contracted by $\varphi$. The data $\big((C',p'_1,\dots,p'_k), E, s,\varphi\big)$ is called $(\epsilon=0+)$-stable if the base points of $s$ are away from the nodes and the marked points, and the following modified ampleness condition hold:
\[
\omega_{C'}\big(\sum_{i=1}^{k} p_i'\big)\otimes\det(E)^\epsilon\otimes \varphi^*(\omega_C^{-1}\otimes M),
\]
is ample for every sufficiently small rational numbers $0<\epsilon<<1$, where $M$ is any ample line bundle on $C$. 

Let $Q^{\epsilon=0+}_{g,k}(\text{Gr}(n,N),d)$ denote the moduli stack of $(\epsilon=0+)$-stable quasimaps to $\text{Gr}(n,N)$. This moduli stack was first introduced in \cite{marian4}, and the definition and construction were generalized to arbitrary values of the stability parameter $\epsilon$ in \cite{toda} and to more general GIT targets in \cite{kim4}. The moduli stack $Q^{\epsilon=0+}_{g,k}(\text{Gr}(n,N),d)$ is a proper Deligne-Mumford stack of finite type, with a canonical perfect obstruction theory (see \cite[\textsection{6}]{marian4} and \cite[Theorem 4.1.2]{kim4}). We denote by $\overline{\ca{M}}^{\epsilon=0+}_{C,k}(\text{Gr}(n,N),d)$ the moduli stack of quasimaps with one parametrized component in Definition \ref{mydef}. It follows from the condition (\ref{eq:extracon}) that $\overline{\ca{M}}^{\epsilon=0+}_{C,k}(\text{Gr}(n,N),d)$ is a closed substack of the \emph{quasimap graph space} $QG^{\epsilon=0+}_{g,k}(\text{Gr}(n,N),d;C)$ introduced in \cite[\textsection{7.2}]{kim4}. According to \cite[Theorem 7.2.2]{kim4}, the moduli stack $QG^{\epsilon=0+}_{g,k}(\text{Gr}(n,N),d;C)$ is a proper Deligne-Mumford stack of finite type. Hence the same properties hold for the substack $\overline{\ca{M}}^{\epsilon=0+}_{C,k}(\text{Gr}(n,N),d)$.

By the standard result (see Section \ref{final}), the moduli stack $\overline{\ca{M}}^{\epsilon=0+}_{C,k}(\text{Gr}(n,N),d)$ is equipped with a canonical perfect obstruction theory. Hence by the construction in \cite[\textsection{2.3}]{Lee}, we obtain a virtual structure sheaf
\[
\O^{\vir}_{\overline{\ca{M}}^{\epsilon=0+}_{C,k}(\text{Gr}(n,N),d)}\in K_0(\overline{\ca{M}}^{\epsilon=0+}_{C,k}(\text{Gr}(n,N),d)).
\]
Let $\pi:\ca{C}\rightarrow\overline{\ca{M}}^{\epsilon=0+}_{C,k}(\text{Gr}(n,N),d)$ be the universal curve and let $\ca{E}\rightarrow\ca{C}$ be the universal vector bundle bundle. As in \cite{zhang}, we define the level-$l$ determinant line bundle by
\begin{equation}\label{eq:levelstr}
\ca{D}^l:=\big(\text{det}\,R\pi_*(\ca{E})\big)^{-l}.
\end{equation}
There are evaluation morphisms
\[
\text{ev}_i:\overline{\ca{M}}^{\epsilon=0+}_{C,k}(\text{Gr}(n,N),d)\rightarrow\text{Gr}(n,N),\quad i=1,\dots,k.\]
Choose $k$ partitions $\lambda_1,\dots,\lambda_k$ in the set $\text{P}_l$. Let $S$ be the tautological vector bundle over $\text{Gr}(n,N)$ and let $E:=S^\vee$. We define $K$-theory classes 
\[
V_{\lambda_i}:=\mathbb{S}_{\lambda_i}(S)\in K^0(\text{Gr}(n,N))_\bb{Q},\quad i=1,\dots,k.
\]
Here $\bb{S}_\lambda$ denotes the Schur functor associated to a partition $\lambda$ (see \cite[\textsection{6}]{fulton2}).\footnote{There are different conventions in the definitions of the Schur functor. We use the one introduced in \cite[\textsection{6}]{fulton2}. For example, if $\lambda=(a)$, we have $\bb{S}_{\lambda}(V)=\text{Sym}^aV$. If $\lambda=(1,\dots,1)$ with 1 repeated $b$ times, then $\bb{S}_{\lambda}(V)=\wedge^bV$.}
\begin{definition}\label{finalinvariant}
 For any $e\in\bb{Z}$, we define the level-$l$ $(\epsilon=0+)$-stable GLSM invariant with insertions $V_{\lambda_1},\dots, V_{\lambda_k}$ by
\begin{align*}
&\langle V_{\lambda_1}, \cdots V_{\lambda_k}|\text{det}(E)^{e}\rangle^{l,\epsilon=0+}_{C, d}\\
:=\chi\bigg(\overline{\ca{M}}^{\epsilon=0+}_{C,k}(\text{Gr}(n,N),&d), \ca{D}^l\otimes\O^{\vir}_{\overline{\ca{M}}^{\epsilon=0+}_{C,k}(\text{Gr}(n,N),d)}\otimes\bigotimes_{i=1}^k \text{ev}_i^*V_{\lambda_i}\otimes (\text{det}\,\ca{E}_{x_0})^e\bigg).
\end{align*}
\end{definition}
\begin{remark}
The above definition is motivated by that of $(\delta=\infty)$-stable GLSM invariants. As discussed in Example \ref{largeepsilon}, we have the following isomorphism:
\[
\overline{\ca{M}}^{\text{par},\delta=\infty}_{C}(\text{Gr}(n,N), d,\underline{\lambda})\cong\text{Fl}_{m_1}(\ca{E}_{p_1})\times_{\overline{\ca{M}}_Q(d,n,k)}\dots\times_{\overline{\ca{M}}_Q(d,n,k)}\times\text{Fl}_{m_k}(\ca{E}_{p_k}),\]
where $\ca{E}$ is the universal vector bundle on $\overline{\ca{M}}_Q(d,n,k)\times C$.
Let $\pi:\overline{\ca{M}}^{\text{par},\delta=\infty}_{C}(\text{Gr}(n,N), d,\underline{\lambda})\rightarrow\overline{\ca{M}}_Q(d,n,k)$ be the flag bundle map. By the projection formula, we have
\begin{align*}
&\langle V_{\lambda_{p_1}},\dots, V_{\lambda_{p_k}}\rangle^{ l, \delta=\infty,\text{Gr}(n, N)}_{C,d}\\=\chi\bigg(\overline{\ca{M}}_Q(d,n,k),\big(\text{det}\,R\pi_*&(\ca{E})\big)^{-l}\otimes\ca{O}^\vir_{\overline{\ca{M}}_Q(d,n,k)}\otimes R\pi_*\bigg(\bigotimes_{p\in I}L_{\lambda_p}\bigg)\otimes\big(\text{det}\,\ca{E}_{x_0}\big)^e\bigg),
\end{align*}
where $e$ is defined by (\ref{eq:defofe}).
It follows from Bott's theorem for partial flag bundles (see \cite[Theorem 4.1.8]{weyman}) that 
\[
R\pi_*\bigg(\bigotimes_{p\in I}L_{\lambda_p}\bigg)=\bigotimes_{p\in I} \bb{S}_{\lambda_p}(\ca{E}^\vee_{p}).
\]
When a marked point $p$ is not a base point, the class $\bb{S}_{\lambda_p}(\ca{E}^\vee_p)$ coincides with $\text{ev}_p^*V_{\lambda_p}$.
\end{remark}
\begin{remark}
For $(\epsilon=0+)$-stable quasimaps with one parametrized component, the distinguished point $x_0$ is viewed as a light point, while the marked points $p_1',\dots,p'_k$ are considered as heavy points (see the discussion immediately following Definition \ref{def6.7}). Note that the factor $\big(\text{det}\,\ca{E}_{x_0}\big)^e$ in the definition of the $(\epsilon=0+)$-stable GLSM invariant is the pullback of the $K$-theory class $\text{det}(E)^{e}$ via the stacky evaluation map $\widetilde{\text{ev}}_0:\overline{\ca{M}}^{\epsilon=0+}_{C,k}(\text{Gr}(n,N),d)\rightarrow [M_{n\times N}/\text{GL}_n(\bb{C})]$ at $x_0$. Therefore, we can view this factor as an insertion at the light point $x_0$.
\end{remark}

\subsection{Master space and wall-crossing}\label{final}
To relate the $(\epsilon=0+)$-stable GLSM invariants and $(\delta=\infty)$-stable GLSM invariants, we introduce quasimaps with mixed weight markings, following \cite{zhou1}.

Let $m$ be an integer such that $1\leq m\leq k$. We relabel the first $m$ markings in $I$ as $x_1,\dots, x_m$ and the last $k-m$ markings as $y_1,\dots,y_{k-m}$. Let $\vec{x}=(x_1,\dots,x_m)$ and $\vec{y}=(y_1,\dots,y_{k-m})$.
\begin{definition}\label{def6.7}
A quasimap $\big((C',\vec{x};\vec{y}), E, s,\varphi\big)$ with one parametrized component to $\text{Gr}(n,N)$ is called $(0+,0+)$-stable with mixed $(m,k-m)$-weighted markings and degree $d$ if :
\begin{enumerate}
\item The quasimap only has finitely many base points which are away from the nodes.
\item $x_1,\dots,x_m$ are not base points of $s$.
\item The $\bb{Q}$-line bundle 
\begin{equation}\label{eq:polarization}
\det(E)^\epsilon\otimes \omega_{C'}\bigg(\sum_{i=1}^{m} x_i+\epsilon'\sum_{j=1}^{k-m} y_j\bigg)\otimes \varphi^*(\omega_C^{-1}\otimes M)
\end{equation}
is relatively ample for every sufficiently small rational numbers $0<\epsilon,\epsilon'<<1$. Here $\omega_C$ denotes the dualizing sheaf of $C$ and $M$ is any ample line bundle on $C$.
\item $\text{deg}(E)=d$.
\end{enumerate}
\end{definition}
Since $x_1,\dots,x_m$ have weights 1, they are called heavy markings. The markings $y_1,\dots,y_{k-m}$ are called light markings because their weights are arbitrarily small.

Let $\overline{\ca{M}}^{0+,0+}_{C,m|k-m}(\text{Gr}(n,N),d)$ denote the moduli space of $(0+,0+)$-stable quasimaps to $\text{Gr}(n,N)$ with mixed $(m,k-m)$-weighted markings and degree $d$. Let $\theta_0$ denote the determinant character of $\text{GL}(n,\bb{C})$. If we drop the condition (\ref{eq:extracon}) in Definition \ref{mydef}, the moduli space $QG^{(0+)\cdot\theta_0,0+}_{g,m|k-m,d}(\text{Gr}(n,N))$ of such $((0+)\cdot\theta_0,0+)$-stable quasimaps was constructed in \cite{kim3}. According to \cite[\textsection{2.2}]{kim3}, it is a proper Deligne-Mumford stack. Note that $\overline{\ca{M}}^{0+,0+}_{C,m|k-m}(\text{Gr}(n,N),d)$ is a closed substack of $QG^{(0+)\cdot\theta_0,0+}_{g,m|k-m,d}(\text{Gr}(n,N))$. Hence $\overline{\ca{M}}^{0+,0+}_{C,m|k-m}(\text{Gr}(n,N),d)$ is also a proper Deligne-Mumford stack. In fact, we can say more about the properties of this moduli stack.
\begin{lemma}\label{projcoarse}
The moduli stack $\overline{\ca{M}}^{0+,0+}_{C,m|k-m}(\emph{Gr}(n,N),d)$ is a global quotient stack with projective coarse moduli space.
\end{lemma}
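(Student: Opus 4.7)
The plan is to exhibit $\overline{\ca{M}}^{0+,0+}_{C,m|k-m}(\text{Gr}(n,N),d)$ as a GIT quotient of a quasi-projective scheme by a reductive group, which simultaneously yields the global quotient structure and the projectivity of the coarse moduli. The strategy closely follows the GIT construction of quasimap moduli spaces developed by Ciocan-Fontanine--Kim--Maulik \cite{kim4} and its mixed-weight variant due to Kim--Oh \cite{kim3}, with modifications to incorporate the parametrization $\varphi:C'\rightarrow C$ required by condition \eqref{eq:extracon}.

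First, I would establish uniform boundedness of the family of $(0+,0+)$-stable quasimaps of fixed degree $d$. Since the $\bb{Q}$-line bundle in \eqref{eq:polarization} is relatively ample, the arithmetic genus of $C'$ and the number of rational tails attached to the parametrized component $C_0\cong C$ are uniformly bounded. Raising \eqref{eq:polarization} to a sufficiently divisible power makes it very ample on every admissible $C'$ with a fixed Hilbert polynomial, and, combined with a uniform Quot-scheme presentation $V\otimes\O_{C'}\twoheadrightarrow E(m)$ for $m\gg 0$ (using that the underlying sheaves $E$ form a bounded family by condition (1) of Definition \ref{def6.7}), this produces a locally closed embedding of the moduli functor into a product of a Hilbert scheme and a relative Quot scheme. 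Cutting out the parametrization $\varphi:C'\rightarrow C$ of degree one satisfying \eqref{eq:extracon} is a closed condition, so one obtains a locally closed $G$-invariant subscheme $Z$ of this ambient space, where $G=GL(V)$ acts naturally.

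Second, I would check that the moduli stack is canonically isomorphic to the quotient stack $[Z/G]$, which gives the global quotient presentation. For the projectivity of the coarse moduli space, I would construct an explicit $G$-linearized line bundle on (the closure of) $Z$, built from the polarization in \eqref{eq:polarization} together with an appropriate twist involving $\det R\pi_*(\ca{E})$ and $\det(\ca{E}_{x_0})$, and then verify that GIT (semi)stability with respect to this linearization agrees with $(0+,0+)$-quasimap stability. The GIT quotient $Z\GIT G$ is then projective and, by the matching of stability conditions, coincides with the coarse moduli space of our stack.

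The main obstacle will be the Hilbert--Mumford weight computation that matches GIT-stability with $(0+,0+)$-stability in the presence of the parametrization $\varphi$ and the asymmetric roles of heavy markings $\vec{x}$ (which must avoid base points and the unstable locus) versus light markings $\vec{y}$ (which may be mapped to base points). This is more intricate than the calculation in Lemma \ref{weight} because one must simultaneously track weights coming from the domain curve $C'$ (via the very ample line bundle), from the $N$ sections $s$, and from the map $\varphi$; however, the machinery of \cite{kim4,kim3} provides a template, and the parametrization only imposes additional closed conditions rather than disrupting the GIT setup.
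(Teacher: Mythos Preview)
Your global-quotient argument is essentially the same as the paper's: both present the stack as $[U/G]$ for a locally closed subscheme $U$ of a Hilbert-scheme $\times$ relative-Quot-scheme parameter space (the paper uses $G=\mathrm{PGL}(V)$, embedding $C'\hookrightarrow\bb{P}(V)\times C$ via the polarization \eqref{eq:polarization}, then passing to the relative Quot scheme and cutting down by the open conditions).

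For projectivity, however, the paper takes a genuinely different route. Rather than constructing a $G$-linearized ample line bundle and matching GIT-stability with $(0+,0+)$-stability via a Hilbert--Mumford computation, the paper adapts the argument of Fulton--Pandharipande \cite[\S4.3]{fp} based on Koll\'ar's semipositivity method \cite{kollar}. Concretely: for any flat family over a base $T$, the pushforward $\ca{F}_t=\pi_*(\ca{L}_{1/(d+1)}^t)$ of a suitable power of the polarization is shown to be semipositive, and one builds a quotient $W\twoheadrightarrow Q$ with $W$ semipositive (combining the curve embedding and the Quot data). This yields a set-theoretic classifying map $T\to\mathrm{Gr}(w,r')/\mathrm{GL}$, and projectivity then follows from Koll\'ar's Ampleness Lemma together with the finite-stabilizer property. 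What this buys is that one entirely avoids the delicate Hilbert--Mumford weight analysis you flag as the main obstacle; the price is that the argument is less explicit about the polarization on the coarse moduli. Your GIT approach is viable in principle and is indeed the template in \cite{kim4,kim3}, but the paper chose the semipositivity shortcut precisely to sidestep the weight computation.
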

\begin{proof}
The argument to show the moduli stack is a global quotient stack is similar to the one given in \cite[\textsection{6.1}]{marian4}. Let $\big((C',\vec{x};\vec{y}), E, s,\varphi\big)$ be a quasimap with one parametrized componen. In Definition \ref{def6.7}, we fix an ample line bundle $M$ on $C$ and denote the $\Q$-line bundle (\ref{eq:polarization}) by $L_{\epsilon,\epsilon'}$. Note that the ampleness of $L_{\epsilon,\epsilon'}$ for $\epsilon=\epsilon'=1/(d+1)$ is enough to ensure the stability of a degree $d$ quasimap. We will fix $\epsilon=\epsilon'=1/(d+1)$ throughout the discussion. 

By boundedness, we may choose a sufficiently large integer $t$ uniformly such that it is divisible by $d+1$ and the line bundle $L_{\epsilon,\epsilon'}^t$ is very ample with vanishing higher cohomology. We denote by $e$ the dimension of $H^0(C',L_{\epsilon,\epsilon'}^t)$, which is independent of the choice of stable quasimaps. Let $V$ be an $e$-dimensional complex vector space. An isomorphism
\[
H^0(C',L_{\epsilon,\epsilon'}^t)\cong V^\vee
\]
induces an embedding $\iota:C'\hookrightarrow \bb{P}(V)\times C$ of multidegree $(f,1)$, where $f=\text{deg}\, L_{\epsilon,\epsilon'}^t$. Let $\text{Hilb}$ denote the Hilbert scheme of genus $g$ curves in $\bb{P}(V)\times C$ of multidegree $(f,1)$. A stable quasimap $\big((C',\vec{x};\vec{y}), E, s,\varphi\big)$ gives rise to a point in
\[
\ca{H}=\text{Hilb}\times\bb{P}(V)^k,
\]
where the last $k$ factors record the locations of the markings. Let $\ca{H}'\subset\ca{H}$ be the quasi-projective subscheme parametrizing tuples $\big((C',\vec{x};\vec{y}),\varphi\big)$ satisfying
\begin{itemize}
\item the points $\vec{x},\vec{y}$ are contained in $C'$ whose projections to $C$ coincide with the corresponding markings $p_i$ on $C$,
\item the marked curve $(C',\vec{x};\vec{y})$ is a connected, at most nodal, projective curve of genus $g$, whose markings $x_1,\dots,x_m,y_1,\dots,y_{k-m}$ are distinct and away from the nodes.
\end{itemize}

Let $\pi:\ca{C}'\rightarrow\ca{H}'$ be the universal curve and let $\text{Quot}\rightarrow\ca{H}'$ be the relative Quot scheme parametrizing rank $n-r$ degree $d$ quotients $\ca{O}^{\oplus N}_{C'}\rightarrow Q\rightarrow0$ on the fibers of $\pi$. Define a locally closed subscheme 
$U\subset\text{Quot}$ satisfying the following:
\begin{itemize}
\item The quotient $Q$ is locally free at the nodes and $x_1,\dots,x_m$,
\item the restriction of $\ca{O}_{\bb{P}(V)}(1)$ coincides with the line bundle $L_{\epsilon,\epsilon'}^t$.
\end{itemize}
The natural $\text{PGL}(V)$-action on $ \bb{P}(V)\times C$ induces $\text{PGL}(V)$-actions on $\ca{H}'$ and $U$. A $\text{PGL}(V)$-orbit in $U$ corresponds to a stable quasimap with one parametrized component and mixed $(m,k-m)$-weighted markings up to isomorphism. By stability, the $\text{PGL}(V)$-action on $U$ has finite stabilizers. The moduli stack $\overline{\ca{M}}^{0+,0+}_{C,m|k-m}(\text{Gr}(n,N),d)$ is the quotient stack $[U/\text{PGL}(V)]$.

To establish the projectivity of the coarse moduli space, we adapt an argument of \cite[\textsection{4.3}]{fp}, which treats the case of the moduli space of stable maps and uses Koll\'ar's semipositivity method \cite{kollar}. Let $(\pi:\ca{X}\rightarrow T;\vec{x};\vec{y};\ca{E};s,\varphi)$ be a flat family of stable quasimaps. Fix $M=\ca{O}_C(p)$, where $p$ is away from the markings $p_i$ of $C$. Let $\ca{L}_{\epsilon,\epsilon'}$ be the family version of (\ref{eq:polarization}). We define a $\pi$-relatively ample line bundle $\ca{L}_{1/(d+1)}$ to be $\ca{L}_{\epsilon,\epsilon'}\otimes \varphi^*(\omega_C)$ if $g\geq1$ and $\ca{L}_{\epsilon,\epsilon'}$ if $g=0$. Consider \[
\ca{F}_t:=\pi_*\big(\ca{L}_{1/(d+1)}^t\big).\]
By using a similar argument to the one in \cite[Lemma 3]{fp}, one can show that $\ca{F}_t$ is a semipositive vector bundle on $T$ for $t\geq2(d+1)$ and divisible by $d+1$. By boundedness, we may find a sufficiently large $t$ such that there exists a $T$-embedding
\[
\iota:\ca{X}\hookrightarrow\bb{P}(\ca{F}_t^\vee)\times_\bb{C}C.
\]
Here the morphism to the second factor is induced by $\varphi$. Let $T_i$ be the subscheme of $\bb{P}(\ca{F}_t^\vee)$ defined by the $i$-th section. Set $\ca{M}=\ca{O}_{\bb{P}(\ca{F}_t^\vee)}(1)\otimes M$. This line bundle is relatively ample with respect to the projection $q:\bb{P}(\ca{F}_t^\vee)\times C\rightarrow T$. By boundedness, we can choose a sufficiently large integer $r$ such that 
\begin{equation}\label{eq:quot1}
q_*(\ca{M}^r)\oplus\bigoplus_{i=1}^kq_*(\ca{O}_{\bb{P}(\ca{F}_t^\vee)}(r))\rightarrow q_*(\ca{M}^r\otimes\ca{O}_{\ca{X}})\oplus\bigoplus_{i=1}^kq_*\big(\ca{O}_{\bb{P}(\ca{F}_t^\vee)}(r)\otimes\ca{O}_{T_i}\big)\rightarrow0.
\end{equation}

Let $\ca{S}=\ca{E}^\vee$ and let $0\rightarrow\ca{S}\rightarrow\ca{O}_{\ca{X}}^{\oplus N}\rightarrow\ca{Q}\rightarrow 0$ be the short exact sequence induced by $s$ over $\ca{X}$. Again by boundedness, there exists an integer $t$ uniformly large, so that
\begin{equation}\label{eq:quot2}
\pi_*(\ca{O}_{\ca{X}}^{\oplus N}\otimes\ca{L}_{1/(d+1)}^t)\rightarrow \pi_*(\ca{Q}\otimes\ca{L}_{1/(d+1)}^t)\rightarrow 0.
\end{equation}
Combining (\ref{eq:quot1}) and (\ref{eq:quot2}), we obtain a quotient 
\begin{equation}\label{eq:quot3}
W\rightarrow Q\rightarrow 0,
\end{equation}
where 
\begin{align*}
W&=\big(q_*(\ca{M}^r)\oplus\bigoplus_{i=1}^kq_*(\ca{O}_{\bb{P}(\ca{F}_t^\vee)}(r))\big)\oplus \ca{F}_t^{\oplus N},\\
 Q&=\big(q_*(\ca{M}^r\otimes\ca{O}_{\ca{X}})\oplus\bigoplus_{i=1}^kq_*\big(\ca{O}_{\bb{P}(\ca{F}_t^\vee)}(r)\otimes\ca{O}_{T_i}\big)\big)\oplus\pi_*(\ca{Q}\otimes\ca{L}_{1/(d+1)}^t).
\end{align*}
Note that $q_*(\ca{M}^r)\cong \text{Sym}^r(\ca{F}_t)\otimes H^0(C,M)$ and $q_*(\ca{O}_{\bb{P}(F_t^\vee)}(r))\big)=\text{Sym}^r(\ca{F}_t)$. By the stability of semipositivity under direct sums, tensor products, and symmetric powers (cf. \cite[Proposition 3.5]{kollar}), $W$ is semipositive. Let $\text{GL}$ be the structure group of $\ca{F}_t$. Let $w$ and $r'$ be the ranks of $W$ and $Q$, respectively. The quotient (\ref{eq:quot3}) induces a set theoretic \emph{classifying map}
\[
u_T:T\rightarrow\text{Gr}(w,r')/\text{GL},
\]
where the Grassmannian denotes the $r'$-dimensional quotients of fixed $w$-dimensional space. We denote by $\overline{M}$ the coarse moduli space of $\overline{\ca{M}}^{0+,0+}_{C,m|k-m}(\text{Gr}(n,N),d)$. By identical arguments to the ones given in the proof of Lemma 5 and Lemma 6 of \cite{fp}, it follows that each point of the image of $u_T$ has finite stabilizer and there exists a set theoretic injection 
\[
\delta:\overline{M}\rightarrow\text{Gr}(r',w)/\text{GL}.
\] 
The projectivity of $\overline{M}$ follows from the Ampleness Lemma \cite[Lemma 3.9]{kollar} and the same argument given on page 69 of \cite{fp}. We omit the details.
\end{proof}

It follows from the definition that when $m=0$, we recover the $(\delta=\infty)$-stable GLSM data and when $m=k$, we recover the $(\epsilon=0+)$-stable quasimaps. Hence, we have
\begin{align*}
\overline{\ca{M}}^{0+,0+}_{C,0|k}(\text{Gr}(n,N),d)&\cong\overline{\ca{M}}_Q(d,n,k),\\
\overline{\ca{M}}^{0+,0+}_{C,k|0}(\text{Gr}(n,N),d)=&\overline{\ca{M}}^{\epsilon=0+}_{C,k}(\text{Gr}(n,N),d).
\end{align*}

To study the wall-crossing from $\delta=\infty$ to $\epsilon=0^+$, we follow \cite{zhou1} and construct a master space. Let $T$ be a scheme.

\begin{definition}\label{masterfamily}
A $T$-family of $(0+,0+)$-stable quasimap with one parametrized component to $\text{Gr}(n,N)$ and \emph{mixed} $(m,k-m)$-weighted markings consists of 
\[
(\pi:\ca{X}\rightarrow T;\vec{x};\vec{y};\ca{E},\ca{N};s,\varphi,v_1,v_2),
\]
where 
\begin{enumerate}
\item $(\pi:\ca{X}\rightarrow T;\vec{x};\vec{y};\ca{E};s,\varphi)$ is a $T$-family of quasimaps of genus $g$ with one parametrized component to $\text{Gr}(n,N)$ such that the base points are away from the heavy markings $x_1,\dots,x_m$, and the nodes of $\ca{X}$,
\item $N$ is a line bundle on $T$,
\item $v_1\in H^0(T,T_{y_1}\otimes N)$ and $v_2\in H^0(T,N)$ are sections without common zeros, where $T_{y_1}=\omega_{\ca{X}/T}^\vee|_{y_1}$.
\end{enumerate}
We require that 
\begin{itemize}

\item \emph{Generic Stability:} The $\bb{Q}$-line bundle
\[
\det(\ca{E})^\epsilon\otimes \omega_{\ca{X}/ T}\bigg(\sum_{i=1}^{m} x_i+y_1+\epsilon'\sum_{j=2}^{k-m} y_j\bigg)\otimes\varphi^*(\omega_C^{-1}\otimes M)
\]
is relatively ample for every sufficiently small rational numbers $0<\epsilon,\epsilon'<<1$. Here $\omega_C$ denotes the dualizing sheaf of the trivial family $T\times C\rightarrow T$ and $M$ is the pullback of any ample line bundle on $C$.

\item When $v_1=0$, $y_1$ is not a base point. 
\item When $v_2=0$, the $\bb{Q}$-line bundle 
\[
\det(\ca{E})^\epsilon\otimes \omega_{\ca{X}/ T}\bigg(\sum_{i=1}^{m} x_i+\epsilon'\sum_{j=1}^{k-m} y_j\bigg)\otimes \varphi^*(\omega_C^{-1}\otimes M)
\]
is relatively ample for every sufficiently small rational numbers $0<\epsilon,\epsilon'<<1$.
\end{itemize}
\end{definition}

 Let $\sigma=(\pi:\ca{X}\rightarrow T;\vec{x};\vec{y};\ca{E},\ca{N};s,\varphi,v_1,v_2)$ and $\sigma'=(\pi':\ca{X}'\rightarrow T';\vec{x'};\vec{y'};\ca{E}',\ca{N}';s',\varphi',v'_1,v'_2)$ be two families of quasimaps with parametrized components and mixed $(m,k-m)$-weighted markings. A morphism $\sigma'\rightarrow\sigma$ consists of a cartesian diagram
\[
\begin{tikzcd}
\ca{X}'\arrow{r}{f}\arrow[swap]{d}{\pi'} &\ca{X}\arrow{d}{\pi}\\
T'\arrow{r}{g} & T,
\end{tikzcd}
\]
an isomorphism $\phi:\ca{N}'\rightarrow g^*\ca{N}$ of line bundles, and an isomorphism $\psi:\ca{E}'\rightarrow f^*\ca{E}$ of vector bundles such that 
\begin{itemize}
\item $f$ preserves the markings, $\varphi'=\varphi\circ f$,
\item $\psi(s')=f^*s$,
\item  $\phi(v_2')=g^*v_2$, and $(df_{y_1'}\otimes\phi)(v_1')=g^*v_1$.
\end{itemize}

For $1\leq m\leq k$, let $\widetilde{\ca{M}}_{C,m|k-m}$ denote the moduli stack parametrizing families as in Definition \ref{masterfamily}. The same argument as in \cite[Theorem 4]{zhou1} shows that $\widetilde{\ca{M}}_{C,m|k-m}$ is a Deligne-Mumford stack of finite type. To apply the master space technique, we need the following proposition.

\begin{proposition}
The moduli stack $\widetilde{\ca{M}}_{C,m|k-m}$ is proper.
\end{proposition}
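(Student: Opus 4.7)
\medskip

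\noindent\textbf{Proof plan.} The plan is to verify the valuative criterion of properness. Since $\widetilde{\ca{M}}_{C,m|k-m}$ is already known to be a Deligne--Mumford stack of finite type, it suffices to show that for every DVR $(R,\mathfrak{m})$ with fraction field $K$ and residue field $\kappa$, any $R$-morphism $\eta_K\colon \text{Spec}\,K\to\widetilde{\ca{M}}_{C,m|k-m}$ extends, uniquely up to unique isomorphism and possibly after a finite base change of $R$, to an $R$-morphism $\text{Spec}\,R\to\widetilde{\ca{M}}_{C,m|k-m}$.

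For existence, first forget the data $(\ca{N},v_1,v_2)$. By the generic stability condition in Definition \ref{masterfamily}, the resulting family over $\text{Spec}\,K$ is a $(0+,0+)$-stable quasimap with one parametrized component in which $y_1$ plays the role of a heavy marking; that is, it defines a $K$-point of $\overline{\ca{M}}^{0+,0+}_{C,m+1|k-m-1}(\text{Gr}(n,N),d)$. Lemma \ref{projcoarse} gives a projective coarse moduli space for this stack, so after a finite base change of $R$ the underlying quasimap extends uniquely to an $R$-family $(\pi\colon\ca{X}\to\text{Spec}\,R;\vec{x};\vec{y};\ca{E};s,\varphi)$. Next, the pair $(v_1,v_2)$ (modulo the isomorphism class of $\ca{N}$) defines a $K$-point of the $\mathbb{P}^1$-bundle $\mathbb{P}(T_{y_1}\oplus \ca{O})$ over the already-extended family, and by properness of this $\mathbb{P}^1$-bundle over $\text{Spec}\,R$ the section extends uniquely; any representative $(\ca{N},v_1,v_2)$ of the extended section pulled back to $\text{Spec}\,R$ completes the data.

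However, the extension produced this way may not lie in $\widetilde{\ca{M}}_{C,m|k-m}$ on the closed fiber, because one of the two fiberwise conditions (``$v_1=0$ implies $y_1$ is not a base point'' and ``$v_2=0$ implies the weaker $(m,k-m)$-weighted stability'') can fail. The heart of the argument, and the main obstacle, is to modify the extended family across the wall so as to restore stability. This is a bubbling/contraction procedure in the style of Yang Zhou \cite{zhou1}: if $v_1$ vanishes on the closed fiber but $y_1$ is still a base point, then one blows up $\ca{X}$ at $y_1$, inserting a rational tail on which the destabilizing weight of the $N$-sections is absorbed; the new tail receives the marking $y_1$ and the new family satisfies the $(m+1,k-m-1)$-heavy stability. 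Conversely, if $v_2$ vanishes on the closed fiber while the heavy stability is violated (only the weaker $(m,k-m)$-stability should hold), one contracts the appropriate unstable tail. Each of these modifications is canonical and can be done simultaneously with the descent of $(\ca{N},v_1,v_2)$ under the birational modification, using the fact that the modification is an isomorphism over the generic fiber.

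For separatedness, suppose two extensions $\sigma^{(1)},\sigma^{(2)}$ over $\text{Spec}\,R$ agree over $\text{Spec}\,K$. The underlying quasimaps agree by the separatedness implicit in Lemma \ref{projcoarse}; the sections $(v_1,v_2)$ agree because the extension of a $K$-valued point of $\mathbb{P}^1$ to $\text{Spec}\,R$ is unique; and the two possible bubbling corrections at the central fiber are likewise uniquely determined by the data. Combining this with the existence argument above verifies the valuative criterion and completes the proof that $\widetilde{\ca{M}}_{C,m|k-m}$ is proper.
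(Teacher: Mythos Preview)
Your overall strategy---verify the valuative criterion by first extending the underlying quasimap in a known proper moduli space, then extending $(v_1,v_2)$ as a point of $\mathbb{P}(T_{y_1}\oplus\mathcal{O})$, and finally repairing the central fiber by a birational modification---matches the paper's. However, there is a genuine gap in your first step.

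You claim that forgetting $(\mathcal{N},v_1,v_2)$ yields a $K$-point of $\overline{\ca{M}}^{0+,0+}_{C,m+1|k-m-1}(\text{Gr}(n,N),d)$, because the generic stability condition treats $y_1$ as heavy. But the generic stability condition only gives the \emph{ampleness} part of the $(m+1,k-m-1)$-weighted definition; it does \emph{not} force $y_1$ to be away from the base locus. In the master space, the requirement ``$y_1$ is not a base point'' is imposed only when $v_1=0$. So in the main case $v_1^\circ\neq 0$, the generic fiber may very well have $y_1$ as a base point, and then the forgetful image does not lie in $\overline{\ca{M}}^{0+,0+}_{C,m+1|k-m-1}$. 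Your extension step therefore does not go through.

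The paper avoids this by first normalizing the generic fiber and reducing to the case where the domain is smooth and irreducible (so no stability-relevant tails exist), and then extending in the \emph{opposite} moduli space $\overline{\ca{M}}^{0+,0+}_{C,m|k-m}$, where $y_1$ is light and base points at $y_1$ are permitted. The correction step then goes in the direction light $\to$ heavy: if $v_1(b)=0$ while $y_1(b)$ is a base point, one performs \emph{repeated} blowups at $y_1(b)$ (each blowup decreases the vanishing order of $v_1$ by exactly one) until either $v_1(b)\neq 0$ or the base locus has been pushed entirely onto the exceptional chain, followed by contraction of those exceptional components on which $\mathcal{E}$ is trivial. Your single ``blow up $\mathcal{X}$ at $y_1$'' is not sufficient; the iteration and the stopping criterion are essential, and your description of the contraction case is too vague to be an argument. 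Finally, for separatedness the paper does not simply cite Lemma~\ref{projcoarse} but runs a semistable-reduction argument to compare two putative limits; this step is also missing from your sketch.
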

\begin{proof}
The argument is a straightforward modification of the one given in the proof of \cite[Theorem 5]{zhou1}. Let $R$ be a DVR over $\bb{C}$ and let $K$ be its field of fractions. Let $B=\text{Spec}\, R$. Let $b\in B$ be the closed point and let $B^\circ=\text{Spec}\,K$ be the generic point. Suppose $\sigma^\circ=(\pi^\circ:\ca{X}^\circ\rightarrow B^\circ;\vec{x^\circ};\vec{y^\circ};E^\circ,N^\circ;s^\circ,\varphi,v_1^\circ,v_2^\circ)$ is a $B^\circ$-family of stable quasimaps. We need to show that, possibly after a finite base change, there is a stable quasimap extension $\sigma$ of $\sigma^\circ$ over $B$, and the extension is unique up to unique isomorphisms. 

By standard reductions, after finite base change, the normalization of $\ca{X}^\circ$ is a disjoint union $\coprod_{i=0}^k\ca{X}_i^\circ$ of smooth and irreducible curves. The preimages of the nodes are viewed as heaving markings. Assume that the preimage of $y_1^\circ$ is in $\ca{X}_0^\circ$. For $i>0$, the restriction of $(E^\circ;s^\circ,\varphi^\circ)$ to $\ca{X}^\circ_i$ defines a family of quasimaps with a fixed component if $\varphi^\circ|_{\ca{X}^\circ_i}$ is nontrivial. Otherwise, it defines a family of quasimaps without a fixed component. In either case, since the moduli stack of $(0+,0+)$-stable quasimaps (with or without fixed components) to $\text{Gr}(n,N)$ with mixed $(m,k-m)$-weighted markings is proper, the restriction of $(E^\circ;s^\circ,\varphi^\circ)$ extends uniquely to a $B$-family $\sigma^\circ_i$ of quasimaps, possibly after finite base change. Therefore, if we show that the pullback of $\sigma^\circ$ to $\ca{X}_0^\circ$ extends uniquely to a $B$-family of stable quasimaps $\sigma^\circ_0$ with $(m,k-m)$-weighted markings, possibly after finite base change, we obtain the unique extension of $\sigma^\circ$ by gluing $\sigma^\circ_i$ along the preimages of the nodes. Hence we only need to consider the following two cases:
\begin{enumerate}
\item $\varphi$ is nontrivial and $\ca{X}^\circ$ is a family of smooth and irreducible curves.
\item $\varphi$ is trivial, $v_1^\circ\neq0$ and $\sigma^\circ=(\pi^\circ:\bb{P}^1\times B^\circ\rightarrow B^\circ;x_1;y_1;E^\circ,N^\circ;s^\circ,v_1^\circ,v_2^\circ)$ where $x_1$ and $y_1$ correspond to $\infty$ and $0$ on $\bb{P}^1$, respectively.
\end{enumerate} 

In case (1), if $v_1^\circ=0$ or $v_2^\circ=0$, $\sigma^\circ$ defines a family of stable quasimaps with $(m+1,k-m-1)$-weighted or $(m,k-m)$-weighted markings. By the properness of $\overline{\ca{M}}^{0+,0+}_{C,m|k-m}(\text{Gr}(n,N),d)$ and $\overline{\ca{M}}^{0+,0+}_{C,m+1|k-m-1}(\text{Gr}(n,N),d)$, possibly after a finite base change, $\sigma$ extends to a flat $B$-family of stable quasimaps. Hence we assume that $v_1^\circ\neq0$ and $v_2^\circ\neq0$ on $B^\circ$. Notice that over the generic fiber, $\sigma^\circ$ is a stable quasimap to $\text{Gr}(n,N)$ when we view $y_1$ as a light point. Possibly after finite base change, we extend $(E^\circ;s^\circ,\varphi)$ to a $(0+,0+)$-stable quasimaps to $\text{Gr}(n,N)$ with $(m,k-m)$-weighted markings. As explained in the proof of \cite[Theorem 5]{zhou1}, $(\ca{N}^\circ;v_1^\circ,v_2^\circ)$ has a unique extension $(\ca{N},v_1,v_2)$ to $B$ such that $(v_1,v_2)$ has no common zeros. Therefore, we obtain a family $\sigma$ over $B$. The extension may fail to be a stable quasimap when 
\begin{equation}\label{eq:unstablecon}
v_1(b)=0\quad\text{and}\quad y_1(b)\ \text{is a base point.}
\end{equation}
This will be corrected by blowups. 

Suppose (\ref{eq:unstablecon}) happens. Then the support $Z$ of the cokernel $\ca{K}$ of $\ca{O}^N_{\ca{X}}\rightarrow\ca{E}$ contains $y_1$ and has dimension at most 1. The initial Fitting ideal $\text{Fit}_0(K)$ of $\ca{K}$ endows $Z$ a natural scheme structure. Let $\nu:\ca{X}'\rightarrow\ca{X}$ be the blowup at $y_1(b)$. As explained in the proof of \cite[Theorem 5]{zhou1}, the unique extension $(\ca{X}';\ca{N}';v_1',v_2')$ of $(\ca{X}^\circ;\ca{N}^\circ;v_1^\circ,v_2^\circ)$ satisfies that $v_2'(b)\neq0$ and the vanishing order of $v_1'(b)$ is exactly one less than that of $v_1(b)$. We replace $(\ca{X};\ca{N};v_1,v_2)$ by $(\ca{X}';\ca{N}';v_1',v_2')$, and repeat this procedure until either of the following situation happens:
\begin{enumerate}[(i)]
\item $v_1(b)\neq 0$;
\item $v_1(b)$ only lies on the exceptional component of the total transform of $Z$.
\end{enumerate} 
In both cases, let \[
\mu:\ca{X}''\rightarrow\ca{X}
\] be the sequence of blowups at $y_1(b)$. We write
\[
Z''=\mu^{-1}(Z)=\sum_im_iE_i+\sum_jn_jD_j,
\]
where the $E_i$ are the exceptional divisors of $\mu$. Let $E:=\sum_im_iE_i$. 
We set 
\[
\ca{K}''=\mu^*\ca{K}\otimes\ca{O}_E
\]
and define the sheaf $\ca{E}''$ as the kernel of the map $\mu^*(\ca{E})\rightarrow \ca{K}''$. Then $\mu^*s:\ca{O}^N_{\ca{X}''}\rightarrow\mu^*\ca{E}$ factors through $s'':\ca{O}^N_{\ca{X}''}\rightarrow\ca{E}''$. If (ii) holds, the divisors $D_j$ intersect the $E_i$ away from $y_1(b)$. Hence $y_1(b)$ is not a base point of $s''$. Finally, we contract the exceptional divisors on which $\ca{E}(b)$ is trivial. More explicitly, set $\ca{L}=\det(\ca{E}'')\otimes \omega_{\ca{X}''/ B}(\sum_{i=1}^{m} x_i+y_1)\otimes\varphi^*(\omega_C^{-1}\otimes M)$. Note $\ca{L}$ is relatively basedpoint free. Hence we have the contraction map
\[
c:\ca{X}''\rightarrow \widetilde{\ca{X}}=\text{Proj}\,\big(\bigoplus_i\ca{L}^i\big).
\]
Note that $\ca{E}$ has a nontrivial degree on the exceptional divisor of the last blowup. This exceptional divisor contains $y_1(b)$ and is not contracted. Therefore, if we define the N sections to be $s':\ca{O}^N_{\widetilde{\ca{X}}}\rightarrow c_*\ca{E}''$, (\ref{eq:unstablecon}) still does not hold, and we obtain a stable family over $B$.

In case (2), we can find a $B^\circ$-isomorphism between $\ca{X}$ and $\bb{P}^1\times B^\circ$, identifying $y_1$ with $\{0\}\times B^\circ$, $x_1$ with $\{\infty\}\times B^\circ$, and $v_1^\circ/v_2^\circ$ with the tangent vector $\partial/\partial z$, where $z$ is the coordinate on $\bb{P}^1$. We extend the marked curves to a constant family over $B$. By the properness of the relative Quot functor, the prestable quasimaps also extend uniquely. The extension may fail to be stable when 
\begin{equation}\label{eq:unstablecon2}
\quad x_1(b)\ \text{is a base point.}
\end{equation}
If this happens, we blow up the surface $\ca{X}$ repeatedly at $x_1(b)$ until (\ref{eq:unstablecon2}) does not hold. Then we blow down the exceptional divisors on which $\ca{E}(b)$ is trivial as in the case (1). If the component containing $y_1(b)$ is contracted, then the contraction morphism maps $v_1$ to 0 and $y_1(b)$ to a point that is not a base point. Hence we obtain a stable family in the case (2).

Last, we prove the moduli space is separated by the valuative criterion. Consider two flat families of quasimaps $\sigma=(\pi:\ca{X}\rightarrow B;\vec{x};\vec{y};\ca{E},\ca{N};s,v_1,v_2)$ and $\sigma'=(\pi:\ca{X}'\rightarrow B;\vec{x'};\vec{y'};\ca{E}',\ca{N}';s',v_1',v_2')$ which are isomorphic over $B^\circ$. We want to show that, possibly after finite base-change, they are isomorphic. By semistable reduction, we construct a family $\ca{X}''\rightarrow B$ of quasistable marked curves and dominant morphisms 
\[
p:\ca{X}''\rightarrow \ca{X},\quad\text{and}\quad p':\ca{X}''\rightarrow \ca{X}'
\]
which are compatible with marked points and restrict to isomorphisms over $B^\circ$. By the separatedness of the Quot functor, the pullback of quasimaps $p^*s$ and $(p')^{-1}s'$ agree on the special fiber. One can check that due to the stability condition, the maps $p:\ca{X}''\rightarrow \ca{X}$ and $p':\ca{X}''\rightarrow \ca{X}'$ contract the same set of rational components of the special fiber. We conclude the family $\sigma$ and $\sigma'$ are isomorphic.
\end{proof}

\begin{remark}
By using similar arguments to the proof of Lemma \ref{projcoarse}, one can show that the moduli stack $\widetilde{\ca{M}}_{C,m|k-m}$ is a global quotient stack with projective coarse moduli space.
\end{remark}

Let $\widetilde{\fr{U}}_C$ be the category of families $(\pi:\ca{X}\rightarrow T;\vec{x};\vec{y};\ca{N};\varphi,v_1,v_2)$ and let $\fr{U}$ be the category of families  $(\pi:\ca{X}\rightarrow T;\vec{x};\vec{y};\ca{N};v_1,v_2)$. The same argument as in \cite[Theorem 4]{zhou1} shows that $\fr{U}$ is a smooth Artin stack. There is a smooth morphism
\[
\nu:\widetilde{\fr{U}}_C\rightarrow \fr{U}
\]
which forgets the parametrization $\varphi$. It shows that $\widetilde{\fr{U}}_C$ is also smooth. By abuse of notation, we use $\pi$ to denote the universal curves over various moduli stacks. Then the relative cotangent bundle of $\nu$ is given by $(R^0\pi_*\varphi^* T_C)^\vee$. There is a morphism
\[
\tilde{\mu}:\widetilde{\ca{M}}_{C,m|k-m}\rightarrow \widetilde{\fr{U}}_C
\]
which forgets the quasimap $(\ca{E};s)$. By the standard result (see, e.g., \cite[\textsection{3.2}]{marian4}, \cite[\textsection{4.5}]{kim4}), the $\tilde{\mu}$-relative perfect obstruction theory is given by
\begin{equation}\label{eq:defrpot}
R^\bullet\pi_*\ca{H}om(\ca{E}^\vee,\ca{Q}),
\end{equation}
where the sheaf $\ca{Q}$ is defined by the universal exact sequence 
\[
0\rightarrow \ca{E}^\vee\rightarrow\ca{O}_{\ca{C}}^N\rightarrow\ca{Q}\rightarrow 0
\]
on the universal curve $\ca{C}$ of $\widetilde{\ca{M}}_{C,m|k-m}$. Similarly, for the moduli stack $\overline{\ca{M}}^{0+,0+}_{C,m|k-m}(\text{Gr}(n,N),d)$ of $(0+,0+)$-stable quasimaps with mixed $(m,k-m)$-weighted markings, formula (\ref{eq:defrpot}) defines a relative perfect obstruction theory for the forgetful morphism
\[
\nu:\overline{\ca{M}}^{0+,0+}_{C,m|k-m}(\text{Gr}(n,N),d)\rightarrow \widetilde{C[k]},
\]
where $\widetilde{C[k]}$ is the smooth Artin stack parametrizing the data $(C';\vec{x};\vec{y},\varphi)$ satisfying (\ref{eq:extracon}).

Define the $\bb{C}^*$-action on the master space $\widetilde{\ca{M}}_{C,m|k-m}$ by
\[
t\cdot(\pi:\ca{C}'\rightarrow T;\vec{x},\vec{y};\ca{E},\ca{N};s,v_1,v_2)=(\pi:\ca{C}'\rightarrow T;\vec{x},\vec{y};\ca{E},\ca{N};s,tv_1,v_2),\quad t\in\bb{C}^*.
\]
There are three types of fixed loci:
\begin{enumerate}
\item $F_0\widetilde{\ca{M}}_{C,m}$ is the vanishing locus of $v_1$.
\item $F_\infty\widetilde{\ca{M}}_{C,m}$ is the vanishing locus of $v_2$.
\item For each $0<d'\leq d$, $F_{d'}\widetilde{\ca{M}}_{C,m}$ is the locus where
\begin{itemize}
\item $C'=C_{\text{main}}\cup C_{\text{rat}}$, with $C_\text{rat}$ a rational subcurve with $\text{deg}(E|_{C_\text{rat}})=d'$ and $C_{\text{main}}$ containing the parametrized component.
\item $v_1$ and $v_2$ are both nonzero.
\item $y_1\in C_\text{rat}$ and $C_\text{rat}\cap C_\text{main}$ are the only two special points of $C_\text{rat}$.
\item $s$ has a base point of length $d'$ at $y_1$.
\end{itemize}
\end{enumerate}

Let $T_{y_1}$ denote the tangent line bundle at $y_1$. Let $\bb{C}_1$ denote the standard representation of $\bb{C}^*$ and let $\bb{C}_{-1}$ denote its dual. The following lemma follows from the same argument as in the proof of \cite[Lemma 7]{zhou1}.
\begin{lemma}
\begin{enumerate}
\item
The substack $F_0\widetilde{\ca{M}}_{C,m}$ is isomorphic to $\overline{\ca{M}}^{0+,0+}_{C,m+1|k-m-1}(\emph{Gr}(n,N),d)$, where the heavy markings are  $x_1,\dots,x_m,y_1$ and  the light markings are $y_2,\dots,y_{k-m}$. Its equivariant normal bundle is isomorphic to $T_{y_1}\otimes\bb{C}_{1}$ and the corresponding $\bb{C}^*$-equivariant $K$-theoretic Euler class is $1-q^{-1}T^\vee_{y_1}$.

\item 
The substack $F_\infty\widetilde{\ca{M}}_{C,m}$ is isomorphic to $\overline{\ca{M}}^{0+,0+}_{C,m|k-m}(\emph{Gr}(n,N),d)$, where the heavy markings are  $x_1,\dots,x_m$ and  the light markings are $y_1,y_2,\dots,y_{k-m}$. Its equivariant normal bundle is isomorphic to $T_{y_1}^\vee\otimes\bb{C}_{-1}$ and the corresponding $\bb{C}^*$-equivariant $K$-theoretic Euler class is $1-qT_{y_1}$.
\end{enumerate}
\end{lemma}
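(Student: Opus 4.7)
The plan is to follow Zhou's argument in \cite[Lemma 7]{zhou1}, adapted to the parametrized setting. The proof divides naturally into two parts: identifying the underlying moduli stacks of the fixed loci, and then computing their equivariant normal bundles together with the associated $K$-theoretic Euler classes.

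First I would identify $F_0\widetilde{\ca{M}}_{C,m}$. On this locus we have $v_1=0$, so by the master family axioms $v_2$ is nowhere vanishing (hence trivializes $\ca{N}$), and $y_1$ is \emph{not} a base point of $s$. The generic stability condition with $v_1=0$ reduces to the ampleness of $\det(\ca{E})^\epsilon\otimes\omega_{\ca{X}/T}(\sum_{i=1}^m x_i+y_1+\epsilon'\sum_{j=2}^{k-m}y_j)\otimes\varphi^*(\omega_C^{-1}\otimes M)$, which is exactly the stability condition in which $y_1$ has been promoted from a light to a heavy marking. The trivialization $v_2$ removes the remaining line bundle data, so the forgetful functor $(\ca{X},\vec{x},\vec{y},\ca{E},\ca{N},s,\varphi,0,v_2)\mapsto(\ca{X},\vec{x},y_1,\vec{y}\setminus\{y_1\},\ca{E},s,\varphi)$ gives the isomorphism $F_0\widetilde{\ca{M}}_{C,m}\cong\overline{\ca{M}}^{0+,0+}_{C,m+1|k-m-1}(\text{Gr}(n,N),d)$. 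The identification for $F_\infty\widetilde{\ca{M}}_{C,m}$ is dual: when $v_2=0$, the axioms force $v_1\neq0$ (so $\ca{N}\cong T_{y_1}^\vee$ canonically) and the generic stability degenerates to the $(m,k-m)$-weighted stability condition, producing the isomorphism with $\overline{\ca{M}}^{0+,0+}_{C,m|k-m}(\text{Gr}(n,N),d)$.

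Next I would compute the equivariant normal bundles. The pair $(v_1,v_2)\in H^0(T_{y_1}\otimes \ca{N})\oplus H^0(\ca{N})$ without common zeros determines a section of $\bb{P}(T_{y_1}\oplus\ca{O})$, and the $\bb{C}^*$-action scales only $v_1$. Along $F_0$ we can take $\ca{N}=\ca{O}$ via $v_2$; then the normal direction is parametrized by $v_1\in T_{y_1}$, with $\bb{C}^*$ acting by weight $+1$, yielding normal bundle $T_{y_1}\otimes\bb{C}_1$ and $K$-theoretic Euler class $\lambda_{-1}(T_{y_1}^\vee\otimes\bb{C}_{-1})=1-q^{-1}T_{y_1}^\vee$. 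Along $F_\infty$ we instead trivialize $\ca{N}\cong T_{y_1}^\vee$ via $v_1$; the normal direction is $v_2\in T_{y_1}^\vee$ with $\bb{C}^*$-weight $-1$, giving normal bundle $T_{y_1}^\vee\otimes\bb{C}_{-1}$ and Euler class $1-qT_{y_1}$.

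Finally, I would verify the deformation-theoretic statement that these are genuine normal directions, i.e., that tangent vectors to the master space transverse to the fixed locus are exhausted by the $v_1$ (resp.\ $v_2$) deformations. This follows because the forgetful morphism $\widetilde{\ca{M}}_{C,m|k-m}\to\overline{\ca{M}}^{0+,0+}_{C,m+1|k-m-1}$ (resp.\ $\overline{\ca{M}}^{0+,0+}_{C,m|k-m}$) at $F_0$ (resp.\ $F_\infty$) is smooth with relative tangent bundle exactly the space in which $v_1$ (resp.\ $v_2$) lives, as the only additional data being tracked are $\ca{N}$ and the pair $(v_1,v_2)$ modulo isomorphism. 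The main obstacle is purely bookkeeping: carefully checking, on the level of families, that the master moduli functor restricted to $v_1=0$ (resp.\ $v_2=0$) really does agree with the corresponding quasimap moduli functor—this amounts to verifying the stability conditions match and that there are no hidden automorphisms coming from rescaling $\ca{N}$ once $v_2$ (resp.\ $v_1$) has been used to trivialize it.
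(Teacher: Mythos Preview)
Your proposal is correct and follows exactly the approach the paper takes: the paper simply states that the lemma ``follows from the same argument as in the proof of \cite[Lemma 7]{zhou1},'' and what you have written is precisely that argument, spelled out in the parametrized setting. Your identification of the fixed loci via the trivialization of $\ca{N}$ by the nonvanishing section, the matching of stability conditions, and the computation of the normal bundles from the $(v_1,v_2)$ data are all accurate and in line with Zhou's method.
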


We denote by $F_{d'}\subset QG^{0+}_{0,1}(\text{Gr}(n,N),d')$ the $\bb{C}^*$-fixed locus of the graph space parametrizing quasimaps
\[
(\bb{P}^1,q_\bullet,E,s)
\]
where the only marking $q_\bullet$ lies at $\infty\in\bb{P}^1$ and the only base point of $s$ is at 0. Let $\text{ev}_\infty:F_{d'}\rightarrow \text{Gr}(n,N)$ be the evaluation morphism at the unique marking $q_\bullet$ and let $\text{ev}_{m+1}:\overline{\ca{M}}^{0+,0+}_{C,m+1|k-m-1}(\text{Gr}(n,N),d)\rightarrow\text{Gr}(n,N)$ be the evaluation morphism at the last heavy marking $x_{m+1}$. There is a natural morphism
\[
\iota_{d'}:\overline{\ca{M}}^{0+,0+}_{C,m+1|k-m-1}(\text{Gr}(n,N),d)\times_{\text{Gr}(n,N)}F_{d'}\rightarrow F_{d'}\widetilde{\ca{M}}_{C,m}
\]
given by gluing the heavy marking $x_{m+1}$ with $q_\bullet$ and placing the light marking $y_1$ at $0\in\bb{P}^1$. Using the same argument as in \cite[Lemma A.6]{clader}, one can show that $\iota_{d'}$ is an isomorphism. Let $\widetilde{\bb{P}^1[1]}$ be the Fulton-MacPherson space of (not necessarily stable) configurations of 1 point on $\bb{P}^1$. According to \cite[\textsection{2.8}]{kim5}, $\widetilde{\bb{P}^1[1]}$ is a smooth Artin stack, locally of finite type. Let 
\[
\fr{Bun}_{\text{GL}_n(\bb{C})}\rightarrow\widetilde{\bb{P}^1[1]}\]
 be the relative moduli stack of principal $\text{GL}_n(\bb{C})$-bundles on the fibers of the universal curve over $\widetilde{\bb{P}^1[1]}$. It is again a smooth Artin stack, locally of finite type (see \cite[\textsection{2.1}]{kim4}). Consider the forgetful morphism
 \[
 \mu:QG^{0+}_{0,1}(\text{Gr}(n,N),d')\rightarrow \fr{Bun}_{\text{GL}_n(\bb{C})}
 \]
 which forgets the section $s$. The natural $\mu$-relative perfect obstruction theory is given by (\ref{eq:defrpot}), where all the sheaves are defined over the universal curve of $QG^{0+}_{0,1}(\text{Gr}(n,N),d')$. The moving part of the relative perfect obstruction theory (\ref{eq:defrpot}) is denoted by $N^{\text{vir},\text{rel}}_{F_{d'}/QG^{0+}_{0,1}}$.

\begin{lemma}
Under the isomorphism $\iota_{d'}$, the equivariant normal bundle of the substack $F_{d'}\widetilde{\ca{M}}_{C,m}$ is isomorphic to 
\[\ca{N}_{\emph{node}}\oplus N^{\vir,\emph{rel}}_{F_{d'}/QG^{0+}_{0,1}}.\]
Here $\ca{N}_{\emph{node}}\cong T_{x_{m+1}}\boxtimes T_\infty$, where $T_{x_{m+1}}$ and $T_\infty$ are the tangent line bundles at the markings $x_{m+1}$ and $\infty$, respectively.
\end{lemma}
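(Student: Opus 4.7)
The plan is to compute the $\bb{C}^*$-fixed/moving decomposition of the tangent--obstruction complex of $\widetilde{\ca{M}}_{C,m|k-m}$ at a point of $F_{d'}\widetilde{\ca{M}}_{C,m}$, using the $\tilde{\mu}$-relative perfect obstruction theory (\ref{eq:defrpot}) together with the tangent complex of the smooth Artin stack $\widetilde{\fr{U}}_C$. At a closed point $\xi$ of $F_{d'}\widetilde{\ca{M}}_{C,m}$ the source curve decomposes as $C'=C_{\text{main}}\cup_p C_{\text{rat}}$, where $C_{\text{rat}}\cong\bb{P}^1$ contains $y_1$ at $0$ and is glued to $C_{\text{main}}$ at the node $p=\infty=x_{m+1}$; the isomorphism $\iota_{d'}$ is precisely the recipe ``cut at the node and declare $x_{m+1}$ and $q_\bullet$ the two new heavy markings.''

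I would first write the normalization exact sequence for the node, pull it back along $\tilde{\mu}$, and obtain a distinguished triangle expressing the tangent complex of $\widetilde{\ca{M}}_{C,m|k-m}$ at $\xi$ in terms of: (i) the tangent complex of the main-component factor, viewed as a point of $\overline{\ca{M}}^{0+,0+}_{C,m+1|k-m-1}(\text{Gr}(n,N),d)$ with $x_{m+1}$ as an extra heavy marking; (ii) the tangent complex of the rational-tail factor, viewed as a point of $QG^{0+}_{0,1}(\text{Gr}(n,N),d')$ via its image in $F_{d'}$; (iii) the one-dimensional node-smoothing term $T_p C_{\text{main}}\otimes T_p C_{\text{rat}}=T_{x_{m+1}}\boxtimes T_\infty$; and (iv) the deformations of $(N,v_1,v_2)$, which lie entirely in the base and contribute only to the tangent complex of the fixed locus. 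Cohomology of $\ca{H}om(\ca{E}^\vee,\ca{Q})$ splits as a direct sum over the two components because the sheaf is locally free near the node on each side of the normalization, and the extra gluing data is finite-dimensional and accounted for by the node-smoothing term.

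The second step is to identify the $\bb{C}^*$-weights. The $\bb{C}^*$-action rescales $v_1\in H^0(T_{y_1}\otimes N)$, equivalently it acts on the tangent line at $y_1=0\in\bb{P}^1$; by the standard analysis of the automorphism $z\mapsto tz$ on the rational tail, this action fixes $T_{x_{m+1}}$ on $C_{\text{main}}$ (weight $0$) and acts with nonzero weight on $T_\infty=T_p C_{\text{rat}}$, so the node-smoothing term is moving and contributes the factor $T_{x_{m+1}}\boxtimes T_\infty$ to $\ca{N}_{\text{node}}$. The same $\bb{C}^*$ acts trivially on the main-component tangent complex (which involves no data on the tail) and acts on the rational-tail tangent complex exactly as it does on the $\mu$-relative perfect obstruction theory of $QG^{0+}_{0,1}(\text{Gr}(n,N),d')$ restricted to $F_{d'}$. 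Therefore the fixed part of the triangle is the tangent complex of the fiber product $\overline{\ca{M}}^{0+,0+}_{C,m+1|k-m-1}(\text{Gr}(n,N),d)\times_{\text{Gr}(n,N)}F_{d'}$, and the moving part is exactly $\ca{N}_{\text{node}}\oplus N^{\text{vir},\text{rel}}_{F_{d'}/QG^{0+}_{0,1}}$.

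The main obstacle will be bookkeeping: one has to verify that the matching condition at the node is correctly encoded as the fiber product over $\text{Gr}(n,N)$ along the evaluations $\text{ev}_{m+1}$ and $\text{ev}_\infty$, that the $(N,v_1,v_2)$ part of the obstruction theory contributes nothing to the moving part once we quotient by the $\bb{C}^*$ generating the rescaling of $v_1$, and that the residual automorphisms of $C_{\text{rat}}$ fixing the two special points $\{0,\infty\}$ are absorbed in the usual way so that no extra moving classes are produced. Once these checks are in place, the splitting of the normal bundle is immediate from the triangle above, and the identifications of the two summands are the ones claimed. This is the standard virtual-localization calculation at a rational-tail fixed locus, adapted from \cite{zhou1} to the setting with a parametrized component.
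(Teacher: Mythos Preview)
Your proposal is correct and follows essentially the same approach as the paper: separate the virtual tangent into the contribution from the smooth base $\widetilde{\fr{U}}_C$ (whose moving part is the node-smoothing line $\ca{N}_{\text{node}}$) and the $\tilde\mu$-relative perfect obstruction theory (\ref{eq:defrpot}) (whose moving part, via the normalization at the node, is $R^\bullet\pi_*\big(\ca{H}om(\ca{E}^\vee,\ca{Q})|_{\ca{C}_{\text{rat}}}(-\Delta_\infty)\big)=N^{\vir,\text{rel}}_{F_{d'}/QG^{0+}_{0,1}}$). The paper's proof is terser, stating these two identifications directly rather than spelling out the normalization triangle and the weight analysis of the $(N,v_1,v_2)$ piece, but the argument is the same.
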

\begin{proof}
Let $\fr{Z}_{d'}\subset \widetilde{\fr{U}}_C$ be the reduced, locally-closed substack where $y_1$ is on a rational tail of degree $d'$ and $v_1$ and $v_2$ are both nonzero. The normal bundle of $\fr{Z}_{d'}$ in $\widetilde{\fr{U}}_C$ is $\ca{N}_{\text{node}}$ which belongs to the moving part. The universal curve decomposes as $\ca{C}_{\text{main}}\cup\ca{C}_{\text{rat}}$. The moving part of (\ref{eq:defrpot}) is 
\[
R^\bullet\pi_*\big(\ca{H}om(\ca{E}^\vee,\ca{Q})|_{\ca{C}_{\text{rat}}}(-\Delta_\infty)\big),
\]
where $\Delta_\infty$ denotes the node $\ca{C}_{\text{main}}\cap\ca{C}_{\text{rat}}$. This coincides with $N^{\vir,\text{rel}}_{F_{d'}/QG^{0+}_{0,1}}$.
\end{proof}

Recall that the Grassmannian $\text{Gr}(n, N)$ can be written as a GIT quotient $M_{n\times N}\sslash \text{GL}_n(\bb{C})$, where $M_{n\times N}$ denotes the vector space of $n\times N$ complex matrices. Let $V_1,\dots,V_m,W_1,\dots,W_{k-m}$ be finite-dimensional representations of $\text{GL}_n(\bb{C})$. The $\text{GL}_n(\bb{C})$-equivariant vector bundles $M_{n\times N}\times V_i$ and $M_{n\times N}\times W_j$ induce vector bundles on $\text{Gr}(n, N)$ and the stack quotients $[M_{n\times N}/\text{GL}_n(\bb{C})]$. By abuse of notation, we still use $V_i$ and $W_j$ to denote the induced vector bundles.

There are two types of evaluation morphisms:
\begin{align*}
\text{ev}_i:\overline{\ca{M}}^{0+,0+}_{C,m|k-m}(\text{Gr}(n,N),d)&\rightarrow\text{Gr}(n,N),\hspace{1.6cm} i=1,\dots,m,\\
\widetilde{\text{ev}}_j:\overline{\ca{M}}^{0+,0+}_{C,m|k-m}(\text{Gr}(n,N),d)&\rightarrow [M_{n\times N}/\text{GL}_n(\bb{C})],\quad j=1,\dots, k-m.
\end{align*}
\begin{definition}
For any $e\in\bb{Z}$, we define level-$l$ mixed $(m,k-m)$-weighted invariants by
\begin{align*}
&\langle V_1, \cdots V_m|W_1,\dots,W_{k-m}\rangle^{l,0+}_{C, d,m|k-m}\\
:=\chi\bigg(\overline{\ca{M}}^{0+,0+}_{C,m|k-m}(\text{Gr}(n,N)&,d), \ca{D}^l\otimes\O^{\vir}_{\overline{\ca{M}}^{0+}_{C,m|k-m}(\text{Gr}(n,N),d)}\otimes\bigotimes_{i=1}^m \text{ev}_i^*V_i\otimes\bigotimes_{j=1}^{k-m} \widetilde{\text{ev}}_{j}^*W_j\otimes (\text{det}\,\ca{E}_{x_0})^e\bigg),
\end{align*}
where $\ca{D}^l$ is the determinant line bundle defined by (\ref{eq:levelstr}) and $\ca{E}_{x_0}$ is the restriction of the universal bundle at $x_0$.
\end{definition}

\begin{theorem}\label{epdel}
Suppose $\lambda\in\emph{P}_l$ and set $W_1:=\bb{S}_{\lambda}(S)$. If $N-2l\geq n$, the wall-crossing of level-$l$ weighted invariants is trivial, i.e.,
\[
\langle V_1, \cdots V_m|W_1,\dots,W_{k-m}\rangle^{l,0+}_{C, d,m|k-m}=\langle V_1, \cdots V_m,W_1|W_2\dots,W_{k-m}\rangle^{l,0+}_{C, d,m+1|k-m-1}
\]
\end{theorem}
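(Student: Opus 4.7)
The plan is to apply Yang Zhou's master space technique using the $\bb{C}^*$-equivariant virtual localization formula on the proper Deligne-Mumford stack $\widetilde{\ca{M}}_{C,m|k-m}$. Concretely, I would pull back the integrand of the $(m,k-m)$-weighted invariant to the master space (i.e., the level-$l$ determinant line bundle $\ca{D}^l$, the insertions $\text{ev}_i^*V_i$ and $\widetilde{\text{ev}}_j^* W_j$, and the insertion $(\text{det}\,\ca{E}_{x_0})^e$ at the special point $x_0$), tensor with the virtual structure sheaf of $\widetilde{\ca{M}}_{C,m|k-m}$, and call the resulting $\bb{C}^*$-equivariant class $\tilde{\ca{A}}$. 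Because the master space is proper and the integrand is a genuine equivariant class, $\chi(\widetilde{\ca{M}}_{C,m|k-m}, \tilde{\ca{A}})$ lies in $K_0^{\bb{C}^*}(\mathrm{pt})_{\bb{Q}} = \bb{Q}[q,q^{-1}]$, i.e., is a Laurent polynomial in the equivariant parameter $q$.

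Next, using the $K$-theoretic virtual localization formula (as established by Qu and by Ciocan-Fontanine--Kim), I would rewrite $\chi(\widetilde{\ca{M}}_{C,m|k-m}, \tilde{\ca{A}})$ as the sum of contributions from the three types of fixed loci $F_0$, $F_\infty$, $F_{d'}$ ($1 \leq d' \leq d$) described above. By the description of $F_0 \cong \overline{\ca{M}}^{0+,0+}_{C,m+1|k-m-1}(\mathrm{Gr}(n,N),d)$ with equivariant normal bundle $T_{y_1}\otimes\bb{C}_1$, and of $F_\infty \cong \overline{\ca{M}}^{0+,0+}_{C,m|k-m}(\mathrm{Gr}(n,N),d)$ with normal bundle $T_{y_1}^\vee\otimes\bb{C}_{-1}$, the residues at $q=1$ of the $F_0$ and $F_\infty$ contributions recover, respectively, $\langle V_1,\dots,V_m,W_1|W_2,\dots,W_{k-m}\rangle^{l,0+}_{C,d,m+1|k-m-1}$ and $\langle V_1,\dots,V_m|W_1,\dots,W_{k-m}\rangle^{l,0+}_{C,d,m|k-m}$, by unwinding that $W_1 = \bb{S}_{\lambda}(S)$ pulled back via $\widetilde{\text{ev}}_{y_1}$ becomes $\text{ev}_{y_1}^* V_\lambda$ once $y_1$ ceases to be a base point (which is precisely the condition defining $F_0$).

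Since the total equivariant Euler characteristic is a Laurent polynomial in $q$, the sum of all residues (including those at $q = 0, \infty$, and possibly at roots of unity coming from automorphism groups of the rational tails) must vanish. Thus to prove the theorem it suffices to show that every contribution from the fixed loci $F_{d'}$ with $d' \geq 1$ vanishes identically, i.e., contributes $0$ to the relevant residue. This is the main obstacle. Under the isomorphism $F_{d'} \cong \overline{\ca{M}}^{0+,0+}_{C,m+1|k-m-1}(\mathrm{Gr}(n,N),d-d') \times_{\mathrm{Gr}(n,N)} F_{d'}^{\mathrm{graph}}$, the localization contribution factors as an Euler characteristic over $\overline{\ca{M}}^{0+,0+}_{C,m+1|k-m-1}(\mathrm{Gr}(n,N),d-d')$ of an explicit class obtained by integrating the equivariant integrand along the rational tail $F_{d'}^{\mathrm{graph}} \subset QG^{0+}_{0,1}(\mathrm{Gr}(n,N),d')$, divided by $\lambda_{-1}$ of the dual of the virtual normal bundle $\ca{N}_{\mathrm{node}} \oplus N^{\mathrm{vir,rel}}_{F_{d'}/QG^{0+}_{0,1}}$.

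The key technical step in showing this tail contribution vanishes is a level-dependent positivity argument on the rational tail. On each component of $F_{d'}^{\mathrm{graph}}$, the restriction of the level-$l$ determinant line bundle $\ca{D}^l = (\det R\pi_* \ca{E})^{-l}$ combines with $(\det \ca{E}_{x_0})^e$ and the insertions to produce an equivariant line bundle; one computes its $\bb{C}^*$-weight and its restriction to the node, and observes that the hypothesis $N - 2l \geq n$ (equivalently $N \geq n+2l$) ensures that the Laurent polynomial in $q$ obtained from integrating the rational-tail contribution along the fibers of $F_{d'}^{\mathrm{graph}} \to \mathrm{Gr}(n,N)$ has no nonzero residue at $q=1$; in fact every coefficient vanishes, by an argument entirely parallel to the vanishing of the excess Leray terms in the proof of Theorem \ref{mainthm1} (the dimension count $n_+ > li + kN$ there plays the analogous role here, with the level $l$ bounded against the Grassmannian dimension via $N-n\geq 2l$). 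This parallel is exactly the reason the bound $N \geq n+2l$ reappears here. Once the tail contributions vanish, the $q\to 1$ limit of virtual localization collapses to the desired equality between the $F_0$ and $F_\infty$ contributions, which is the statement of the theorem.
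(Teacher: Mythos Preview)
Your overall architecture is right: pull the integrand back to the master space $\widetilde{\ca{M}}_{C,m|k-m}$, apply $K$-theoretic virtual localization over the three types of fixed loci, and use properness to conclude that the total equivariant Euler characteristic is a Laurent polynomial in $q$. But two points are off, and the second one is a genuine gap.

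First, the residues are taken at $q=0$ and $q=\infty$, not at $q=1$. The poles of the individual fixed-locus contributions sit at the eigenvalues of the cotangent line bundles $L_1$ (and at $0$ or $\infty$), not at $q=1$, so ``residue at $q=1$'' does not extract the invariants. What the paper does is write $\frac{1}{1-q^{-1}L_1}=1+\frac{L_1}{q-L_1}$ and $\frac{1}{1-qL_1^{-1}}=-\frac{L_1}{q-L_1}$, so that after applying $[\text{Res}_{q=0}+\text{Res}_{q=\infty}](\,\cdot\,)\,\frac{dq}{q}$ only the constant ``$1$'' from the $F_0$ term survives (Lemma~\ref{elementarylem}, first part), yielding exactly the difference of the two weighted invariants.

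Second, and more importantly, the $F_{d'}$ contributions do \emph{not} vanish identically, and the mechanism that kills them is not a Leray spectral sequence argument parallel to Theorem~\ref{mainthm1}. What actually happens is that after pushing forward along the rational tail one obtains the explicit rational function $\mu^{W_1}_{d'}(q)\in K^0(\text{Gr}(n,N))_{\bb{Q}}(q)$, computed in the appendix from the $I$-function of the Grassmannian. The hypothesis $N-2l\geq n$ enters through a straightforward $q$-degree count (Lemma~\ref{laurentpart}): it guarantees that the numerator of $\mu^{W_1}_{d'}(q)$ has strictly smaller $q$-degree than the denominator, so $\mu^{W_1}_{d'}(q)$ is regular at $q=0$ and vanishes at $q=\infty$. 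The second part of Lemma~\ref{elementarylem} then shows that $[\text{Res}_{q=0}+\text{Res}_{q=\infty}]$ of $\frac{\mu^{W_1}_{d'}(q)}{1-q^{-1}L_{m+1}}\cdot(\text{rest})\,\frac{dq}{q}$ is zero. The analogy with Theorem~\ref{mainthm1} is only numerological (both bounds come from comparing $l$ against $N-n$); the actual argument here is an Euler-class degree estimate on $\bb{P}^1$, not a projective-bundle cohomology vanishing. Your proposal as written does not supply this step.
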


\begin{proof}
Let 
\begin{align*}
\text{ev}'_i:\widetilde{\ca{M}}_{C,m|k-m}(\text{Gr}(n,N),d)&\rightarrow\text{Gr}(n,N),\hspace{1.6cm} i=1,\dots,m,\\
\widetilde{\text{ev}}'_j:\widetilde{\ca{M}}_{C,m|k-m}(\text{Gr}(n,N),d)&\rightarrow [M_{n\times N}/\text{GL}_n(\bb{C})],\quad j=1,\dots, k-m,
\end{align*}
be the evaluation morphisms of the master space. Let $\text{ev}_0$ and $\text{ev}_\infty$ be the the evaluation morphisms of $F_{d'}$ at $0$ and $\infty$, respectively. For simplicity, we denote by $\ca{F}$ the tensor product $\ca{D}^l\otimes\bigotimes_{i=1}^m (\text{ev}_i')^*V_i\otimes\bigotimes_{j=1}^{k-m} (\widetilde{\text{ev}}_{j}')^*W_j\otimes(\text{det}\,\ca{E}_{x_0})^e$. By abuse of notation, we use $\ca{O}^\vir$ and $\ca{D}^l$ to denote the virtual structure sheaves and determinant line bundles on various moduli spaces. One can check that the fixed part of the restriction of the relative perfect obstruction theory (\ref{eq:defrpot}) to each fixed loci coincides with its canonical relative perfect obstruction theory. By the $K$-theoretic virtual localization formula (cf. \cite[\textsection{3}]{qu}), the splitting property of $\ca{D}^l$ among nodal strata (cf. \cite[Proposition 2.9]{zhang}), and the projection formula, we have
\begin{align}\label{eq:ktheoryloc}
&\chi\big(\widetilde{\ca{M}}_{C,m|k-m},\O^{\vir}\otimes\ca{F}\big)\\
=&\chi\bigg(\overline{\ca{M}}^{0+}_{C,m+1|k-m-1}(\text{Gr}(n,N),d),\frac{\O^{\vir}\otimes\iota_0^*\ca{F}}{1-q^{-1}L_1}\bigg)+\chi\bigg(\overline{\ca{M}}^{0+}_{C,m|k-m}(\text{Gr}(n,N),d),\frac{\O^{\vir}\otimes\iota_\infty^*\ca{F}}{1-qL_1^{-1}}\bigg)\nonumber\\
+&\sum_{1\leq d'\leq d}\chi\bigg(\overline{\ca{M}}^{0+}_{C,m+1|k-1,d-d'},\frac{\text{ev}_{m+1}^*(\mu^{W_1}_{d'}(q))}{1-q^{-1}L_{m+1}}\otimes\ca{G}\bigg)\nonumber,
\end{align}
where 
\[\ca{G}:=\text{ev}_{m+1}^*(\text{det}^{-l}(S))\otimes\O^{\vir}\otimes\ca{D}^l\otimes\bigotimes_{i=1}^m \text{ev}_i^*V_i\otimes\bigotimes_{j=2}^{k-m} \widetilde{\text{ev}}_{j}^*W_j\otimes (\text{det}\,\ca{E}_{x_0})^e
\]
and 
\[
\mu^{W_1}_{d'}(q):=(\text{ev}_\infty)_*\bigg(\frac{\ca{D}^l\otimes\widetilde{\text{ev}}_0^*( W_1)}{\lambda_{-1}^{\bb{C}^*}(N^\vee)}\bigg)
\]
with $N:=N^{\vir,\text{rel}}_{F_{d'}/QG^{0+}_{0,1}}$ and $\widetilde{\text{ev}}_0:F_{d'}\rightarrow [M_{n\times N}/\text{GL}_n(\bb{C})]$ the evaluation morphism at $0\in\bb{P}^1$.

Note that $\chi\big(\widetilde{\ca{M}}_{C,m|k-m},\O^{\vir}\otimes\ca{F}\big)\in\bb{Q}[q,q^{-1}]$. Therefore we have
\begin{equation}\label{eq:residueeq0}
[\text{Res}_{q=0}+\text{Res}_{q=\infty}]\,\bigg(\chi\big(\widetilde{\ca{M}}_{C,m|k-m},\O^{\vir}\otimes\ca{F}\big)\bigg)\,\frac{dq}{q}=0.
\end{equation}
Let $R(q)$ denote the RHS of (\ref{eq:ktheoryloc}). Note that 
\[
\frac{\O^{\vir}\otimes\iota_0^*\ca{F}}{1-q^{-1}L_1}=\O^{\vir}\otimes\iota_0^*\ca{F}+\frac{\O^{\vir}\otimes\iota_0^*\ca{F}\otimes L_1}{q-L_1}
\]
and 
\[
\frac{\O^{\vir}\otimes\iota_\infty^*\ca{F}}{1-qL_1^{-1}}=-\frac{\O^{\vir}\otimes\iota_\infty^*\ca{F}\otimes L_1}{q-L_1}.
\]
Then by Lemma \ref{projcoarse}, Lemma \ref{elementarylem}, Lemma \ref{laurentpart}, and (\ref{eq:residueeq0}), we have 
\begin{align*}
0=&[\text{Res}_{q=0}+\text{Res}_{q=\infty}]\,\big(R(q)\big)\,\frac{dq}{q}\\
=&-\chi\big(\overline{\ca{M}}^{0+}_{C,m+1|k-m-1}(\text{Gr}(n,N),d),\O^{\vir}\otimes\iota_0^*\ca{F}\big)+\chi\big(\overline{\ca{M}}^{0+}_{C,m|k-m}(\text{Gr}(n,N),d),\O^{\vir}\otimes\iota_\infty^*\ca{F}\big).
\end{align*}
This concludes the proof of the theorem.

\end{proof}

The following corollary follows from Corollary \ref{pairtoverlinde}, Theorem \ref{intromainthm2} and Theorem \ref{epdel}.
\begin{corollary}\label{epdel}
Suppose $\lambda_{p_1},\dots,\lambda_{p_k}\in\emph{P}_l$. If $N-2l\geq n$, then we have
\[
\langle V_{\lambda_{p_1}},\dots, V_{\lambda_{p_k}}\rangle^{ l, \delta=\infty,\emph{Gr}(n, N)}_{C,d}=\langle V_{\lambda_{p_1}},\dots, V_{\lambda_{p_k}}|\emph{det}(E)^{e}\rangle^{l,\epsilon=0+}_{C, d},
\]
where $V_{\lambda_i}=\mathbb{S}_{\lambda_i}(S)$ and $e$ is an integer defined by (\ref{eq:defofe}). If we further assume $n\leq2$, $\lambda_{p_i}\in\emph{P}_l'$ for all $i$, and the conditions in Corollary \ref{pairtoverlinde} hold, then the GL Verlinde invariants equal the $(\epsilon=0+)$-stable GLSM invariants, i.e.,
\[
\langle V_{\lambda_{p_1}},\dots, V_{\lambda_{p_k}}\rangle^{ l, \emph{Verlinde}}_{g,d}=\langle V_{\lambda_{p_1}},\dots, V_{\lambda_{p_k}}|\emph{det}(E)^{e}\rangle^{l,\epsilon=0+}_{C, d}.
\]
\end{corollary}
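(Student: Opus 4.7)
The plan is to decompose the corollary into two independent statements and to reduce each to results already proved in the paper. For the first equality, I would first identify both sides with mixed-weight invariants, and then apply Theorem \ref{epdel} iteratively. For the $m=0$ endpoint, the isomorphism $\overline{\ca{M}}^{\text{par},\delta=\infty}_C(\text{Gr}(n,N),d,\underline{\lambda}) \cong \text{Fl}_{m_{p_1}}(\ca{E}_{p_1})\times_{\overline{\ca{M}}_Q}\dots\times_{\overline{\ca{M}}_Q}\text{Fl}_{m_{p_k}}(\ca{E}_{p_k})$ of Example \ref{largeepsilon}, combined with Bott's theorem for partial flag bundles, gives $R\pi_*\big(\bigotimes_{p\in I}L_{\lambda_p}\big)=\bigotimes_{p\in I}\bb{S}_{\lambda_p}(\ca{E}_p^\vee)$. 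Since $\ca{E}=\ca{F}^\vee$ with $\ca{F}$ locally free on the universal curve, the class $\bb{S}_{\lambda_p}(\ca{E}_p^\vee)$ coincides with $\widetilde{\text{ev}}_p^{*}V_{\lambda_p}$ even when $p$ is a base point of the underlying quotient. Applying the projection formula then yields $\langle V_{\lambda_{p_1}},\dots,V_{\lambda_{p_k}}\rangle^{l,\delta=\infty,\text{Gr}(n,N)}_{C,d}=\langle\,|V_{\lambda_1},\dots,V_{\lambda_k}\rangle^{l,0+}_{C,d,0|k}$. At the $m=k$ endpoint, matching definitions directly gives $\langle V_{\lambda_{p_1}},\dots,V_{\lambda_{p_k}}|\text{det}(E)^{e}\rangle^{l,\epsilon=0+}_{C,d}=\langle V_{\lambda_1},\dots,V_{\lambda_k}|\,\rangle^{l,0+}_{C,d,k|0}$.

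Next, I would invoke Theorem \ref{epdel} exactly $k$ times, flipping one light marking to a heavy marking at each step:
\[
\langle\,|V_{\lambda_1},\dots,V_{\lambda_k}\rangle^{l,0+}_{C,d,0|k}=\langle V_{\lambda_1}|V_{\lambda_2},\dots,V_{\lambda_k}\rangle^{l,0+}_{C,d,1|k-1}=\cdots=\langle V_{\lambda_1},\dots,V_{\lambda_k}|\,\rangle^{l,0+}_{C,d,k|0}.
\]
The hypothesis $N\geq n+2l$ in the corollary gives $N-2l\geq n$, which is precisely the numerical condition required by Theorem \ref{epdel} (valid uniformly across the $k$ steps, since the set $\{\lambda_i\}$ does not change). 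This chain yields the first equality.

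For the second equality, I would chain three identifications. Under the rank $n\leq 2$ hypothesis together with $\lambda_{p_i}\in\text{P}_l'$ and nonemptiness of $U^s_C(n,d,\underline{\lambda})$, Corollary \ref{pairtoverlinde} gives
\[
\langle V_{\lambda_{p_1}},\dots,V_{\lambda_{p_k}}\rangle^{l,\text{Verlinde}}_{g,d}=\langle V_{\lambda_{p_1}},\dots,V_{\lambda_{p_k}}\rangle^{l,\delta=0+,\text{Gr}(n,N)}_{C,d}.
\]
Theorem \ref{intromainthm2} (in the form of Theorem \ref{intromainthm3} for $n=2$, and trivially for $n=1$ by Remark \ref{rank1}) asserts that the $\delta$-stable parabolic GLSM invariants are independent of generic $\delta$, so the right-hand side agrees with its $\delta=\infty$ analogue. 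Applying the first equality then completes the chain.

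The main technical point I expect to check carefully is the endpoint identification at $m=0$, specifically the assertion that $R\pi_*\big(\bigotimes_{p\in I}L_{\lambda_p}\big)$ genuinely equals the pullback $\bigotimes_{p\in I}\widetilde{\text{ev}}_p^*V_{\lambda_p}$ on the Quot scheme when marked points may be base points. This amounts to verifying that the fiber of $\ca{E}^\vee=\ca{F}|_p$ at a base point is naturally the restriction of the tautological subbundle pulled back through the stacky evaluation, and that Bott's theorem commutes with the relevant base change; this is routine but is the one place where the distinction between light and heavy markings would otherwise cause trouble. Everything else is an assembly of results that have already been established in the preceding sections.
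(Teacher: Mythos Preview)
Your proposal is correct and follows essentially the same approach as the paper: the paper's own justification is the one-line remark that the corollary follows from Corollary \ref{pairtoverlinde}, Theorem \ref{intromainthm2}, and Theorem \ref{epdel}, and your argument simply unpacks this chain, using the Remark after Definition \ref{finalinvariant} (together with Bott's theorem on the flag-bundle description from Example \ref{largeepsilon}) to anchor the $m=0$ endpoint and then iterating the wall-crossing of Theorem \ref{epdel}. The one detail you flag---that $\bb{S}_{\lambda_p}(\ca{E}_p^\vee)=\widetilde{\text{ev}}_p^*V_{\lambda_p}$ holds even at base points because $\ca{E}^\vee=\ca{F}$ is the universal subbundle and the stacky evaluation pulls back the tautological $S$---is exactly the right observation to close the gap left open in that Remark.
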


\begin{lemma}\label{elementarylem}
Suppose $L$ is a line bundle on a proper, separated Deligne-Mumford stack $\ca{X}$ with projective coarse moduli space, and $E\in K_0(\ca{X})_\bb{Q}$. Then we have
\[
[\emph{Res}_{q=0}+\emph{Res}_{q=\infty}]\,\bigg(\frac{E}{q-L}\bigg)\,\frac{dq}{q}=-EL^{-1}.
\]
Suppose $f(q)$ is a $Q$-series whose coefficients are rational functions in $q$ with coefficients in $K_0(\ca{X})_\Q$. Assume that $f(q)$ is regular at $q=0$ and vanishes at $q=\infty$. Then
\[
[\emph{Res}_{q=0}+\emph{Res}_{q=\infty}]\,\bigg(\frac{f(q)}{1-q^{-1}L}\bigg)\,\frac{dq}{q}=0
\]

\end{lemma}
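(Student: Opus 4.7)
My plan is to verify both identities by direct residue computation on $\bb{P}^1$, treating $q$ as a formal variable over $K_0(\ca{X})_\Q$ in which $L \in K^0(\ca{X})_\Q$ is a unit. Under this interpretation $(q-L)^{-1}$ makes sense as a formal Laurent series in a punctured neighborhood of $q=0$ (expanded as $-L^{-1}\sum_{n\geq 0}(q/L)^n$) and of $q=\infty$ (expanded as $q^{-1}\sum_{n\geq 0}(L/q)^n$). The residue at $q=0$ is then the coefficient of $q^{-1}$ in the first expansion, and the residue at $q=\infty$ is the negative of the coefficient of $q^{-1}$ in the second, the sign being fixed by the convention that the sum of all residues of a rational $1$-form on $\bb{P}^1$ is zero.

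For the first identity I would multiply out to get $E\,dq/(q(q-L))$ and read off both residues directly. Around $q=0$ the expansion is $-\sum_{n\geq 0} E L^{-n-1} q^{n-1}$, whose coefficient of $q^{-1}$ is $-EL^{-1}$; around $q=\infty$ the expansion is $\sum_{n\geq 0} E L^n q^{-n-2}$, which contains no $q^{-1}$ term, so the contribution vanishes. Summing yields $-EL^{-1}$ as claimed. Equivalently one could invoke the partial fraction identity $1/(q(q-L)) = L^{-1}\bigl(1/(q-L)-1/q\bigr)$ and the global residue theorem on $\bb{P}^1$.

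For the second identity I would rewrite $f(q)\,dq/(q(1-q^{-1}L)) = f(q)\,dq/(q-L)$. Regularity of $f$ at $q=0$, combined with the invertibility of $-L$, implies that $f(q)/(q-L)$ is regular at $q=0$, so the residue there is $0$. The hypothesis that $f$ vanishes at $q=\infty$ means its expansion there involves only powers $q^n$ with $n \leq -1$; multiplying by the expansion of $(q-L)^{-1}$, which begins at $q^{-1}$, yields powers $q^n$ with $n \leq -2$ only, so the coefficient of $q^{-1}$ vanishes and the residue at $q=\infty$ is also $0$. When $f(q)$ is genuinely a $Q$-series whose coefficients are rational functions in $q$, the argument is applied coefficient-by-coefficient in the Novikov variable $Q$.

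There is no substantive obstacle: the entire proof reduces to bookkeeping with formal Laurent expansions over $K_0(\ca{X})_\Q$. The only point requiring care is the sign convention for $\text{Res}_{q=\infty}$, which is precisely what is needed so that the two-term sum $\text{Res}_{q=0}+\text{Res}_{q=\infty}$ captures minus the sum of all remaining residues of the rational $1$-form on $\bb{P}^1$.
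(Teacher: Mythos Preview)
Your argument is correct and reaches the same conclusions, but it follows a different route from the paper.

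The paper opens by noting that, since $L$ is a $K$-theory class rather than a scalar, one must be careful about what it means to take residues of $\frac{E}{q-L}$. Its solution is to invoke the existence of a minimal polynomial $M(q)$ for $L$ in $K^0(\ca{X})_\Q$ (proved separately using that the coarse moduli is projective). Writing $M(q)-M(L)=P(q,L)(q-L)$ yields $\frac{1}{q-L}=\frac{P(q,L)}{M(q)}$, so the expression becomes an honest rational function in $q$ with scalar denominator, and classical residue calculus applies directly: $\mathrm{Res}_{q=0}\,\frac{P(q,L)E}{qM(q)}\,dq=\frac{P(0,L)E}{M(0)}=-EL^{-1}$ (using $M(0)=-P(0,L)L$), while $\deg P<\deg M$ forces the residue at $\infty$ to vanish. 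The second identity is handled the same way.

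Your approach bypasses the minimal polynomial entirely: you simply expand $(q-L)^{-1}$ as a formal Laurent series at $0$ and at $\infty$ and read off the $q^{-1}$ coefficients. This is more elementary and avoids the auxiliary Lemma on minimal polynomials; it only uses that $L$ is a unit (immediate since $L$ is a line bundle). The trade-off is that you are implicitly \emph{defining} the residue as the $q^{-1}$ coefficient of the formal expansion in each chart, rather than reducing to classical residue theory; the paper's method makes explicit that the object really is a rational function on $\bb{P}^1$, so the residues are unambiguous. Both routes are valid, and your computation of the coefficients is correct.
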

\begin{proof}
Since the rational functions considered in the lemma have coefficients in the $K$-ring, we need to be careful when taking residues. For the first assertion, let $M(q)$ be the minimal polynomial of $L$ in $K^0(\ca{X})_\Q$. The existence of $M(q)$ is guaranteed by Lemma \ref{minimalpoly}. Since $L$ is invertible, $M(q)$ satisfies $M(0)\neq0$. By rearranging the identity $M(q)-M(L)=P(q,L)(q-L)$, with $\text{deg}\,P<\text{deg}\,M$, we obtain
\[
\frac{1}{q-L}=\frac{P(q,L)}{M(q)}.
\]
Using the above identity, we can compute the residues directly. First, we have
\[
\text{Res}_{q=0}\,\bigg(\frac{P(q,L)E}{M(q)}\bigg)\,\frac{dq}{q}=\lim_{q\to 0}q\bigg(\frac{P(q,L)E}{qM(q)}\bigg)=-EL^{-1},
\]
where the second equality follows from the identity $M(0)=-P(0,L)L$. Second, we have 
\[
\text{Res}_{q=\infty}\,\bigg(\frac{P(q,L)E}{M(q)}\bigg)\,\frac{dq}{q}=-\text{Res}_{q=0}\,\bigg(\frac{P(1/q,L)E}{M(1/q)}\bigg)\,\frac{dq}{q}=-\lim_{q\to 0}q\bigg(\frac{P(1/q,L)E}{qM(1/q)}\bigg)=0,
\]
where the last equality follows from the fact that the degree of $P$ is smaller than the degree of $M$. This concludes the proof of the first assertion.

The second assertion follows from a similar analysis. As before, we denote the minimal polynomial of $L$ by $M(q)$. Note that
\[
\frac{1}{1-q^{-1}L}=\frac{q}{q-L}=\frac{qP(q,L)}{M(q)}.
\]
We compute 
\[
\text{Res}_{q=0}\,\bigg(\frac{qP(q,L)f(q)}{M(q)}\bigg)\,\frac{dq}{q}=\lim_{q\to 0}\,\frac{qP(q,L)f(q)}{M(q)}=0
\]
and 
\begin{align*}
&\text{Res}_{q=\infty}\,\bigg(\frac{qP(q,L)f(q)}{M(q)}\bigg)\,\frac{dq}{q}\\
=&-\text{Res}_{q=0}\,\bigg(\frac{P(1/q,L)f(1/q)}{qM(1/q)}\bigg)\,\frac{dq}{q}\\
=&-\lim_{q\to 0}\,\frac{P(1/q,L)f(1/q)}{qM(1/q)}\\
=&0.
\end{align*}
In the above computations, we use the fact that $f$ is regular at $q=0$ and vanishes at $q=\infty$. This concludes the proof of the second assertion.
\end{proof}
\begin{lemma}\label{minimalpoly}
Suppose $\ca{X}$ is a proper, separated Deligne-Mumford stack with projective coarse moduli space $X$. Let $\ca{L}$ be a line bundle on $\ca{X}$. Then there exists a minimal polynomial $M(q)$ such that $M(\ca{L})=0$ in $K^0(\ca{X})_\Q$.
\end{lemma}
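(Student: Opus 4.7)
My plan is to reduce the problem from the stack $\ca{X}$ to its projective coarse moduli space $X$, where classical results provide polynomial relations for line bundles in $K^0$. The first step is to show that some positive tensor power $\ca{L}^N$ descends along the coarse moduli map $\pi : \ca{X} \to X$. Since $\ca{X}$ is a proper Deligne-Mumford stack, the inertia stack $I\ca{X}$ is proper and has only finitely many connected components. On each such component $\ca{L}$ determines a character of the finite stabilizer group, and these characters take values in roots of unity of uniformly bounded order, so there is a single $N \geq 1$ such that $\ca{L}^N$ acts trivially on every stabilizer. By Kempf's descent lemma, $\ca{L}^N \cong \pi^* L$ for some line bundle $L$ on $X$.

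The second step is to produce a polynomial relation $P(L) = 0$ in $K^0(X)_\Q$. Fix an ample line bundle $\ca{A}$ on $X$ and choose $m$ large enough that both $\ca{A}^{\otimes m}$ and $L \otimes \ca{A}^{\otimes m}$ are very ample, giving closed embeddings $\iota_j : X \hookrightarrow \bb{P}^{M_j}$ with $\iota_j^* \ca{O}(1)$ equal to one of these two bundles. On $\bb{P}^{M_j}$ the Koszul resolution of a closed point yields $(1 - \ca{O}(-1))^{M_j + 1} = 0$ in $K^0(\bb{P}^{M_j})$, so pulling back gives monic polynomial relations satisfied by each of $\ca{A}^{\otimes m}$ and $L \otimes \ca{A}^{\otimes m}$ in $K^0(X)_\Q$. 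Because both of these line bundles are units, the $\Q$-subalgebra of $K^0(X)_\Q$ they generate is finite-dimensional, so the quotient $L = (L \otimes \ca{A}^{\otimes m}) \cdot (\ca{A}^{\otimes m})^{-1}$ is algebraic over $\Q$, giving a nonzero polynomial $P(q) \in \Q[q]$ with $P(L) = 0$.

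Applying $\pi^*$ then gives $P(\ca{L}^N) = \pi^* P(L) = 0$, so the ideal $I_{\ca{L}} \subset \Q[q]$ of polynomials vanishing on $\ca{L}$ contains the nonzero polynomial $P(q^N)$. Since $\Q[q]$ is a principal ideal domain, $I_{\ca{L}}$ is generated by a unique monic polynomial, the desired minimal polynomial $M(q)$. The main technical hurdle is the descent step: to ensure that a single $N$ works for every component of $I\ca{X}$ one must use the properness (and hence quasi-compactness) of the inertia stack in order to bound the orders of the stabilizer characters of $\ca{L}$ uniformly. Once descent is in hand, the finite-dimensionality argument on $X$ and the pullback to $\ca{X}$ are routine.
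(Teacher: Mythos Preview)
Your proposal is correct and follows essentially the same strategy as the paper: descend a tensor power of $\ca{L}$ to the projective coarse moduli space (the paper simply cites \cite[Lemma~2]{kresch}, while you sketch the inertia-stack argument and invoke descent), then write the descended line bundle as a ratio of two very ample bundles and use the Koszul relation $(1-L^\vee)^n=0$ coming from a basepoint-free set of sections of each. The only cosmetic difference is in the last step: the paper explicitly expands $(1-L_1L_2^{-1})^{2n}=L_2^{-2n}\bigl((L_2-1)-(L_1-1)\bigr)^{2n}$ via the binomial theorem, whereas you argue more abstractly that the $\Q$-subalgebra generated by the two very ample bundles is finite-dimensional; both reach the same conclusion.
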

\begin{proof}
Without loss of generality, we assume $\ca{X}$ has only one connected component. By \cite[Lemma 2]{kresch}, some positive power $\ca{L}^{\otimes d}$ is the pullback of a line bundle $P$ on the coarse moduli space $X$. It suffices to show that there exists a sufficiently large $n_0\in\Z_+$ such that $(1-P)^{n_0}=0$ in $K^0(X)_\Q$. To justify the claim, we first consider a very ample line bundle $L$ on $X$. Let $n:=\text{dim}\,H^0(X,L)$ and let $s\in H^0(X,L^{\oplus n})$ be a section corresponding to a basis of $H^0(X,L)$. The section $s$ induces the Koszul resolution
\[
0\rightarrow\bigwedge^n\big((L^{\oplus n})^\vee\big)\rightarrow\bigwedge^{n-1}\big((L^{\oplus n})^\vee\big)\rightarrow\cdots\rightarrow\bigwedge^2\big((L^{\oplus n})^\vee\big)\rightarrow \big(L^{\oplus n}\big)^\vee\rightarrow\ca{O}\rightarrow0,
\]
which implies the relation $(L^\vee-1)^n=0$ in $K^0(X)_\Q$. It follows that $(1-L)^n=0$ in $K^0(X)_\Q$.

For the line bundle $P$, we can always find very ample line bundles $L_1$ and $L_2$ on $X$ such that $P=L_1L_2^\vee$. By the previous discussion, we can find a sufficiently large $n$ such that 
\begin{equation}\label{eq:alemma}
(1-L_i)^n=0\ ,i=1,2,
\end{equation}
 in $K^0(X)_\Q$. Set $n_0=2n$. Then
\[
(1-L)^{n_0}=L_2^{-n_0}(L_2-L_1)^{n_0}=L_2^{-n_0}\big((L_2-1)-(L_1-1)\big)^{2n}=0.
\]
The last equality follows from the binomial theorem and (\ref{eq:alemma}). This concludes the proof of the lemma.
\end{proof}

\appendix
\section{$K$-theoretic $I$-function of the Grassmannian and wall-crossing contributions}
In this appendix, we compute the function $\mu^{W_1}_{d'}(q)$ which shows up in the wall-crossing contributions in Theorem \ref{epdel}. This function is closely related to the \emph{$K$-theoretic small $I$-function} of the Grassmannian. The level-0 $K$-theoretic small $I$-function of $\text{Gr}(n,N)$ was computed in \cite{taipale} and the computation was generalized to the case for arbitrary level structure in \cite{wen}. Both computations are generalizations of \cite{bck} on the cohomological small $I$-function of $\text{Gr}(n,N)$.

We first recall the computation of the $K$-theoretic small $I$-function of the Grassmannian. Let $Q^{0+}_{g,k}(\text{Gr}(n,N),d)$ be the moduli stack of $(\epsilon=0+)$-stable quasimaps and let $QG^{0+}_{g,k}(\text{Gr}(n,N),d)$ be the quasimap graph space (cf. \cite{kim4,kim2}). There is a $\bb{C}^*$-action on $QG^{0+}_{g,k}(\text{Gr}(n,N),d)$ given by
\begin{equation}\label{eq:actionst}
t\cdot[x_0,x_1]=[tx_0,x_1],\quad\forall t\in\bb{C}^*.
\end{equation}
The fixed loci of this $\bb{C}^*$-action is studied in \cite[\textsection 4.1]{kim2}. 

Now, let us focus on the case $g=0$ and $k=0$. We denote by $F_d$ the fixed-point component of $QG^{0+}_{0,0}(\text{Gr}(n,N),d)$ parametrizing the quasimaps of degree $d$ 
\[
(\bb{P}^1,E,s)
\]
with $E$ a vector bundle of rank $n$ and degree $d$, and $s:\bb{P}^1\rightarrow E\otimes\ca{O}^N_{\bb{P}^1}$ a section such that $s(x)\neq0$ for $x\neq0\in\bb{P}^1$ and $0\in\bb{P}^1$ is a base point of length $d$. 

Let $\pi:\ca{C}\rightarrow QG^{0+}_{0,0}(\text{Gr}(n,N),d)$ be the universal curve and let $\ca{O}^N_{\ca{C}}\rightarrow\ca{E}$ be the universal morphism. As in \cite{zhang}, we define the level-$l$ determinant line bundle by
 \[
\ca{D}^{l}:=\big(\text{det}\,R\pi_*(\ca{E})\big)^{-l}.
 \]
Let $\{\phi_i\}$ be a basis of $K^0(\text{Gr}(n,N))_\bb{Q}$. The $K$-theoretic small $I$-function of $\text{Gr}(n,N)$ of level $l$ is defined by
\[
\ca{I}^{l}(q,Q)=1+\sum_i\sum_{d>0}Q^d\chi\bigg(F_d,\ca{O}_{F_d}^{\text{vir}}\otimes \text{ev}^*(\phi_i)\otimes\bigg(\frac{\text{tr}_{\bb{C}^*}\ca{D}^{l}}{\lambda_{-1}^{\bb{C}^*}(N_{F_d}^\vee)}\bigg)\bigg)\phi^i
\]
where 
\begin{itemize}
\item $\{\phi^i\}$ is the dual basis of $\{\phi_i\}$ with respect to the twisted Mukai pairing
\[
(\phi_a,\phi_b):=\chi\big(\phi_a\otimes \phi_b\otimes\big(\text{det}\,E\big)^{-l}\big),
\] 
\item $N_{F_d}$ is the (virtual) normal bundle of the fixed locus $F_d$ in $QG^{0+}_{0,0}(\text{Gr}(n,N),d)$, and
\item the trace $\text{tr}_{\bb{C}^*}(V)$ of the restriction of a $\bb{C}^*$-equivariant bundle $V$ to the fixed point locus is defined as the following virtual bundle:
\[
\text{tr}_{\bb{C}^*}(V):=\sum_iq^i\,V(i),
\]
where $t\in\bb{C}^*$ acts on $V(i)$ as multiplication by $t^i$.
\end{itemize}

Let $\text{Quot}_{\bb{P}^1,d}(\bb{C}^N,N-n)$ be the Grothendieck's Quot scheme parametrizing quotients $\ca{O}^N_{\bb{P}^1}\rightarrow Q\rightarrow 0$, where $Q$ is a coherent sheaf on $C$ of rank $N-n$ and degree $d$. Let $X$ be a scheme. Suppose $\ca{O}^N_{\bb{P}^1\times X}\rightarrow \tilde{Q}\rightarrow 0$ is a flat quotient over $\bb{P}^1\times X$. The kernel $\ca{S}$ of the quotient map is locally free due to flatness. Let $\ca{E}$ be the dual of $\ca{S}$. By dualizing the injection $0\rightarrow\ca{S}\rightarrow \ca{O}^N_{\bb{P}^1\times X}$, we obtain a morphism $\ca{O}^N_{\bb{P}^1\times X}\rightarrow \ca{E}$ satisfying that, for any closed point $x\in X$, the restriction of the morphism to $\bb{P}^1\times\{x\}$ is surjective at all but a finite number of points. It is easy to check that $\ca{O}^N_{\bb{P}^1\times X}\rightarrow \ca{E}$ is a flat family of quasimaps with one parametrized rational component. Therefore, we obtain a morphism from $\text{Quot}_{\bb{P}^1,d}(\bb{C}^N,N-n)$ to $QG^{0+}_{0,0}(\text{Gr}(n,N),d)$. In fact, this morphism is an isomorphism. Indeed, for any quasimap $(C',E,s,\varphi)$ in $QG^{0+}_{0,0}(\text{Gr}(n,N),d)$, the underlying curve must be $\bb{P}^1$ due to the stability condition. In sum, we have an isomorphism
\[
QG^{0+}_{0,0}(\text{Gr}(n,N),d)\cong\text{Quot}_{\bb{P}^1,d}(\bb{C}^N,N-n).
\]

The distinguished $\bb{C}^*$-fixed-point loci $F_d$ is explicitly identified in \cite{bck}. Consider a collection of integers $\{d_i\}_{1\leq i\leq n}$ which satisfies
\begin{equation}\label{eq:partitioncon}
\sum d_i=d\quad\text{and}\quad0\leq d_1\leq d_2\leq\dots\leq d_n.
\end{equation}
Suppose $0\leq d_1=\dots=d_{n_1}<d_{n_1+1}=\dots=d_{n_2}<\dots<d_{n_{j}+1}=\dots=d_{n}$. Then the jumping index of $\{d_i\}$ is defined as the collection of integers $\{n_i\}_{1\leq i\leq j}$. Let $S$ be the tautological subbundle over $\text{Gr}(n,N)$. According to \cite[Lemma 1.2]{bck}, the irreducible components of $F_d$ are indexed by collections of integers satisfying (\ref{eq:partitioncon}). More precisely, the irreducible components of $F_d$ are the images of flag varieties:
\[
\iota_{\{d_i\}}:\text{Fl}(n_1,\dots,n_j;S)\hookrightarrow\text{Quot}_{\bb{P}^1,d}(\bb{C}^N,N-n).
\]
Here $\text{Fl}(n_1,\dots,n_j;S)$ denotes the relative flag variety of type $\{n_i\}$ and we refer the reader to \cite[\textsection{1}]{bck} for the precise definition of the embedding $\iota_{\{d_i\}}$.

Consider the universal sequence of sheaves on $\text{Quot}:=\text{Quot}_{\bb{P}^1,d}(\bb{C}^N,N-n)$:
\[
0\rightarrow\ca{K}\rightarrow\bb{C}^N\otimes\ca{O}_{\text{Quot}\times\bb{P}^1}\rightarrow\ca{Q}\rightarrow 0.
\]
Let $\pi:\text{Quot}\times\bb{P}^1\rightarrow\text{Quot}$ be the projection. The tangent bundle of the Quot scheme is described by the ``Euler sequence'':
\[
0\rightarrow\pi_*(\ca{K}^\vee\otimes\ca{K})\rightarrow\pi_*\ca{K}^\vee\otimes\bb{C}^N\rightarrow T\text{Quot}\rightarrow R^1\pi_*(\ca{K}^\vee\otimes\ca{K})\rightarrow 0.
\]
Denote $\text{Fl}:=\text{Fl}(n_1,\dots,n_j;S)$ and set $n_{j+1}:=n$. Let $q:\text{Fl}\rightarrow\text{Gr}(n,N)$ be the flag bundle map and let
\[
0\subset S_{n_1}\subset S_{n_2}\subset\dots\subset  S_{n_j}\subset S_{n_j+1} =q^*S
\]
be the universal flag on $\text{Fl}$. According to \cite{bck}, the restriction of $\ca{K}$ from $\text{Quot}\times\bb{P}^1$ to $\text{Fl}\times\bb{P}^1$ has an increasing filtration $0=\ca{K}_0\subset\dots\subset \ca{K}_j\subset\ca{K}_{j+1}=\ca{K}$ with 
\[
\ca{K}_i/\ca{K}_{i-1}\cong\pi^*(S_{n_i}/S_{n_{i-1}})(-d_{n_i}z).
\]
Therefore, in the $K$-group $K^0(\text{Fl}\times\bb{P}^1)$, we have
\[
[\ca{K}^\vee]=\sum_{a=1}^{j+1}[(S_{n_a}/S_{n_{a-1}})^\vee(d_{n_a})]
\]
and 
\[
[\ca{K}^\vee\otimes\ca{K}]=\sum_{a,b=1}^{j+1}[\pi^*\big((S_{n_a}/S_{n_{a-1}})^\vee\otimes(S_{n_b}/S_{n_{b-1}})\big)(d_{n_a}-d_{n_b})].
\]
Using the splitting principle, we write
\[
\sum_{s=1}^{n_i-n_{i-1}}\ca{L}_{n_{i-1}+s}=(S_{n_i}/S_{n_{i-1}})^\vee.\footnote{The formal line bundles $\ca{L}_{n_{i-1}+s}$ can be viewed as the \emph{$K$-theoretic Chern roots} of $S_{n_i}/S_{n_{i-1}}$.}
\]

To compute the small $I$-function, we need to compute the equivariant normal bundle $N_{\text{Fl}/\text{Quot}}$ and the restriction of the equivariant determinant line bundle $\ca{D}^l|_{\text{Fl}}$. Note that 
\[
N_{\text{Fl}/\text{Quot}}=T\text{Quot}^{\text{mov}}=\big(\pi_*\ca{K}^\vee\otimes\bb{C}^N\big)^{\text{mov}}+\big(R^1\pi_*(\ca{K}^\vee\otimes\ca{K})\big)^{\text{mov}}-\big(\pi_*(\ca{K}^\vee\otimes\ca{K})\big)^{\text{mov}}.
\]
Here $V^{\text{mov}}$ denotes the moving part of a vector bundle $V$ under the $\bb{C}^*$-action (cf. \cite[\textsection{2.8}]{liu}).
According to the computations in \cite{taipale}, we have
\begin{align*}
&\lambda_{-1}^{\bb{C}^*}\big(\big(\big(R^1\pi_*(\ca{K}^\vee\otimes\ca{K})\big)^{\text{mov}}\big)^\vee-\big(\big(\pi_*(\ca{K}^\vee\otimes\ca{K})\big)^{\text{mov}}\big)^\vee\big)\\
=&\frac{\prod_{1\leq a<b\leq j+1}\prod_{\substack{1\leq s\leq n_a-n_{a-1}\\1\leq t\leq n_a-n_{a-1}}}\prod_{c=1}^{d_{ba}-1}\ca{L}^\vee_{n_{a-1}+s}\ca{L}_{n_{b-1}+t}q^{-c}}{\prod_{1\leq a<b\leq j+1}\prod_{\substack{1\leq s\leq n_a-n_{a-1}\\1\leq t\leq n_b-n_{b-1}}}(-1)^{r_ar_b(d_{ba}-1)}(1-\ca{L}_{n_{b-1}+t}^\vee\ca{L}_{n_{a-1}+s}q^{d_{ba}})},
\end{align*}
and 
\begin{align*}
&\lambda_{-1}^{\bb{C}^*}\big(\big(\big(\pi_*\ca{K}^\vee\otimes\bb{C}^N\big)^{\text{mov}}\big)^\vee\big)\\
=&\prod_{a=1}^{j+1}\prod_{s=1}^{r_a}\prod_{b=1}^{d_{n_a}}(1-\ca{L}^\vee_{n_{a-1}+s}q^b)^N,
\end{align*}
where $r_a=n_a-n_{a-1}$ and $d_{ba}=d_{n_b}-d_{n_a}$. 

Note that $\ca{E}=\ca{K}^\vee$. From the definition of the level structure $\ca{D}^{l}$, it is straightforward to compute
\[
\text{tr}_{\bb{C}^*}\ca{D}^{l}=\prod_{a=1}^{j+1}\prod_{s=1}^{r_a}\prod_{b=0}^{d_{n_a}}(\ca{L}^\vee_{n_{a-1}+s}q^b)^l.
\]

Combining all the above results and the pushforward lemma \cite[Lemma 3]{taipale}, we obtain
\begin{align*}
\ca{I}^{l}(q,Q)\nonumber\\=\sum_{d\geq 0}Q^d\sum_{\substack{\{d_i\}\\\sum d_i=d}}\rho_*\bigg(&\frac{\text{tr}_{\bb{C}^*}\ca{D}^{l}}{\lambda^{\bb{C}}_{-1}(N^\vee_{\text{Fl}/\text{Quot}})}\bigg)\nonumber\\
=\sum_{d\geq 0}Q^d\sum_{\substack{\{d_i\}\\\sum d_i=d}}\rho_*\bigg(&
\prod_{1\leq a<b\leq j+1}\prod_{\substack{1\leq s\leq n_a-n_{a-1}\\1\leq t\leq n_b-n_{b-1}}}(-1)^{r_ar_b(d_{ba}-1)}(1-\ca{L}_{n_{b-1}+t}^\vee\ca{L}_{n_{a-1}+s}q^{d_{ba}})\nonumber\\
\cdot&\prod_{1\leq a<b\leq j+1}\prod_{\substack{1\leq s\leq n_a-n_{a-1}\\1\leq t\leq n_b-n_{b-1}}}\prod_{c=1}^{d_{ba}-1}\ca{L}^\vee_{n_{b-1}+t}\ca{L}_{n_{a-1}+s}q^{c}\nonumber\\
\cdot
&\frac{\prod_{a=1}^{j+1}\prod_{s=1}^{r_a}\prod_{b=0}^{d_{n_a}}(\ca{L}^\vee_{n_{a-1}+s}q^b)^l
}{\prod_{a=1}^{j+1}\prod_{s=1}^{r_a}\prod_{b=1}^{d_{n_a}}(1-\ca{L}^\vee_{n_{a-1}+s}q^b)^N}\bigg)\nonumber\\
=\sum_{d\geq 0}Q^d\sum_{\substack{\{d_i\}\\\sum d_i=d}}\sum_{w}\bigg(&
\prod_{1\leq a<b\leq j+1}\prod_{\substack{1\leq s\leq n_a-n_{a-1}\\1\leq t\leq n_b-n_{b-1}}}(-1)^{r_ar_b(d_{ba}-1)}(1-\ca{L}_{n_{b-1}+t}^\vee\ca{L}_{n_{a-1}+s}q^{d_{ba}})\\
\cdot&\prod_{1\leq a<b\leq j+1}\prod_{\substack{1\leq s\leq n_a-n_{a-1}\\1\leq t\leq n_b-n_{b-1}}}\prod_{c=1}^{d_{ba}-1}\ca{L}^\vee_{n_{b-1}+t}\ca{L}_{n_{a-1}+s}q^{c}\nonumber\\
\cdot
&\frac{\prod_{a=1}^{j+1}\prod_{s=1}^{r_a}\prod_{b=0}^{d_{n_a}}(\ca{L}^\vee_{n_{a-1}+s}q^b)^l
}{\prod_{a=1}^{j+1}\prod_{s=1}^{r_a}\prod_{b=1}^{d_{n_a}}(1-\ca{L}^\vee_{m_{a-1}+s}q^b)^N}
\bigg)\nonumber
\end{align*}
for $w\in S_n/(S_{r_1}\times\dots\times S_{r_{j+1}})$. Here $w$ acts on the indices. 

Let $W_1=\bb{S}_{\lambda}(S)$ with $\lambda\in \text{P}_l$. Recall in the proof of Theorem \ref{epdel}, we need to study
\[\mu^{W_1}_{d'}(q)=(\text{ev}_\infty)_*\bigg(\frac{\ca{D}^l\otimes\widetilde{\text{ev}}_0^*(\bb{S}_{\lambda}(S))}{\lambda_{-1}^{\bb{C}^*}(N^\vee)}\bigg),\]
where $N=N^{\vir,\text{rel}}_{F_{d'}/QG^{0+}_{0,1}}$ and $\widetilde{\text{ev}}_0:F_{d'}\rightarrow [M_{n\times N}/\text{GL}_n(\bb{C})]$ is the evaluation morphism at $0\in\bb{P}^1$. Note that $\widetilde{\text{ev}}_0^*(\bb{S}_{\lambda}(S))=\bb{S}_{\lambda}(\ca{K}_0)$, where $\ca{K}_0$ denotes the restriction of $\ca{K}$ to $\text{Fl}\times\{0\}$. In the computation of the equivariant virtual normal bundle of $F_{d'}$ in $QG^{0+}_{0,0}$, the localization contribution of automorphisms moving the unmarked points at 0 and $\infty$ cancels with that of the deformation of the parametrization $\varphi$. Therefore we have $N^{\vir,\text{rel}}_{F_{d'}/QG^{0+}_{0,1}}\cong N^{\vir}_{F_{d'}/QG^{0+}_{0,0}}$.

By the above analysis, we obtain the explicit formula of $\mu^{W_1}_{d'}(q)$ as follows:
\begin{align}\label{eq:explicitI}
\mu^{W_1}_{d'}(q)\\
=\sum_{\substack{\{d_i\}\\\sum d_i=d'}}\sum_{w}\bigg(&
\prod_{1\leq a<b\leq j+1}\prod_{\substack{1\leq s\leq n_a-n_{a-1}\\1\leq t\leq n_b-n_{b-1}}}(-1)^{r_ar_b(d_{ba}-1)}(1-\ca{L}_{n_{b-1}+t}^\vee\ca{L}_{n_{a-1}+s}q^{d_{ba}})\nonumber\\
\cdot&\prod_{1\leq a<b\leq j+1}\prod_{\substack{1\leq s\leq n_a-n_{a-1}\\1\leq t\leq n_b-n_{b-1}}}\prod_{c=1}^{d_{ba}-1}\ca{L}^\vee_{n_{b-1}+t}\ca{L}_{n_{a-1}+s}q^{c}\nonumber\\
\cdot
&\frac{\prod_{a=1}^{j+1}\prod_{s=1}^{r_a}\prod_{b=0}^{d_{n_a}}(\ca{L}^\vee_{n_{a-1}+s}q^b)^l
}{\prod_{a=1}^{j+1}\prod_{s=1}^{r_a}\prod_{b=1}^{d_{n_a}}(1-\ca{L}^\vee_{m_{a-1}+s}q^b)^N}
\nonumber\\
\cdot
& \bb{S}_{\lambda}(\ca{K}_0)\bigg)\nonumber.
\end{align}
Note that the $\bb{C}^*$-action (\ref{eq:actionst}) on the fiber of $\ca{O}_{\bb{P}^1}(-d)$ at 0 is given by the $d$th tensor power of the standard representation. Hence $\ca{K}_0$ can be explicitly written as follows:
\[
\ca{K}_0=\sum_{a=1}^{j+1}\sum_{s=1}^{n_i-n_{i-1}}\ca{L}^\vee_{n_{i-1}+s}q^{d_{n_a}}.
\]

\begin{lemma}\label{laurentpart}
If $N-n\geq 2l$, then $\mu^{W_1}_{d'}(q)$ is regular at $q=0$ and vanishes at $q=\infty$. 
\end{lemma}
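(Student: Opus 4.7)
The plan is to carry out a factor-by-factor $q$-degree analysis of the explicit formula \eqref{eq:explicitI} for $\mu^{W_1}_{d'}(q)$ and combine the resulting bounds with the hypothesis $N \geq n + 2l$. Since both regularity at $q=0$ and vanishing at $q=\infty$ are preserved by finite sums, it suffices to control a single summand of \eqref{eq:explicitI} indexed by a collection $\{d_i\}$ with $\sum d_i = d'$ and a coset representative $w$.

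Regularity at $q=0$ is immediate by inspection. The only potentially offending factor is the denominator $\prod_{a,s}\prod_{b=1}^{d_{n_a}}(1-\ca{L}^\vee_{n_{a-1}+s}q^b)^N$, which specializes to $1$ at $q=0$. Every other factor in \eqref{eq:explicitI} is polynomial in $q$ with only non-negative powers, including the Schur factor $\bb{S}_\lambda(\ca{K}_0)$ with $\ca{K}_0 = \sum_a\sum_s \ca{L}^\vee_{n_{a-1}+s}q^{d_{n_a}}$, so each summand is regular at $q=0$.

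The main step is the degree count at $q=\infty$. Let $D_1 \leq \cdots \leq D_n$ denote the individual $q$-weights of $\ca{K}_0$ (each $d_{n_a}$ listed with multiplicity $r_a$), and set $S_1 := \sum_i D_i(D_i+1)/2$ and $S_2 := \sum_{i<j,\,D_i<D_j}(D_j-D_i)(D_j-D_i+1)/2$. A direct tally yields the following top $q$-degree contributions per summand: the two Vandermonde-type factors jointly give $S_2$; the ratio $\prod(\ca{L}^\vee q^b)^l/\prod(1-\ca{L}^\vee q^b)^N$ gives $(l-N)S_1$; and the Schur factor $\bb{S}_\lambda(\ca{K}_0)$ gives at most $\sum_i \lambda_i D_{n+1-i} \leq l\sum_i D_i = ld' \leq lS_1$, the first step using $\lambda_i \leq l$ and the last using $D_i(D_i+1) \geq 2D_i$. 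The crude estimate $S_2 \leq (n-1)S_1$ (bound each $(D_j-D_i)(D_j-D_i+1)/2$ by $D_j(D_j+1)/2$ and note each $j$ appears in at most $n-1$ pairs) then assembles everything into a top $q$-degree of at most $(2l + n - 1 - N)S_1$. Since $d' \geq 1$ forces $S_1 \geq 1$, and $N \geq n+2l$ forces $2l+n-1-N \leq -1$, the top degree is at most $-1$, so each summand vanishes at $q=\infty$.

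The only technical estimate that deserves care is the bound $\deg_q \bb{S}_\lambda(\ca{K}_0) \leq \sum_i \lambda_i D_{n+1-i}$. This will follow from the tableau expansion $s_\lambda(x_1,\ldots,x_n) = \sum_T \prod_s x_{T(s)}$ over semistandard Young tableaux $T$ of shape $\lambda$: specializing $x_i = q^{D_i}$ with $D_1\leq\cdots\leq D_n$, the maximum total weight is realized by a column-constant filling, yielding total weight $\sum_i \lambda_i D_{n+1-i}$ as claimed.
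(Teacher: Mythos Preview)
Your proof is correct and follows essentially the same degree-counting approach as the paper, with your notation $D_i,\,S_1,\,S_2$ being a cleaner repackaging of the paper's block sums $r_a,\,d_{n_a},\,d_{ba}$. One small slip: no semistandard tableau is literally ``column-constant'' (columns are strictly increasing), but the bound $\deg_q \bb{S}_\lambda(\ca{K}_0)\le\sum_i\lambda_i D_{n+1-i}$ you state is correct and follows more directly from the symmetry of $s_\lambda$ (the monomial $x_1^{\lambda_n}\cdots x_n^{\lambda_1}$ appears), or from the tableau that fills each column $j$ with the $\lambda'_j$ largest labels.
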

\begin{proof}
It is clear that $\mu^{W_1}_{d'}(q)$ has no pole at $q=0$. For any $\lambda\in \text{P}_l$, $\bb{S}_{\lambda}(\ca{K}_0)$ is a polynomial in $q$ whose degree is bounded above by $ld'$. For a fixed choice of $\{d_i\}$ and $w$, the degree of the numerator of (\ref{eq:explicitI}) is bounded by
\begin{align}\label{eq:numer}
&\sum_{1\leq a<b\leq j+1}r_ar_bd_{ba}
+\sum_{1\leq a<b\leq j+1}r_ar_b(d_{ba}-1)d_{ba}/2
+\sum_{a=1}^{j+1}r_a(d_{n_a}+1)d_{n_a}l/2+ld'\nonumber\\
=&\sum_{1\leq a<b\leq j+1}r_ar_b(d_{ba}+1)d_{ba}/2
+\sum_{a=1}^{j+1}r_a(d_{n_a}+1)d_{n_a}l/2+ld'
\end{align}
and the degree of the denominator is 
\begin{equation}\label{eq:denom}
\sum_{a=1}^{j+1}r_a(d_{n_a}+1)d_{n_a}N/2.
\end{equation}
Since we have
\[
\sum_{a=1}^{j+1}d_{n_a}^2\geq\sum_{a=1}^{j+1}d_{n_a}=d',\,r_a\geq1\quad\text{and}\quad d_{ba}\geq d_b,\]
it follows that the difference $(\ref{eq:denom})-(\ref{eq:numer})$ is greater than or equal to
\begin{align*}
&\sum_{a=1}^{j+1}r_a(d_{n_a}+1)d_{n_a}(N-l-(n-1))/2-ld'\\
\geq&d'(N-l-(n-1))-ld'\\
=&d'(N-2l-(n-1))\\
>&0.
\end{align*}
Hence $\mu^{W_1}_{d'}(q)$ vanishes at $q=\infty$ under the assumption that $N-n\geq 2l$.
\end{proof}

\bibliographystyle{amsplain.bst}
\bibliography{ref}

\end{document}